\documentclass[aip,cha,reprint,nofootinbib]{revtex4-2}

\usepackage{graphicx}
\usepackage{pifont}
\usepackage{leftindex}
\usepackage{amsmath,amssymb,amsfonts}
\usepackage{amsthm}
\usepackage{enumerate}
\usepackage{bm}
\usepackage{dsfont}
\usepackage[mathscr]{eucal}
\usepackage{upgreek}
\usepackage{xcolor}
\usepackage{hyperref}

\makeatletter
\newcommand{\pushright}[1]{\ifmeasuring@#1\else\omit\hfill$\displaystyle#1$\fi\ignorespaces}
\newcommand{\pushleft}[1]{\ifmeasuring@#1\else\omit$\displaystyle#1$\hfill\fi\ignorespaces}
\makeatother
\newcommand{\tr}{\mathrm{tr}}

\DeclareMathOperator*\uplim{\overline{lim}}

\newcommand{\diag}{\mathbf{diag}}

\newcommand{\dist}[2]{\mathbf{dist} \left[ #1; #2\right]}
\newcommand{\brak}[1]{^{\left\langle #1 \right\rangle} }
\newcommand{\inter}{\mathbf{int}\,}
\newcommand{\mA}{\mathscr{A}}
\newcommand{\Ball}[2]{B_{#2}^{\,#1}} % ball B_x^r

\newcommand{\ecc}{\operatorname{ecc}}
\newcommand{\varpid}{\varpi_d}
\newcommand{\omegad}{\omega_d}
\newcommand{\Ln}{\mathbf{Ln}}

\DeclareMathAlphabet{\mathpzc}{OT1}{pzc}{m}{it}

\newcommand{\trn}{^{\scriptscriptstyle \top}} %Transpose symbol
\newcommand{\trnm}{^{\scriptscriptstyle -\top}} %--- minus Transpose symbol

\newcommand{\dotm}[1]{\dot{#1}}
\newfont{\bfb}{msbm10 scaled 1200}
\newfont{\mfb}{msbm8}
\newtheorem{asm}{Assumption}[section]
\newtheorem{proposition}{Proposition}[section]
\newtheorem{dfn}{Definition}[section]

\newtheorem{remark}{Remark}[section]
\newtheorem{lemma}{Lemma}[section]
\newtheorem{thmm}{Theorem}[section]
\newtheorem{corollary}{Corollary}[section]

\DeclareRobustCommand{\longrightarrow}{
  \DOTSB\relbar\joinrel\relbar\joinrel\rightarrow
}
\newcommand{\R}{\mathbb{R}}%

\newcommand{\ff}{g}
\newcommand{\br}{\mathbb{R}}
\newcommand{\ve}{\varepsilon}
\newcommand{\spr}[2]{\left\langle #1; #2 \right\rangle}

\newcommand{\mc}{}
\makeatletter
\DeclareFontFamily{U}  {MnSymbolF}{}
\DeclareSymbolFont{symbolsMN}{U}{MnSymbolF}{m}{n}
\SetSymbolFont{symbolsMN}{bold}{U}{MnSymbolF}{b}{n}
\DeclareFontShape{U}{MnSymbolF}{m}{n}{
    <-6>  MnSymbolF5
   <6-7>  MnSymbolF6
   <7-8>  MnSymbolF7
   <8-9>  MnSymbolF8
   <9-10> MnSymbolF9
  <10-12> MnSymbolF10
  <12->   MnSymbolF12}{}
\DeclareFontShape{U}{MnSymbolF}{b}{n}{
    <-6>  MnSymbolF-Bold5
   <6-7>  MnSymbolF-Bold6
   <7-8>  MnSymbolF-Bold7
   <8-9>  MnSymbolF-Bold8
   <9-10> MnSymbolF-Bold9
  <10-12> MnSymbolF-Bold10
  <12->   MnSymbolF-Bold12}{}
\DeclareMathSymbol{\tbigtimes}{\mathop}{symbolsMN}{2}
\newcommand*{\bigtimes}{%
  \DOTSB
  \tbigtimes
  \slimits@
}
\makeatother
\allowdisplaybreaks%

\begin{document}

\title{Contraction theory: Hausdorff--Riemann Measures as Set-Based Lyapunov Functions}

\author{A. Matveev}
\email{almat1712@yahoo.com}
\affiliation{Department of Mathematics and Mechanics, Saint Petersburg University, St.\ Petersburg, Russia}
\affiliation{Department of Information Technologies and Artificial Intelligence, Sirius University of Science and Technology, Sirius, Russia}

\author{A. Pogromsky}
\email{A.Pogromsky@tue.nl}
\affiliation{Department of Mechanical Engineering, Eindhoven University of Technology, Eindhoven, The Netherlands}

\date{\today}

\begin{abstract}
We {\mc offer} a measure-theoretic extension of {\mc the concept and theory of} $k$-contraction, {\mc including their generalization on fractional dimensions $d$.}
{\mc The respective contraction property is defined through} the exponential
decay of the $d$-dimensional volume of compact sets transported by a nonlinear flow.
For autonomous systems on positively invariant compact sets, we derive {\mc comprehensive, i.e.,} necessary
and sufficient, conditions {\mc for $d$-contractivity} in two complementary forms. The first is expressed in
terms of {\mc the} finite-time Lyapunov characteristic exponents and {\mc is akin in spirit to} the
first Lyapunov method.
The second {\mc one is consonant with the} second Lyapunov method {\mc and relies on} existence
of a Riemannian metric ensuring exponential decay of the metric-induced
$d$-dimensional Hausdorff measure.

To {\mc acquire} monotone {\mc measure-theoretic-based} Lyapunov functions, we introduce a family of
\emph{Hausdorff-Riemann measures}, {\mc which are} elliptic, metric-dependent $d$-measures that
strictly decrease along the trajectories and thus {\mc may} serve as Lyapunov functions. These measures enable an anytime characterization of {\mc the rate of} contraction
and provide constructive tools for stability analysis and feedback design.
{\mc To illustrate the applicability of the approach}, we derive tractable criteria for orbital stability of periodic
solutions {\mc of autonomous ODE's} and {\mc employ} several {\mc prototypical particular} examples, including a rigid body
with dissipation and constant torque, the R\"ossler system, and the Langford system.
\end{abstract}

\keywords{Nonlinear systems; contraction; $k$-contraction; uniform $d$-contraction; Hausdorff--Riemann measure; set-based Lyapunov function}

\maketitle
\section*{Lead paragraph}
{\bf
Contraction theory studies the conditions under which the trajectories of a dynamical system converge toward one another. This property underlies robust behavior of the system and simplifies analysis of its nonlinear models. Traditional contraction criteria focus on shrinkage of lengths, areas, or volumes - typically in integer dimensions and using standard Euclidean notions of the size. This paper extends that viewpoint to non-integer dimensions $d$ via introducing Hausdorff -Riemann $d$-measures, which combine Hausdorff-type scaling with using state-dependent metrics. We show that a nonlinear system is uniformly $d$-contractive
(in the sense that under the action of the dynamic flow, the ordinary Hausdorff $d$-measure of any compact set eventually falls strictly below its initial value)
if and only if there exists a Hausdorff -Riemann $d$-measure that decreases exponentially at any time. In this sense, the latter measure of a compact set can play the role of a Lyapunov function, providing an ``anytime'' certificate of contraction that bridges dimension theory with nonlinear dynamics and enables new contraction tests beyond the conventional integer framework.
}
\section{Introduction}

Contraction theory \cite{Lohmiller_Slotine,Aminzare_Sontag_2014,Forni_Sep_2014,Forni_Sepulchre19,Bullo_book}, together with related concepts such as incremental stability \cite{DAngeli_2002} and convergent systems \cite{Pavlov_book,Pavlov_demi}, has proven to be a powerful framework for the analysis of nonlinear systems and the design of automatic controllers; see, for example, the survey \cite{TSUKAMOTO2021135}. At its core, {\mc there} lies the notion of system contractivity, which means that all trajectories converge to one another over time, regardless of their initial states.

This understanding of contraction is closely related to a weaker concept called $k$-contraction. Following V.~I.~Arnold, a mapping is said to be {\em $k$-contracting} in \cite[Sec.~3.1]{Ilyushenko} if it decreases the volume element of any $k$-dimensional plane.

%Recently, Wu and co-authors \cite{Kanevski} advanced the notion of $k$-contraction to encompass systems that contract any $k$-dimensional parallelotop. This approach has proven to be especially effective in the context of 2-contractive systems. For them, a related elegant argument from \cite{Muldowney93} permits one to exclude the existence of limit cycles. Specifically, assuming a limit cycle, located in a simply connected positively invariant area, along with appealing to existence of the surface of minimal area circumscribed by this cycle,\footnote{The classical Plateau problem concerns the existence of a minimal surface with a specified boundary. This problem was initially posed by J.L.~Lagrange in 1760. Notwithstanding, the problem is named after Joseph Plateau, who conducted experiments with soap films. Existence of solution was independently established by J.~Douglas and T.~Rad\'{o} in the 1930s.} result in a contradiction: that surface is transformed into one of a lesser area due to $2$-contraction, which is impossible since the resulting surface is still circumscribed by the same cycle. In turn, the absence of limit cycles and the closing lemma ultimately entail that any 2-contractive system on $\mathbb{R}^n$ exhibits a non-oscillatory behavior and all its trajectories eventually rest around an equilibrium point. This combination of two properties, though weaker than the conventional global asymptotic stability, has proven to be remarkably useful in a number of applications; for a survey of which, we refer the reader to \cite{DAngeli21}.

Recently, Wu and co-authors \cite{Kanevski} extended the notion of $k$-contraction to systems that contract every $k$-dimensional parallelotope. This approach has proven particularly effective in the study of $2$-contractive systems. For such systems, an elegant argument from \cite{Muldowney93} allows one to exclude the existence of limit cycles.

Specifically, suppose that a limit cycle lies in a simply connected positively invariant region. The existence of a surface of minimal area bounded by this cycle\footnote{The classical Plateau problem concerns the existence of a minimal surface with a specified boundary. This problem was initially posed by J.L.~Lagrange in 1760. Notwithstanding, the problem is named after Joseph Plateau, who conducted experiments with soap films. Existence of a solution was independently established by J.~Douglas and T.~Rad\'{o} in the 1930s.} leads to a contradiction: under $2$-contraction, this surface is mapped to another surface of strictly smaller area, which is impossible because the resulting surface remains bounded by the same cycle.

Consequently, the absence of limit cycles, together with the closing lemma, implies that any $2$-contractive system on $\mathbb{R}^n$ exhibits non-oscillatory behavior and that all trajectories eventually settle near an equilibrium point. Although this combination of properties is weaker than conventional global asymptotic stability, it has proven remarkably useful in a number of applications; for a survey, see \cite{DAngeli21}.

All previous studies of $k$-contraction dealt with integer $k$'s and classical volumes/measures. In this paper, we generalize the notion of $k$-contraction to non-integer dimensions $k$ and to more flexible geometric measures. Our motivation stems from the well-known efficacy of dimensional and volume-based instruments in nonlinear dynamics, which is particularly demonstrated by the Hausdorff and Lyapunov dimension theories \cite{Douad,Smith86,Temam,Leonov_PS,BLReitman,AfraimovichPesin1987,Kuznetsov16,Rui_Kato}.

A central role in our analysis is played by a family of metric-dependent measures, which we call the {\em Hausdorff-Riemann elliptic $d$-measures}. These measures extend the classical Hausdorff $d$-measure by incorporating a state-dependent Riemannian metric and therefore provide a natural geometric tool for describing volume evolution of compact sets along trajectories. In this framework, contraction of sets can be studied through the monotonic decay of appropriately constructed measures rather than through classical integer-dimensional volumes. This broader concept is called the {\em uniform $d$-contraction} and means, in a nutshell, that under the action of the dynamic flow, the Hausdorff-Riemann $d$-measure of any compact set decreases relative to its initial value, starting from some finite time instant.

Our analysis relies on two complementary perspectives: the {\em first Lyapunov method}, based on characteristic exponents, and the {\em second Lyapunov method}, based on Lyapunov functions. Within each framework, we derive conditions that are both necessary and sufficient for uniform $d$-contraction.
Moreover, we prove that uniform $d$-contraction is equivalent to the existence of a suitable state-dependent metric tensor under which the associated Hausdorff–Riemann $d$-measure decreases monotonically in time. In this sense, the proposed measures act as {\em Lyapunov functions for compact sets}, yielding an anytime characterization of contraction.
In contrast to standard approaches based on compound matrix logarithmic norms \cite{Muldowney93,Kanevski,DAngeli21}, our framework accommodates variable metrics. In addition, we introduce the notion of a Hausdorff–Riemann elliptic $d$-measure, combining the construction of elliptic $d$-measures due to Douady and Oesterl'{e} \cite{Leonov_PS} with a Riemannian metric structure.

The main theoretical results are supported by three examples involving a rigid body subject to an external torque, the R\"ossler system, and the Langford system respectively, where we derive bounds ensuring uniform contraction. As the examples show, manual calculations may suffice for simple systems; however, more complex applications may require numerical tools - potentially those discussed in \cite{Louzeiro20222610,PeterGiesl2023} - to compute appropriate metrics. In the first example, the Jacobian is independent of the control input, which limits the applicability of the techniques from \cite{Kanevski}. {\mc To illustrate benefits from the developed apparatus in the field of theoretical analysis, tractable criteria for orbital stability of periodic solutions of autonomous ODE's are obtained.}

From a technical standpoint, the proofs of the key results presented here rely heavily on arguments borrowed from \cite{PoMaNonlin,MaPo_automatica,MaPo_automaticaII,KawanMaPo_automatica}, which focus on upper and lower bounds of the so called restoration entropy - the notion that appears in studies related to the remote state estimation problem.

{\bf Paper outline.}
{\mc Section~\ref{sec.motiv} offers an introductory discussion focused on motivation issues.}
Section~\ref{UDC} introduces the class of uniformly $d$-contractive systems.
Section~\ref{CUDCLE} {\mc presents} necessary and sufficient conditions for uniform $d$-contraction in terms of Lyapunov characteristic exponents.
Section~\ref{CUDCLF} characterizes uniform $d$-contraction via a state-dependent (Riemannian) metric.
We then provide necessary and sufficient conditions for $d$-contraction based on the existence of measures, called Hausdorff-Riemann {\mc elliptic d-}measures, that decay monotonically along trajectories, thus playing the role of set{\mc -based} Lyapunov functions.
Specializing to $d=2$, we derive new, tractable criteria certifying orbital stability of periodic solutions.
Illustrative examples appear in Sections~\ref{Tennis_bat}, \ref{ross.sec}, and \ref{sec:langford}.
Technical proofs of the supplementary  results are deferred to the Appendix.

The following notations are adopted in the paper:
\begin{itemize}
\item $S_x^r$ and $\Ball{r}{x}$ are the sphere and open Euclidean ball with a radius of $r$ centered at $x$, respectively;
\item $:=$ means ``is defined to be'', ``is used to denote'';
\item $\dist{x}{O}$ is the distance from the point $x$ to the set $O$;
\item $N^O_\ve := \{x: \dist{x}{O} < \ve\}$ is the $\ve$-neighborhood of the set $O$;
\item $I_n$ is the $n \times n$ unit matrix;
\item $Df(x)$ is the Jacobian matrix of the function $f$ at the point $x$;
\item $\inter O$ is the interior of the set $O$;
\item $\lfloor d\rfloor$ is the integer floor of the real number $d$, i.e., the maximal integer not exceeding $d$;
\item $\|\cdot\|_2$ is the standard Euclidean norm of a vector from $\br^n$ or the spectral norm of a matrix;
\item $\spr{\cdot}{\cdot}$ is the standard Euclidean inner product in $\br^n$;
\item $\diag(a_1,\ldots,a_n)$ is the diagonal matrix with the listed diagonal entries.
\end{itemize}
\section{Some motivation issues}
\label{sec.motiv}
A classical result, due to Liouville, states
that for any linear ODE with time-varying coefficients
\begin{equation}\label{linsys_At}
\dot \xi = A(t) \xi, \quad \xi \in \mathbb{R}^n
\end{equation}
the determinant of the fundamental matrix $X(t)$
satisfies the following relation:
\[
\det X(t) = \exp \left[\int_{0}^{t} \tr A(s)\, ds \right] \det X(0).
\]
Hence if $\tr A(t) < 0 \ \ \forall t$, then $\det X(t)$ decreases over time; if moreover, $\sup_t \tr A(t) < 0$, then $\det X(t)$ exponentially goes to zero as $t \to \infty$. Meanwhile, the $n$-dimensional volume (i.e., the Lebesgue measure) of any measurable
set $E \subset \br^n$ is multiplied by $|\det X(t)|$ when passing from $E$ to its image $X(t)E$ under the action of the dynamic flow of \eqref{linsys_At}.
Thus, if the volume of $E$ is finite, then the volume of its image similarly decreases and goes to zero in the same respective conditions.
\par
An important application of this result to the theory of nonlinear dynamics is concerned with the situation where $A(t)$ is the Jacobian matrix of a vector nonlinear differential equation:
\begin{equation}\label{given_system}
  \dot x = f(x) \in \br^n
\end{equation}
%\[
%\dot x = f(x), \quad x \in \mathbb{R}^n,
%\]
evaluated along some of its solutions $x_\star(t)$:
\begin{equation*}
%\label{linearization-}
A(t) := D f\big[ x_\star(t) \big].
\end{equation*}
In this case, the Liouville’s formula implies that if ${\rm div}\, f(x) < 0\; \forall x$, any infinitesimal $n$-dimensional volume element shrinks as time evolves, and goes to zero as $t \to \infty$ if in addition $\sup_{t \geq 0} {\rm div}\, f[x_\star(t)] < 0$. This situation is referred to as \emph{$n$-contraction}.

Recently, Wu et al.~\cite{Kanevski} employed the theory of multiplicative and additive compound matrices (see, e.g., \cite[Ch.~3]{BLReitman}), along with logarithmic matrix norms (see, e.g., \cite[Ch.~4]{BLReitman}), to introduce the notion of \emph{$k$-contraction}. Loosely speaking, such contraction means that infinitesimal $k$-dimensional volume elements vanish as time progresses, where $k$ is an integer not exceeding $n$. Furthermore, they derived {\em sufficient} conditions for the $k$-contraction in terms of inequalities involving the logarithmic norms of the fundamental matrices associated with the $k$-compound variational equations.
Apart from general theoretical contributions, \cite{Kanevski} discusses several impressive control-related applications in the context of $k$-contractive systems, thus highlighting the importance and practical relevance of contraction properties.

These insights motivate further development of the topic and naturally raise extra issues. First and foremost, is it possible to provide an exhaustive and somewhat constructive
criteria for $k$-contractivity? Another natural inquiry arises in the situation where the system passes, in a continuous fashion, from being, e.g., 4-contractive to 3-contractive since its parameters gradually change due to some reason. Whether the system preserves some contraction properties in between, and if so, in which sense and is it possible to use them for analysis of the system's dynamics?
\iffalse
 As a motivating example for our study, suppose a system depends on certain parameters. During its evolution, changes in these parameters may alter the contraction properties of the system, e.g., from being 4-contractive to 3-contractive, and eventually to 2-contractive. This naturally raises the question: what happens during the transition between 4- and 3-contraction? we characterize this situation using volume elements other than those in a $k$-dimensional Euclidean space? More importantly, can we establish tractable and verifiable {\em necessary and sufficient} conditions that ensure contraction in this generalized sense?
\fi
Can this sense be expressed in terms of ordinary volume elements in a $k$-dimensional Euclidean space or another means are required? This paper responds to these and other inquiries.
\par
 To this end, we extend the concept of $k$-contraction to the case of non-integer $k$'s and present two complementary
necessary and sufficient criteria for such contractivity of a nonlinear system. This set consists of the following:
\begin{itemize}
\item[i)] A criterion that uses the characteristic exponents of the system and so is akin in spirit to the first Lyapunov approach;
\item[ii)] A criterion that is in the vein of the second Lyapunov's method when being applied to the variational equation of the system.
\end{itemize}
The property addressed by these criteria means that contraction of the Hausdorff $d$-measure comes into effect if the experiment duration is large enough.
At the same time, we additionally show that whenever that property holds, the dynamic flow reduces another, yet similar $d$-measure at any time. This measure is based on properly introducing a Riemannian metric on $\br^n$ and is sandwitched between two copies of the above Hausdorff measure, each with its own scaling factor.
Since this new measure can be used in the role of a Lyapunov function when studying \(d\)-contraction, it may be particularly valuable for control design.
\iffalse
\begin{itemize}

%\mc a \emph{second Lyapunov method for the variational equation}, yielding a flow–adapted metric tensor and the decay rate of \(d\)-volume elements;
\item[iii)] An enhancement of the criterion ii) that states that a
{\color{red} an \emph{extension of the second Lyapunov method} (Lyapunov functions) that constructs a \(d\)-measure which decays monotonically with the flow under \(d\)-contraction.}
\end{itemize}
\fi

\section{Uniformly $d$-contractive systems}
\label{UDC}

\subsection{Hausdorff measure as a generalization of the $k$-measured volume}

This section presents a background material on a generalization of the $d$-dimensional volume with an integer $d$ to a possibly fractional $d \ge 0$. To this end, we consider  compact subsets $K \subset \mathbb{R}^n$, as well as $d\in [0, \infty)$, $\varepsilon>0$, and various coverings of $K$ by finitely many open balls $B_{x_i}^{r_i}$
with radii $r_i\le\varepsilon$. The {\it Hausdorff $(d,\varepsilon)$-pre-measure of $K$} is defined as
\begin{equation}
\label{measure_eps_defn}
\mu_{d,\varepsilon}(K)=\inf \sum_{i}r_i^d,
\end{equation}
where the infimum is over all the above coverings. The {\it Hausdorff $d$-measure} of the set $K$ is the limit
\[
\mu_d(K)=\lim_{\varepsilon\rightarrow 0}\mu_{d,\varepsilon}(K)=\sup_{\varepsilon>0}\mu_{d,\varepsilon}(K).
\]
The following facts are well known (see, e.g., Secs. 4.1, 4.2 and 4.7 in \cite{mattila}) and will be used in the sequel.
\begin{proposition}
\label{prop_H_mes}
For any $d \in [0, \infty)$, the following statements hold:
\begin{enumerate}[{\bf i)}]
\item Both $\mu_{d,\ve}(\cdot)$ and $\mu_d(\cdot)$ are well-defined and are outer measures on $\br^n$;
\item The restriction of $\mu_d(\cdot)$ to the Borel $\sigma$-algebra is a complete Borel measure;
\item If $\mu_d(K)>0$ then $\mu_{d'}(K)=\infty$ for any $d'<d$ and $\mu_{d'}(K)=0$ for any $d'>d$;
%\item If $\mu_d(K)<\infty$ then $\mu_{d'}(K)=0$ for any $d'>d$;
\item $\mu_d(x+c K) = |c|^d \mu_d(K)$ for all $x \in \br^n, c \in \br$, where the sum is in the Minkowski sense.
\end{enumerate}
\end{proposition}

The Hausdorff $d$-measure is a generalization of the Lebesgue measure for possibly fractional numbers $d$. If $d=n$, they differ by only a scalar factor. For integer $d \in [0,n)$ and Borel subsets of $d$-dimensional manifolds in $\br^n$ (and even more general sets, see, e.g., \cite{Federer} for details), the Hausdorff $d$-measure is equal to the Riemannian $d$-volume up to a normalization constant \cite{Federer}. In fact, there are various definitions of the Hausdorff $d$-measure in the literature, e.g., coverings by arbitrary open sets with diameters less than $\varepsilon$ can be used instead of balls. However, the resulting measures differ from $\mu_d$ only by a scaling factor. The results of this paper are insensitive to this.

Due to iii) from Prop. \ref{prop_H_mes}, the following infimum and supremum assume a common value
\begin{equation}
\label{def.dimen}
{\rm dim}_H(K):=\inf_{d \geq 0} \{\mu_{d }(K)=0\}=\sup_{d \geq 0}\{\mu_d(K)=\infty\},
\end{equation}
which is called the {\em Hausdorff dimension} of the set $K$. This concept has found many fruitful applications in the theory of nonlinear dynamics \cite{Douad,Smith86,Temam,Leonov_PS,BLReitman,Kuznetsov16,Rui_Kato}.

\subsection{Definition of the uniform $d$-contraction}
\label{def.dcont}
We consider a dynamical system described by an autonomous vector ODE \eqref{given_system},
%\begin{equation}\label{given_system}
%  \dot x = f(x),
%\end{equation}
where $f: \br^n \to \br^n$ is continuously differentiable function. The solution to (\ref{given_system}) with the initial state $x_\circ$ evaluated at time $t$ is denoted by $x(t,x_\circ)$, and the shift mapping along the solutions by $\varphi^t$, so that $\varphi^t(x_\circ)=x(t,x_\circ)$.

Throughout the paper, the system \eqref{given_system} is studied on a certain set $\mathbb{X} \subset \br^n$, while assuming the following.
\begin{asm}
\label{ass_on_X}
The solution $x(t,x_\circ)$ is defined for all $t \geq 0$ if $x_\circ$ lies in a certain open set $O$ containing $\mathbb{X}$.
\end{asm}
\begin{asm}
\label{ass_on_X+}
The set $\mathbb{X}$ equals the closure of its interior $\inter\, \mathbb{X}$, is compact and positively invariant $\varphi^t(\mathbb{X})\subset \mathbb{X}$ $\forall t>0$.
\end{asm}
\par
By Asm.~\ref{ass_on_X}, the mapping $\varphi^t$ is well defined on $O$ for any $t \geq 0$. Under this assumption, we introduce the following definition, where $ d\in[0, n]$. .
\begin{dfn}
The system is said to be
\begin{enumerate}[{\bf i)}]
%\item {\em minimally and uniformly $d$-contractive on the set $\mathbb{X}$} if there exists $ \varepsilon \in [0, 1)$ and
% a time instant $T>0$ such that for any compact set $K\subset\mathbb{X}$ with $\mu_d(K)< \infty$, the following inequality holds
\item {\em uniformly $d$-contractive on $\mathbb{X}$} if for arbitrary $ \varepsilon \in (0,1)$ there exists $\mathscr{T}(\ve) >0$ such that for any compact set $K\subset\mathbb{X}$ with $\mu_d(K)< \infty$, the following holds:
\begin{equation}
\label{def_uniform_contr}
\mu_d\big[ \varphi^T(K)\big] \le \varepsilon \mu_d(K) \qquad \forall T\ge \mathscr{T}(\ve);
\end{equation}
\item {\em exponentially $d$-contractive on $\mathbb{X}$} if there are positive numbers $\kappa$ and $\lambda$ such that for every compact set $K\subset \mathbb{X}$ with $\mu_d(K)<\infty$, the following holds:
\begin{equation}
\label{def_contr}
\mu_d \big[ \varphi^t(K) \big]\le \kappa e^{-\lambda t}\mu_d(K) \qquad\forall t\ge 0.
\end{equation}
\end{enumerate}
\label{def.contr}
\end{dfn}

We stress that $\mathscr{T}(\ve)$ is common for all compact sets $K \subset \mathbb{X}$. The attribute ``uniformly'' expresses exactly this fact. For logical consistency of Defn.~\ref{def.contr}, it is not needed that the set $\mathbb{X}$ meets Asm.~\ref{ass_on_X+}.

Despite a visible difference between the properties from i) and ii) in Defn.~\ref{def.contr}, they are in fact the same.
\begin{thmm}\label{equiv_uniform}
If Asm.~{\rm \ref{ass_on_X}} and {\rm \ref{ass_on_X+}} hold,
the two properties introduced in Defn.~{\rm \ref{def.contr}} are equivalent.
\end{thmm}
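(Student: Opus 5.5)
The implication ii)$\Rightarrow$i) is immediate, so the work is in i)$\Rightarrow$ii). If \eqref{def_contr} holds, then for a given $\varepsilon\in(0,1)$ I take $\mathscr{T}(\varepsilon):=\max\{0,\lambda^{-1}\ln(\kappa/\varepsilon)\}$, and for $T\ge\mathscr{T}(\varepsilon)$ we get $\kappa e^{-\lambda T}\le\varepsilon$. This gives \eqref{def_uniform_contr}.

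For i)$\Rightarrow$ii), I fix $\varepsilon=1/2$ and put $T:=\mathscr{T}(1/2)$. The plan has two steps.
\begin{enumerate}[{\bf 1)}]
\item Find a constant $C\ge 1$ such that $\mu_d[\varphi^s(K)]\le C\mu_d(K)$ for all $s\in[0,T]$ and all compact $K\subset\mathbb{X}$.
\item Iterate using the semigroup property. Write $t=mT+s$ with $m\in\mathbb{Z}_{\ge0}$ and $s\in[0,T)$. The set $K':=\varphi^s(K)$ is compact and lies in $\mathbb{X}$ by Asm.~\ref{ass_on_X+}. By step 1), $\mu_d(K')\le C\mu_d(K)<\infty$. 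Applying \eqref{def_uniform_contr} $m$ times to the successive compact images $\varphi^{jT}(K')\subset\mathbb{X}$ gives
\[
\mu_d[\varphi^t(K)]\le 2^{-m}C\mu_d(K)\le 2C e^{-(\ln 2/T)t}\mu_d(K).
\]
So \eqref{def_contr} holds with $\kappa=2C$ and $\lambda=\ln 2/T$.
\end{enumerate}

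The main obstacle is step 1). I would get it from a uniform Lipschitz estimate: there is $L$ with $\|\varphi^s(x)-\varphi^s(y)\|_2\le L\|x-y\|_2$ for all $x,y\in\mathbb{X}$ and $s\in[0,T]$.

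To prove this estimate, I use that $O$ is open and $\mathbb{X}$ is compact, so some closed neighbourhood $\overline{N^{\mathbb{X}}_{\delta}}$ lies in $O$. The map $(s,x)\mapsto\varphi^s(x)$ is $C^1$ on $[0,T]\times O$. Hence $\|D_x\varphi^s\|_2\le L_1$ on the compact set $[0,T]\times\overline{N^{\mathbb{X}}_{\delta}}$. I then split into two cases.
\begin{itemize}
\item If $\|x-y\|_2<\delta$, the segment $[x,y]$ lies in $N^{\mathbb{X}}_\delta$. The mean value inequality gives the bound with constant $L_1$.
\item If $\|x-y\|_2\ge\delta$, positive invariance gives $\|\varphi^s(x)-\varphi^s(y)\|_2\le\operatorname{diam}\mathbb{X}\le(\operatorname{diam}\mathbb{X}/\delta)\|x-y\|_2$.
\end{itemize}
This yields $L:=\max\{L_1,\operatorname{diam}\mathbb{X}/\delta\}$.

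It remains to pass from the Lipschitz bound to the measure bound. I take a cover of $K$ realizing $\mu_{d,\eta}(K)$ up to a small slack, and discard the balls not meeting $K$. Each remaining ball $B^{r_i}_{x_i}$ is contained in a ball of radius $2r_i$ centred at some point $y_i\in K$. The image under $\varphi^s$ of $K\cap B^{2r_i}_{y_i}$ lies in the open ball $B^{3Lr_i}_{\varphi^s(y_i)}$; the factor $3$ rather than $2$ keeps strict inclusion in an open ball. This gives $\mu_{d,3L\eta}[\varphi^s(K)]\le(3L)^d\mu_{d,\eta}(K)$. Letting $\eta\to0$ yields step 1) with $C=\max\{1,(3L)^d\}$.
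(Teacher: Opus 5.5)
Your proof is correct. Its skeleton matches the paper's: iterate \eqref{def_uniform_contr} at multiples of a fixed $T$, and absorb the leftover time $s\in[0,T)$ into a constant that bounds $\mu_d[\varphi^s(K)]/\mu_d(K)$ uniformly for $s\in[0,T]$ and compact $K\subset\mathbb{X}$. You apply the short-time step first and then iterate, whereas the paper does the reverse; this is immaterial.

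The genuine difference is how that constant is obtained.
\begin{itemize}
\item The paper applies its area-type inequality, Thm.~\ref{thm:area-infty}, with $P\equiv I_n$ and $g=\varphi^\tau$. It combines this with the sandwich $\mu_d\le 2^n\chi_d$, $\chi_d\le\mu_d$ of Lem.~\ref{lem:sandwich}, which gives the factor $2^n\max_{\tau\in[0,T]}\max_{x\in\mathbb{X}}\omega_d[D\varphi^\tau(x)]$.
\item You instead prove a uniform Lipschitz bound for $\varphi^s$ on $\mathbb{X}$. Your split into $\|x-y\|_2<\delta$ and $\|x-y\|_2\ge\delta$ correctly handles a possibly non-convex $\mathbb{X}$, using positive invariance in the second case. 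You then use the elementary fact that an $L$-Lipschitz map enlarges Hausdorff pre-measures by at most a factor $(cL)^d$.
\end{itemize}

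Your route is self-contained and bypasses elliptic measures and the area-type theorem altogether. Its constant involves $\sigma_1^d$ rather than $\omega_d$, which is cruder but harmless here, since only finiteness of the constant matters for this theorem.

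The paper's route gives the sharp $\omega_d$ factor, and that factor is genuinely needed elsewhere. In the sufficiency part of Thm.~\ref{firstmethod}, the bound must be $\max_x\omega_d[X(t,x)]$, which decays under $\mathbf{\Sigma_d}<0$, whereas a Lipschitz factor $\sigma_1^d$ need not decay. So in the paper the same tool does double duty.

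One cosmetic point: in ii)$\Rightarrow$i), the definition requires $\mathscr{T}(\varepsilon)>0$. Take $\max\{1,\lambda^{-1}\ln(\kappa/\varepsilon)\}$ instead of $\max\{0,\cdot\}$.
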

\par
The proof of this theorem is given in Sec.~\ref{sec.proofs1}, following introduction of a necessary technical apparatus in Sec.~\ref{PMEDM}.

\begin{corollary}
\label{dim_est}
If the system \eqref{given_system} is uniformly $d$-contractive on $\mathbb{X}$, then the Hausdorff dimension \eqref{def.dimen} of any compact positively invariant set $K\subset \mathbb{X}, \varphi^t(K) = K \; \forall t \geq 0$ with $\mu_d(K)<\infty$ does not exceed $d$.
\end{corollary}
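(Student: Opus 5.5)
The plan is to show first that $\mu_d(K)=0$. The bound ${\rm dim}_H(K)\le d$ then follows from the definition \eqref{def.dimen}, because the infimum there is taken over those $d'$ with $\mu_{d'}(K)=0$, and $d$ will be one of them.

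Let $K\subset\mathbb{X}$ be compact with $\varphi^t(K)=K$ for all $t\ge0$ and $\mu_d(K)<\infty$. Fix any $\varepsilon\in(0,1)$ and take $T:=\mathscr{T}(\varepsilon)$ from Defn.~\ref{def.contr}~i). Since $K$ is compact, lies in $\mathbb{X}$, and has finite $d$-measure, inequality \eqref{def_uniform_contr} applies to it. It gives
\[
\mu_d(K)=\mu_d\big[\varphi^T(K)\big]\le \varepsilon\,\mu_d(K).
\]
Because $0\le\mu_d(K)<\infty$ and $\varepsilon<1$, this forces $\mu_d(K)=0$.

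Now compare with \eqref{def.dimen}. That formula defines ${\rm dim}_H(K)=\inf\{d'\ge0:\mu_{d'}(K)=0\}$, and Prop.~\ref{prop_H_mes}~iii) guarantees that this infimum is a well-defined common value with the corresponding supremum. Since $d$ belongs to the set over which the infimum is taken, ${\rm dim}_H(K)\le d$.

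I do not expect a real obstacle here. The one point to check is that \eqref{def_uniform_contr} may be applied to $K$. This holds because the definition only requires $K$ to be compact, contained in $\mathbb{X}$, and of finite $d$-measure, and all three are hypotheses of the corollary. Neither Theorem~\ref{equiv_uniform} nor Asm.~\ref{ass_on_X+} is needed. Positive invariance of $\mathbb{X}$ also plays no role, because invariance of $K$ itself is assumed.
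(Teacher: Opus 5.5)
Your proof is correct and matches the paper's one-line argument: invariance of $K$ together with the contraction inequality forces $\mu_d(K)=0$, and then ${\rm dim}_H(K)\le d$ by \eqref{def.dimen}. The only difference is that the paper cites the exponential bound \eqref{def_contr}, implicitly through Theorem~\ref{equiv_uniform}, whereas you apply \eqref{def_uniform_contr} directly; this is slightly cleaner because it needs neither Theorem~\ref{equiv_uniform} nor Asm.~\ref{ass_on_X+}.
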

\par
Indeed, inequality \eqref{def_contr} leaves only one option $\mu_d(K) =0$ and so ${\rm dim}_H(K) \leq d$ by \eqref{def.dimen}.

This illustrates that the task of estimating the Hausdorff dimension of invariant sets can be casted in the framework of uniformly $d$-contractive systems.

\subsection{Compound matrices and a related sufficient criterion for $k$-contraction with a natural $k$}
\label{sec.link}
The purpose of this section is to link the concept of $d$-contraction introduced in Defn.~\ref{def.contr} to results of the recent study \cite{Kanevski}.
This study deals with natural $d$'s and defines $d$-contraction in a different way (we refer the reader to \cite{Kanevski} for details). At the same time, the basic sufficient criterion from \cite{Kanevski} guarantees $d$-contraction not only in the sense of \cite{Kanevski} but also in the sense of Defn.~\ref{def.contr}. (Strictly speaking, the latter is true if ``not too bad'' sets $K$ are considered in \eqref{def_uniform_contr} and \eqref{def_contr}; this statement will be specified later on.) In the rest of this section, we explain this fact but, due to its secondary importance for the current paper, do not provide a formal proof.
A good reference for most of the algebraic facts and constructions used in this section is \cite{Horn}.

To start with, we introduce the Jacobian matrix $X(t,\xi):=D \varphi^t(\xi)$ of the flow related to the system \eqref{given_system}. A useful observation is that $X(t,\xi)$ is the fundamental matrix of the following linear time-varying system of ODE's
\begin{eqnarray}
\label{linearization}
  \dot X(t,\xi)&=&A(t,\xi)X(t,\xi),~~~X(0,\xi)=I_n,  \\
   A(t,\xi)&:=&D f[x(t,\xi)].\nonumber
\end{eqnarray}
Given a linear mapping $\mA:\mathbb{R}^n\to\mathbb{R}^n$ (and so the associated matrix, which is denoted by the same symbol), its $k$th \emph{exterior power}
$
\wedge^{k}\mA
$
is the linear operator in the exterior power $\wedge^{k}\mathbb{R}^n$ (the space of alternating multilinear forms) of the Euclidean space that is uniquely defined by the following specification of its value on any simple $k$–vector (“$k$–blade”):
\[
(\wedge^k \mA)\,(v_1\wedge \cdots \wedge v_k)\ :=\ (\mA v_1)\wedge \cdots \wedge (\mA v_k)
~~\forall v_j\in\mathbb{R}^n .
\]
%and extended by linearity, it is the \emph{induced} map on the $k$th exterior space that wedges the
%images $Av_i$.
The matrix $\wedge^k \mA$ related to this operator has the dimension
$\binom{n}{k}\times \binom{n}{k}$, is denoted in parallel as $\mA^{(k)}$, and is called the {\it multiplicative compound matrix of} $\mA$.
\par
Let $e_1, \ldots, e_n$ stand for the standard orthonormal basis of $\br^n$. Then all $k$-blades of the form $e_{i_1}\wedge\cdots\wedge e_{i_k}, i_1<i_2< \ldots < i_k$ form a basis of $\wedge^{k}\mathbb{R}^n$. The matrix of $\wedge^k \mA$ in this basis is in fact a 2D-array of all $k\times k$ minors of the matrix $\mA$. Furthermore, the space $\wedge^{k}\mathbb{R}^n$ can be endowed with an uniquely defined inner product for which the considered basis is orthonormal. The related spectral norm of $\mA^{(k)}$ and the singular values $\sigma_1(\mA)\ge\cdots\ge\sigma_n(\mA)$ of $\mA$ are linked by the formula
\iffalse
It is well known that
\[
(AB)^{(k)}= A^{(k)} B^{(k)},\quad  A^{(1)}=A,\quad A^{(n)}=\det(A).
\]
\fi
\begin{equation}
\label{sigma_boy}
\|\mA^{(k)}\|_2\ =\ \sigma_1(\mA)\cdots \sigma_k(\mA)\ = : \omega_k(\mA).
\end{equation}
\par
%It is well known (see, e.g. Proposition 3.3.1) in \cite{BLReitman} that
For the fundamental matrix $X(t)$ of \eqref{linsys_At}, the matrix $X(t)^{(k)}$ is fundamental for the following ODE \cite[Prop.~3.3.1]{BLReitman}:
\begin{equation}
\label{fundament}
\dot{\Xi} =A(t)^{[k]}\Xi, \quad \Xi \in \wedge^{k}\mathbb{R}^n .
\end{equation}
Here the symbol $B^{[k]}$ denotes the so-called {\it $k$th additive compound matrix} of $B \in \br^{m \times m}$, which is defined as $$B^{[k]}:= \frac{d}{d \ve}(I_m+\ve B)^{(k)}|_{\ve=0}.$$
\par
A key result of \cite{Kanevski} states that for natural $k$'s, the system (\ref{given_system}) is $k$-contractive in the sense of \cite{Kanevski} whenever
\begin{eqnarray}
\label{logarithmic_norm}
\exists c>0 \;\text{s.t.}\quad \nu\Big[Df(x)^{[k]}\Big]\le - c <0~~~\forall x\in\mathbb{R}^n, \\
\quad \text{where}\;\nu(A):= \lim\limits_{\ve\to0+} \frac{\|I_N+\ve A\|_2 -1}{\ve}\nonumber
\end{eqnarray}
is the logarithmic norm. It is well known that under minor technical assumptions, $\|X(t) X(\tau)^{-1} \|_2 \leq \exp \int_\tau^t \nu [A(s)]\;ds \; \forall \tau\leq t$ \cite{Soder06} for any linear ODE \eqref{linsys_At}. By applying this fact to the ODE \eqref{fundament} with $A(t):=A(t,\xi)$ given by \eqref{linearization}, we see that
$\Big|\Big| X(t,\xi)^{(k)}\Big|\Big|_2 \le e^{-ct }$ and thereby, $  \omega_k[X(t,\xi)]  \le e^{-ct }$ thanks to \eqref{sigma_boy}.

Now let us consider a set $K \subset \br^n$ that is compact and in addition (and unlike Defn.~\ref{def.contr}),
is countably $(\mu_d,d)$-rectifiable %and $\mu_d$-measurable
(see the definition in \cite[Sec.~3.2.14]{Federer}).
Then by the classic {\it area formula} (see, e.g., (3) in \cite[Thm.~3.2.22]{Federer}),
\begin{eqnarray*}
%\label{area}
\mu_k[\varphi^t(K)] &=& \int_{\varphi^t(K)} \mu_k(dy) =
\int_{K}  \omega_k \big[ D \varphi^t (\xi)\big] \mu_k(d\xi) \\
&=& \int_{K}  \omega_k \big[ X (t, \xi)\big] \mu_k(d\xi) \leq e^{-ct } \mu_k(K).
\end{eqnarray*}
Thus we see that the criterion \eqref{logarithmic_norm} from \cite{Kanevski} guarantees the exponential contractivity in the sense of Defn.~\ref{def.contr}, but with the caveat that \eqref{def_contr} (with $d:= k, \kappa:=1, \lambda:=c$) is guaranteed only for compact sets $K$ with the above extra property.

In contrast to \cite{Kanevski}, we do not rely on the apparatus of compound matrices \cite{BLReitman,Muldowny90} and basically operate within the framework of classical Lyapunov's approaches.

%{\color{red}  this paper mostly follows the conventional Lyapunov approach, while the compound matrices (see{}\cite{BLReitman,Muldowny90}) will be used only in the proof of the %result on orbital stability.}

\section{Characterization of the uniform $d$-contraction via the Lyapunov exponents}
\label{CUDCLE}
The {\it finite-time Lyapunov exponents} are the quantities
\[
\Lambda_i(t,x)=t^{-1}\ln \alpha_i(t,x),
\]
where
$
\quad \alpha_1(t,x) \geq \ldots \geq \alpha_n(t,x)
$
are the singular values of the Jacobian matrix $X(t,x):=D \varphi^t(x)$ of the flow. A useful observation is that the map $(t,x) \mapsto X(t,x) \in GL(\br^n)$ is a cocycle over the flow $\{ \varphi^t(\cdot)\}_{t \geq 0}$, i.e.,
\begin{equation}
\label{cocycle}
X(0,x) = I_n, \quad X[t+\tau,x] = X[\tau,\varphi^t(x)]X[t,x].
\end{equation}
%Another useful fact is that $X(t,\xi)$ is the fundamental matrix of the following linear time-variant system of ODE's
%\begin{equation}
%\label{linearization}
%  \dot X(t,\xi)=A(t)X(t,\xi),~~~X(0,\xi)=I_n, \qquad \text{where} \quad A(t):=D f[x(t,\xi)].
%\end{equation}

The long-standing interest to the Lyapunov exponents is much related to studies of expansion and contraction properties of dynamic flows.
Regarding infinitesemally small volumes, these properties are expected to be nearly the same for the flow $\varphi^t$ and its linearization. For the latter, they are reflected by the image of the unit ball
under the action of the linear operator $X(t,\xi)$. This is an ellipsoid whose semi-axes equal the singular values $\alpha_i(t,\xi)$ of $X(t,\xi)$. Thereby, they and their time-averaged logarithms $\Lambda_i(t,x)$ provide us with a tool for the afore-mentioned studies.
\par
To apply this philosophy to analysis of $d$-contraction,
we write $d \in [0,n]$ as a sum of the integer and fractional parts:
\begin{equation}
\label{decopp}
d=d_0+s,~~d_0=\lfloor d\rfloor,~~s=d-d_0 \in [0,1).
\end{equation}

For any matrix $A \in \br^{n \times n}$ with singular values $\sigma_1(A)\ge\cdots\ge\sigma_n(A)\ge 0$, we put
\begin{equation}
\label{def.omega}
\omegad(A)\ :=\ \sigma_1(A)\cdots \sigma_{d_0}(A)\,\sigma_{d_0+1}(A)^{\,s},
\end{equation}
where the multiplier $\sigma_{d_0+1}(A)^{\,s}$ is dropped if $d=n$. (We assume that $a^0 := 1 \; \forall a \geq 0$.) Finally, we denote
\begin{multline}
\Sigma_d(t,x)=\frac1t\ln\left[\omega_d(X(t,x))\right]
\\
=\sum_{i=1}^{d_0}\Lambda_i(t,x)+s\Lambda_{d_0+1}(t,x).
\label{def.sigd}
\end{multline}
The following lemma introduces a parameter $\mathbf{\Sigma_d}$ of the flow that will play a key role in our analysis.
\begin{lemma}
\label{lem.cocycle}
Let Asm.~{\rm \ref{ass_on_X}} and {\rm \ref{ass_on_X+}} hold. Then
\begin{multline}
\label{three,eq}
\mathbf{\Sigma_d} := \sup_{x\in\mathbb{X}}\uplim_{t\rightarrow\infty}\Sigma_d(t,x)
=\lim_{t\rightarrow\infty}\max_{x\in\mathbb{X}}\Sigma_d(t,x)
\\
=\inf_{t>0}\max_{x\in\mathbb{X}}\Sigma_d(t,x).
\end{multline}
\end{lemma}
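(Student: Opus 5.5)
The argument rests on the subadditivity of $F(t):=\max_{x\in\mathbb{X}} t\,\Sigma_d(t,x)=\max_{x\in\mathbb{X}}\ln\omega_d[X(t,x)]$ together with Fekete's lemma. There is one further ingredient, a compactness argument that exchanges $\sup_x$ and $\uplim_t$.

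\emph{Step 1 (submultiplicativity).} We show that $\omega_d(AB)\le\omega_d(A)\,\omega_d(B)$ for $d\in[0,n]$. For integer $k$, $\omega_k(A)=\|A^{(k)}\|_2$ by \eqref{sigma_boy}, and $(AB)^{(k)}=A^{(k)}B^{(k)}$, which gives the claim. For fractional $d=d_0+s$, write $\omega_d=\omega_{d_0}^{1-s}\omega_{d_0+1}^{s}$. This identity holds because $\omega_{d_0}^{1-s}\omega_{d_0+1}^{s}=\sigma_1\cdots\sigma_{d_0}\sigma_{d_0+1}^{s}$. Submultiplicativity of each factor then gives the claim.

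Combining this with the cocycle identity \eqref{cocycle} yields
\[
\ln\omega_d[X(t+\tau,x)]\le\ln\omega_d[X(\tau,\varphi^t(x))]+\ln\omega_d[X(t,x)].
\]
Since $\varphi^t(x)\in\mathbb{X}$ by positive invariance (Asm.~\ref{ass_on_X+}), we obtain $F(t+\tau)\le F(t)+F(\tau)$.

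The function $F$ is finite: $X(t,x)$ is nonsingular and depends continuously on $x$ over the compact set $\mathbb{X}$ (Asm.~\ref{ass_on_X}). Hence the max is attained and $F$ is continuous in $t$. In particular $F$ is bounded above on $[1,2]$, which is the hypothesis needed for the continuous-time Fekete lemma. That lemma gives
\[
\lim_{t\to\infty}F(t)/t=\inf_{t>0}F(t)/t,
\]
possibly equal to $-\infty$. This proves the second equality in \eqref{three,eq}.

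\emph{Step 2 (easy inequality).} Call the common value $\Sigma^*$. For each fixed $x$ we have $\Sigma_d(t,x)\le F(t)/t$. Taking $\uplim_t$ and then $\sup_x$ gives $\mathbf{\Sigma_d}\le\Sigma^*$.

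\emph{Step 3 (reverse inequality, the main obstacle).} We must find a single point whose upper limit reaches $\Sigma^*$. The plan is the standard argument for uniform upper limits of subadditive cocycles.

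First fix $T>0$ and use the cocycle to chain along the orbit. For every $x$ and every $m\in\mathbb{N}$,
\[
\ln\omega_d[X(mT,x)]\le\sum_{j<m}\ln\omega_d[X(T,\varphi^{jT}x)].
\]
This inequality bounds from above, so it does not by itself produce a point with large exponent. The lower bound comes from a measure-theoretic route. Pick $x_t$ attaining $F(t)$ and form the empirical measures $\frac1t\int_0^t\delta_{\varphi^\tau x_t}\,d\tau$. Extract an invariant weak-$*$ limit $\mu$ supported on $\mathbb{X}$, which is possible by compactness. Then show
\[
\int\frac{1}{T}\ln\omega_d[X(T,y)]\,\mu(dy)\ge\Sigma^*\qquad\forall T>0.
\]
This bound follows from the chaining inequality applied to pieces of length $T$, with the boundary pieces controlled by $\sup_{x,\,\tau\le T}|\ln\omega_d X(\tau,x)|$, and from continuity of the integrand.

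Next apply Kingman's subadditive ergodic theorem to the cocycle $\ln\omega_d[X(t,\cdot)]$ over an ergodic component of $\mu$. It gives, for $\mu$-a.e. $y$, that $\lim_t\Sigma_d(t,y)=\inf_T\frac1T\int\ln\omega_d X(T,\cdot)\,d\mu\ge\Sigma^*$. Since such a $y$ lies in $\mathbb{X}$, we get $\mathbf{\Sigma_d}\ge\Sigma^*$.

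The delicate points are the case $\Sigma^*=-\infty$, which is excluded when $d<n$ or handled trivially when $F$ is bounded below by compactness, and the passage to an ergodic component. For the latter I would use the ergodic decomposition: the inequality for $\mu$ forces it to hold for some ergodic component.
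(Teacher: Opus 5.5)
Your proposal is correct and is essentially the paper's argument. The paper also gets subadditivity of $\ln\omega_d[X(t,x)]$ from Horn's inequality \eqref{eq:submult} and the cocycle identity \eqref{cocycle}, then simply cites Schreiber's theorem, whose standard proof is your combination of Fekete's lemma, a Krylov--Bogolyubov invariant measure, and Kingman's theorem.

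Two simplifications are available. First, the ergodic decomposition is unnecessary: Kingman's theorem for the possibly non-ergodic invariant $\mu$ already gives $\int\lim_{t\to\infty}\frac1t\ln\omega_d[X(t,y)]\,\mu(dy)=\inf_{T>0}\frac1T\int\ln\omega_d[X(T,y)]\,\mu(dy)\ge\Sigma^*$, hence a point $y\in\mathbb{X}$ with $\lim_{t\to\infty}\Sigma_d(t,y)\ge\Sigma^*$. Second, the case $\Sigma^*=-\infty$ never occurs for any $d$, since $\omega_d[X(t,x)]\ge\sigma_n[X(t,x)]^d\ge e^{-dct}$ with $c:=\max_{\mathbb{X}}\|Df\|_2$; it would be trivial by your Step~2 anyway.
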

\begin{proof}
By applying the Horn's inequality \cite[Prop.~2.3.1]{BLReitman}
\begin{equation}
\label{eq:submult}
\omega_d(AB)\le\omega_d(A)\omega_d(B) \qquad \forall A,B \in \br^{n \times n}
\end{equation}
to \eqref{cocycle}, we see that $\psi_{t+\tau}(x)\le\psi_{t}(x)+\psi_{\tau}(\varphi^tx)$,
where $\psi_t(x):=\log\omega_d[X(t,x)]$. This means that the cocycle $\frac1t\psi_t(x)$ is subadditive. The proof is completed by \cite[Thm.~1]{Schreiber} (see also App. B.4 in \cite{Kawan_book}).
\end{proof}

The following statement is the main result of Sec.~\ref{CUDCLE}.

\begin{thmm}
\label{firstmethod}
Let Asm.~{\rm \ref{ass_on_X}} and {\rm \ref{ass_on_X+}} hold. Then the system \eqref{given_system} is uniformly $d$-contractive on $\mathbb{X}$ if and only if $\mathbf{\Sigma_d}<0$.
\end{thmm}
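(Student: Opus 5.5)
Sufficiency ($\mathbf{\Sigma_d}<0$ implies uniform $d$-contraction) will follow the Douady--Oesterl\'e route, and necessity will be a construction of a compact set that fails to contract.

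\emph{Sufficiency.} By Lemma~\ref{lem.cocycle}, $\mathbf{\Sigma_d}<0$ yields $T_0>0$ and $\eta>0$ with $\max_{x\in\mathbb{X}}\omega_d[X(T_0,x)]\le e^{-2\eta T_0}<1$. I then fix a compact $K\subset\mathbb{X}$ with $\mu_d(K)<\infty$ and a fine cover of $K$ by balls $B_{x_i}^{r_i}$, $r_i\le\ve$, with $\sum r_i^d\le\mu_{d,\ve}(K)+\delta$. Since $\varphi^{T_0}$ is $C^1$ on the open set $O\supset\mathbb{X}$, uniform continuity of $D\varphi^{T_0}$ on a compact neighborhood of $\mathbb{X}$ gives
\[
\varphi^{T_0}(B_{x_i}^{r_i})\subset \varphi^{T_0}(x_i)+(1+\theta(\ve))\,X(T_0,x_i)B_0^{r_i},
\]
with $\theta(\ve)\to0$. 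This set is an ellipsoid with semi-axes $(1+\theta)\alpha_j r_i$.

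The key covering lemma, due to Douady--Oesterl\'e, is that an ellipsoid with semi-axes $a_1\ge\dots\ge a_n$ can be covered by at most $c_{n,d}\,(a_1\cdots a_{d_0})/a_{d_0+1}^{d_0}$ balls of radius $\sqrt{n-d_0}\,a_{d_0+1}$. Hence its $(d,\cdot)$-premeasure is at most $C_{n,d}\,\omega_d$ of the semi-axes. Applying this to each image ellipsoid gives
\[
\mu_{d,\ve'}\big(\varphi^{T_0}K\big)\le C_{n,d}(1+\theta)^d e^{-2\eta T_0}\big(\mu_{d,\ve}(K)+\delta\big),
\]
where $\ve'=\ve\cdot\max\|X\|\sqrt n$.

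The constant $C_{n,d}$ may exceed $1$, so I apply the argument with $T_0$ replaced by $mT_0$. The third equality in \eqref{three,eq} and subadditivity give $\max_x\omega_d[X(mT_0,x)]\le e^{-2\eta mT_0}$, so for large $m$ the factor $C_{n,d}e^{-2\eta mT_0}$ is below any prescribed $\varepsilon$. Letting $\ve\to0$ and $\delta\to0$ gives $\mu_d[\varphi^{T}K]\le\varepsilon\mu_d(K)$ for $T=mT_0$.

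To cover all $T\ge\mathscr{T}$, I write $T=t'+\tau$ with $\tau\in[\mathscr{T},\mathscr{T}+T_0]$ and use the bound directly at each $\tau$: the bound $\max_x\omega_d[X(\tau,x)]\le e^{-\eta\tau}$ holds for all large $\tau$ by the limit form in Lemma~\ref{lem.cocycle}. I then use that $\varphi^{t'}K\subset\mathbb{X}$ and that $\mu_d$ does not blow up. In fact it is simplest to apply the covering estimate directly at any $T\ge\mathscr{T}$, since the middle equality of \eqref{three,eq} gives $\max_x\Sigma_d(T,x)\le-\eta$ for all $T\ge\mathscr{T}$. Positive invariance ensures that the images stay in $\mathbb{X}$.

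\emph{Necessity.} Assume $\mathbf{\Sigma_d}\ge0$. By the middle equality of \eqref{three,eq}, for every $T$ there is $x_T\in\mathbb{X}$ with $\omega_d[X(T,x_T)]\ge e^{-\eta T}$ for arbitrarily small $\eta$. I want a compact $K$ near $x_T$ whose $d$-measure is essentially multiplied by at least $c\,\omega_d$. I will take $K$ to be a small piece of the product set: a $d_0$-dimensional disc spanned by the top $d_0$ right singular vectors, times a self-similar Cantor set of dimension $s$ with $0<\mu_s<\infty$ along the $(d_0+1)$-th singular direction, scaled by $r$ and placed in $\inter\mathbb{X}$ near $x_T$ (or near a nearby interior point by continuity, using $\mathbb{X}=\overline{\inter\mathbb{X}}$).

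For small $r$, $\varphi^T$ acts on $K$ as the linear map $X(T,x_T)$ up to a factor $1\pm\theta$. The image is a product-type set whose $\mu_d$ is comparable, via Marstrand-type product inequalities, to $\alpha_1\cdots\alpha_{d_0}\alpha_{d_0+1}^s\,\mu_d(K)$, within constants $c_{n,d}$ independent of $T$. This gives $\mu_d[\varphi^TK]\ge c\,e^{-\eta T}\mu_d(K)$, which contradicts the requirement $\le\varepsilon\mu_d(K)$ for all large $T$ once $\varepsilon<c$ and $\eta$ is small. Alternatively, I can use $\omega_d(X(T,x))\ge e^{-\eta T}$ together with Theorem~\ref{equiv_uniform}: exponential contraction would force $\omega_d\le\kappa e^{-\lambda T}$ uniformly.

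\emph{Main obstacle.} The main difficulty is this lower bound. It requires a test set for which Hausdorff measure transforms with a two-sided constant under linear maps and near-linear perturbations, with a constant independent of the distortion. The product disc$\times$Cantor construction and uniformity of the product-measure comparison constants are where I expect the real work.
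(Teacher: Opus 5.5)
Your sufficiency argument is correct and is a slightly more direct version of the paper's. The paper bounds $\mu_d[\varphi^t(K)]\le 2^n\chi_d[\varphi^t(K)]$, applies the area-type inequality of Theorem~\ref{thm:area-infty} with $P=I_n$ to the elliptic measure, and returns to $\mu_d$ via Lemma~\ref{lem:sandwich}. You instead go from balls to near-ellipsoids to balls in one step. Starting from balls spares you the eccentricity bookkeeping: the correction $\theta=\delta(\ve)/\sigma_n$ is uniform because balls have eccentricity one. The paper builds the elliptic framework mainly because it is reused for the Riemannian measures in Sec.~\ref{sec:monotone-decay}. Two small points: centers of the covering balls may lie just outside $\mathbb{X}$, which continuity of $\omega_d[X(T,\cdot)]$ handles as $\ve\to0$; and the detour through $mT_0$ is unnecessary once you use the limit form of \eqref{three,eq}.

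The necessity direction has a genuine gap, exactly at the step you flag, and as written that step is wrong. The set $\varphi^T(K)$ is not a product set, so product (Marstrand/Falconer) inequalities say nothing about it. Nor does the zeroth-order fact (Lemma~\ref{lem.dif0}) that $\varphi^T$ is within $r\,\delta(r)$ of the affine map $x\mapsto\varphi^T(x_T)+L(x-x_T)$, with $L:=X(T,x_T)$, give any lower bound on $\mu_d$.

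The missing idea is to use the product structure only for the linear image $LK$, and to transfer to $\varphi^T(K)$ through a map that is close to the identity in the relative sense. Take $x_T\in\inter\mathbb{X}$, fix $T$, and put $h:=\varphi^T\circ L^{-1}$ on the convex set $L(B_{x_T}^{r})$. Then $Dh=X(T,x)X(T,x_T)^{-1}$, and
$\|Dh-I_n\|_2\le\|X(T,x)-X(T,x_T)\|_2\,\|X(T,x_T)^{-1}\|_2=:\theta\to0$ as $r\to0$. Hence $h^{-1}=L\circ(\varphi^T)^{-1}$ is $(1-\theta)^{-1}$-Lipschitz on $\varphi^T(B_{x_T}^{r})$, which gives $\mu_d(LK)\le 2^d(1-\theta)^{-d}\mu_d[\varphi^T(K)]$.

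The paper does exactly this with $g=X(t,x_*)\circ(\varphi^t)^{-1}$ and Theorem~\ref{thm:area-infty}. It obtains $\mu_d[X(t,x_*)K_{\ve,t}]\le 2^n\Omega_{\ve,t}\,\mu_d[\varphi^t(K_{\ve,t})]$, where $\Omega_{\ve,t}=\max_{x\in K_{\ve,t}}\omega_d[X(t,x_*)X(t,x)^{-1}]\to1$. Combined with Lemma~\ref{linear_lemma} and the scaling $\mu_d[x_*+\ve(K_t-x_*)]=\ve^d\mu_d(K_t)$, this yields the lower bound you need.

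This also resolves your worry about constants independent of the distortion. The relative error $\theta$ may depend on $\|L^{-1}\|_2$, but it is killed by shrinking $r$ at fixed $T$. The only constant that must be uniform in $L$ is $c_d$ from the linear lemma.

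Finally, fix the quantifiers. The middle equality of \eqref{three,eq} gives $\max_x\Sigma_d(T,x)\ge-\eta$ only for $T\ge T_\eta$, with no control of $\eta T$, so your contradiction does not close as stated. Use the third equality instead: under $\mathbf{\Sigma_d}\ge0$ it yields $\max_{x\in\mathbb{X}}\omega_d[X(T,x)]\ge1$ for every $T>0$. Then move to a nearby interior point using $\mathbb{X}=\overline{\inter\mathbb{X}}$. Alternatively, argue as the paper does: pass from exponential contraction (Theorem~\ref{equiv_uniform}) to $\omega_d[X(t,x_*)]\le 2^n\kappa c_d^{-1}e^{-\lambda t}$ on $\inter\mathbb{X}$.
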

\par
The proof of this theorem is given in Sec.~\ref{sec.proofs2}, following introduction of a necessary technical apparatus in Sec.~\ref{PMEDM}.
\begin{remark}
\label{rem.drop}
Even if the first requirement from Asm.~{\rm \ref{ass_on_X+}} is dropped, the inequality $\mathbf{\Sigma_d}<0$ is still sufficient for the uniform $d$-contractivity, as will be shown in the proof of Thm. {\rm \ref{firstmethod}}.
\end{remark}

\section{Characterization of the uniform $d$ contraction in the vein of the second Lyapunov method}
\label{CUDCLF}

Whereas Thm.~\ref{firstmethod} offers a useful and illuminative insight, its direct application is limited to mostly numerical analysis.
The objective of this section is to provide tractable analytical tools for establishing the uniform $d$-contraction. To this end, we follow the Lyapunov's functions approach (the second Lyapunov method).

For a mapping $g(x)$ of $x\in \mathbb{X}$ into a finite-dimensional real vector space $V$, the {\it orbital derivative} $\dotm{g}(a)$ at point $a \in \mathbb{X}$ is defined as
$\dotm{g}(a) := \frac{d}{dt} g \big[ x(t,a)\big]|_{t=0}$. If $g$ is defined and continuously differentiable in a neighborhood of $a$, then this derivative exists and is equal to $[D g(a)] f(a)$.

In this section, we consider matrix-functions $P(\cdot)$ with the following properties.
\begin{asm}
\label{kawan}
The mapping $P(\cdot)$ is defined and continuous on $\mathbb{X}$, assumes values in the set of symmetric and positive definite $n\times n$ matrices, and has the orbital derivative $\dotm{P}(a)$ at any point $a \in \mathbb{X}$. Furthermore, this derivative is orbitally continuous, i.e., the function $t \mapsto \dotm{P}[x(t,a)]$ is continuous for any $a \in \mathbb{X}$.
\end{asm}
For any $x \in \mathbb{X}$, the $n$ roots of the following polynomial equation of degree $n$ are real (see, e.g., \cite{PoMaNonlin,KawanMaPo_automatica})
\begin{equation}\label{eigenvalue_problem}
\det\big[ A\trn(x)P(x)+P(x)A(x)+\dotm{P}(x)-\lambda P(x) \big]=0.
\end{equation}
We enumerate them in the descending order
\begin{equation}
\label{roots}
\lambda_1(x)\ge\lambda_2(x)\ge\ldots\ge\lambda_n(x)
\end{equation}
and put
\begin{equation}\label{XI_P}
\Xi^{P(\cdot)}_d(x)=\lambda_1(x)+\lambda_2(x)+\lambda_{d_0}(x)+s\lambda_{d_0+1}(x).
\end{equation}
\par
The quantities $\lambda_i(x)$ have appeared in several contexts.
They were used to obtain tractable upper bounds on the topological entropy of the system in \cite{PoMaNonlin}
and on the so-called restoration entropy in \cite{KawanMaPo_automatica}. These entropies are upper limited by the maximum values assumed by the sum of the top roots $\lambda_i(x)$ on $\mathbb{X}$. Moreover, the restoration entropy can be approximated from the both
sides within this framework: for any function $P(x)$, the associated roots
provide an upper bound, and conversely, for any accuracy $\varepsilon>0$, there exists a
choice of $P(x)$ such that the resulting bound is within $\varepsilon$ of the true value, see \cite{KawanMaPo_automatica}.

The following main result of this section shows that in a somewhat similar way, the $d$-contractivity can be comprehensively characterized
in terms of $P$ and $\lambda_i$.

\begin{thmm}\label{second_method}
Suppose that Asm.~{\rm \ref{ass_on_X}} and {\rm \ref{ass_on_X+}} hold. Then
the following statements are equivalent:
\begin{itemize}
\item[i)] The system \eqref{given_system} is uniformly $d$-contractive on $\mathbb{X}$;
\item[ii)] There is a function $P(\cdot)$ such that it satisfies Asm.~{\rm \ref{kawan}} and
%    \begin{equation}
%    \label{Pmetric}
    $\Lambda:=\sup_{x\in\mathbb{X}}\Xi^{P(\cdot)}_d(x)<0$.
    % \end{equation}
\end{itemize}
\end{thmm}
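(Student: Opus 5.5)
Both implications will be routed through Thm.~\ref{firstmethod}, which reduces uniform $d$-contractivity to the inequality $\mathbf{\Sigma_d}<0$, with $\mathbf{\Sigma_d}$ given by Lemma~\ref{lem.cocycle}. (In \eqref{XI_P} we read $\Xi^{P(\cdot)}_d=\lambda_1+\dots+\lambda_{d_0}+s\lambda_{d_0+1}$.)

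\emph{Step 1: ii)$\Rightarrow$i).} Fix $x\in\mathbb{X}$ and put $Y(t):=P[x(t,x)]^{1/2}X(t,x)$. The singular values of $Y(t)$ are the $P$-singular values of $X(t,x)$. They are the square roots of the eigenvalues of $X\trn P X$ relative to the identity, or equivalently of the generalized eigenvalues of $X\trn P(x(t))X$ relative to $P(x)$ after the obvious renormalization. Differentiating,
\[
\tfrac{d}{dt}\big(X\trn P X\big)=X\trn\big(A\trn P+PA+\dotm P\big)X .
\]
Hence, by the Courant--Fischer (Ky Fan) characterization of sums of top eigenvalues, the function $\tfrac{d}{dt}\ln\omega_d[Y(t)]$ is bounded above by $\tfrac12\,\Xi^{P(\cdot)}_d[x(t,x)]\le \Lambda/2$. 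The factor $\tfrac12$ arises because the singular values are square roots. To avoid differentiability issues at eigenvalue crossings, I will work with upper Dini derivatives, or use the integral form via the Horn inequality \eqref{eq:submult} applied on short time steps and then pass to the limit. Asm.~\ref{kawan} supplies the orbital continuity needed for this. Integrating gives $\omega_d[Y(t)]\le \omega_d[P(x)^{1/2}]\,e^{\Lambda t/2}$. Since $P$ is continuous and positive definite on the compact set $\mathbb{X}$, there are constants $c_1,c_2>0$ with $c_1I\le P\le c_2I$, and therefore $\omega_d[X(t,x)]\le C e^{\Lambda t/2}$ uniformly in $x$. It follows that $\mathbf{\Sigma_d}\le\Lambda/2<0$, and Thm.~\ref{firstmethod} yields i).

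\emph{Step 2: i)$\Rightarrow$ii), the construction of the metric.} By Thm.~\ref{firstmethod} we have $\mathbf{\Sigma_d}<0$. By Lemma~\ref{lem.cocycle} we may fix $T>0$ with $\max_{x\in\mathbb{X}}\Sigma_d(T,x)<\mathbf{\Sigma_d}/2<0$, i.e. $\omega_d[X(T,x)]\le e^{-\gamma T}$ for some $\gamma>0$. I would then imitate the restoration-entropy constructions of \cite{KawanMaPo_automatica,MaPo_automaticaII}. The idea is to average the pulled-back Euclidean metric along the orbit over a window of length $T$, with exponential weights,
\[
P(x):=\int_0^T e^{\beta\tau}X(\tau,x)\trn X(\tau,x)\,d\tau,
\]
or a Lyapunov-type weighting of the singular directions. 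This $P$ is continuous, positive definite, and has a continuous orbital derivative, because by the cocycle property \eqref{cocycle} it depends on $x$ only through the trajectory. The orbital derivative can be computed explicitly: differentiating along the orbit turns the integral into boundary terms at $\tau=0$ and $\tau=T$ minus $\beta P$. Consequently $A\trn P+PA+\dotm P$ is controlled by $X(T)\trn X(T)$ and the identity.

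\emph{Main obstacle.} The quadratic-form averaging above controls single directions, i.e. $d=1$, but not $d$-volumes. For $d>1$ the sum of the top $d_0$ generalized eigenvalues is not simply governed by the averaged form. My first approach would therefore be a different construction. I would build $P$ so that the $d$-dimensional quantity $\omega_d$ itself is averaged, for instance by choosing $P(x)$ adapted to the singular-value decomposition of $X(T,x)$, with weights $\alpha_i(T,x)^{-2}$ tuned by exponents. A cleaner alternative is to prove the weaker statement first, that the required $\Lambda$ need only hold for a $T$-step discretization, and then smooth in time by averaging $\ln$ of the eigenvalues. If this fails, I would fall back on the fact, proven in \cite{KawanMaPo_automatica}, that for the sum of the top-$k$ roots the infimum over metrics $P$ of $\max_x$ equals the corresponding exponent. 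That argument extends to the fractional weighting $s$, because $\omega_d$ is a convex combination, in logarithmic form, of $\omega_{d_0}$ and $\omega_{d_0+1}$. This yields a $P$ with $\sup_x\Xi^{P(\cdot)}_d<0$.
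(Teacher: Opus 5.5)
Your Step~1 is essentially the paper's argument for ii)$\Rightarrow$i). The paper uses $Y_x(t)=S[\varphi^t(x)]X_x(t)S(x)^{-1}$ and notes that it solves $\dot z=F[\varphi^t(x)]z$, where the eigenvalues of $F+F\trn$ are the roots $\lambda_i$ (Lemmas~\ref{lem.funode}, \ref{lem.eigenv}). It then gets $\omega_d[Y_x(t)]\le e^{\Lambda t/2}$ from Smith's lemma plus the same $d_0$/$d_0+1$ interpolation you use. Your Ky Fan differential inequality is that estimate done by hand, so this direction is fine.

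The gap is in i)$\Rightarrow$ii): you never produce the metric.
\begin{itemize}
\item The weighted Gramian $\int_0^T e^{\beta\tau}X\trn X\,d\tau$ only handles $d\le 1$, as you note yourself.
\item The SVD-adapted and time-smoothing ideas are not carried out.
\item The fallback, as you formulate it, does not give the conclusion. A per-$k$ statement of the form ``$\inf_P\max_x\sum_{i\le k}\lambda^P_i(x)=2\mathbf{\Sigma}_k$'' for integer $k$ cannot be interpolated to fractional $d$.
\end{itemize}
There are two reasons the interpolation fails. First, the near-optimal metrics for $k=d_0$ and $k=d_0+1$ may differ. Second, even with a common metric you only get $\sup_x\Xi^{P}_d\le(1-s)\sup_x\Xi^{P}_{d_0}+s\sup_x\Xi^{P}_{d_0+1}$, and $\mathbf{\Sigma_d}<0$ does not imply $(1-s)\mathbf{\Sigma}_{d_0}+s\mathbf{\Sigma}_{d_0+1}<0$. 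For example, take a planar flow on a positively invariant disk whose only recurrent points are a saddle with exponents $(2,-5)$ and sinks with exponents $(-0.1,-0.1)$. Then $\mathbf{\Sigma}_{1.5}=-0.15<0$, yet $\tfrac12\mathbf{\Sigma}_1+\tfrac12\mathbf{\Sigma}_2=0.9>0$. Your log-convexity argument is only legitimate pointwise in $x$ with one and the same metric.

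That is the missing ingredient, and it is what the paper uses. Lemma~\ref{le.prop27}, taken from Prop.~27 of \cite{KawanMaPo_automatica}, gives for every $t>0$ a single $P(\cdot)$ satisfying Asm.~\ref{kawan} with $\tfrac12\Xi^{P(\cdot)}_d(x)\le\Sigma_d(t,x)$ for all $x\in\mathbb{X}$. Then Thm.~\ref{firstmethod} and Lemma~\ref{lem.cocycle} let you pick $t$ with $\max_x\Sigma_d(t,x)<0$, which yields $\Lambda<0$. Without such a pointwise, single-metric construction, or a proof that the cited result provides one, your i)$\Rightarrow$ii) remains a plan rather than a proof.
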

\par
The proof of this theorem is given in Sec.~\ref{sec.proofs3}, following introduction of a necessary technical apparatus in Sec.~\ref{PMEDM}.
\begin{remark}
\label{rem.drop+}
If the first requirement is dropped from Asm.~{\rm \ref{ass_on_X+}}, the implication {\rm ii) $\Rightarrow$ i)} in Thm.~{\rm \ref{second_method}} remains valid, as will be shown in the proof of Thm. {\rm \ref{second_method}}.
\end{remark}

\section{Hausdorff-Riemann elliptic $d$-measures}
\label{PMEDM}
Defn.~\ref{def.contr} means that contraction of the Hausdorff $d$-measure is made manifest only after indefinitely ``large enough'' time has passed.
%occurs asymptotically (more precisely, if the duration of the experiment is indefinitely ``large enough'').
However, contraction is not guaranteed for any particular pre-specified time, whatever it might be.
In Sec.~\ref{sec:monotone-decay}, we show that then the dynamic flow constantly diminishes another, yet similar $d$-measure. Moreover, this measure is sandwitched between two copies of the above Hausdorff measure, each with its own scaling factor. It follows that this new $d$-measure is able to serve as a Lyapunov function when studying \(d\)-contraction. This fact is particularly valuable for control design, opening the door for the use of Lyapunov-based methods in the synthesis of feedback rules that render the system \(d\)-contractive. In our current study, this hint is in fact employed for the proof of Thm.~\ref{second_method}.

The above novel $d$-measures play a key role in the proofs of Thm.~\ref{equiv_uniform}, \ref{firstmethod}, and \ref{second_method}.
Their definition combines the core idea of the so-called \emph{elliptic Hausdorff \(d\)-measure} (see, e.g., \cite{BLReitman,Kuz_Reit}) with using a Riemannian metric, which defines individual concepts of distance and ellipsoid for various points $x$ in the state space. We however, start with a simpler, yet relevant case.
\iffalse
Motivated by this, we introduce such measures. Informally, by combining the \emph{elliptic}  Hausdorff \(d\)-measure (see, e.g., \cite{BLReitman,Kuz_Reit}) with the   Riemannian metric \(P\) from Theorem~\ref{second_method}, we obtain a \(d\)-measure that decreases \emph{monotonically} along trajectories, thereby certifying uniform \(d\)-contraction and enabling Lyapunov-based control synthesis.
\fi

\subsection{Elliptic $d$-measures}
\label{sec.ellipt}
%For an ellipsoid $E\subset\R^n$ with ordered semiaxes $\sigma_1(E)\ge\cdots\ge\sigma_n(E)>0$, define
%\[
%\mathrm{ecc}(E):=\frac{\sigma_1(E)}{\sigma_n(E)}\ge 1,\qquad
%\varpi_d(E):=\sigma_1(E)\cdots\sigma_{d_0}(E)\,\sigma_{d_0+1}(E)^{\,s}.
%\]
%If $E=x+A\Ball{0}{1}$ with a linear $A$, then
%\begin{equation}\label{eq:varpi=omega}
%\varpi_d(E)=\omegad(A).
%\end{equation}
Let $d\in(0,n]$ be given.
For any ellipsoid $E\subset\R^n$ with the ordered semi-axes $\varsigma_1(E)\ge\cdots\ge\varsigma_n(E)>0$, we denote
\begin{gather}
\label{def.ecc}
\ecc(E)=\frac{\varsigma_1(E)}{\varsigma_n(E)} \geq 1, \\ \qquad \varpid(E)\ :=\ \varsigma_1(E)\cdots \varsigma_{d_0}(E)\,\varsigma_{d_0+1}(E)^{\,s}, \label{deff.oomm} \\ \qquad \text{where} \; d_0:=\lfloor d\rfloor, s:=d-d_0\in[0,1) \nonumber
\end{gather}
are the integer and fractional parts of $d$. If $d$ is integer, the multiplier $\sigma_{d_0+1}(E)^{\,s}$ is dropped here. The quantity $\ecc(E)$ is called the {\it eccentricity} of the ellipsoid $E$.

For the ease of references, we first highlight the following trivial fact, which uses the quantity $\omegad(A)$ defined in \eqref{def.omega}.
\begin{lemma}
\label{lem.same}
For any $A \in \br^{n \times n}$ with $\det A \neq 0$ and $x \in \br^n$, the parameters of the ellipsoid $E:= x + A(\Ball{1}{0})$ are as follows: $\varpid (E) =\omegad(A), \varsigma_i(E)= \sigma_i(A)$.
\end{lemma}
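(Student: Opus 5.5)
The plan is to reduce everything to the singular value decomposition of $A$. Since $\det A\neq 0$, write $A=U\Sigma V\trn$ with orthogonal $U,V$ and $\Sigma=\diag\big(\sigma_1(A),\ldots,\sigma_n(A)\big)$, where $\sigma_1(A)\ge\cdots\ge\sigma_n(A)>0$. The unit ball is invariant under orthogonal maps, so $V\trn(\Ball{1}{0})=\Ball{1}{0}$. Hence
\[
E = x + U\Sigma(\Ball{1}{0}).
\]
The set $\Sigma(\Ball{1}{0})$ is the solid ellipsoid $\{y:\sum_i y_i^2/\sigma_i(A)^2<1\}$, whose axes point along the coordinate directions and whose semi-axes are $\sigma_i(A)$. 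Applying the rotation $U$ and then the translation by $x$ moves the axes but does not change their lengths. Therefore $E$ is an ellipsoid with semi-axes $\sigma_1(A),\ldots,\sigma_n(A)$, all positive.

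The semi-axes must next be matched to the indexing used in the definition of $\varpid$. They are already listed in descending order, so the $i$th largest semi-axis is $\varsigma_i(E)=\sigma_i(A)$. This identification requires that the semi-axes of an ellipsoid be intrinsically defined, independent of the particular linear map used to represent it. To see this, note that $E-x=\{y: y\trn (AA\trn)^{-1}y<1\}$. The semi-axes are then the square roots of the eigenvalues of $AA\trn$, and these are exactly $\sigma_i(A)$.

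Finally, substitute $\varsigma_i(E)=\sigma_i(A)$ into \eqref{deff.oomm} and compare with \eqref{def.omega}. The two formulas coincide term by term, so $\varpid(E)=\omegad(A)$. For integer $d$, the factor with exponent $s$ is dropped in both definitions, and the convention $a^0:=1$ keeps them consistent in that case.

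There is no real obstacle in this argument. The only point needing care is the intrinsic description of the semi-axes via $(AA\trn)^{-1}$ given above.
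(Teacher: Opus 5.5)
Your proof is correct. The paper states this lemma as a trivial fact and gives no proof. Your SVD reduction, together with the intrinsic description $E-x=\{y: y^{\top}(AA^{\top})^{-1}y<1\}$ that makes the semi-axes well defined, is exactly the standard justification the paper is implicitly relying on.
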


The following concept is mainly due to Douady and Oesterl\'{e} \cite{Leonov_PS}.
\begin{dfn}[Elliptic $d$-measure]
\label{def.ellip}
For any $\varepsilon>0, \alpha \geq 1$ and a compact set $K\subset\R^n$, we put
\begin{gather}
\chi_{d,\varepsilon}^{\langle \alpha \rangle}(K)
:=\inf\Big\{\sum_i \varpi_d(E_i)\ :\ K\subset \bigcup_i E_i,\ \varsigma_1(E_i)<\varepsilon, \nonumber\\ \label{def.meas} \ecc(E_i) \leq \alpha
\Big\},
\\
\label{def.meas1}
\chi_{d}^{\langle \alpha \rangle}(K) := \sup_{\ve>0} \chi_{d,\varepsilon}^{\langle \alpha \rangle}(K) = \lim_{\ve \downarrow 0} \chi_{d,\varepsilon}^{\langle \alpha \rangle}(K),
\end{gather}
where $\inf$ is over all finite coverings $\{E_i\}$ of $K$ by ellipsoids with the properties specified in \eqref{def.meas}.
The {\em elliptic $d$-measure} of $K$ is defined as the limit
\begin{equation}
\label{ellip.meas}
\chi_{d}(K) := \lim_{\alpha \to \infty } \chi_{d}^{\langle \alpha \rangle}(K).
\end{equation}
\end{dfn}
The limit in \eqref{def.meas1} (either finite or infinite) exists and \eqref{def.meas1} is true since as $\ve \downarrow$, the collection of coverings employed in \eqref{def.meas} reduces and so the $\inf$ from \eqref{def.meas} increases. The limit in \eqref{ellip.meas} exists since as $\alpha \uparrow$, the collection of coverings with a fixed $\ve$ expands and so $\chi_{d,\varepsilon}^{\langle \alpha \rangle}(K)$ reduces.
For $\alpha=1$, the quantity $\chi_{d}^{\langle \alpha \rangle}(K) $ equals the Hausdorff $d$-measure $\mu_d(K)$.
Compared with Defn.~5.3.3 in \cite{Leonov_PS}, the inequality $ \sigma_1(E_i)<\varepsilon$ is substituted in place of $[\varpid(E_i)]^{1/d} < \ve^d$ and a bound on the eccentricity is added. Because of these alterations, we modified (by truncation) the term "elliptical Hausdorff $d$-measure" from \cite[Defn.~5.3.3]{Leonov_PS} in the wording of Defn.~\ref{def.ellip}.
\begin{lemma}[Sandwich with Hausdorff   measures]
\label{lem:sandwich}
For any compact set $K\subset\R^n$, the following is true:
\begin{equation}\label{eq:sandwich}
\mu_d(K)\ \le\ 2^n\,\chi_d(K),\qquad \chi_d(K)\ \le\ \mu_d(K).
\end{equation}
%where one may take $C_n:=2^{n}$.
\end{lemma}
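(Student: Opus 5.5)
The second inequality is the easy one, and I will handle it first. The elliptic measures are monotone in $\alpha$, so $\chi_d(K)\le\chi_d^{\langle 1\rangle}(K)$. The paper already notes that $\chi_d^{\langle 1\rangle}(K)=\mu_d(K)$. The reason is that ellipsoids with $\ecc(E)=1$ are exactly balls, and a ball $E$ of radius $r$ has $\varpid(E)=r^{d_0}r^{s}=r^d$. The only discrepancy is that the constraint reads $\varsigma_1<\ve$ in \eqref{def.meas} but $r_i\le\ve$ in \eqref{measure_eps_defn}. This is harmless, because the two pre-measures are interleaved in $\ve$ and both are monotone, so their limits as $\ve\downarrow0$ coincide. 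Hence $\chi_d(K)\le\mu_d(K)$.

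For the first inequality I will fix $\alpha\ge1$ and $\ve>0$ and take any admissible finite covering $\{E_i\}$ of $K$ from \eqref{def.meas}. Each ellipsoid $E$ with semi-axes $\varsigma_1\ge\dots\ge\varsigma_n$ will be replaced by a covering with Euclidean balls of one common radius $\rho:=\varsigma_{d_0+1}(E)$; when $d=n$, I take $\rho:=\varsigma_n(E)$. Then $E$ lies in the box $\prod_j[-\varsigma_j,\varsigma_j]$ in its principal axes. Along each axis $j\le d_0$, I cut the interval of length $2\varsigma_j$ into $\lceil \varsigma_j/\rho\rceil\le 2\varsigma_j/\rho$ pieces of length at most $2\rho$. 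Along each axis $j>d_0$, one piece suffices, since $2\varsigma_j\le 2\rho$. This yields at most $\prod_{j\le d_0}(2\varsigma_j/\rho)$ subboxes, each having all sides at most $2\rho$. Each subbox has circumradius at most $\sqrt n\,\rho$, so it lies in a ball of radius $\sqrt n\rho$.

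The total contribution of $E$ is then at most
\[
2^{d_0}\Big(\prod_{j\le d_0}\varsigma_j\Big)\rho^{-d_0}(\sqrt n\rho)^d = 2^{d_0}n^{d/2}\varpid(E).
\]
This is off from the claimed constant $2^n$ by the factor $n^{d/2}$, and removing that factor is the main obstacle. I would first refine the subdivision, partitioning $E$ into pieces of diameter at most $2\rho$ and counting them via a volume/packing argument, so that each piece sits in a ball of radius $\rho$. Alternatively, the constant can be absorbed using the paper's remark that measures defined through different covering families (arbitrary sets of small diameter versus balls) differ only by a scaling factor. I would not worry much about the sharp constant, since the results only need some dimensional constant $C_n$. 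The radii satisfy $\sqrt n\rho\le\sqrt n\varsigma_1<\sqrt n\ve$, so summing over $i$ gives
\[
\mu_{d,\sqrt n\ve}(K)\le C_n\,\chi_{d,\ve}^{\langle\alpha\rangle}(K).
\]

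To finish, I will let $\ve\to0$ to obtain $\mu_d(K)\le C_n\chi_d^{\langle\alpha\rangle}(K)$. Notably, $C_n$ does not depend on $\alpha$; this is the point of choosing $\rho=\varsigma_{d_0+1}$ rather than $\varsigma_n$, so that eccentricity never enters the estimate. Letting $\alpha\to\infty$ in \eqref{ellip.meas} then gives $\mu_d(K)\le C_n\chi_d(K)$, with $C_n=2^n$ after the refinement above.
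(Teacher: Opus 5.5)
Your covering scheme is the same as the paper's. In principal axes, you cover the box over the first $d_0$ axes by $\prod_{j\le d_0}\lceil\varsigma_j/\rho\rceil$ cubes of side $2\rho$, with $\rho=\varsigma_{d_0+1}(E)$. You then extend them to $n$-cubes and enclose each in a ball of radius $\sqrt n\,\rho$. Your treatment of the second inequality is fine, and so are the limits $\ve\downarrow0$ and then $\alpha\to\infty$, including the point that the constant does not depend on $\alpha$.

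Your bookkeeping is also right: this scheme gives $\mu_{d,\sqrt n\ve}(E)\le 2^{d_0}n^{d/2}\varpid(E)$, hence $\mu_d(K)\le 2^{d_0}n^{d/2}\chi_d(K)$. The paper's proof does not avoid this factor either. Its first step, $\mu_{d,\sqrt n\ve}(E)\le\varrho^d k$, counts each ball of radius $\sqrt n\,\varrho$ as $\varrho^d$ rather than $(\sqrt n\,\varrho)^d$, so it silently drops exactly the factor $n^{d/2}$ you found. And $2^{d_0}n^{d/2}$ exceeds $2^n$ already for $n=3$, $d\in[2,3)$; at $d=2.5$ it is $4\cdot 3^{5/4}\approx 15.8>8$.

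The genuine gap is your last step, ``$C_n=2^n$ after the refinement''. Neither proposed fix works as stated.
\begin{itemize}
\item A piece of diameter at most $2\rho$ need not lie in a ball of radius $\rho$. By Jung's theorem the sharp enclosing radius is $\rho\sqrt{2n/(n+1)}>\rho$ for $n\ge2$; an equilateral triangle of side $2\rho$ already needs radius $2\rho/\sqrt3$.
\item A volume or packing count only bounds from below the number of small pieces needed. Controlling the cost of a covering needs an upper bound, which requires an actual covering construction, and covering-density losses enter there.
\item Appealing to the equivalence of Hausdorff measures built on different covering families only trades one dimensional constant for another. For example, enclosing a set of diameter $D$ in a ball of radius $D$ costs a factor $2^d$. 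It cannot produce the specific constant $2^n$.
\end{itemize}

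What you have actually proved is the sandwich with constant $2^{d_0}n^{d/2}$. You can improve this to $2^{d_0}(d_0+1)^{d/2}$ by noting that $E\subset\Pi\times\{y\in\br^{n-d_0}:\|y\|_2\le\rho\}$. Either constant suffices for every later use of the lemma: the proofs of Theorems~\ref{equiv_uniform} and \ref{firstmethod} and the bounds \eqref{sadnwich2} need only some constant depending on $n$ and $d$.

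Establishing the literal constant $2^n$ would need a genuinely more efficient covering of each ellipsoid. One route is to cover the $d_0$-dimensional cross-section by $d_0$-balls rather than cubes, optimize the ball radius against the transverse thickness $\rho$, and handle separately the semi-axes comparable to $\rho$. Neither your proposal nor the paper supplies such an argument.
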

\begin{proof}\footnote{Lem.~\ref{lem:sandwich} is very similar to Lemma 5.3.1 in \cite{Leonov_PS}. Meanwhile, Lem.~\ref{lem:sandwich}
handles, as was discussed, another type of $d$-measure. Furthermore, the subsequent proof of Lem. \ref{lem:sandwich}
demonstrates in addition that in \eqref{eq:sandwich}, the scaling factors do not depend on the choice of the inner product in $\br^n$.
At the same time, this proof
much retraces the arguments from the proof of Lemma 5.3.1 in \cite{Leonov_PS}.}
In \eqref{eq:sandwich}, the second inequality is trivial since any ball is an ellipsoid with the unit eccentricity, and so $\inf$ is over a smaller set in \eqref{measure_eps_defn} than in \eqref{def.meas}.
\par
To prove the first inequality, it suffices to show that $\mu_{d, \sqrt{n} \ve}(K)\le  2^n \chi_{d,\ve}\brak{\alpha}(K)$ for any compact set $K \subset \br^n$ and $\ve>0, \alpha \geq 1$. Here $\chi_{d,\ve}\brak{\alpha}(K)$ can be approximated by a sum from \eqref{def.meas} as close as desired. So thanks to i) in Prop.~\ref{prop_H_mes}, it suffices to show that
\begin{gather}
\label{key.rel}
\mu_{d, \sqrt{n} \ve}(E)\le  2^n \varpid(E) \; \forall E \quad \text{with} \quad \varsigma_1(E) < \ve .
\end{gather}
\par
To this end, we consider an orthonormal basis of $\br^n$ such that in the related coordinates $x_i$, the ellipsoid $E$ is described by the inequality
$\sum_{i=1}^n \frac{x_i^2}{\varsigma_i(E)^2} \leq 1$. We put
\begin{gather*}
E_\ast := \{y = \{y_i\}_{i=0}^{d_0} : (y_1, \ldots, y_{d_0}, 0, \ldots, 0) \in E \} \subset \Pi \\ := \bigtimes_{i=1}^{d_0} [-\varsigma_i(E), \varsigma_i(E)] , ~~ \varrho := \varsigma_{d_0+1}(E) \geq \varsigma_{n}(E) .
\end{gather*}
The parallelepiped $\Pi$ can be covered by no more than $k := \prod_{i=1}^{d_0} \left\lceil \frac{\varsigma_i(E)}{\varrho}\right\rceil$ cubes of dimension $d_0$ with the side length $2 \varrho$. By expanding each of them to $n$-cube, we obtain a covering of $E$. Every resultant cube can be covered by a $n$-ball with a radius $\sqrt{n} \varrho < \sqrt{n} \ve$. Altogether, the resulting balls cover $\Pi$ and so $E$. It follows that
\begin{gather*}
\mu_{d,\sqrt{n} \ve} (E) \leq \varrho^d k = \varrho^d \prod_{i=1}^{d_0}  \left\lceil \frac{\sigma_i(E)}{\varrho}\right\rceil = \varrho^s \prod_{i=1}^{d_0} \varrho \left\lceil \frac{\sigma_i(E)}{\varrho}\right\rceil \\ \leq \varrho^s \prod_{i=1}^{d_0} ( \sigma_i(E)+\varrho ) \overset{\sigma_i(E)\geq \varrho\, i \leq d_0}{\leq}
2^{d_0} \sigma_{d_0+1}(E)^s \prod_{i=1}^{d_0} \sigma_i(E).
\end{gather*}
This completes the proof of the first inequality from \eqref{key.rel} since the last expression does not exceed its r.h.s.
\end{proof}
\par
It is immediate from Lemma~\ref{lem:sandwich} that the Hausdorff dimension of any compact set $K$ can be easily determined based on $\chi_d$ since
$\chi_d(K)=0\Leftrightarrow \mu_d(K)=0$ and $\chi_d(K)=\infty\Leftrightarrow \mu_d(K)=\infty$. By retracing the relevnt argument from, e.g., Sec. 4.1, 4.2 and 4.7 in \cite{mattila}, it can be seen that
Prop.~\ref{prop_H_mes} remains valid if the Hausdorff $d$-measure is replaced by the elliptic $d$-measure in its formulation.

\subsection{$P$-elliptic $d$-measures}
\label{sec.pellipt}

Up to this point, it was tacitly assumed that $\br^n$ is equipped with the standard inner product $\spr{\cdot}{\cdot}$. So the singular values of linear operators (including those representing $n \times n$ matrices), semi-axes of ellipsoids in $\br^n$, and the balls were taken with respect to this product and the associated norm. Meanwhile, all the above constructions and results are trivially extended to the case of an arbitrary inner product \raisebox{6.0pt}{\rotatebox{180}{\ding{226}}}$\cdot,\cdot$\raisebox{-1.0pt}{\ding{226}} in $\br^n$.
These products are in a one-to-one correspondence with positive definite $n \times n$ matrices $P$; specifically, \raisebox{6.0pt}{\rotatebox{180}{\ding{226}}}$\cdot,\cdot$\raisebox{-1.0pt}{\ding{226}}=\raisebox{6.0pt}{\rotatebox{180}{\ding{226}}}$x,y$\raisebox{-1.0pt}{\ding{226}}$^P$$:=\spr{Px}{y} \; \forall x,y$. So the related constructions (balls, singular values, etc.) are equipped with an extra index $P$, and the term ``elliptic $d$-measure'' is supplied with the prefix $P$, thus shaping into the {\it $P$-elliptic $d$-measure}. The absence of such an index means that the construction relates to the standard inner product.

The following lemma links some ordinary and eponymous $P$-related quantities, respectively.
\begin{lemma}
\label{lem.epony}
Suppose that $P_1, P_2 \in \br^{n \times n}$ are symmetric positive definite matrices, $A \in \br^{n \times n}$ is a non-singular matrix, $S_i:= \sqrt{P}_i $ stands for the symmetric positive definite square root of $P_i$, and $E \subset \br^n$ is an ellipsoid.
\par
Then the following statements hold:
\begin{enumerate}[{\bf i)}]
\item The ball {\color{black} $\leftindex^{P_2\!}{B}_x^r = S^{-1}_2 B_0^r +x = S_{2 \to 1}^{-1} \leftindex^{P_1\!}{B}_0^r +x$, where  $S_{2 \to 1} := S_2^{-1} S_1$};
\item $\sigma_i^{P_2}(A) = \sigma_i(S_2 A S_2^{-1}) = \sigma_i^{P_1}(S_{1 \to 2}  A S_{1 \to 2}^{-1} )$ and $\omega_d^{P_2}(A) = \omega_d(S_2AS^{-1}_2) = \omega_d^{P_1}(S_{1 \to 2}  A S_{1 \to 2}^{-1} )$;
\item $\varpi_d^{P_2}(E) = \varpi_d(S_2E)= \varpi_d^{P_1}(S_{1 \to 2}  E)$, $\varsigma_i^{P_2}(E) = \varsigma_i(S_2E)= \varsigma^{P_1}_i(S_{1 \to 2}  E)$;
\item
$
\frac{1}{\omega_d (S_1S_2^{-1})} \varpi_d^{P_1}(E) \leq \varpi_d^{P_2}(E) \leq \omega_d (S_{2} S_1^{-1}) \varpi_d^{P_1} (E)
$;
\item $\frac{\sigma_n(S_2)}{\sigma_1 (S_1)} \varsigma_1^{P_1} (E) \leq \varsigma_1^{P_2} (E) \leq \frac{\sigma_1(S_2)}{\sigma_n( S_1)} \varsigma_1^{P_1} (E)$,   $\frac{\sigma_n(S_2)}{\sigma_1 (S_1)} \varsigma_n^{P_1} (E) \leq \varsigma_n^{P_2} (E) \leq \frac{\sigma_1(S_2)}{\sigma_n (S_1)} \varsigma_n^{P_1} (E)$;
\item $ \frac{\sigma_n(S_2)}{\sigma_1(S_2)} \frac{\sigma_n(S_1)}{\sigma_1 (S_1)} \ecc^{P_1}(E) \leq\ecc^{P_2}(E) \leq \frac{\sigma_1(S_2)}{\sigma_n(S_2)} \frac{\sigma_1(S_1)}{\sigma_n (S_1)} \ecc^{P_1}(E)$;
\item $\varpid^{P_2} (AE) \leq \omega_d (S_2AS_1^{-1}) \varpid^{P_1} (E) $.
\end{enumerate}
\end{lemma}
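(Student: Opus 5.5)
The plan is to reduce everything to the standard inner product through the isometry $S_2\colon(\br^n,\langle\cdot,\cdot\rangle^{P_2})\to(\br^n,\spr{\cdot}{\cdot})$. This map is an isometry because $\langle x,y\rangle^{P_2}=\spr{P_2x}{y}=\spr{S_2x}{S_2y}$.

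\textbf{Items i)–iii).} Every $P_2$-metric notion is transported by this isometry.

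For i), the $P_2$-ball is $\{y:\|S_2(y-x)\|_2<r\}=x+S_2^{-1}B_0^r$. Writing $S_2^{-1}B_0^r=S_2^{-1}S_1S_1^{-1}B_0^r$ and using the same identity for $P_1$, namely $\leftindex^{P_1\!}{B}_0^r=S_1^{-1}B_0^r$, gives the second form with $S_{2\to1}^{-1}=S_1^{-1}S_2$. One should check that $S_{2\to1}^{-1}$ applied to $\leftindex^{P_1\!}{B}_0^r$ gives $S_1^{-1}S_2S_1^{-1}B_0^r$. If this does not match $S_2^{-1}B_0^r$, I would reinterpret the paper's convention as $S_{2\to1}:=S_2^{-1}S_1$ acting as $S_{2\to1}^{-1}$ on $S_1$-coordinates, and verify the identity directly.

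For ii), the $P_2$-singular values of $A$ are the square roots of the eigenvalues of $A^{*}A$, where $A^*=P_2^{-1}A\trn P_2$ is the $P_2$-adjoint. Now
\[
A^*A=S_2^{-1}(S_2AS_2^{-1})\trn(S_2AS_2^{-1})S_2,
\]
which is similar to $(S_2AS_2^{-1})\trn(S_2AS_2^{-1})$. Hence $\sigma_i^{P_2}(A)=\sigma_i(S_2AS_2^{-1})$. Applying the same identity with $P_1$ to the matrix $S_{1\to2}AS_{1\to2}^{-1}$ gives the last form, and the claim for $\omega_d$ follows from \eqref{def.omega}.

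For iii), $S_2$ maps $P_2$-ellipsoids onto Euclidean ellipsoids isometrically, so the semi-axes are preserved. Then $\varsigma_i^{P_2}(E)=\varsigma_i(S_2E)$, and \eqref{deff.oomm} gives the statement for $\varpi_d$.

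\textbf{Items iv)–vii).} These all follow from one basic estimate, which is vii) in Euclidean form: $\varpi_d(BE)\le\omega_d(B)\varpi_d(E)$ for nonsingular $B$. To prove it, write $E=x+C(B_0^1)$. Then $BE=Bx+BC(B_0^1)$, and by Lemma~\ref{lem.same} together with Horn's inequality \eqref{eq:submult},
\[
\varpi_d(BE)=\omega_d(BC)\le\omega_d(B)\,\omega_d(C)=\omega_d(B)\,\varpi_d(E).
\]

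For vii), use iii) to write $\varpi_d^{P_2}(AE)=\varpi_d(S_2AE)=\varpi_d\big((S_2AS_1^{-1})(S_1E)\big)$. The basic estimate bounds this by $\omega_d(S_2AS_1^{-1})\varpi_d(S_1E)$, which equals $\omega_d(S_2AS_1^{-1})\varpi_d^{P_1}(E)$.

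For iv), take $A=I$ in vii) to get the right inequality. Swapping the roles of $P_1$ and $P_2$ gives the left one.

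For v), the largest and smallest semi-axes are $\sigma_1$ and $\sigma_n$ of $C$, where $S_2E=S_2S_1^{-1}(S_1E)$. The singular-value inequalities
\[
\sigma_1(MN)\le\sigma_1(M)\sigma_1(N),\qquad \sigma_n(MN)\ge\sigma_n(M)\sigma_n(N)
\]
apply with $M=S_2S_1^{-1}$. Then one bounds $\sigma_1(S_2S_1^{-1})\le\sigma_1(S_2)/\sigma_n(S_1)$ and $\sigma_n(S_2S_1^{-1})\ge\sigma_n(S_2)/\sigma_1(S_1)$, and symmetric arguments give the reverse inequalities. Item vi) is the ratio of the two estimates in v).

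\textbf{Main obstacle.} The only genuinely delicate point is the bookkeeping of the conjugations $S_{1\to2}$ and $S_{2\to1}$ in i)–iii), so that they match the paper's convention. I would fix $S_{1\to2}=S_1^{-1}S_2$ and verify each identity by direct substitution. The analytic content reduces to Horn's inequality and the elementary singular-value bounds above.
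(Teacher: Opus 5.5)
Your proposal follows the paper's proof essentially item by item: transport by $S_2$, the similarity of $P_2^{-1}A\trn P_2A$ to $\Xi\trn\Xi$ with $\Xi=S_2AS_2^{-1}$, Horn's inequality for iv) and vii), singular-value bounds for v), and the ratio for vi); getting iv) as the case $A=I_n$ of vii), and v) from $\sigma_1(MN)\le\sigma_1(M)\sigma_1(N)$ and $\sigma_n(MN)\ge\sigma_n(M)\sigma_n(N)$ rather than the paper's max/min-norm formulas, are only cosmetic differences. On your hedge in i), the check really does fail: with $S_{2\to1}=S_2^{-1}S_1$ one gets $S_{2\to1}^{-1}S_1^{-1}B_0^r=S_1^{-1}S_2S_1^{-1}B_0^r\neq S_2^{-1}B_0^r$ in general (already for $n=1$), so the printed second identity is a typo for $x+S_{2\to1}S_1^{-1}B_0^r$, which is exactly what your factorization $S_2^{-1}B_0^r=S_2^{-1}S_1\cdot S_1^{-1}B_0^r$ proves --- say this outright instead of ``reinterpreting'' the convention (the paper's one-line ``follows from the first one'' misses it too, and no later item uses that form).
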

The proof is presented in the Appendix.
\subsection{Hausdorff-Riemann elliptic $d$-measure}
\label{sec.rimellipt}
This is a direct extension of the $P$-elliptic $d$-measure from the previous section to the case where the matrix $P$ varies over the space. So any location is associated with its own $P$-related inner product, metric, and quantitative interpretation of the singular values, semi-axes of ellipsoids, their  eccentricity, and the quantities \eqref{deff.oomm}.
The major alteration undertaken in this section is that in the definition \eqref{def.meas} of the elliptic $d$-measure, the quantities $\varpi_d(E_i), \varsigma_1(E_i)$, and $\ecc(E_i)$ are evaluated locally in the metric related to the center of the handled ellipsoid $E_i$.
\par
In this section, we assume a mapping $P(\cdot)$ that satisfies the following.
\begin{asm}
The mapping $P(\cdot)$ is defined on $\br^n$, its values are symmetric positive definite matrices, and
\begin{multline}
\label{two.ineq}
M_{P(\cdot)}:= \sup_{x\in \br^n} \sqrt{\sigma_1[P(x)]} < \infty, \\
 m_{P(\cdot)} :=  \inf_{x\in \br^n} \sqrt{\sigma_n[P(x)]} >0.
\end{multline}
\label{ass.p}
\end{asm}
\iffalse
{\color{red}
Here the conditions \eqref{two.ineq} are introduced to simplify the matters and do not, in essence, restrict the generality of the forthcoming results. The point is that
in this paper, the $P(\cdot)$-related constructions are used for study of the system \eqref{given_system} on an invariant set $\mathbb{X}$, which is compact.
Accordingly, the properties of $P(\cdot)$ on $\mathbb{X}$ is what really matters, whereas \eqref{two.ineq} would follow from the continuity of $P(\cdot)$ if $\mathbb{X}$ is put in place of $O$ there.
For technical reasons, we, however, have to temporarily extend our consideration, along with $P(\cdot)$, to a (possibly small) open neighborhood $O$ of $\mathbb{X}$.
This $O$ and the extension can always be selected so that \eqref{two.ineq} and overall Asm.~\ref{ass.p} do hold if $P(\cdot)$ originally is continuous on $\mathbb{X}$.}
\fi
\par
From now on, the symbol $\mathbf{c}(E)$ stands for the center of the ellipsoid $E$, and the symmetric positive definite square root of $P(x)$ is denoted by $\sqrt{P(x)}$ or $S(x)$.
The following is a direct extension of Defn.~\ref{def.ellip}.

\begin{dfn}[Hausdorff-Riemann elliptic $d$-measure]\label{def:riemmetric}
For any $\varepsilon>0$, $\alpha\ge 1$, and compact set $K\subset O$, we put
\begin{gather}
\nonumber
\leftindex^{P(\cdot)\!}\pi^{\langle\alpha\rangle}_{d,\varepsilon}(K)
:=\inf\Big\{\sum_i \varpi_d^{P[\mathbf{c}(E_i)]}\big(E_i\big)\  :\ K\subset \bigcup\nolimits_i E_i,\\  \varsigma_1^{P[\mathbf{c}(E_i)]}(E_i)<\varepsilon,\ \mathrm{ecc}^{P[\mathbf{c}(E_i)]}(E_i)\le \alpha\Big\},
\label{def.meas+}
\\
\label{def.meas1+}
\leftindex^{P(\cdot)\!}\pi^{\langle\alpha\rangle}_d(K):=\lim_{\varepsilon\downarrow 0}\leftindex^{P(\cdot)\!}\pi^{\langle\alpha\rangle}_{d,\varepsilon}(K).
\end{gather}
Here $\inf$ is over all finite coverings $\{E_i\}$ of $K$ by ellipsoids with the properties specified in \eqref{def.meas+}.
The {\em Hausdorff-Riemann elliptic $d$-measure} of $K$ is the limit
\begin{equation}
\label{ellip.meas+}
\pi_d^{P(\cdot)}(K):=\lim_{\alpha\to\infty}\leftindex^{P(\cdot)\!}\pi^{\langle\alpha\rangle}_d(K)=\sup_{\alpha>1}\leftindex^{P(\cdot)\!}\pi^{\langle\alpha\rangle}_d(K).
\end{equation}
\end{dfn}
\par
Existence of the limits in \eqref{def.meas1+} and \eqref{ellip.meas+}, and formula \eqref{ellip.meas+} are justified just as the similar facts and relations in
Sec.~\ref{sec.ellipt}. The concepts introduced in Defn.~\ref{def.ellip} and \ref{def:riemmetric} clearly coincide if the matrix-function $P(\cdot)$ is constant. If this constant is the identity matrix, the Hausdorff-Riemann elliptic $d$-measure is identical to the elliptic $d$-measure from Sec.~\ref{sec.ellipt}.
\par
The following lemma shows that the Hausdorff-Riemann elliptic $d$-measures born by various matrix-functions evaluate each other.
\begin{lemma}
Let the functions $P_i: \br^n \to \br^{n \times n}, i=1,2$ meet  Asm.~{\rm \ref{ass.p}}. Then for any compact set $K \subset \br^n$,
\begin{equation}
\label{sandwich1+}
\pi_d^{P_1(\cdot)}(K) \leq \big[ m_{P_2(\cdot)}\big]^{-d}\, \big[ M_{P_1(\cdot)}\big]^{d} \, \pi_d^{P_2(\cdot)}(K) .
\end{equation}
\label{lem.eval}
\end{lemma}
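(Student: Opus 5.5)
We compare the two measures covering by covering: any admissible covering for the $P_2(\cdot)$-measure is turned into an admissible covering for the $P_1(\cdot)$-measure (with modified $\varepsilon$ and $\alpha$), and the cost of each ellipsoid changes by at most the factor $[m_{P_2(\cdot)}]^{-d}[M_{P_1(\cdot)}]^{d}$. Since the coverings are the same sets, only the admissibility parameters and the weights change.

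Fix $\varepsilon>0$ and $\alpha\ge 1$, and take a finite covering $\{E_i\}$ of $K$ that is admissible in \eqref{def.meas+} for $P_2(\cdot)$. Put $c_i:=\mathbf{c}(E_i)$, $S_1:=\sqrt{P_1(c_i)}$ and $S_2:=\sqrt{P_2(c_i)}$.

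\textbf{Weights.} By Lemma~\ref{lem.epony}, iv),
\[
\varpi_d^{P_1(c_i)}(E_i)\le \omega_d(S_1S_2^{-1})\,\varpi_d^{P_2(c_i)}(E_i).
\]
Every singular value satisfies $\sigma_j(S_1S_2^{-1})\le \sigma_1(S_1)\,\sigma_1(S_2^{-1})\le M_{P_1(\cdot)}/m_{P_2(\cdot)}$. Since $\omega_d$ is a product of singular-value factors with total exponent $d$, this gives
\[
\omega_d(S_1S_2^{-1})\le \big[M_{P_1(\cdot)}/m_{P_2(\cdot)}\big]^d .
\]
This is exactly the constant in \eqref{sandwich1+}.

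\textbf{Admissibility.} By Lemma~\ref{lem.epony}, v),
\[
\varsigma_1^{P_1(c_i)}(E_i)\le \frac{M_{P_1(\cdot)}}{m_{P_2(\cdot)}}\,\varsigma_1^{P_2(c_i)}(E_i)<\varepsilon' := \frac{M_{P_1(\cdot)}}{m_{P_2(\cdot)}}\,\varepsilon .
\]
By Lemma~\ref{lem.epony}, vi),
\[
\ecc^{P_1(c_i)}(E_i)\le \alpha' := \alpha\,\frac{M_{P_1(\cdot)}M_{P_2(\cdot)}}{m_{P_1(\cdot)}m_{P_2(\cdot)}} .
\]
Here one must check which factor vi) actually produces. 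If only the product of the condition numbers of $S_1$ and $S_2$ appears, it is still bounded by the global constants of Asm.~\ref{ass.p}, and that is all we need. Hence $\{E_i\}$ is admissible for $P_1(\cdot)$ with parameters $(\varepsilon',\alpha')$.

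\textbf{Summing and passing to the limits.} Summing over $i$ and taking the infimum over admissible $P_2(\cdot)$-coverings gives
\[
\leftindex^{P_1(\cdot)\!}\pi^{\langle\alpha'\rangle}_{d,\varepsilon'}(K)\le \big[m_{P_2(\cdot)}\big]^{-d}\big[M_{P_1(\cdot)}\big]^{d}\,\leftindex^{P_2(\cdot)\!}\pi^{\langle\alpha\rangle}_{d,\varepsilon}(K).
\]
Letting $\varepsilon\downarrow0$ forces $\varepsilon'\downarrow0$, so the same inequality holds for the $\varepsilon$-limits \eqref{def.meas1+}, with $\alpha'$ on the left and $\alpha$ on the right. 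Letting $\alpha\to\infty$ forces $\alpha'\to\infty$. Since both sides are monotone in the eccentricity bound and \eqref{ellip.meas+} is a supremum, the left side tends to $\pi_d^{P_1(\cdot)}(K)$ and the right side to the constant times $\pi_d^{P_2(\cdot)}(K)$. This yields \eqref{sandwich1+}.

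\textbf{Main obstacle.} The delicate point is this last interchange of limits. Because $\alpha'$ differs from $\alpha$, we cannot compare the two measures at a common $\alpha$; we can only match the suprema over $\alpha$. This works because $\alpha\mapsto\alpha'$ is increasing and unbounded, so the $P_1(\cdot)$-values along the sequence $\alpha'$ still converge to the full supremum.

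A secondary point is the trivial case $\pi_d^{P_2(\cdot)}(K)=\infty$, where there is nothing to prove. Also, no admissible covering is lost: for every $\varepsilon>0$ and $\alpha\ge1$ small-ball coverings of the compact set $K$ exist, since balls have unit eccentricity.
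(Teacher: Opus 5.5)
Your proof is correct and is essentially the paper's argument. Both proofs compare the measures ellipsoid by ellipsoid using iv)--vi) of Lemma~\ref{lem.epony}. The paper writes the covering inclusion as $\Upsilon_1[\alpha,\varepsilon]\supset\Upsilon_2[\alpha_{1\to2},\varepsilon_{1\to2}]$, shrinking the $P_2$-parameters, whereas you enlarge the $P_1$-parameters to $(\alpha',\varepsilon')$; and it bounds $\omega_d(S_1S_2^{-1})$ through \eqref{eq:submult} rather than factor by factor.

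One small remark: these quantities are actually nonincreasing in $\alpha$, so the limit in \eqref{ellip.meas+} is an infimum despite the ``$\sup$'' written there. Your limit passage only uses that the monotone limit exists and that $\alpha'\to\infty$, so it is unaffected.
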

\begin{proof}
By using iv)--vi) in Lem.~\ref{lem.epony}, we see that
\begin{gather}
\nonumber
\varsigma_1^{P_1} (E) \overset{\text{v)}}{\leq} \frac{\sigma_1(S_1)}{\sigma_n( S_2)} \varsigma_1^{P_2} (E)  \overset{\text{\eqref{two.ineq}}}{\leq}
\frac{M_{P_1(\cdot)}}{m_{P_2(\cdot)}}  \varsigma_1^{P_2} (E),
 \\
 \nonumber
  \ecc^{P_1}(E) \overset{\text{vi)}}{\leq} \frac{\sigma_1(S_1)}{\sigma_n(S_1)} \frac{\sigma_1(S_2)}{\sigma_n (S_2)} \ecc^{P_2}(E) \\
\overset{\text{\eqref{two.ineq}}}{\leq} \frac{M_{P_1(\cdot)}}{m_{P_1(\cdot)}} \frac{M_{P_2(\cdot)}}{m_{P_2(\cdot)}} \ecc^{P_2}(E),
\nonumber
 \\
 \varpi_d^{P_1}(E) \overset{\text{iv)}}{\leq} \omega_d \left[ P_{1}(x)^{1/2} P_2(x)^{-1/2} \right]\varpi_d^{P_2} (E) \nonumber
\\ \overset{\text{\eqref{eq:submult}}}{\leq}
\omega_d \left[ P_{1}(x)^{1/2}\right] \omega_d \left[ P_2(x)^{-1/2} \right]\varpi_d^{P_2}(E) \nonumber\\
 \overset{\text{\eqref{def.omega}}}{\leq} \frac{\big[ M_{P_1(\cdot)}\big]^{d}}{\big[m_{P_2(\cdot)}\big]^{d}}\varpi_d^{P_2}(E).
 \label{omeg.ineq}
\end{gather}
\par
Let $\Upsilon_i (\alpha,\ve)$ stand for the collection of coverings $\mathfrak{E}=\{E_j\}$ over which $\inf$ is taken in \eqref{def.meas+} with $P(\cdot) := P_i(\cdot)$. By the foregoing,
$
\Upsilon_1[\alpha,\ve] \supset \Upsilon_2 \left[ \alpha_{1\to 2}, \ve_{1 \to 2} \right]
$, where $\alpha_{1\to 2} := \frac{m_{P_2(\cdot)}}{M_{P_2(\cdot)}} \frac{m_{P_1(\cdot)}}{M_{P_1(\cdot)}} \alpha$ and $\ve_{1 \to 2} :=  \frac{m_{P_1(\cdot)}}{M_{P_2(\cdot)}} \ve$. Hence
\begin{gather*}
\leftindex^{P_1(\cdot)}\pi^{\langle\alpha\rangle}_{d,\varepsilon}(K)
\overset{\text{\eqref{def.meas+}}}{=} \inf_{\mathfrak{E} \in \Upsilon_1 (\alpha,\ve)} \sum_i \varpi_d^{P_1[\mathbf{c}(E_i)]}\big(E_i\big) \\
\overset{\text{\eqref{omeg.ineq}}}{\leq}   \frac{\big[ M_{P_1(\cdot)}\big]^{d}}{\big[m_{P_2(\cdot)}\big]^{d}} \inf_{\mathfrak{E} \in \Upsilon_1 (\alpha,\ve)} \sum_i \varpi_d^{P_2[\mathbf{c}(E_i)]}\big(E_i\big)
\\
\leq \frac{\big[M_{P_1(\cdot)}\big]^{d}}{\big[m_{P_2(\cdot)}\big]^{d}} \inf_{\mathfrak{E} \in \Upsilon_2 \left[ \alpha_{1\to 2}, \ve_{1 \to 2} \right]  } \sum_i \varpi_d^{P_2[\mathbf{c}(E_i)]}\big(E_i\big).
\end{gather*}
It remains to pass to the limit as $\ve\to 0+$ and then as $\alpha\to \infty$ and invoke \eqref{def.meas1+} and \eqref{ellip.meas+}.
\end{proof}
\par
By Lem.~\ref{lem.eval}, the Hausdorff-Riemann elliptic measure is sandwiched between both Hausdorff $\mu_d$ and elliptic $\chi_d$ measures.
\begin{corollary}
Let a function $P: \br^n \to \br^{n \times n}, i=1,2$ meet Asm.~{\rm \ref{ass.p}}. Then for any compact set $K \subset \br^n$,
\begin{gather}
\label{sadnwich2}
[ m_{P(\cdot)}]^{d} \,\chi_d(K) \leq \pi_d^{P(\cdot)}(K) \leq [M_{P(\cdot)}]^{d} \, \chi_d(K), \\
 2^{-n} [m_{P(\cdot)}]^{d} \,\mu_d(K) \leq \pi_d^{P(\cdot)}(K) \leq [M_{P(\cdot)}]^{d} \, \mu_d(K) . \nonumber
\end{gather}
\end{corollary}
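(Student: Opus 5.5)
Apply Lem.~\ref{lem.eval} twice, with one of the two matrix-functions taken to be the constant identity $I_n$, and then combine the result with Lem.~\ref{lem:sandwich}.

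For the constant function $P_\ast(\cdot)\equiv I_n$, Asm.~\ref{ass.p} holds with $M_{P_\ast(\cdot)}=m_{P_\ast(\cdot)}=1$. As remarked after Defn.~\ref{def:riemmetric}, $\pi_d^{P_\ast(\cdot)}(K)=\chi_d(K)$ for every compact $K$.

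To get the upper bound in the first line of \eqref{sadnwich2}, apply \eqref{sandwich1+} with $P_1(\cdot):=P(\cdot)$ and $P_2(\cdot):=P_\ast(\cdot)$:
\[
\pi_d^{P(\cdot)}(K)\le [m_{P_\ast(\cdot)}]^{-d}[M_{P(\cdot)}]^{d}\chi_d(K)=[M_{P(\cdot)}]^{d}\chi_d(K).
\]
For the lower bound, apply \eqref{sandwich1+} with the roles swapped, $P_1(\cdot):=P_\ast(\cdot)$ and $P_2(\cdot):=P(\cdot)$:
\[
\chi_d(K)\le [m_{P(\cdot)}]^{-d}\pi_d^{P(\cdot)}(K).
\]
Multiplying through by $[m_{P(\cdot)}]^{d}>0$ gives the left inequality. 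This settles the first line.

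For the second line, substitute \eqref{eq:sandwich}. On the right, $\chi_d(K)\le\mu_d(K)$ gives $\pi_d^{P(\cdot)}(K)\le[M_{P(\cdot)}]^d\mu_d(K)$. On the left, $\mu_d(K)\le 2^n\chi_d(K)$ gives $2^{-n}[m_{P(\cdot)}]^d\mu_d(K)\le[m_{P(\cdot)}]^d\chi_d(K)\le\pi_d^{P(\cdot)}(K)$.

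There is no real obstacle. The only points to check are, first, that the identification $\pi_d^{I_n}=\chi_d$ holds with the paper's conventions: the strict bound $\varsigma_1<\varepsilon$ and the eccentricity bound are the same in \eqref{def.meas} and \eqref{def.meas+}, so the two infima coincide. Second, infinite values cause no trouble, since all the inequalities are between quantities in $[0,\infty]$ multiplied by positive finite constants.
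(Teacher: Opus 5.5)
Your proposal is correct and matches the paper's proof exactly: the paper also obtains the first line of \eqref{sadnwich2} by applying \eqref{sandwich1+} twice, with $(P_1,P_2)=(P,I_n)$ and then $(P_1,P_2)=(I_n,P)$, using $\pi_d^{I_n}=\chi_d$ and $M_{I_n}=m_{I_n}=1$. It then derives the second line from the first combined with \eqref{eq:sandwich}.
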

Indeed, the first two inequalities are obtained by taking first, $P_1(\cdot):= P(\cdot), P_2(\cdot):=I_n$ and second, $P_2(\cdot):= P(\cdot), P_1(\cdot):=I_n$ in \eqref{sandwich1+}. The third and fourth inequalities follow from the first two ones and \eqref{eq:sandwich}.

\subsection{Area type inequality for Hausdorff-Riemann measures}
\label{area.sec.ellip}
The well-known {\it area formula} (see, e.g., (3) in \cite[Thm.~3.2.22]{Federer}) deals with natural $d$'s and countably $(\mu_d,d)$-rectifiable, $\mu_d$-measurable (see the definitions in \cite[Sec.~3.2.14]{Federer})
sets $K, Z \subset \br^n$. It states that if $g(\cdot)$ is a continuously differentiable mapping and $g(K) \subset Z$, then (under minor technical assumptions)
\begin{equation}
\label{area}
\int_{K}  \omega_d \big[ Dg(x)\big] \mu_d(dx) = \int_{Z} N (y) \mu_d(dy).
\end{equation}
Here $N(y)$ is the number of elements in the set $\big\{x:\ g(x)=y\bigr\}$ ($N(y):= 0$ if this set is empty) and $\mu_d$ is the ordinary Hausdorff $d$-measure. With $Z:= g(K)$, \eqref{area} trivially implies the following {\it area type inequality}:
\begin{gather*}
\mu_d[g(K)] \leq \int_{g(K)} \underbrace{N (y)}_{\geq 1} \mu_d(dy) = \int_{K}  \omega_d \big[ D\ff(x)\big] \mu_d(dx) \\ \leq \mu_d(K) \, \sup_{x \in K} \omega_d \big[ D\ff(x)\big].
\end{gather*}
In this section, we obtain an analog of this inequality for the fractional $d$'s and Hausdorff-Riemann elliptic $d$-measures. (The authors are unaware of any such an analog established so far.)
\par
Throughout the section, we deal with a mapping $g(\cdot)$ satisfying the following.
\begin{asm}
\label{ass.map}
The function $g: O \to \br^n$ is defined and continuously differentiable on an open set $O \subset \br^n$.
% and the set $\mathbb{X} \subset O$ is compact.
%The Jacobian matrix $Dg(\cdot)$ is nonsingular $\sigma_n \big[ D g (x)\big] > 0$ in a vicinity of $\mathbb{X}$.
\end{asm}
\par
The main result of this section is the following.
\begin{thmm}\label{thm:area-infty}
Suppose that Asm.~{\rm \ref{ass.map}} holds, a matrix-function $P: \br^n \to\R^{n\times n}$ satisfies Asm.~{\rm \ref{ass.p}}, and the Jacobian matrix $Dg(x)$ is nonsingular $\sigma_n \big[ D g (x)\big] > 0$ for any point from a compact set $K\subset O$.
\par
Then the following inequality holds:
\begin{gather}
\label{area.for}
\pi_d^{P(\cdot)}\big[g(K)\big]\ \le\ \Omega_K  \pi_d^{P(\cdot)}(K),\quad  \text{\rm where} \\  \Omega_K:=
\max_{x\in K}\omegad\big\{S[g(x)]\,Dg(x)\,S(x)^{-1}\big\}, \quad S(x):= \sqrt{P(x)} .\nonumber
\end{gather}
\end{thmm}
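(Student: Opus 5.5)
The plan is to work at the level of a single small ellipsoid of a near-optimal covering of $K$. I would show that its image under $g$ can be covered by a slightly enlarged ellipsoid whose $\varpi_d$-value, computed in the metric at the new center, is at most $(\Omega_K+\delta)$ times the original. I would then sum over the covering and let $\varepsilon\downarrow0$, $\alpha\to\infty$ and $\delta\to0$.

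First, by continuity of $Dg$ on a compact neighborhood $K_\rho\subset O$ of $K$ on which $Dg$ stays nonsingular, I fix $\delta>0$. Assuming continuity of $P(\cdot)$ near $K$ and $g(K)$ (implicit in how $\Omega_K$ is used, though Asm.~\ref{ass.p} alone gives only boundedness), I pick $\eta>0$ such that for $|x-x'|<\eta$:
\begin{itemize}
\item $\|Dg(x')-Dg(x)\|_2$ is small;
\item $\|S[g(x')]-S[g(x)]\|_2$ is small;
\item $\|S(x')-S(x)\|_2$ is small.
\end{itemize}
Take a covering $\{E_i\}$ admissible in \eqref{def.meas+} for $K$, with $\varsigma_1^{P[\mathbf{c}(E_i)]}(E_i)<\varepsilon$ and $\varepsilon\ll\eta$. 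Discard the $E_i$ not meeting $K$. Since $m_{P(\cdot)}>0$, each remaining $E_i$ then lies in $K_\rho$, with Euclidean diameter $O(\varepsilon)$. Write $E_i=x_i+A_i(\Ball{1}{0})$ with $x_i=\mathbf{c}(E_i)$. By the mean value theorem,
\[
g(x_i+A_iu)=g(x_i)+Dg(x_i)A_iu+r_i(u),\qquad \|r_i(u)\|_2\le\theta(\varepsilon)\,\|A_iu\|_2,
\]
with $\theta(\varepsilon)\to0$. The key geometric step is to absorb $r_i$. Let $B_i:=Dg(x_i)A_i$. Then $r_i(u)=B_iw$ with $\|w\|_2\le \theta\,\|Dg(x_i)^{-1}\|_2\|A_i\|\,\|A_i^{-1}\|\cdot$const. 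Since the $P[x_i]$-eccentricity of $E_i$ is at most $\alpha$, and $P$ is uniformly bounded above and below, $\|A_i\|_2\|A_i^{-1}\|_2\le C\alpha$. Hence $g(E_i)\subset g(x_i)+(1+C'\alpha\theta(\varepsilon))B_i(\Ball{1}{0})=:E_i'$. This is the step I expect to be the main obstacle. The perturbation must be absorbed multiplicatively into a full-dimensional ellipsoid, which is exactly where the eccentricity bound $\alpha$ is needed. For fixed $\alpha$, the factor $(1+C'\alpha\theta(\varepsilon))^d\to1$ as $\varepsilon\to0$, so the order of limits in Defn.~\ref{def:riemmetric} ($\varepsilon$ first, then $\alpha$) is what makes the argument work.

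Next, evaluate $E_i'$ in the metric at its center $g(x_i)$. By Lem.~\ref{lem.epony} vii) with $P_2=P[g(x_i)]$ and $P_1=P(x_i)$,
\[
\varpi_d^{P[g(x_i)]}(E_i')\le(1+C'\alpha\theta)^d\,\omega_d\big\{S[g(x_i)]Dg(x_i)S(x_i)^{-1}\big\}\,\varpi_d^{P(x_i)}(E_i).
\]
If $x_i\in K$, this is at most $(1+C'\alpha\theta)^d\Omega_K\varpi_d^{P(x_i)}(E_i)$. If $x_i$ lies only within $O(\varepsilon)$ of $K$, continuity of the map $x\mapsto\omega_d\{\cdots\}$ gives the bound with $\Omega_K+\delta$. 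I also need $E_i'$ to be admissible for some parameters $(\alpha',\varepsilon')$. By Lem.~\ref{lem.epony} ii), v) and vi), $\varsigma_1^{P[g(x_i)]}(E_i')\le C''\varepsilon$ and $\ecc^{P[g(x_i)]}(E_i')\le C'''\alpha$. The constants depend only on $\max_{K_\rho}\|Dg\|$, $\max_{K_\rho}\|Dg^{-1}\|$, $M_{P(\cdot)}$ and $m_{P(\cdot)}$.

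Summing over $i$, I obtain
\[
\leftindex^{P(\cdot)\!}\pi^{\langle C'''\alpha\rangle}_{d,C''\varepsilon}[g(K)]\le(1+C'\alpha\theta(\varepsilon))^d(\Omega_K+\delta)\,\leftindex^{P(\cdot)\!}\pi^{\langle\alpha\rangle}_{d,\varepsilon}(K)
\]
plus an arbitrarily small slack. Let $\varepsilon\downarrow0$ via \eqref{def.meas1+}, then $\alpha\to\infty$ via \eqref{ellip.meas+}; the rescaling of $\alpha$ is harmless because $\pi^{\langle\alpha\rangle}_d$ increases monotonically to its limit. Finally let $\delta\to0$. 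This yields \eqref{area.for}.
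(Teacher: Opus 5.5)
Your proposal is correct and follows essentially the same route as the paper. The paper's Lem.~\ref{thm:unit-constant} does the same steps: a near-optimal admissible covering with non-meeting ellipsoids dropped; linearization at the centers, with the remainder absorbed into a homothetic enlargement whose factor is controlled by the eccentricity bound $\alpha$; the estimate via Lem.~\ref{lem.epony} vii); and bounds on $\varsigma_1$ and $\ecc$ of the image ellipsoid, where its $a(\alpha)$ is your $C'''\alpha$. It then takes limits $\varepsilon\downarrow0$, $\alpha\to\infty$ and shrinks the neighborhood, with your $\delta$ playing the role of $\Omega_\gamma\to\Omega_K$ as $\gamma\to0+$. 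Your remark that continuity of $P(\cdot)$ is tacitly needed there is accurate; one harmless slip is that the quantity in \eqref{def.meas1+} is nonincreasing in $\alpha$, not increasing, though your argument only uses that the limit exists.
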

\par
The rest of Sec.~\ref{area.sec.ellip} is devoted to the proof of Thm.~\ref{thm:area-infty}, which is broken into a stream of several lemmas.
By the third assumption in Thm.~\ref{thm:area-infty}, there exists $\gamma_\star>0$ such that the $\gamma_\star$-neighborhood $N_{\gamma_\star}^{K} := \{x \in \br^n : \dist{x}{K} \leq \gamma_\star\}$ of $K$ lies in $O$ and the Jacobian matrix $Dg(\cdot)$ is nonsingular in it.
For any $\gamma \in [0,\gamma^\star)$, we introduce the modulus of continuity of the derivative $Dg(\cdot)$:
\begin{gather}
\delta_{g,\gamma}(r):= \max \Bigl\{ \|D\ff(x_1) - D\ff(x_2)\|: x_1,x_2 \in N_\gamma^{K}, \nonumber\\ \|x_2 - x_1\| \leq r \Bigr\}. \label{modulus}
%\kappa_{\gamma} := \max_{x\in N_\gamma^{\mathbb{X}}}\frac{\sigma_1\big[ D \ff(x) \big] }{\sigma_n[D\ff(x)]} < \infty .
%\ov{\sigma}_f:= \max \left\{ \sigma_1 \big[ Df(x)\big]: x \in N_\gamma^Y \right\}, \quad \underline{\sigma}_f:= \min \left[ \sigma_n \big[ Df(x)\big]: x \in N_\gamma^Y \right\}.
\end{gather}
By Asm.~\ref{ass.map}, $\delta_{g,\gamma}(r) \searrow 0$ as $r \to 0+$.
In the next lemma, $\mathbf{diam}\, M:= \sup_{x,y \in M} \|x-y\| \; \forall M \subset \br^n$.
\begin{lemma}
\label{lem.dif0}
Suppose that $\gamma \in (0,\gamma_\star)$ and $M \subset \br^n$ is a compact set such that $M \cap K \neq \emptyset$ and $ r:= \mathbf{diam}\,M < \gamma$. Then $M \subset N_\gamma^{K} \subset O$ and for any $x_\ast \in M$, the following inclusions hold:
\begin{gather}
 \ff(M) \subset \ff(x_\ast) + D \ff (x_\ast)[M - x_\ast] +  \Ball{r \delta_{g,\gamma}(r)}{0}, \nonumber
\\
  \ff(x_\ast) + D \ff (x_\ast)[M - x_\ast] \subset  \ff(M) +  \Ball{r \delta_{g,\gamma}(r)}{0} .
  \label{difff}
  \end{gather}
\end{lemma}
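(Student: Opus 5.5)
The plan is the standard first-order Taylor estimate with integral remainder, applied on the segment joining $x_\ast$ and an arbitrary point of $M$.

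\textbf{Location of $M$.} Pick $k \in M \cap K$. For any $x \in M$ we have $\|x-k\| \le \mathbf{diam}\,M = r < \gamma$, so $\dist{x}{K} < \gamma$ and $M \subset N_\gamma^K$. Since $\gamma < \gamma_\star$, this gives $N_\gamma^K \subset N_{\gamma_\star}^K \subset O$.

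The neighborhood $N_\gamma^K$ need not be convex, so the segment between two points of $M$ must be checked to stay inside it. Take $x, x_\ast \in M$ and $z = x_\ast + \theta(x - x_\ast)$ with $\theta \in [0,1]$. Then $\|z - k\| \le (1-\theta)\|x_\ast - k\| + \theta\|x-k\| \le r < \gamma$. Hence the whole segment lies in $N_\gamma^K$, where $Dg$ is defined and the modulus \eqref{modulus} applies.

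\textbf{First inclusion.} For $x \in M$, write
\[
g(x) - g(x_\ast) - Dg(x_\ast)(x - x_\ast) = \int_0^1 \big[ Dg(x_\ast + \theta(x-x_\ast)) - Dg(x_\ast) \big](x - x_\ast)\, d\theta .
\]
Both points in the bracket lie in $N_\gamma^K$ and are at distance at most $\theta\|x - x_\ast\| \le r$ from each other. So the norm of the integrand is at most $\delta_{g,\gamma}(r)\, \|x - x_\ast\| \le r\,\delta_{g,\gamma}(r)$. Thus $g(x) \in g(x_\ast) + Dg(x_\ast)[M - x_\ast] + \bar B$, where $\bar B$ is the closed ball of radius $r\,\delta_{g,\gamma}(r)$ centered at $0$.

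\textbf{Second inclusion.} The same identity, read the other way, shows that the point $g(x_\ast) + Dg(x_\ast)(x - x_\ast)$ lies within distance $r\,\delta_{g,\gamma}(r)$ of $g(x) \in g(M)$.

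\textbf{Main obstacle: open versus closed balls.} The estimate only gives the \emph{closed} ball, whereas \eqref{difff} uses the open ball $\Ball{r\delta_{g,\gamma}(r)}{0}$. I would handle this as follows.

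If $\delta_{g,\gamma}(r) = 0$, then $Dg$ is constant on the relevant segments and the remainder vanishes, so the inclusions hold exactly.

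If $\delta_{g,\gamma}(r) > 0$, I would try to argue strictness: equality in the norm bound would force the integrand to attain the maximal norm for almost every $\theta$, including $\theta$ near $0$. But by continuity of $Dg$, $\|Dg(x_\ast + \theta(x - x_\ast)) - Dg(x_\ast)\| \to 0$ as $\theta \to 0$, which rules out equality. The exception is the case $x = x_\ast$, where the remainder is $0$ and lies trivially in the open ball. Since the integrand is continuous in $\theta$ and strictly below the bound near $\theta = 0$, the integral is strictly smaller than $r\,\delta_{g,\gamma}(r)$.

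Failing that, I would accept closed balls. The downstream use presumably only needs an arbitrarily slightly enlarged radius, which is harmless.
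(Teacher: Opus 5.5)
Your proposal is correct and follows the paper's route: the integral form of the first-order Taylor remainder along the segment from $x_\ast$ to $x$, bounded via the modulus $\delta_{g,\gamma}$ by $\|x-x_\ast\|\,\delta_{g,\gamma}(r)\le r\,\delta_{g,\gamma}(r)$. It is in fact slightly more complete than the paper's proof in three respects. First, the paper fixes $x_\ast\in M\cap K$, so it only covers that case. Your anchor point $k\in M\cap K$ with $\|z-k\|\le r<\gamma$ handles every $x_\ast\in M$, which is what is needed later with $x_\ast=\mathbf{c}(E_i)$. Second, your continuity argument settles the strictness needed for the open ball, which the paper passes over. Third, in the degenerate case $r\,\delta_{g,\gamma}(r)=0$ the open ball is empty, so it has to be read as $\{0\}$, as you implicitly do.
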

\begin{proof} We pick $x_\ast \in M\cap K$, consider an arbitrary $x \in M$,
and put $x(\theta) := (1-\theta)x_\ast + \theta x$. Then for any $\theta \in [0,1]$,
\begin{gather*}
\|x(\theta) - x_\ast\|_2 = \theta \|x - x_\ast\|_2 \leq \mathbf{diam}\,M < \gamma.
\end{gather*}
%\begin{gather*}
%\label{segment}
%\|x(\theta) - y\| \leq
%(1-\theta) \|x_\ast - y\| + \theta \|x-y\|  \leq \gamma \\ \leq (1-\theta) \mathbf{diam}\,M + \theta \mathbf{diam}\,M < \gamma \quad \forall \theta \in [0,1].
%\end{gather*}
Hence the straight line segment bridging $x_\ast$ and $x$ lies in $N_\gamma^{K} \subset O$; in particular, $ x \in N_\gamma^{K}$ and so $M \subset N_\gamma^{K}$.
For $\Delta:= x - x_\ast$, we have $\|\Delta\|_2 \leq \mathbf{diam} M = r$. Hence
\begin{multline*}
%\label{bas.argmod}
\ff(x) -\ff(x_\ast) - D \ff (x_\ast) \Delta = \\ \int_0^1 \left[ D\ff(x_\ast + \theta \Delta ) -  D \ff (x_\ast) \right] \Delta \; d \theta,
\\
\Rightarrow
\left\| \ff(x) -\ff(x_\ast) - D \ff (x_\ast) \Delta \right\|_2 \\ \leq \|\Delta\|\int_0^1 \left\| D\ff(x_\ast + \theta \Delta ) -  D \ff (x_\ast) \right\|_2  \; d \theta
\overset{\text{\eqref{modulus}}}{\leq} r \delta_{g,\gamma}(r).
\end{multline*}
It remains to note that this inequality implies \eqref{difff}.
\end{proof}
Another technical facts are listed in the following.
\begin{lemma}
\label{lemm.elip}
Let $P \in \br^{n \times n}$ be a positive definite matrix,
$E \subset \br^n$ be an ellipsoid, $A \in \br^{n \times n}$ be a nonsingular matrix, and $x \in \br^n$.
%Let $P$ be a positive definite $n \times n$ matrix.
%For any ellipsoid $E \subset \br^n$ and nonsingular matrix $A \in \br^{n \times n}$,
Then
the following relations hold:
\begin{enumerate}[{\bf i)}]
\item \label{lemm.elip1} $\varsigma_i^P(x+cE) = c \varsigma_i^P(E)$, $ \ecc^P(x+cE) = \ecc^P(E)$, $ \varpid^P(x+cE) = c^d \varpid^P(E)$\; $\forall c >0$;
\item \label{lemm.elip2} $ \varpid^P(AE)\ \le\ \omega_d^P(A)\, \varpid^P(E)$;
\item \label{lemm.elip3}
$E+\Ball{\lambda}{0} \subset E^+_\lambda := \mathbf{c}(E) + \left[ 1+ \frac{\lambda}{\varsigma_n(E)}\right] [E - \mathbf{c}(E)]$, $\varpid\bigl(E^+_\lambda\bigr)
 \ \le\ \Bigl(1+\frac{\lambda}{\varsigma_n(E)}\Bigr)^{\!d}\, \varpid(E) \; \forall \lambda >0$ ;
 \item \label{lemm.elip4} $ \varsigma_1^P(x+AE) \leq \sigma_1^P(A) \varsigma_1^P(E)$,  $\varsigma_n^P(x+AE) \geq  \sigma_n^P(A) \varsigma_n^P(E)$, $\ecc^P(x+AE) \leq \frac{\sigma_1^P(A)}{\sigma_n^P(A)}\,\ecc^P(E)$.
\end{enumerate}
\end{lemma}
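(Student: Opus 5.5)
The plan is to reduce everything to the standard inner product by a linear change of coordinates. Set $S:=\sqrt{P}$. By Lemma~\ref{lem.epony} iii), $\varsigma_i^P(E)=\varsigma_i(SE)$ and $\varpid^P(E)=\varpid(SE)$; by ii), $\sigma_i^P(A)=\sigma_i(SAS^{-1})$ and $\omega_d^P(A)=\omega_d(SAS^{-1})$. Since $S(x+cE)=Sx+cSE$, $S(AE)=(SAS^{-1})(SE)$ and $SE$ is again an ellipsoid, items i), ii) and iv) reduce to their counterparts for $P=I_n$, applied to the ellipsoid $SE$ and the matrix $SAS^{-1}$. Item iii) is already stated in the standard metric.

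Step 1 (standard case, i) and iv)). Write $E=\mathbf{c}(E)+B(\Ball{1}{0})$ with nonsingular $B$. By Lemma~\ref{lem.same}, $\varsigma_i(E)=\sigma_i(B)$.
\begin{itemize}
\item For i): $x+cE=x+c\,\mathbf{c}(E)+cB(\Ball{1}{0})$, and $\sigma_i(cB)=c\,\sigma_i(B)$. Hence the semi-axes scale by $c$, the eccentricity is unchanged, and $\varpid$ scales by $c^{d_0}c^{s}=c^d$.
\item For iv): $x+AE=x+A\mathbf{c}(E)+AB(\Ball{1}{0})$, so $\varsigma_i(x+AE)=\sigma_i(AB)$. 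The standard bounds $\sigma_1(AB)\le\sigma_1(A)\sigma_1(B)$ and $\sigma_n(AB)\ge\sigma_n(A)\sigma_n(B)$ give the two inequalities, and taking their ratio gives the eccentricity bound.
\end{itemize}

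Step 2 (ii)). Again $\varpid(AE)=\omegad(AB)$ by Lemma~\ref{lem.same}, since translations do not affect semi-axes. Horn's inequality \eqref{eq:submult} then gives $\omegad(AB)\le\omegad(A)\omegad(B)=\omegad(A)\varpid(E)$. Horn's inequality for fractional $d$ was already used in Lemma~\ref{lem.cocycle}. If one wants to justify it, note that $\omegad=\omega_{d_0}^{1-s}\omega_{d_0+1}^{s}$, and each integer $\omega_k$ is submultiplicative by \eqref{sigma_boy}.

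Step 3 (iii)). Translate so that $\mathbf{c}(E)=0$. Take $y=e+b$ with $e\in E$ and $\|b\|<\lambda$. Since $\Ball{\lambda}{0}\subset\frac{\lambda}{\varsigma_n(E)}E$ (the smallest semi-axis bounds the inscribed ball), convexity and central symmetry of $E$ give
\[
y\in E+\frac{\lambda}{\varsigma_n(E)}E=\Bigl(1+\frac{\lambda}{\varsigma_n(E)}\Bigr)E=E^+_\lambda .
\]
The bound on $\varpid(E^+_\lambda)$ then follows from item i) with $c=1+\lambda/\varsigma_n(E)$; in fact it holds with equality.

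Main obstacle. The only nontrivial ingredients are the Minkowski-sum identity $aE+bE=(a+b)E$ for a convex set $E$, used in iii), and fractional Horn submultiplicativity, used in ii). Both are standard, so I expect no real difficulty beyond careful bookkeeping of the change of metric.
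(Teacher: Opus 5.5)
Your proof is correct and takes essentially the paper's route: you reduce to the Euclidean metric by conjugating with $S=\sqrt{P}$ via Lemma~\ref{lem.epony} ii)--iii) and Lemma~\ref{lem.same}, use Horn's inequality \eqref{eq:submult} for ii), and combine the convexity inclusion $\Ball{\lambda}{0}\subset\frac{\lambda}{\varsigma_n(E)}[E-\mathbf{c}(E)]$ with i) for iii). The only cosmetic difference is in iv): the paper works directly with the $P$-induced operator norm $\|\cdot\|_P$, using submultiplicativity and $\sigma_n^P(M)=1/\sigma_1^P(M^{-1})$, whereas you first reduce to the Euclidean case.
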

This lemma is proven in the Appendix.

To proceed, we need the following notations:
\begin{gather}
\sigma^+_{\gamma}:= \max_{x \in N_\gamma^{K}} \sigma_1 \big[ D\ff(x)\big] \overset{\text{\rm Asm.~\ref{ass.map}}}{<} \infty,
 \\
   \sigma_{\gamma}^-:= \min_{x \in N_\gamma^{K}} \sigma_n \big[ D\ff(x)\big] \overset{\text{\rm Asm.~\ref{ass.map}}}{>} 0,
   \label{def.sigg}
   \\
\label{quant0}
\Omega_{\gamma} := \max_{x\in N_\gamma^{K}}\omega_d\big[ S(g(x)) D\ff(x) S(x)^{-1}\big] \overset{\text{\rm Asm.~\ref{ass.map}}}{<} \infty ,
\\
\label{defki}
e_{P(\cdot)}:= \frac{M_{P(\cdot)}}{ m_{P(\cdot)}}, \; \xi_{\ve,\alpha,\gamma}:=2 \frac{e_{P(\cdot)}}{\sigma^-_\gamma} \alpha \delta_{g,\gamma}\left( \frac{2\ve}{m_{P(\cdot)}} \right), \;
\\
\label{defki1}
\epsilon_\gamma(\ve,\alpha):= e_{P(\cdot)}  M_{P(\cdot)} \sigma^+_\gamma (1 + \xi_{\ve,\alpha,\gamma}) \ve ,
\\
\label{defki2}
a(\alpha):= \big[e_{P(\cdot)} \big]^4 \frac{ \sigma_\gamma^+}{\sigma_\gamma^-} \alpha.
\end{gather}
\begin{lemma}
\label{thm:unit-constant}
For any $\gamma \in (0,\gamma_\star)$, $\ve \in \left(0, \frac{\gamma m_{P(\cdot)}}{2} \right)$, $\alpha \geq 1$, and compact set $K\subset O$,
%we denote
%\begin{multline}
%\label{defki}
%e_{P(\cdot)}:= \frac{M_{P(\cdot)}}{ m_{P(\cdot)}}, \; \xi_{\ve,\alpha,\gamma}:=2 \frac{e_{P(\cdot)}}{\sigma^-_\gamma} \alpha \delta_{g,\gamma}\left( \frac{2\ve}{m_{P(\cdot)}} %\right), \; \\ \epsilon_\gamma(\ve,\alpha):= e_{P(\cdot)}  M_{P(\cdot)} \sigma^+_\gamma (1 + \xi_{\ve,\alpha,\gamma}) \ve , \\ a(\alpha):= \big[e_{P(\cdot)} \big]^4 \frac{ %\sigma_\gamma^+}{\sigma_\gamma^-} \alpha.
%\end{multline}
%Then
the following inequality holds:
\begin{gather}
\label{def.max}
\leftindex^{P(\cdot)\!}\pi^{\langle a(\alpha)\rangle}_{d,\epsilon_\gamma(\varepsilon,\alpha)}[g(K)] \leq \leftindex^{P(\cdot)\!}\pi_{d,\varepsilon}^{\langle \alpha \rangle}(K) (1+\xi_{\ve,\alpha,\gamma})^d \Omega_\gamma .
\end{gather}
\end{lemma}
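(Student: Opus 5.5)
The plan is to transform an admissible covering of $K$ for the right-hand side into an admissible covering of $g(K)$ for the left-hand side, ellipsoid by ellipsoid. Fix a finite covering $\{E_i\}$ of $K$ with $\varsigma_1^{P[x_i]}(E_i)<\varepsilon$ and $\ecc^{P[x_i]}(E_i)\le\alpha$, where $x_i:=\mathbf{c}(E_i)$, whose sum $\sum_i\varpi_d^{P[x_i]}(E_i)$ is within an arbitrary $\eta>0$ of $\leftindex^{P(\cdot)\!}\pi_{d,\varepsilon}^{\langle \alpha \rangle}(K)$. Discard the ellipsoids not meeting $K$. By Lem.~\ref{lem.epony}, v), with $P_1=P(x_i)$ and $P_2=I_n$, we get $\varsigma_1(E_i)\le\varepsilon/m_{P(\cdot)}$, so $\mathbf{diam}\,E_i\le 2\varepsilon/m_{P(\cdot)}<\gamma$. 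Hence Lem.~\ref{lem.dif0} applies with $M:=E_i$, $x_\ast:=x_i$ and $r_i:=2\varepsilon/m_{P(\cdot)}$. It yields
\[
g(E_i)\subset g(x_i)+Dg(x_i)[E_i-x_i]+\Ball{r_i\delta_{g,\gamma}(r_i)}{0}.
\]
The set $E_i':=g(x_i)+Dg(x_i)[E_i-x_i]$ is an ellipsoid centered at $g(x_i)$. This is the key point: the new ellipsoid's center is the image of the old center, so the metric used for it is $P[g(x_i)]$.

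Next, absorb the ball by inflating $E_i'$ via Lem.~\ref{lemm.elip}, iii). This step must be carried out in Euclidean terms and then converted to the $P[g(x_i)]$-metric. Two lower bounds are needed. First, $\varsigma_n(E_i')\ge\sigma_\gamma^-\varsigma_n(E_i)$ by Lem.~\ref{lemm.elip}, iv). Second, $\varsigma_n(E_i)\ge \varsigma_1(E_i)/\ecc(E_i)$, where $\ecc(E_i)\le e_{P(\cdot)}^2\alpha$ by Lem.~\ref{lem.epony}, vi). The inflation factor is then $1+\lambda/\varsigma_n(E_i')$ with $\lambda=r_i\delta_{g,\gamma}(r_i)$. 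The main obstacle is that $r_i$ is the crude bound $2\varepsilon/m_{P(\cdot)}$, while $\varsigma_n(E_i)$ may be much smaller than $\varepsilon$. The remedy is to use the actual diameter $2\varsigma_1(E_i)$ in place of $r_i$ inside $\lambda$, i.e., $\lambda\le 2\varsigma_1(E_i)\delta_{g,\gamma}(2\varepsilon/m_{P(\cdot)})$, relying on the monotonicity of $\delta$. The ratio then becomes
\[
\frac{\lambda}{\varsigma_n(E_i')}\le \frac{2\,\ecc(E_i)\,\delta_{g,\gamma}}{\sigma^-_\gamma}.
\]
I expect this to match $\xi_{\varepsilon,\alpha,\gamma}$ after bookkeeping with $e_{P(\cdot)}$, possibly by taking the Euclidean eccentricity bound via vi) with only one of the factors, since $P_2=I$. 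Call the inflated ellipsoid $F_i:=(E_i')^+_\lambda$. It has center $g(x_i)$ and is homothetic to $E_i'$ with ratio at most $1+\xi_{\varepsilon,\alpha,\gamma}$, and the sets $F_i$ cover $g(K)$.

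The remaining task is to verify the three quantities for $F_i$ in the metric $P[g(x_i)]$. For the measure, we use Lem.~\ref{lemm.elip}, i) and Lem.~\ref{lem.epony}, vii) with $P_1=P(x_i)$, $P_2=P(g(x_i))$ and $A=Dg(x_i)$. This gives
\[
\varpi_d^{P[g(x_i)]}(F_i)\le(1+\xi)^d\,\omega_d\big(S[g(x_i)]Dg(x_i)S(x_i)^{-1}\big)\,\varpi_d^{P[x_i]}(E_i)\le(1+\xi)^d\Omega_\gamma\,\varpi_d^{P[x_i]}(E_i),
\]
where the last step uses $x_i\in N_\gamma^K$. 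For the size, we pass to Euclidean via Lem.~\ref{lem.epony}, v), apply $\sigma_1(Dg)\le\sigma^+_\gamma$ and the factor $1+\xi$, and return to the metric. This bounds $\varsigma_1^{P[g(x_i)]}(F_i)$ by $e_{P(\cdot)}M_{P(\cdot)}\sigma_\gamma^+(1+\xi)\varepsilon=\epsilon_\gamma(\varepsilon,\alpha)$. The strict inequality follows from the strict bound on $\varsigma_1^{P[x_i]}(E_i)$. For the shape, the eccentricity is unchanged by homothety (Lem.~\ref{lemm.elip}, i)). It grows by at most $\sigma^+_\gamma/\sigma^-_\gamma$ under $Dg$ (Lem.~\ref{lemm.elip}, iv)), and the two metric changes, in and out of Euclidean, cost $e_{P(\cdot)}^2$ each by Lem.~\ref{lem.epony}, vi). Altogether this gives $a(\alpha)$. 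Hence $\{F_i\}$ is admissible for the left-hand side of \eqref{def.max}. Summing the measure bound over $i$ and letting $\eta\to0$ completes the proof.
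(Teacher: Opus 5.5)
Your construction matches the paper's. You transfer a near-optimal covering one ellipsoid at a time and linearize at the center $x_i$ via Lem.~\ref{lem.dif0}, so the new ellipsoid is centered at $g(x_i)$. You absorb the Euclidean error ball by a homothety whose ratio is controlled through the true diameter $2\varsigma_1(E_i)$, hence by $2\ecc(E_i)\delta_{g,\gamma}/\sigma^-_\gamma$. You bound $\varpi_d$ with vii) of Lem.~\ref{lem.epony}. Your hedge on the eccentricity constant resolves correctly: vi) with $P_2:=I_n$ gives $\ecc(E_i)\le e_{P(\cdot)}\alpha$, which yields exactly $\xi_{\ve,\alpha,\gamma}$. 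Routing the eccentricity through the Euclidean metric gives $e_{P(\cdot)}^2(\sigma^+_\gamma/\sigma^-_\gamma)\alpha\le a(\alpha)$, so that part is fine too.

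The step that does not go through is the size bound. Running your own chain gives
\[
\varsigma_1^{P[g(x_i)]}(F_i)\le M_{P(\cdot)}(1+\xi)\,\sigma^+_\gamma\,\varsigma_1(E_i)<M_{P(\cdot)}(1+\xi)\,\sigma^+_\gamma\,\frac{\ve}{m_{P(\cdot)}}=e_{P(\cdot)}\sigma^+_\gamma(1+\xi)\ve .
\]
This lies below $\epsilon_\gamma(\ve,\alpha)=e_{P(\cdot)}M_{P(\cdot)}\sigma^+_\gamma(1+\xi)\ve$ only when $M_{P(\cdot)}\ge 1$; the stated $\epsilon_\gamma$ has the wrong homogeneity under $P\mapsto cP$. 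Sharper bookkeeping cannot fix this, because the statement itself fails when $M_{P(\cdot)}<1$. Take $g=\mathrm{id}$, $P\equiv cI_n$ with $c<1$, and $\alpha=1$. Then $\xi=0$, $\Omega_\gamma=1$, $a(1)=1$, $\epsilon_\gamma=\sqrt{c}\,\ve$, and \eqref{def.max} reduces to $\chi^{\langle 1\rangle}_{d,\ve}(K)\le\chi^{\langle 1\rangle}_{d,\ve/\sqrt{c}}(K)$. For a unit segment and $d=1/2$, the left side is at least $1/(2\sqrt{\ve})$, since a ball of radius $r<\ve$ covers length at most $2r$. The right side is at most $c^{1/4}/(2\sqrt{\ve})+2c^{-1/4}\sqrt{\ve}$, which is smaller for small $\ve$.

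The paper's proof contains the same slip. At the end of its size chain it bounds the Euclidean $\varsigma_1(E_i)$ by $\ve$, citing \eqref{many.rel}, whereas only $\varsigma_1(E_i)\le \ve/m_{P(\cdot)}$ from \eqref{last.inn} is available. The fix is to set $\epsilon_\gamma(\ve,\alpha):=e_{P(\cdot)}\sigma^+_\gamma(1+\xi_{\ve,\alpha,\gamma})\ve$, which is exactly what your argument proves. This still tends to $0$ as $\ve\to0+$, so Thm.~\ref{thm:area-infty} is unaffected.
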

The lemma is proven in the Appendix.

{\bf Proof of Theorem~\ref{thm:area-infty}:} It suffices to consecutively let $\ve \to 0+, \alpha \to \infty$, and $\gamma \to 0+$ in \eqref{def.max}, with noting that $\Omega_\gamma \to \Omega$ as $\gamma \to 0+$ by \eqref{area.for} and \eqref{quant0}. \endproof

\section{Proofs of Theorems~\ref{equiv_uniform}, \ref{firstmethod}, and \ref{second_method}}
\label{sec.proofs}
Now we are in a position to prove these theorems.
\subsection{Proof of Theorem~\ref{equiv_uniform}}
\label{sec.proofs1}
Since the implication \textbf{ii) $ \Rightarrow $ i)} is trivial, it remains to show that
\textbf{i) $ \Rightarrow $ ii)}.
To this end, we choose $\ve \in [0,1)$, consider $\mathscr{T}(\ve)$ from \textbf{i)} in Defn.~\ref{def.contr}, and then pick
$T> \mathscr{T}(\ve)$
%and $\gamma >0$ so that the $\gamma$-neighborhood $N^{\mathbb{X}}_\gamma$ of $\mathbb{X}$ lies in the open set from Asm.~\ref{ass_on_X}.
Let $K \subset \mathbb{X}$ be a compact set. Then $\varphi^t(K) \subset \mathbb{X}\; \forall t \geq 0$ by Asm.~\ref{ass_on_X} and \ref{ass_on_X+}. Thanks to the continuity of the mapping $\varphi^T$, the set $\varphi^T(K)$ is also compact. So \eqref{def_uniform_contr} can be iterated to yield that
\begin{equation}
\label{uter,hh}
\mu_d \big[ \varphi^{k T}(K)\big] \leq \ve^k \mu_d \big[ K\big] \quad k=0,1,2, \ldots.
\end{equation}
By writing an arbitrary $t \geq 0$ in the form $t = k T+\tau$ with $\tau \in [0, T)$ and an integer $k \geq 0$, we see that
\begin{gather*}
\mu_d \big[ \varphi^{t}(K)\big] \overset{\text{\eqref{eq:sandwich}}}{\leq} 2^n  \chi_d \Big\{ \varphi^{\tau}\big[ \varphi^{kT}(K)\big]\Big\}.
\end{gather*}
Applying Thm.~\ref{thm:area-infty} with $P(x):= I_n \; \forall x, g(\cdot) := \varphi^\tau(\cdot)$ to the compact set $\mathscr{K}:=\varphi^{kT}(K) \subset \mathbb{X}$ yields that
\begin{gather}
\label{teo.lem.in}
\chi_d\big[\varphi^\tau(\mathscr{K})\big]\ \le\ \Omega_\tau  \chi_d(\mathscr{K}) \leq \Omega  \chi_d(\mathscr{K}), \\ \text{\rm where}~~ \Omega_\tau:=
\max_{x\in \mathbb{X}}\omegad\big\{ D\varphi^\tau(x)\big\} , \Omega := \max_{\tau \in [0,T]} \Omega_\tau < \infty .\nonumber
\end{gather}
Hence
\begin{gather*}
\mu_d \big[ \varphi^{t}(K)\big]
\leq 2^n \Omega \chi_d \big[ \varphi^{kT}(K)\big]
\overset{\text{\eqref{eq:sandwich}}}{\leq} 2^n \Omega \mu_d \big[ \varphi^{kT}(K)\big] \\ \overset{\text{\eqref{uter,hh}}}{\leq} 2^n \Omega \ve^k \mu_d \big[ K\big]
 = \frac{\Omega 2^n}{\ve^{\tau/T}}  \big(\ve^{1/T} \big)^{kT+\tau} \mu_d \big[ K\big] \\ \overset{\ve<1, \tau \leq T}{\leq}
\frac{\Omega 2^n}{\ve} \big(\ve^{1/T} \big)^{t} \mu_d \big[ K\big] = \frac{\Omega 2^n}{\ve} e^{t \frac{1}{T} \ln \ve } \mu_d \big[ K\big].
\end{gather*}
This completes the proof of ii) since $\ln \ve <0$. \hfill \endproof
\subsection{Proofs of Theorem~\ref{firstmethod} and Remark~\ref{rem.drop}}
\label{sec.proofs2}
These proofs are prefaced by two technical lemmas.
\begin{lemma}
\label{linear_lemma}
For any $d \in (0,n]$, there exists $c_d>0$ such that
any matrix $A \in \mathbb{R}^{n \times n}$ has a compact set $K \subset \mathbb{R}^n$ for which
\[
0 < \mu_d(K) < \infty
\quad\text{\rm and}\quad
\mu_d(A K) \geq c_d \omega_d(A)\mu_d(K).
\]
\end{lemma}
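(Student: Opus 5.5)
We reduce to a diagonal matrix and then take $K$ to be a rotated copy of one fixed \emph{model} set $C$. The upper bound on $\mu_d(C)$ is then a single constant, and the lower bound on $\mu_d(AK)$ comes from a mass distribution (Frostman) argument with constants depending only on $n$ and $d$.

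\textbf{Reductions.} Let $d=d_0+s$ as in \eqref{decopp}. Write the singular value decomposition $A=U\Sigma V\trn$ with $U,V$ orthogonal and $\Sigma=\diag(\sigma_1,\ldots,\sigma_n)$, where $\sigma_i=\sigma_i(A)$. Orthogonal maps are isometries, so they preserve $\mu_d$. Hence if we set $K:=V C$, we get $\mu_d(K)=\mu_d(C)$ and $\mu_d(AK)=\mu_d(\Sigma C)$, and it suffices to treat $A=\Sigma$.

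If $\omega_d(A)=0$, the claim is trivial for any compact $C$ with $0<\mu_d(C)<\infty$, for instance the set constructed below. So assume $\omega_d(A)>0$. Then $\sigma_1\ge\cdots\ge\sigma_{d_0}>0$, and also $\sigma_{d_0+1}>0$ when $s>0$.

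\textbf{The model set.} Fix once and for all a self-similar Cantor set $F\subset[0,1]$: the two-piece construction with contraction ratio $2^{-1/s}$ when $s>0$, and $F=\{0\}$ when $s=0$. It carries a probability measure $\nu_F$ with the Frostman bound $\nu_F(I)\le c_F|I|^s$ for every interval $I$.

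Put $C:=[0,1]^{d_0}\times F\times\{0\}^{n-d_0-1}$; when $d=n$ take $C:=[0,1]^n$.

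\emph{Upper bound.} Cover $C$ at scale $3^{-k}$, or at the Cantor scale, by products of cubes with the Cantor intervals. This gives $\mu_d(C)\le c'_d<\infty$.

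\emph{Lower bound.} $C$ carries the product measure $\nu:=\mathrm{Leb}^{d_0}\otimes\nu_F$. For every ball of radius $r$ one has $\nu(B)\le c'' r^d$. The mass distribution principle then gives $\mu_d(C)\ge 1/c''>0$.

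\textbf{Key estimate.} The image is the scaled set $\Sigma C=\prod_{i\le d_0}[0,\sigma_i]\times\sigma_{d_0+1}F$. It carries the push-forward measure $\tilde\nu$, whose total mass we normalise to equal $\omega_d(\Sigma)=\sigma_1\cdots\sigma_{d_0}\sigma_{d_0+1}^s$. That is, we use Lebesgue measure on the box times $\sigma_{d_0+1}^s$ times the pushed-forward Cantor measure.

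A ball of radius $r$ meets the box in a set whose projection onto each axis has length at most $\min(2r,\sigma_i)$. Its Cantor factor has mass at most $c_F\min(2r,\sigma_{d_0+1})^s$. Hence
\[
\tilde\nu(B)\le c_F\prod_{i\le d_0}\min(2r,\sigma_i)\,\min(2r,\sigma_{d_0+1})^s\le c_F(2r)^{d_0}(2r)^s=c_F2^d r^d .
\]
In the regime $2r>\sigma_{d_0+1}$ we simply use $\sigma_{d_0+1}^s<(2r)^s$. This bound is uniform in $A$, so the mass distribution principle yields
\[
\mu_d(\Sigma C)\ge \tilde\nu(\Sigma C)/(c_F2^d)=\omega_d(A)/(c_F2^d).
\]
Combining this with $\mu_d(C)\le c'_d$ gives the claim with $c_d:=1/(c_F2^dc'_d)$.

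\textbf{Main obstacle.} The delicate point is ensuring that every constant is independent of $A$, in particular of the ratios $\sigma_i/\sigma_{d_0+1}$. This is handled by the $\min(2r,\cdot)$ bookkeeping above. That bookkeeping uses the ordering $\sigma_i\ge\sigma_{d_0+1}$ only implicitly: each factor $\min(2r,\sigma_i)$ is bounded by $2r$ regardless of the ordering. We must also verify the Frostman bound for $F$ together with $\mu_s(F)<\infty$, which is standard for self-similar sets.
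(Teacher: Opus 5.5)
Your proof is correct. Its skeleton matches the paper's:
\begin{itemize}
\item the same SVD reduction to $\Sigma=\mathrm{diag}(\sigma_1,\ldots,\sigma_n)$;
\item the same test set, namely a $d_0$-dimensional unit cube times a self-similar Cantor set of dimension $s$ (a ball or cube when $d=n$).
\end{itemize}
Where you differ is in how the two measure estimates are justified.

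The paper uses them as black boxes. Finiteness of $\mu_d(K)$ comes from a cited product upper bound. The lower bound $\mu_d(\Sigma K)\ge c_d\,\mu_{d_0}(\text{box})\,\mu_s(\sigma_{d_0+1}C)$ comes from Falconer's product inequality, whose constant depends only on $d_0$ and $s$. The factor $\omega_d(A)$ then falls out of the separate scaling laws for $\mu_{d_0}$ and $\mu_s$.

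You instead build an explicit measure $\tilde\nu$ of total mass $\omega_d(A)$ on $\Sigma C$. You check that $\tilde\nu(B)\le c_F2^dr^d$ for every ball at every scale, and then apply the mass distribution principle. You also use a direct covering for the upper bound. In effect you re-prove the special case of Falconer's inequality that is actually needed: a box times a set carrying a Frostman measure valid at all scales.

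What your route buys:
\begin{itemize}
\item It makes the independence of $c_d$ from $A$ fully visible, in particular from the ratios $\sigma_i/\sigma_{d_0+1}$.
\item It avoids the normalization bookkeeping in the paper's final display, which tacitly takes $\mu_{d_0}([0,1]^{d_0})=1$.
\end{itemize}

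The price is that you must supply the Frostman bound for $F$ yourself. Two points to make explicit:
\begin{itemize}
\item $c_F\ge 1$ follows automatically from $\nu_F([0,1])=1$. This is exactly what lets you absorb the regime $2r>\sigma_{d_0+1}$ into $c_F\min(2r,\sigma_{d_0+1})^s$.
\item The upper-bound covering must be done at the Cantor scale $\rho^k$ with $\rho=2^{-1/s}<1/2$. There $2^k\rho^{ks}=1$ keeps the sum bounded by a constant depending only on $d$. The scale $3^{-k}$ fits only the middle-thirds set.
\end{itemize}

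The paper's route, by contrast, needs nothing about the Cantor factor beyond $0<\mu_s(C)<\infty$. It pays for this by relying on two external product theorems.
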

{\bf Proof:}
If $d=n$, the claim evidently holds with $K:= \Ball{1}{0}$. Let $d < n$.
We consider the singular value decomposition $A = U_+ \varSigma U_-$ of $A$. Here $U_\pm \in \br^{n\times n}$ are orthogonal matrices and $\varSigma$ is the diagonal matrix $\mathbf{diag}\, \big[ \sigma_1(A), \ldots, \sigma_n(A)\big]$. Since $\mu_d(U_\pm K) = \mu_d(K)$ for any compact set $K \subset \br^n$, it suffices to prove the lemma assuming that $A$ is diagonal $A = \varSigma$.
\par
We use $d_0 \leq n-1$ and $s$ from \eqref{decopp} and the self-similar Cantor set $C \subset [0,1]$ of Hausdorff dimension $s$ (whose existence is proven in, e.g., \cite[Sec. 4.10]{mattila}), and put $K:=[0,1]^{d_0} \times C \subset \mathbb{R}^{d_0+1} \subset \mathbb{R}^n$, where $\mathbb{R}^{d_0+1}$ is embedded into $\mathbb{R}^n$ by placing zeros to the missing (right) entries of $x \in \mathbb{R}^{d_0+1}$. It is easy to see by inspection that this embedding does not affect $\mu_d(K)$.
\par
According to \cite[Sec. 4.10]{mattila}), $0 < \mu_{s}(C) < \infty$. So $\mu_d(K) >0$ by \cite[Prop.~7.1]{Falconer52}, whereas Thm.~C (with $q:=0, E:=[0,1]^{d_0}, \xi[\Ball{r}{x}]:=r^{d_0}, F:= C, \xi^\prime[\Ball{r}{x}]:=r^{s}$) from \cite{AtJeGu23} guarantees that
\begin{equation}
\label{inn.cantor}
\mu_d(K) \leq \mu_{d_0}\big( [0,1]^{d_0} \big) \times \mu_s(C) = \mu_s(C) < \infty.
\end{equation}
Meanwhile, $AK = [0,\sigma_1(A)] \times [0,\sigma_2(A)] \times \cdots \times [0,\sigma_{d_0}(A)] \times \sigma_{d_0+1}(A) C \subset \mathbb{R}^{d_0+1} \subset \mathbb{R}^n$. The proof is completed by \cite[Prop.~7.1]{Falconer52}, by which there exists a constant $c_d$ (that depends on $d$ only) such that
\begin{gather*}
\mu_d(K)\geq c_d \mu_{d_0} \left( [0,\sigma_1(A)] \times [0,\sigma_2(A)] \times \cdots   [0,\sigma_{d_0}(A)]\right) \\ \times \mu_s (\sigma_{d_0+1}(A) C)
\\
= c_d \sigma_1(A) \times \cdots \times \sigma_{d_0}(A) \sigma_{d_0+1}(A)^s \mu_s ( C) \\ = \omega_d(A) \mu_s(C) \overset{\text{\eqref{inn.cantor}}}{\geq} c_d \omega_d(A) \mu_d(K). \qquad \qedhere
\end{gather*}
\par
{\bf Proof of Theorem \ref{firstmethod}:}
{\em Sufficiency.} Let $\mathbf{\Sigma_d}<0$. By \eqref{three,eq}, $\lim\limits_{t\rightarrow\infty}\max\limits_{x\in\mathbb{X}}\Sigma_d(t,x) <0$. So there are
$T >0, \lambda > 0$ such that for $t \geq T$, the following inequality holds
\begin{gather*}
\max_{x\in\mathbb{X}}\Sigma_d(t,x) \leq - \lambda \overset{\text{\eqref{def.sigd}}}{\Rightarrow} \omega_d\big[ X(t,x) \big] \leq e^{- t \lambda} \quad \forall x\in\mathbb{X}.
\end{gather*}
Hence for any compact set $K \subset \mathbb{X}$, we have
\begin{gather*}
t \geq T \Rightarrow \mu_d\big[ \varphi^t(K)\big] \overset{\text{\eqref{eq:sandwich}}}{\leq} 2^n \chi_d\big[  \varphi^t(K)\big] \\ \overset{\text{\eqref{quant0},\eqref{teo.lem.in}}}{\leq}
2^n \max_{x\in\mathbb{X}} \omega_d\big[ X(t,x) \big] \chi_d\big[K\big] \leq 2^n e^{- t \lambda}  \chi_d\big[K\big] \\ \overset{\text{\eqref{eq:sandwich}}}{\leq}  2^n e^{- t \lambda}  \mu_d\big[K\big].
\end{gather*}
We denote $c := \max_{t \in [0,T]} \max_{x\in\mathbb{X}} \omega_d\big[ X(t,x) \big] < \infty$.
For $t\in [0,T]$, similar arguments show that
$$
\mu_d\big[ \varphi^t(K)\big] \leq 2^n c \mu_d\big[K\big] = 2^n c  e^{\lambda T} e^{-\lambda t} \mu_d \big[K\big] .
$$
Hence \eqref{def_contr} does hold with $\kappa := 2^n \max\{c 2^n   e^{\lambda T}; 1\}$.
\par
{\em Necessity.} Let the system be uniformly $d$-contractive on $\mathbb{X}$. By Thm.~\ref{equiv_uniform}, there exist $\kappa>0$ and $\lambda>0$ such that \eqref{def_contr} holds for any compact set $K \subset \mathbb{X}$. We pick $x_\ast \in \inter \mathbb{X}$ and $\ve_\ast>0$ such that $\Ball{\ve_\ast}{x_\ast} \subset \mathbb{X}$.
For any $t >0$,
applying Lem.~\ref{linear_lemma} to $A:= D \varphi^t(x_\ast) = X(t,x_\ast)$ shows that there exists a compact set $K_t$ for which
\begin{gather}
0 < \mu_d(K_t) < \infty \qquad \text{and} ~\nonumber \\ \mu_d\big[ X(t,x_\ast) K_t\big] \geq c_d \omega_d\big[ X(t,x_\ast)\big]\mu_d(K_t),
\label{basjs1}
\end{gather}
where the constant $c_d>0$ does not depend on $t$ and $\ve_\ast$.
\par
Whenever $0<\ve < \ve_\dagger:= \ve_\ast/(2\dist{x_\ast}{K_t}+1)$, the compact set $K_{\ve,t} := x_\ast + \ve \big[ K_t - x_\ast \big]$ is such that
\begin{gather}
\nonumber
\dist{x_\ast}{K_{\ve,t}} \leq \ve \dist{x_\ast}{K_t} < \ve_\ast \Rightarrow K_{\ve,t} \subset \Ball{\ve_\ast}{x_\ast} \subset \mathbb{X},
 \\
 \nonumber
  \mu_d\big[ X(t,x_\ast)K_{\ve,t} \big] = \ve^d  \mu_d\big[X(t,x_\ast) K_{t} \big]
  \\
  \overset{\text{\eqref{basjs1}}}{\geq} \ve^d c_d \omega_d\big[ X(t,x_\ast)\big]\mu_d(K_t).
  \label{frst.ienq}
\end{gather}
Now we introduce the smooth mappings $\psi_t(\cdot) := \varphi^t(\cdot), g(\cdot) := X(t,x_\ast)\psi_t^{-1}(\cdot)$
%, the point $y_{\ast,t} := \varphi^t(x_\ast)$,
and the compact set $\mathscr{K}_{\ve,t} := \varphi^t(K_{\ve,t}) \subset \mathbb{X}$. By \eqref{def_contr},
\begin{multline}
\mu_d\big[ \mathscr{K}_{\ve,t} \big] = \mu_d \big[ \varphi^t(K_{\ve,t}) \big]
\\
\le \kappa e^{-\lambda t}\mu_d(K_{\ve,t}) = \kappa \ve^d e^{-\lambda t}\mu_d(K_{t}) .
\label{exxppnn}
\end{multline}
Meanwhile,
\begin{gather}
\nonumber
g \big[ \mathscr{K}_{\ve,t} \big] = X(t,x_\ast)\psi_t^{-1}\big[ \varphi^t(K_{\ve,t}) \big] = X(t,x_\ast)K_{\ve,t} \Rightarrow
\\
\mu_d \big[ X(t,x_\ast)K_{\ve,t} \big] = \mu_d \big[ g ( \mathscr{K}_{\ve,t} ) \big]
\overset{\text{\eqref{eq:sandwich}}}{\leq} 2^n \chi_d \big[ g ( \mathscr{K}_{\ve,t} ) \big].
\label{ssccnnd.in}
\end{gather}
Applying Thm.~\ref{thm:area-infty} to $P(\cdot) := I_n$, $K:= \mathscr{K}_{\ve,t}$ yields that
\begin{gather}
\nonumber
\chi_d \big[g(\mathscr{K}_{\ve,t})\big]\ \le\ \Omega_{\ve,t}  \chi_d (\mathscr{K}_{\ve,t}) \overset{\text{\eqref{eq:sandwich}}}{\leq}
\Omega_{\ve,t}  \mu_d (\mathscr{K}_{\ve,t}), \quad \text{where}
\\
\nonumber
\Omega_{\ve,t} := \max_{y\in \mathscr{K}_{\ve,t}}\omegad\big[ Dg(y) \big] =
\max_{y\in \mathscr{K}_{\ve,t}}\omegad\big[ X(t,x_\ast)D \psi_t^{-1}(y) \big]
\\
\nonumber
= \max_{y\in \mathscr{K}_{\ve,t}}\omegad\Big\{ X(t,x_\ast) X \big[ t,\psi_t(y)\big]^{-1} \Big\}
\\
\overset{x:=\psi_t(y)}{=\!=\!=\!=\!=} \max_{x \in K_{\ve,t}}\omegad\big[ X(t,x_\ast) X ( t,x)^{-1} \big] .
\label{def.ooom}
\end{gather}
By invoking \eqref{frst.ienq} and \eqref{ssccnnd.in}, we thus see that
\begin{gather*}
\ve^d c_d \omega_d\big[ X(t,x_\ast)\big]\mu_d(K_t) \leq  \mu_d\big[ X(t,x_\ast)K_{\ve,t} \big]
\\
\leq 2^n\chi_d \big[ g ( \mathscr{K}_{\ve,t} ) \big] \leq 2^n \Omega_{\ve,t}  \mu_d (\mathscr{K}_{\ve,t})
\overset{\text{\eqref{exxppnn}}}{\leq} 2^n \Omega_{\ve,t} \kappa \ve^d e^{-\lambda t}\mu_d(K_{t})
\\
\Rightarrow
\omega_d\big[ X(t,x_\ast)\big] \leq 2^n \Omega_{\ve,t} \frac{\kappa}{c_d}  e^{-\lambda t} \quad \forall \ve \in (0,\ve_\dagger).
\end{gather*}
Since the singular values are Lipschitz continuous functions of the matrix entries, the mapping $A \in \br^{n \times n} \mapsto \omega_d(A)$ is continuous.
So $\Omega_{\ve,t} \to 1$ as $\ve \to 0+$ by \eqref{def.ooom}. Hence letting $\ve \to 0+$ in the last inequality obtained so far yields that for all $t>0$ and $x_\ast \in \inter \mathbb{X}$,
\begin{gather*}
\omega_d\big[ X(t,x_\ast)\big] \leq 2^n \frac{\kappa}{c_d}  e^{-\lambda t}
\\
\overset{\text{\eqref{def.sigd}}}{\Rightarrow} \Sigma_d(t,x_\ast) \leq - \lambda + \frac{\ln \kappa - \ln c_d +n \ln 2}{t}.
\end{gather*}
The proof is completed by invoking \eqref{three,eq} and that $\mathbb{X}$ is the closure of $\inter \mathbb{X}$ by Asm.~\ref{ass_on_X+} and so
$\sup_{x_\ast \in \inter X} \Sigma_d(t,x_\ast) = \max_{x_\ast \in X} \Sigma_d(t,x_\ast)$ since $\Sigma_d(t,x_\ast)$ depends on $x_\ast$ continuously.
\hfill
\endproof
\par
{\bf Proofs of Remark~\ref{rem.drop}:} This proof is by inspection of the sufficiency part in the proof of Thm.~\ref{firstmethod}. \hfill \endproof
\subsection{Proof of Theorem~\ref{second_method} and Remark~\ref{rem.drop+}}
\label{sec.proofs3}
They are prefaced with preliminary facts. The first of them uses the notation $w^\prime(\tau+) := \lim_{t \to \tau+} \frac{w(t) - w(\tau)}{t-\tau}$ for the {\it right derivative} of a function $w(\cdot)$ at point $\tau$. We start with a simple fact from the classic calculus.
\begin{lemma}
Let a function $w(\cdot): [a,b] \to \br$ be continuous and have the right derivative at any $t \in [a,b)$. Then the following statements hold with any $\lambda \in \br$:
\begin{enumerate}[{\bf i)}]
\item if $w^\prime(t+) \leq \lambda \; \forall t \in [a,b)$, then $w(\beta) \leq w(\alpha) + \lambda (\beta-\alpha)$ whenever $b \geq \beta \geq \alpha \geq a$;
\item if $w^\prime(t+) \leq \lambda w(t), w(t) >0 \; \forall t \in [a,b)$, then $w(\beta) \leq w(\alpha) e^{\lambda (\beta-\alpha)}$ whenever $b \geq \beta \geq \alpha \geq a$.
\end{enumerate}
\end{lemma}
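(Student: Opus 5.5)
Part i) will be proved by a contradiction argument built on a "first bad point", which avoids any appeal to measure-theoretic facts about Dini derivatives. Fix $\eta>0$ and put $h(t):=w(t)-w(\alpha)-(\lambda+\eta)(t-\alpha)$ on $[\alpha,\beta]$. This function is continuous, satisfies $h(\alpha)=0$, and has $h'(t+)\le-\eta<0$ for every $t\in[\alpha,b)$. We claim that $h\le 0$ on $[\alpha,\beta]$. Suppose not, and let $\tau:=\sup\{t\in[\alpha,\beta]: h(u)\le 0\ \forall u\in[\alpha,t]\}$. By continuity, $h(\tau)\le0$, and $\tau<\beta$. On the other hand, $h'(\tau+)<0$ implies $h(t)<h(\tau)\le0$ for all $t$ in a right neighborhood of $\tau$, which contradicts the maximality of $\tau$. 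Hence $w(\beta)\le w(\alpha)+(\lambda+\eta)(\beta-\alpha)$. Letting $\eta\to0+$ gives i). The case $\beta=\alpha$ is trivial, and if $\beta=b$ the argument works on $[\alpha,b]$ because the right derivative is only needed at points $\tau<\beta$.

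For part ii), we reduce to part i) through a logarithm. Since $w>0$ on $[a,b)$, the function $v(t):=\ln w(t)$ is well defined there. By the chain rule for one-sided derivatives (the logarithm is differentiable at $w(t)>0$), $v'(t+)=w'(t+)/w(t)\le\lambda$. Applying i) to $v$ on $[\alpha,\beta']$ with $\beta'<b$ gives $w(\beta')\le w(\alpha)e^{\lambda(\beta'-\alpha)}$. The endpoint $\beta=b$ requires care, since positivity of $w$ is assumed only on $[a,b)$: there we pass to the limit $\beta'\to b-$ and use the continuity of $w$ to get the inequality at $b$. This limit passage also covers the possibility $w(b)=0$, which is harmless.

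The only delicate step is the one in i): a right derivative bound alone does not give monotonicity unless one uses the strict slack $\eta$ together with the supremum argument. Everything else is routine. As an alternative to the logarithm in ii), one could apply the same sup argument directly to $e^{-(\lambda+\eta)(t-\alpha)}w(t)-w(\alpha)$.
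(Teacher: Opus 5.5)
Your proof is correct and follows the same route as the paper. For i), the paper adds a slack $\varepsilon>0$, takes the supremum $\tau$ of the closed set of $t\in[\alpha,\beta]$ with $w(t)\le w(\alpha)+(\lambda+\varepsilon)(t-\alpha)$, and uses $w'(\tau+)\le\lambda<\lambda+\varepsilon$ to push past $\tau$; ii) then follows by applying i) to $\ln w$. Your separate treatment of the endpoint $\beta=b$ via $\beta'\to b-$ is a small point of rigor the paper's one-line reduction glosses over, since positivity of $w$ is not assumed at $b$ and $\ln w(b)$ may be undefined.
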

\begin{proof}
{\bf i)} For any $\ve >0$, the set $E:= \{t \in [\alpha,\beta]: w(t) \leq w(\alpha) + (\lambda + \ve) (t-\alpha) \} \ni \alpha$ is nonempty and closed due to the continuity of $w(\cdot)$.
Hence $\tau := \sup \{t: t \in E\} \in E$. We are going to show that $\tau = \beta$. Suppose to the contrary that $\tau < \beta$. Then for $t\in (\tau,\beta], t \approx \tau$, we have
\begin{gather*}
w(t) = w(\tau) + \frac{w(t) - w(\tau)}{t-\tau}(t-\tau) \overset{t \approx \tau, w^\prime(\tau+) \leq  \lambda}{\leq} \\ w(\alpha) + (\lambda + \ve) (\tau-\alpha)  + (\lambda + \ve)(t-\tau) \\ = w(\alpha) + (\lambda + \ve) (t-\alpha)  \Rightarrow t \in E,
\end{gather*}
in violation of the definition of $\tau (< t)$. Hence $\beta \in E$. The proof is completed by letting $\ve \to 0+$.
\par
{\bf ii)} It suffices to apply i) to the function $t \mapsto \ln w(t)$.
\end{proof}
\par
Another technical fact is from matrix algebra.
\begin{lemma}
For any $A,B,C \in \br^{n \times n}$,
\begin{gather*}
\sigma_i(ABC) \leq \sigma_1(A) \sigma_i(B) \sigma_1(C) \; \forall i =1,\ldots, n, \\ \qquad \omega_d(ABC)  \leq \sigma_1(A) \sigma_1(C) \omega_d(B) \; \forall d \in [0,n].
\end{gather*}
\label{lem.sv}
\end{lemma}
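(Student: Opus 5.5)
The plan has two steps: first prove the singular-value inequality, then assemble the $\omega_d$ bound from it. The key tool is the min--max (Courant--Fischer) characterization of singular values:
\[
\sigma_i(M)=\min_{\dim L\le i-1}\ \max_{x\perp L,\ \|x\|_2=1}\|Mx\|_2 .
\]
For any matrices $X,Y$, taking $L$ optimal for $Y$ gives $\|XYx\|_2\le \sigma_1(X)\|Yx\|_2$ for every unit $x\perp L$. Hence $\sigma_i(XY)\le\sigma_1(X)\sigma_i(Y)$.

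For the right factor I will use transposition, since $\sigma_i(M)=\sigma_i(M\trn)$. This gives $\sigma_i(YX)=\sigma_i(X\trn Y\trn)\le\sigma_1(X)\sigma_i(Y)$. Applying the left-factor bound with $X:=A$, $Y:=BC$, and then the right-factor bound with $X:=C$, $Y:=B$, yields the first claim:
\[
\sigma_i(ABC)\le\sigma_1(A)\,\sigma_i(BC)\le\sigma_1(A)\,\sigma_1(C)\,\sigma_i(B).
\]

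For the second claim I insert these bounds into the definition \eqref{def.omega}. Write $d=d_0+s$ as in \eqref{decopp} and take the factors $\sigma_1,\ldots,\sigma_{d_0}$ and $\sigma_{d_0+1}^{s}$. All quantities are nonnegative and $t\mapsto t^s$ is monotone, so multiplying the termwise bounds is legitimate. This gives $\omega_d(ABC)\le[\sigma_1(A)\sigma_1(C)]^{d_0+s}\,\omega_d(B)$. The case $d=n$ and the case of a vanishing exponent are handled by the convention $a^0=1$.

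The main obstacle is turning the factor $[\sigma_1(A)\sigma_1(C)]^{d}$ into the single factor $\sigma_1(A)\sigma_1(C)$ that appears in the statement as worded. This conversion is valid exactly when $[\sigma_1(A)\sigma_1(C)]^{d-1}\le 1$, that is, when $d\le1$ or when $\sigma_1(A)\sigma_1(C)\le1$. In general it is not valid: for $A=2I_n$, $B=C=I_n$, $d=2$ one gets $\omega_2(ABC)=4>2=\sigma_1(A)\sigma_1(C)\,\omega_2(B)$. So the plan establishes the first inequality and the stated $\omega_d$ bound under that extra condition, but not the $\omega_d$ bound as written in general.

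The fallback I would try is to reduce to that regime. I would normalize $A$ and $C$ by scalars, use the homogeneity $\omega_d(cM)=|c|^d\omega_d(M)$, and check whether the subsequent uses of Lem.~\ref{lem.sv} only need the bound with powers $\sigma_1(A)^d\sigma_1(C)^d$. This is the form that appears elsewhere, for instance in the derivation of \eqref{omeg.ineq} via \eqref{eq:submult}.
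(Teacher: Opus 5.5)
Your argument for the first inequality is the paper's own: Courant--Fischer min--max for the left factor, transposition for the right factor, then chaining through $BC$. For the second inequality your diagnosis is correct, and it points to an error in the paper rather than in your proposal. The paper justifies that inequality only by saying it ``follows from the first one and \eqref{def.omega}''. Carried out, that step gives exactly your bound $\omega_d(ABC)\le[\sigma_1(A)\sigma_1(C)]^{d}\,\omega_d(B)$, not the stated one, and your example $A=2I_n$, $B=C=I_n$, $d=2$ shows the stated version is false.

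One step of your analysis needs correcting. Put $x:=\sigma_1(A)\sigma_1(C)>0$. The condition $x^{d-1}\le 1$ means $(d-1)\ln x\le 0$, i.e.\ either $d\ge 1$ and $x\le 1$, or $d\le 1$ and $x\ge 1$. It is not ``$d\le 1$ or $x\le 1$'' as you wrote. For $d<1$ the stated bound also fails: with $d=\tfrac12$, $A=\tfrac14 I_n$, $B=C=I_n$ you get $\omega_{1/2}(ABC)=\tfrac12>\tfrac14$. At $d=0$ the bound reads $1\le\sigma_1(A)\sigma_1(C)$. So, as a statement about all matrices, the second inequality holds only at $d=1$.

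Scalar normalization cannot rescue it, because under $A\mapsto cA$ the left side scales like $c^d$ and the right side like $c$; that mismatch is exactly why it fails. The correct fix is to restate the lemma with the factor $[\sigma_1(A)\sigma_1(C)]^d$, consistent with \eqref{omeg.ineq}.

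The fix is harmless for the rest of the paper. The lemma is used only in the proof of ii)$\Rightarrow$i) of Thm.~\ref{second_method}. With the corrected version that proof gets $\omega_d[Y_x(t)]\le(q_-q_+)^d\,\omega_d[X_x(t)]$ and the reverse bound. The constant then vanishes after taking $\frac1t\ln$ and $\limsup_{t\to\infty}$, since $d\ln(q_-q_+)/t\to 0$, so the theorem is unaffected.
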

\begin{proof}
We note that $\|M\|_2 = \sigma_1(M) \; \forall M \in \br^{n \times n}$ and denote by $\mathfrak{L}$ the set of all linear subspaces of $\br^n$. By the min-max characterization of the singular values,
\begin{gather*}
\sigma_i(AB) = \min_{L \in \mathfrak{L}: \dim L = n-i+1 } \max_{x \in L, \|x\|_2 =1} \|AB x\|_2 \\
\leq \sigma_1(A) \min_{L \in \mathfrak{L}: \dim L = n-i+1 } \max_{x \in L, \|x\|_2 =1} \|B x\|_2 = \sigma_1(A) \sigma_i(B);
\\
\sigma_i(BC) = \sigma_i(C\trn B\trn) \leq \sigma_1(C\trn)\sigma_i(B\trn) = \sigma_1(C)\sigma_i(B).
\end{gather*}
By combining the two just established facts, we arrive
at the first inequality from the lemma's statement. The second inequality follows from the first one and \eqref{def.omega}.
\end{proof}
The next lemma is borrowed from \cite{Smith86}.
\begin{lemma}
\label{lem:Smith}
For the fundamental matrix $X(t)$ of a linear ODE $\dot z=F(t)z, t \geq 0$ with $F(t) \in \br^{n \times n}$, $X(0) = I_n$ and any $ t \geq 0, d=1,\dots,n$, we have
\begin{gather*}
\omega_d\big[ X(t) \big] \le\ \exp\!\left\{\frac12\int_0^t \sum_{i=1}^d \nu_i(\tau) \,d\tau\right\},
\end{gather*}
where $\nu_1(\tau)\ge\cdots\ge\nu_n(\tau)$ are the eigenvalues of the symmetric matrix
$F(\tau)+F(\tau)\trn$.
\end{lemma}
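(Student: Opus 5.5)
We will prove the inequality for integer $d$ by combining the Courant--Fischer / Ky Fan characterization of $\omega_d$ with the exterior power flow. Since $\omega_d[X(t)] = \|X(t)^{(d)}\|_2$ by \eqref{sigma_boy}, and $X(t)^{(d)}$ is the fundamental matrix of $\dot\Xi = F(t)^{[d]}\Xi$ by \eqref{fundament}, it suffices to estimate the growth of $\|\Xi(t)\|_2$ for an arbitrary solution of this compound system with $\|\Xi(0)\|_2=1$.

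The first step is the differential estimate. For any solution $\Xi(t)$ we have
\[
\frac{d}{dt}\|\Xi\|_2^2 = \spr{\big(F^{[d]}+(F^{[d]})\trn\big)\Xi}{\Xi}
\]
in the induced inner product on $\wedge^d\br^n$. Because the standard basis of $\wedge^d\br^n$ is orthonormal, $(F^{[d]})\trn=(F\trn)^{[d]}$. Since $B\mapsto B^{[d]}$ is linear, we get $F^{[d]}+(F^{[d]})\trn = (F+F\trn)^{[d]}$.

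The second step uses the spectrum of this matrix. For a symmetric $H$ with eigenvalues $\nu_1\ge\dots\ge\nu_n$ and orthonormal eigenvectors $u_j$, the $d$-blades $u_{j_1}\wedge\cdots\wedge u_{j_d}$ form an orthonormal eigenbasis of $H^{[d]}$. The corresponding eigenvalues are $\nu_{j_1}+\dots+\nu_{j_d}$, which one checks by differentiating $(I+\varepsilon H)^{(d)}$ on these blades. Hence the largest eigenvalue of $H^{[d]}$ is $\sum_{i=1}^d\nu_i$, and
\[
\frac{d}{dt}\|\Xi\|_2^2 \le \Big(\sum_{i=1}^d \nu_i(t)\Big)\|\Xi\|_2^2 .
\]

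The third step integrates this. By Gr\"onwall (or part ii) of the calculus lemma above applied to $w=\|\Xi\|_2^2$), we obtain $\|\Xi(t)\|_2\le \exp\{\frac12\int_0^t\sum_{i\le d}\nu_i\,d\tau\}$. Taking the supremum over unit initial data gives the bound on $\|X(t)^{(d)}\|_2=\omega_d[X(t)]$.

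The main point needing care is the integration step. It requires enough regularity of $F$, namely local integrability or continuity, for the ODE and the chain rule to make sense; we will take $F$ continuous, as in the application $F=A(t,\xi)$. The identification of the eigenvalues of the additive compound of a symmetric matrix is routine. As an alternative avoiding compounds, one could differentiate $\ln\det(V\trn X\trn X V)$ for a $d$-frame $V$ and use Ky Fan's maximum principle $\tr(Q\trn H Q)\le\sum_{i\le d}\nu_i$ for orthonormal $Q$. This again gives the same integrand.
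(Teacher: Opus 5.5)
Your proof is correct. The paper does not prove this lemma; it imports it from \cite{Smith86}. So your argument stands in for a citation rather than paralleling an in-paper proof.

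Your compound-matrix route is essentially the mechanism the paper sketches in Sec.~\ref{sec.link}. Applying $\|X(t)X(\tau)^{-1}\|_2\le\exp\int_\tau^t\nu[A(s)]\,ds$ \cite{Soder06} to the compound system \eqref{fundament} gives $\omega_d[X(t)]=\|X(t)^{(d)}\|_2\le\exp\int_0^t\nu\big[F(\tau)^{[d]}\big]\,d\tau$. Your first two steps then evaluate this spectral-norm logarithmic norm in closed form: $\nu\big(F^{[d]}\big)=\tfrac12\lambda_{\max}\big((F+F\trn)^{[d]}\big)=\tfrac12\sum_{i\le d}\nu_i$. Given \eqref{sigma_boy} and \eqref{fundament}, this is the quickest path. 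However, it reintroduces the compound-matrix apparatus that the paper explicitly says it does not rely on.

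Your Gram-determinant alternative is elementary, free of compounds, and closer to the paper's viewpoint. Write $Y=X(t)V=QR$ for an orthonormal $d$-frame $V$. Then $\frac{d}{dt}\ln\det(Y\trn Y)=\tr\big(Q\trn(F+F\trn)Q\big)\le\sum_{i\le d}\nu_i$ by Ky Fan. To close it you still need to say that $\omega_d[X(t)]^2=\max_{V\trn V=I_d}\det\big(V\trn X(t)\trn X(t)V\big)$.

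Two minor points:
\begin{itemize}
\item The paper also uses this lemma, via Lem.~\ref{lem.kkeeyy}, in Sec.~\ref{sec.orbital}, where $P(\cdot)$ is only piecewise $C^1$ and so $F$ is only piecewise continuous. You should therefore not restrict to continuous $F$. Your argument goes through verbatim with $\Xi$ absolutely continuous and the differential inequality holding almost everywhere.
\item Part ii) of the calculus lemma in Sec.~\ref{sec.proofs3} is stated for a constant rate $\lambda$. For the time-varying rate $\sum_{i\le d}\nu_i(t)$, invoke Gr\"onwall's inequality directly instead.
\end{itemize}
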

\begin{lemma}
\label{lem.hjdfg}
Let the ODE from Lem~{\rm \ref{lem:Smith}} be defined for $t \in [-\delta,\infty)$ with some $\delta>0$, and let $d \in [0,n]$.
Invoking $d_0$ and $s$ from \eqref{decopp}, we denote
\begin{gather}
\label{def.aleph}
\aleph^+_d(t):= \nu_1(t)+\cdots+\nu_{d_0}(t) + s \nu_{d_0+1}(t), \quad \\ \aleph_d^-(t):= \nu_n(t)+\cdots+\nu_{n-d_0+1}(t) + s \nu_{n-d_0}(t). \nonumber
\end{gather}
Then the following inequalities hold:
\begin{gather}
\label{w}
\omega_d[X(t)] \leq  \exp\!\left\{\frac12\int_0^t \aleph_d^+(\tau)\,d\tau\right\} \quad \forall t \geq 0, \qquad \\
\omega_d[X(t)] \leq  \exp\!\left\{\frac12\int_0^t \aleph_d^-(\tau)\,d\tau\right\} \quad \forall t \in [-\delta, 0]. \nonumber
\end{gather}
\end{lemma}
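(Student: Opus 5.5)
The plan is to derive both inequalities in \eqref{w} from Lem.~\ref{lem:Smith}. For fractional $d$ we interpolate between the integer dimensions $d_0$ and $d_0+1$. For negative times we apply the forward bound to the time-reversed ODE. The cases $d=0$ (where $\omega_0\equiv 1$ and $\aleph^\pm_0\equiv 0$) and $d=n$ (where $s=0$ and the extra factor is dropped) are trivial or reduce directly to Lem.~\ref{lem:Smith}. So we assume $0<d<n$, with $d_0$ and $s$ as in \eqref{decopp}.

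\emph{Forward inequality.} By \eqref{def.omega}, for any matrix $A$,
\[
\omega_d(A)=\bigl[\sigma_1(A)\cdots\sigma_{d_0}(A)\bigr]^{1-s}\bigl[\sigma_1(A)\cdots\sigma_{d_0+1}(A)\bigr]^{s}=\omega_{d_0}(A)^{1-s}\,\omega_{d_0+1}(A)^{s},
\]
with the convention $\omega_0\equiv1$. We apply Lem.~\ref{lem:Smith} to the integers $d_0$ and $d_0+1$; when $d_0=0$ only the second application is needed. Raising the two bounds to the powers $1-s$ and $s$ and multiplying them, the exponent becomes
\[
\tfrac12\int_0^t\Bigl[(1-s)\sum_{i\le d_0}\nu_i+s\sum_{i\le d_0+1}\nu_i\Bigr]d\tau=\tfrac12\int_0^t\aleph_d^+(\tau)\,d\tau .
\]
This gives the first line of \eqref{w} for all $t\ge0$.

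\emph{Backward inequality.} Fix $t\in[-\delta,0]$ and put $Y(\tau):=X(-\tau)$ for $\tau\in[0,|t|]$. Then $Y(0)=I_n$ and $\dot Y(\tau)=G(\tau)Y(\tau)$ with $G(\tau):=-F(-\tau)$, so $Y$ is the fundamental matrix of a linear ODE on $[0,\delta]$. The forward inequality therefore applies to $Y$ with $F$ replaced by $G$.

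The eigenvalues of $G(\tau)+G(\tau)\trn=-[F(-\tau)+F(-\tau)\trn]$, listed in descending order, are $-\nu_n(-\tau)\ge\cdots\ge-\nu_1(-\tau)$. Hence the quantity $\aleph^+_d$ built from $G$ at time $\tau$ equals
\[
-\bigl[\nu_n+\cdots+\nu_{n-d_0+1}+s\,\nu_{n-d_0}\bigr](-\tau)=-\aleph_d^-(-\tau).
\]
Consequently,
\[
\omega_d[X(t)]=\omega_d[Y(|t|)]\le\exp\Bigl\{-\tfrac12\int_0^{|t|}\aleph_d^-(-\tau)\,d\tau\Bigr\}=\exp\Bigl\{\tfrac12\int_0^{t}\aleph_d^-(\sigma)\,d\sigma\Bigr\},
\]
where the last equality uses the substitution $\sigma=-\tau$ together with the orientation of $\int_0^t$ for $t<0$. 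This is the second line of \eqref{w}.

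\emph{Expected obstacle.} The main point needing care is the bookkeeping in the backward step. One must check that reversing the sign of $F+F\trn$ turns the top $d_0$ (plus a fractional share of the $(d_0+1)$-st) eigenvalues into minus the bottom ones with exactly the indexing in \eqref{def.aleph}. One must also track the sign of the integral on $[t,0]$. Beyond that, we only need Lem.~\ref{lem:Smith} to be valid for the reversed system on $[0,\delta]$. We take this for granted under the same regularity of $F(\cdot)$ (e.g.\ continuity) under which the lemma is quoted.
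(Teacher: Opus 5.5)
Your proposal is correct and follows the paper's proof. You interpolate between Lem.~\ref{lem:Smith} at $d_0$ and $d_0+1$ with weights $1-s$ and $s$, then treat $t\in[-\delta,0]$ via the reversed system $X(-\theta)$ with coefficient $-F(-\theta)$, whose symmetric-part eigenvalues are $-\nu_n(-\theta)\ge\cdots\ge-\nu_1(-\theta)$. The only cosmetic difference is that you apply the already-established fractional forward bound to the reversed system, while the paper reapplies Lem.~\ref{lem:Smith} for each integer $i$ and interpolates again; the two are equivalent.
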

\begin{proof}
To prove the first inequality, we denote $X:=X(t)$ and use Lem.~\ref{lem:Smith} with $i=d_0, d_0+1$ to see that
\begin{gather*}
\ln \big\{ \sigma_1\big[ X(t)  \big] \cdots\sigma_{d_0}\big[ X (t) \big]\big\} \le\ \frac12\int_0^t \sum_{i=1}^{d_0} \nu_i(\tau)\,d\tau,
\\
\ln \big\{ \sigma_1\big[ X(t)  \big] \cdots\sigma_{d_0+1}\big[ X(t)  \big]\big\} \le\ \frac12\int_0^t  \sum_{i=1}^{d_0+1} \nu_i(\tau)\,d\tau .
\end{gather*}
It remains to multiply the first and second inequalities by $1-s$ and $s$, respectively, and to sum up the results, while taking into account \eqref{def.omega} and \eqref{def.aleph}.
\par
To justify the second inequality in \eqref{w}, we observe that $\mathscr{X}(\theta):= X(-\theta), \theta \in [0, \delta]$ is the fundamental matrix  for the ODE $\dot{z} = \mathscr{F}(\theta) z, \theta \in [0, \delta]$, where $\mathscr{F}(\theta) := - F(-\theta)$. The eigenvalues of
$\mathscr{F}(\theta)+\mathscr{F}(\theta)\trn = - \big[ F(-\theta)+F(-\theta)\trn \big]$ are $-\nu_n(-\theta)\ge\cdots\ge-\nu_1(-\theta)$.
So Lem.~\ref{lem:Smith} yields that for any $i=1, \ldots, n$ and $ t \in [-\delta, 0]$,
\begin{gather*}
\sigma_1\big[ X(t) \big]\cdots\sigma_i\big[ X(t)\big]  =  \sigma_1\big[ \mathscr{X}(-t) \big]\cdots\sigma_i\big[ \mathscr{X}(-t) \big] \\
\le\ \exp\!\left\{\frac12\int_0^{-t} - \big[\nu_n(-\tau)+\cdots+\nu_{n-i+1}(-\tau)\big]\,d\tau \right\}
\\
\overset{\theta:= -\tau}{=\!=\!=\!=} \exp\!\left\{\frac12\int_{0}^t \big[\nu_n(\theta)+\cdots+\nu_{n-i+1}(\theta)\big]\,d\theta\right\}.
\end{gather*}
The proof is completed like in the case of the first inequality from \eqref{w}.
\end{proof}
\par
Given a mapping $P(\cdot): \mathbb{X} \to \br^{n \times n}$ satisfying Asm.~\ref{kawan} and $t\ge0$, $x\in\mathbb{X}$, we introduce the following notations:
\begin{gather}
\label{def.yxode}
 A(x):=Df(x), X_x(t):= D\varphi^t(x), S(x) := \sqrt{P(x)},
\\
\label{def.yxode1}
 Y_x(t)\ :=\ S\!\big[\varphi^t(x)\big]\,X_x(t)\,S(x)^{-1}.
\end{gather}
We also recall that the symbols $\dot{S}(x)$ and $\dot{P}(x)$ stand for the orbital derivatives.
\begin{lemma}
If $x \in \mathbb{X}$, the matrix function $t \mapsto Y_x(t) $ is fundamental for the following ODE and $Y_x(0) = I_n$
\begin{multline}
\dot{z} = F\big[\varphi^t(x)\big]\,z,~~\text{\rm where}
\\
F(y):=S(y)A(y)S(y)^{-1}+\dot S(y)S(y)^{-1}.
\label{comp.ode}
\end{multline}
\label{lem.funode}
\end{lemma}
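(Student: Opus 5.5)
The plan is to differentiate $Y_x(t)$ in $t$ directly, using the product rule on the three factors in \eqref{def.yxode1}, and to express the result as $F[\varphi^t(x)]\,Y_x(t)$. The initial condition is immediate: $X_x(0)=D\varphi^0(x)=I_n$ and $\varphi^0(x)=x$, so $Y_x(0)=S(x)I_nS(x)^{-1}=I_n$.

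For the derivative, I will treat the three factors separately. The factor $S(x)^{-1}$ is constant in $t$. By \eqref{linearization}, $\frac{d}{dt}X_x(t)=A[\varphi^t(x)]X_x(t)$. For the factor $S[\varphi^t(x)]$, I will use the cocycle/flow property $\varphi^{t+h}(x)=\varphi^h[\varphi^t(x)]$. This gives
$$\frac{d}{dt}S[\varphi^t(x)]=\frac{d}{dh}S\big(x(h,\varphi^t(x))\big)\big|_{h=0}=\dot S[\varphi^t(x)]$$
by the definition of the orbital derivative. Combining the three pieces and inserting $S(y)^{-1}S(y)=I_n$ with $y:=\varphi^t(x)$,
$$\dot Y_x=\dot S(y)X_xS(x)^{-1}+S(y)A(y)X_xS(x)^{-1}=\big[\dot S(y)S(y)^{-1}+S(y)A(y)S(y)^{-1}\big]Y_x,$$
which is exactly \eqref{comp.ode}. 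Since $Y_x(t)$ is nonsingular for every $t$, it is a fundamental matrix.

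The main obstacle is justifying that $\dot S$ exists and is orbitally continuous, given that Asm.~\ref{kawan} only asserts this for $P$, not for $S=\sqrt{P}$. I would argue that the matrix square root is a real-analytic map on the open cone of positive definite matrices, since its differential at $S$ is the inverse of the Sylvester operator $H\mapsto SH+HS$, which is invertible because $S>0$. By the chain rule, $t\mapsto S[\varphi^t(x)]=\sqrt{P[\varphi^t(x)]}$ is then differentiable. Its derivative $\dot S$ is the solution $H$ of $S\,H+H\,S=\dot P$, and this depends continuously on $t$ because $\dot P$ is orbitally continuous.

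This continuity also makes $F[\varphi^t(x)]$ continuous in $t$. Hence the ODE \eqref{comp.ode} has unique solutions, and the computed derivative holds at every $t\ge0$ (one-sided at $t=0$), which completes the argument.
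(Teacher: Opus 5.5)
Your proposal is correct and follows the paper's proof: the paper likewise differentiates $Y_x(t)=S[\varphi^t(x)]\,X_x(t)\,S(x)^{-1}$ by the product rule, uses $\dot X_x(t)=A[\varphi^t(x)]\,X_x(t)$, and inserts $S[\varphi^t(x)]^{-1}S[\varphi^t(x)]$ to factor out $F[\varphi^t(x)]$. Your additional justification that $\dot S$ exists and is orbitally continuous (via the invertible Sylvester operator $H\mapsto SH+HS$) is a useful supplement, since the paper uses $\dot S$ without checking that Asm.~\ref{kawan}, which is stated for $P$, carries over to $S=\sqrt{P}$.
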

{\bf Proof:}
Formula \eqref{def.yxode1} yields that $Y_x(0) = I_n$.
The proof is completed by differentiating equation \eqref{def.yxode1}:
\begin{gather*}
\dot{Y}_x(t) = \dot{S} \!\big[\varphi^t(x)\big]\,D\varphi^t(x)\,S(x)^{-1} \\+ S \!\big[\varphi^t(x)\big]\,D f \!\big[\varphi^t(x)\big] D\varphi^t(x)\,S(x)^{-1}
\\
= \dot{S} \!\big[\varphi^t(x)\big] S \!\big[\varphi^t(x)\big]^{-1} S \!\big[\varphi^t(x)\big]\,D\varphi^t(x)\,S(x)^{-1} \\ + S \!\big[\varphi^t(x)\big]\,A S \!\big[\varphi^t(x)\big]^{-1} S \!\big[\varphi^t(x)\big] D\varphi^t(x)\,S(x)^{-1} \\ = F\big[\varphi^t(x)\big] Y_x(t).
\qquad \qedhere
\end{gather*}
\begin{lemma}
\label{lem.eigenv}
For any $x \in \mathbb{X}$, the roots $\lambda_1(x)\ge\lambda_2(x)\ge\ldots\ge\lambda_n(x) \geq 0$ of equation \eqref{eigenvalue_problem} form the sequence of the eigenvalues of the symmetric matrix $F(x)+F(x)\trn$ enumerated in the descending order.
\end{lemma}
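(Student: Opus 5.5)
We need to show that the roots of \eqref{eigenvalue_problem} are exactly the eigenvalues of $F(x)+F(x)\trn$, with $F$ from \eqref{comp.ode}. (The appended ``$\geq 0$'' in the statement looks like a typo, since nothing forces these roots to be nonnegative; we prove only the ordering claim.) The plan is to turn the generalized eigenvalue problem into an ordinary one by a congruence with $S(x)^{-1}$, $S(x)=\sqrt{P(x)}$. Since $\det S^{-1}\neq 0$, we have
\[
\det\big[A\trn P+PA+\dotm{P}-\lambda P\big]=0 \iff \det\big[S^{-1}(A\trn P+PA+\dotm{P})S^{-1}-\lambda I_n\big]=0 .
\]
Multiplicities are preserved because the two determinants differ by the nonzero factor $(\det S)^{2}$. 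So it suffices to prove
\[
S^{-1}\big(A\trn P+PA+\dotm{P}\big)S^{-1}=F+F\trn .
\]

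For the terms without derivatives this is direct. With $P=S^2$, we get $S^{-1}PAS^{-1}=SAS^{-1}$ and $S^{-1}A\trn PS^{-1}=S^{-1}A\trn S=(SAS^{-1})\trn$, using the symmetry of $S$. These are exactly the $A$-parts of $F+F\trn$.

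The remaining, and main, step is the derivative term: we need $S^{-1}\dotm{P}S^{-1}=\dotm{S}S^{-1}+(\dotm{S}S^{-1})\trn=\dotm{S}S^{-1}+S^{-1}\dotm{S}$, where $\dotm S$ is symmetric as a derivative of symmetric matrices. Differentiating $P=S^2$ along the trajectory gives $\dotm{P}=\dotm{S}S+S\dotm{S}$. Multiplying by $S^{-1}$ on both sides then yields $S^{-1}\dotm{P}S^{-1}=S^{-1}\dotm{S}+\dotm{S}S^{-1}$, as required.

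The subtlety is justifying that the orbital derivative $\dotm S$ exists, which is needed anyway for $F$ in Lemma~\ref{lem.funode} to make sense. The map $P\mapsto\sqrt P$ is smooth (indeed real-analytic) on the open cone of positive definite matrices, since its differential at $S$ is the invertible Sylvester operator $H\mapsto HS+SH$. Composing this map with the orbitally differentiable function $t\mapsto P[x(t,a)]$ shows that $\dotm S$ exists, that it is orbitally continuous, and that it is the unique solution of the Sylvester equation $\dotm S S+S\dotm S=\dotm P$.

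Finally, $F+F\trn$ is symmetric, so its eigenvalues are real. This confirms the reality claim made before \eqref{roots}, and ordering these eigenvalues in descending order gives the enumeration in the statement.
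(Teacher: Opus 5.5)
Your proof is correct and is essentially the paper's argument. The paper likewise differentiates $P=SS$ to get $\dot{P}=\dot{S}S+S\dot{S}$, verifies $F+F\trn=S^{-1}\big(PA+A\trn P+\dot{P}\big)S^{-1}$, and converts \eqref{eigenvalue_problem} into the ordinary eigenvalue problem for $F+F\trn$ by this congruence; your extra points are accurate refinements of things the paper leaves implicit, namely the existence and orbital continuity of $\dot{S}$ via smoothness of the matrix square root, the preservation of multiplicities through the nonzero factor $(\det S)^2$, and the spurious ``$\geq 0$'' in the statement.
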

{\bf Proof:} We note that $P = SS \Rightarrow \dot{P} = \dot{S}S+ S \dot{S}$. It remains
to invoke the definition of $F(\cdot)$ in \eqref{comp.ode} to see that
\begin{gather*}
F+F\trn\ = S A S^{-1}+\dot{S} S^{-1} + S^{-1} A\trn S + S^{-1} \dot{S} \\ = S^{-1}\big[ S^2 A + S\dot{S}  + A\trn S^2 + \dot{S} S \big] S^{-1}
\\
= S^{-1}\big[ P A + A\trn P + \dot{P} \big] S^{-1} \qquad \text{and hence}
 \\
 \det\big[ (F+F\trn) - \nu I_n \big] =0 \\ \Leftrightarrow \det \big[ S^{-1}( P A + A\trn P + \dot{P} ) S^{-1}- \nu I_n \big]  =0 \\ \Leftrightarrow
\det \big[ P A + A\trn P + \dot{P}- \nu P \big]  =0. \qquad \qedhere
\end{gather*}
\par
The next lemma uses the quantity $\Xi^{P(\cdot)}_d(x)$ from \eqref{XI_P} and $\Lambda:=\sup_{x\in\mathbb{X}}\Xi^{P(\cdot)}_d(x)$ from Thm.~\ref{second_method}.
\begin{lemma}
\label{lem.kkeeyy}
If $\Lambda < \infty$, the following inequality holds:
\begin{gather}
\label{chto.nado}
\omega_d[Y_x(t)] \leq e^{\frac{\Lambda}{2} t} \qquad \forall t \geq 0.
\end{gather}
\end{lemma}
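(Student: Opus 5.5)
By Lem.~\ref{lem.funode}, $Y_x(t)$ is the fundamental matrix of the linear ODE $\dot z = F[\varphi^t(x)] z$ with $Y_x(0)=I_n$. So we can apply Lem.~\ref{lem.hjdfg} to this ODE with $F(t):=F[\varphi^t(x)]$ and use only the first inequality in \eqref{w}:
\begin{equation*}
\omega_d[Y_x(t)] \le \exp\Big\{\tfrac12\int_0^t \aleph_d^+(\tau)\,d\tau\Big\},
\end{equation*}
where $\aleph_d^+(\tau)$ is built from the ordered eigenvalues $\nu_1(\tau)\ge\dots\ge\nu_n(\tau)$ of $F[\varphi^\tau(x)]+F[\varphi^\tau(x)]\trn$. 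The backward-time part of Lem.~\ref{lem.hjdfg} is not needed here, so the extension to $[-\delta,\infty)$ plays no role.

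Next, Lem.~\ref{lem.eigenv} says that these eigenvalues are exactly the roots $\lambda_i[\varphi^\tau(x)]$ of \eqref{eigenvalue_problem} at the point $\varphi^\tau(x)$, in the same order. Hence
\begin{equation*}
\aleph_d^+(\tau)=\lambda_1[\varphi^\tau(x)]+\dots+\lambda_{d_0}[\varphi^\tau(x)]+s\lambda_{d_0+1}[\varphi^\tau(x)]=\Xi_d^{P(\cdot)}[\varphi^\tau(x)].
\end{equation*}
Here \eqref{XI_P} is read as the sum of the top $d_0$ roots plus $s$ times the next one, which is clearly what is intended. By positive invariance (Asm.~\ref{ass_on_X+}), $\varphi^\tau(x)\in\mathbb{X}$, so $\aleph_d^+(\tau)\le\Lambda$ for every $\tau\ge 0$. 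Integrating from $0$ to $t$ then gives \eqref{chto.nado}.

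The main technical point is to justify applying Lem.~\ref{lem:Smith} and Lem.~\ref{lem.hjdfg} to this ODE, whose coefficient is $F[\varphi^\tau(x)]$ with $\dot S$ only an orbital derivative. We need $\tau\mapsto F[\varphi^\tau(x)]$ to be continuous, or at least locally integrable, so that the ODE has a well-defined fundamental matrix and $\tau\mapsto\aleph_d^+(\tau)$ is integrable. Asm.~\ref{kawan} gives orbital continuity of $\dot P$. To obtain orbital continuity of $\dot S$, note that along the trajectory $\dot P = \dot S S + S\dot S$ is a Sylvester equation in $\dot S$ with positive definite $S$. Its unique solution depends continuously on $(S,\dot P)$, so $\dot S$ exists and is orbitally continuous. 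Once this is settled, continuity of the eigenvalues gives measurability and boundedness of $\aleph_d^+$ on compact intervals, and the estimate follows.
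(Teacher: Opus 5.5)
Your proof is correct and follows the paper's argument: combine Lem.~\ref{lem.funode} with the first inequality of Lem.~\ref{lem.hjdfg}, identify the eigenvalues of $F+F\trn$ with the roots $\lambda_i$ via Lem.~\ref{lem.eigenv}, and bound the integrand by $\Lambda$ using positive invariance of $\mathbb{X}$. Your extra remark on the orbital continuity of $\dot S$ fills in a regularity point the paper leaves implicit. One nit there: uniqueness of the Sylvester solution alone does not give existence of $\dot S$. What does is the invertibility of $X\mapsto XS+SX$ combined with the inverse function theorem, which makes the matrix square root $C^1$.
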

{\bf Proof:}
For $x \in \mathbb{X}$, Lem.~\ref{lem.hjdfg} and \ref{lem.funode} imply that
\begin{gather*}
\omega_d[Y_x(t)] \leq  \exp\!\Bigl\{\frac12\int_0^t \bigg[ \sum_{i=1}^{d_0} \lambda_i[\varphi^\tau(x)] + s \lambda_{d_0+1}[\varphi^\tau(x)]\bigg],d\tau\Bigr\}
%\\ \lambda_1[\varphi^\tau]+\cdots+\lambda_{d_0}[\varphi^\tau]
%\\ + s \lambda_{d_0+1}[\varphi^\tau]\big)(x)\,d\tau\Bigr\}
\\
\overset{\text{\eqref{XI_P}}}{=} \exp\!\left\{\frac12\int_0^t \Xi^{P(\cdot)}_d [\varphi^\tau(x)] \,d\tau\right\} \overset{\varphi^\tau(x) \in \mathbb{X}}{\leq}
e^{\frac{\Lambda}{2} t}. \qquad \qedhere
\end{gather*}
\par
The following last preliminary fact is an immediate corollary of Proposition 27 in \cite{KawanMaPo_automatica}.
\begin{lemma}
\label{le.prop27}
  For any $t>0$, there exists a mapping $P(\cdot)~:~\mathbb{X}\rightarrow \br^{n \times n}$ that meets Asm.~{\rm \ref{kawan}} and is such that
  \begin{equation}
  \label{prop27}
  \frac12\Xi^{P(\cdot)}_d(x) \le \frac1t\ln\omega_d\big[ X(t,x) \big] \overset{\text{\eqref{def.sigd}}}{=}\Sigma_d(t,x)~~\forall x\in\mathbb{X}.
  \end{equation}
\end{lemma}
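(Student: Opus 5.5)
The plan is to derive the lemma from Proposition~27 of \cite{KawanMaPo_automatica}, checking that its construction goes through for fractional $d$. That proposition is stated for the restoration entropy, i.e. for the sum of the top roots $\lambda_i$. The first step is to recall its mechanism. For a fixed horizon $t>0$, one builds a metric $P(\cdot)$ whose associated fundamental matrix $Y_x(\cdot)$ from Lem.~\ref{lem.funode} compresses the growth of $X(t,x)$ into instantaneous form. The construction has two parts:
\begin{itemize}
\item a matrix part, obtained by time-averaging over $[0,t]$ along the trajectory the Gram-type matrices $X(\tau,x)\trn X(\tau,x)$;
\item a scalar conformal factor $e^{2v(x)}$, which shifts every root in \eqref{eigenvalue_problem} by the same amount $2\dot v(x)$.
\end{itemize}
Orbital differentiability of $P(\cdot)$ in the sense of Asm.~\ref{kawan} follows from the cocycle identity \eqref{cocycle}. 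Differentiating an integral $\int_0^t h[\varphi^{\tau+\theta}(x)]\,d\tau$ in $\theta$ at $\theta=0$ gives $h[\varphi^t(x)]-h(x)$, which is continuous along orbits. Orbital continuity of $\dot P$ is then immediate, and symmetry and positive definiteness are inherited from the integrands.

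The second step is to verify inequality \eqref{prop27} pointwise. By Lem.~\ref{lem.eigenv}, the roots $\lambda_i(x)$ are the eigenvalues of $F(x)+F(x)\trn$. So $\frac12\Xi^{P(\cdot)}_d(x)$ equals the weighted top-eigenvalue sum $\frac12\aleph^+_d$ from \eqref{def.aleph}, evaluated for the matrix $F(x)$ of \eqref{comp.ode}. I will split $\Xi$ into a conformal contribution and a matrix contribution:
\begin{itemize}
\item The conformal contribution is $d\,\dot v(x)$ after halving. I will choose $v$ so that $\dot v(x)$ telescopes, via \eqref{cocycle}, to $\frac1{dt}\bigl(\ln\omega_d[X(t,x)]-c(x)\bigr)$.
\item The matrix contribution is bounded by $\frac1{dt}\,c(x)$. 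For this I use the variational (Ky Fan) characterization: the sum of the $k$ largest eigenvalues of a symmetric matrix is the maximum of traces over $k$-dimensional subspaces. The fractional part is handled by the convex combination of the $d_0$ and $d_0+1$ sums, exactly as in the proof of Lem.~\ref{lem.hjdfg}.
\end{itemize}
Adding the two contributions yields $\frac12\Xi^{P(\cdot)}_d(x)\le \frac1t\ln\omega_d[X(t,x)]=\Sigma_d(t,x)$.

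The main obstacle is that $\omega_d$ is not a quadratic form. A single Riemannian metric therefore cannot make the $d$-volume growth exactly additive along the orbit. A purely conformal choice of $P$ only reproduces the weaker Smith-type bound of Lem.~\ref{lem:Smith}, namely an average of $\aleph^+_d$ along the orbit, rather than $\Sigma_d(t,x)$. The first thing I will try is to carry out the averaging at the level of the $d_0$- and $(d_0+1)$-th exterior powers, using \eqref{sigma_boy}. On these powers, $\omega_{d_0}$ and $\omega_{d_0+1}$ are genuine spectral norms, so the arguments of \cite{KawanMaPo_automatica} apply verbatim. I will then interpolate with weights $1-s$ and $s$. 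Horn's inequality \eqref{eq:submult} is the tool that keeps the telescoping term an upper bound rather than an equality.

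Should this interpolation fail to produce a single $P(\cdot)$ serving both exterior powers at once, I will fall back on citing Proposition~27 of \cite{KawanMaPo_automatica} directly. That proposition is formulated for general subadditive cocycles of the form $\ln\omega_d[X(t,x)]$, which subadditivity Lem.~\ref{lem.cocycle} already established.
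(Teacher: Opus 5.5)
\textbf{The self-contained reconstruction has a genuine gap, and the only complete step is the citation, which is how the paper proves the lemma.}

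Your own argument does not reach the inequality. The bound on the ``matrix contribution'' is asserted, not proved, and the tools you name do not deliver it. The conformal factor adds $d\,\dot v$ to $\tfrac12\Xi^{P(\cdot)}_d$. This is the orbital derivative of a bounded function, so its averages along orbits tend to zero: it can shift an excess along an orbit but cannot cancel it. Everything therefore rests on the matrix part. For the averaged Gram matrix $P_m(x)=\tfrac1t\int_0^t X(\tau,x)\trn X(\tau,x)\,d\tau$, the cocycle identity gives $A\trn P_m+P_mA+\dot P_m=\tfrac1t\bigl[X(t,x)\trn X(t,x)-I_n\bigr]$. The roots are then the stationary values of $v\mapsto\bigl(\|X(t,x)v\|_2^2-\|v\|_2^2\bigr)\big/\int_0^t\|X(\tau,x)v\|_2^2\,d\tau$. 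These are ratios of quadratic increments to averaged quadratic sizes, not logarithms of singular values. Ky Fan writes $\sum_{i\le k}\lambda_i$ as a \emph{maximum} over $k$-planes, so it gives an upper bound only if you control the trace on every $k$-plane, and that is the whole problem. Nothing in your construction ties that trace to $\tfrac2t\ln\omega_k[X(t,x)]$ up to a term a conformal factor can absorb. The bookkeeping also needs fixing. With $d\dot v=\tfrac1t(\ln\omega_d-c)$, the matrix part must be bounded by $\tfrac1t c$, not $\tfrac1{dt}c$. You must also choose $c$ so that $\tfrac1t\bigl(\ln\omega_d[X(t,x)]-c(x)\bigr)$ really is an orbital derivative. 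The remedy you propose for the obstacle fails as well. Averaging $(\wedge^kX(\tau,x))\trn(\wedge^kX(\tau,x))$ gives an inner product on $\wedge^k\br^n$ that is in general not the compound $\wedge^kP$ of any metric $P$ on $\br^n$, since a sum of compound matrices is generally not a compound matrix. The roots of \eqref{eigenvalue_problem} are defined only through such a $P$. So this step yields no candidate $P(\cdot)$ even for one integer $k$, let alone for $d_0$ and $d_0+1$ at once.

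The only complete route in your proposal is therefore the final one: citing Proposition~27 of \cite{KawanMaPo_automatica}. That is exactly the paper's proof, which states the lemma as an immediate corollary of that proposition with no further argument. However, the reason you give for why the citation applies is not the operative one. The subadditivity from Lem.~\ref{lem.cocycle} only yields the limit identities \eqref{three,eq}; it produces no metric. What you must take from the cited result is a single $P(\cdot)$ satisfying Asm.~\ref{kawan} such that $\tfrac12\sum_{i\le k}\lambda_i(x)\le\tfrac1t\ln\omega_k[X(t,x)]$ for all $x\in\mathbb{X}$ and both $k=d_0$ and $k=d_0+1$. Then \eqref{prop27} is precisely your convex combination, because $\Xi^{P(\cdot)}_d=(1-s)\sum_{i\le d_0}\lambda_i+s\sum_{i\le d_0+1}\lambda_i$ and $\ln\omega_d=(1-s)\ln\omega_{d_0}+s\ln\omega_{d_0+1}$. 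In other words, the ``single metric serving both exterior powers'' that you flag as a possible point of failure is exactly what the citation must supply. Your own construction does not supply it.
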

\par
{\bf Proof of Thm.~\ref{second_method}:} \textbf{ii) $ \Rightarrow $ i):}
Due to Asm.~\ref{ass_on_X+} and \ref{kawan}, there exist $q_\pm \in (0,\infty )$ such that $\sigma_1[S(x)] \leq q_+, \sigma_1[S(x)^{-1}] \leq q_- \; \forall x \in \mathbb{X}$. Lem.~\ref{lem.sv} guarantees that
\begin{gather*}
\omega_d \big[Y_x(t) \big] \leq q_-q_+ \omega_d \big[X_x(t) \big] , \\ \qquad \omega_d \big[X_x(t) \big] \overset{\text{\eqref{def.yxode1}}}{=} \omega_d \left\{ S\!\big[\varphi^t(x)\big]^{-1}\,Y_x(t)\,S(x) \right\} \\
\leq q_-q_+ \omega_d \big[Y_x(t) \big]
\\
\Rightarrow
\uplim_{t\rightarrow\infty}\frac1t\ln\omega_d\big[ Y_x(t) \big] \leq \uplim_{t\rightarrow\infty}\frac1t\ln\omega_d \big[ X_x(t) ] \\ + \uplim_{t\rightarrow\infty} \frac{\ln q_-q_+}{t} = \uplim_{t\rightarrow\infty}\frac1t\ln\omega_d \big[ X_x(t) ] .
\end{gather*}
By the same arguments, the converse inequality also holds and, overall,
$$
\uplim_{t\rightarrow\infty}\frac1t\ln\omega_d\big[ Y_x(t) \big] = \uplim_{t\rightarrow\infty}\frac1t\ln\omega_d\big[ X_x(t) \big].
$$
 Since $Y_{x}(t+\tau) = Y_{\varphi^\tau}(t) Y_x(\tau)$, inequality \eqref{eq:submult} implies that the mapping
$(t,x) \mapsto\frac1t\ln\omega_d \big[ Y_x(t)\big]$ is a subadditive cocycle. So the statement of Lem.~\ref{lem.cocycle} extends to it
by App.~B.6 in \cite{Kawan_book} (see also \cite[Thm.~1]{{Schreiber}}. Therefore
\begin{gather*}
{\bf \Sigma_d}\overset{\text{\eqref{def.sigd},\eqref{three,eq}}}{=\!=\!=\!=}\sup_{x\in\mathbb{X}}\uplim_{t\rightarrow\infty}\frac1t\ln\omega_d\big[ X_x(t)\big] \\ =\sup_{x\in\mathbb{X}}\uplim_{t\rightarrow\infty}\frac1t\ln\omega_d\big[ Y_x(t)\big]
\overset{\text{\eqref{chto.nado}}}{\leq} \Lambda/2 \overset{\text{(a)}}{<}0,
\end{gather*}
where (a) holds by ii) in Thm.~\ref{second_method}. The proof of i) in Thm.~\ref{second_method} is completed by
Thm.~\ref{firstmethod}.
\par
\textbf{i) $ \Rightarrow $ ii):} For any $t>0$, Lem.~\ref{le.prop27} guarantees existence of a mapping $P(\cdot)$ that meets Asm.~{\rm \ref{kawan}} and \eqref{prop27}. For any $\ve>0$, \eqref{three,eq} implies that $\max_{x\in\mathbb{X}}\Sigma_d(t,x) \leq \mathbf{\Sigma_d} + \ve $ if $t$ is large enough, where $\mathbf{\Sigma_d} <0$ by Thm.~\ref{firstmethod}. The proof is completed by taking $\ve < - \mathbf{\Sigma_d}/2$.
\endproof
\par
{\bf Proof of Remark~\ref{rem.drop+}:} It suffices to inspect the proof of ii) $\Rightarrow$ i) part in Thm.~\ref{second_method}, with regard to Rem.~~\ref{rem.drop}. \hfill \endproof
\section{Anytime contraction}\label{sec:monotone-decay}
In this section, we deal with Hausdorff-Riemann elliptic $d$-measures
\eqref{ellip.meas+} born of functions $P:\mathbb{X}\to\R^{n\times n}$ satisfying Asm.~\ref{kawan}. However, for this measure to be correct, the mapping $P(\cdot)$ should satisfy Asm.~\ref{ass.p} and, in particular, be defined on the entire $\br^n$.
We further assume that $P(\cdot)$ is preliminarily extended from $\mathbb{X}$ to $\br^n$ to meet Asm.~\ref{ass.p}. This is possible by the following.
\begin{lemma}
If $P:\mathbb{X}\to\R^{n\times n}$ meets Asm.~{\rm \ref{kawan}}, then there exists its extension to $\br^n$ that satisfies Asm.~{\rm \ref{ass.p}}.
\end{lemma}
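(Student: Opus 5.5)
The plan is to build the extension in two stages: first extend $P$ continuously off $\mathbb{X}$, then correct the extension so that its values remain symmetric positive definite with uniformly controlled extreme singular values. Asm.~\ref{ass.p} requires only these properties, namely that every $P(x)$ be symmetric positive definite and that the bounds \eqref{two.ineq} hold. No continuity or differentiability is demanded there. The orbital-derivative part of Asm.~\ref{kawan} therefore plays no role, and only the continuity of $P(\cdot)$ on the compact set $\mathbb{X}$ (Asm.~\ref{ass_on_X+}) will be used.

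\textbf{Step 1 (bounds on $\mathbb{X}$).} By Asm.~\ref{kawan}, $x\mapsto P(x)$ is continuous on the compact set $\mathbb{X}$. The eigenvalues of a symmetric matrix depend continuously on its entries, so $\sigma_1[P(x)]$ attains a finite maximum $M^2$ on $\mathbb{X}$. Likewise $\sigma_n[P(x)]$ attains a minimum $m^2$, and $m>0$ because every $P(x)$ is positive definite.

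\textbf{Step 2 (extension).} Outside $\mathbb{X}$ I would take the simplest admissible choice, $P(x):=I_n$ for $x\notin\mathbb{X}$. This gives
\[
M_{P(\cdot)}\le\max\{M,1\}<\infty, \qquad m_{P(\cdot)}\ge\min\{m,1\}>0,
\]
so \eqref{two.ineq} holds and Asm.~\ref{ass.p} is met.

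\textbf{Step 3 (continuous variant).} If continuity of the extension is wanted for later use, I would proceed differently. Apply the Tietze extension theorem entrywise to the upper-triangular entries of $P$, which yields a continuous symmetric-matrix-valued $\tilde P$ on $\br^n$. This $\tilde P$ may fail to be positive definite or bounded far from $\mathbb{X}$. To repair it, replace it by its spectral clipping $\hat P(x):=h[\tilde P(x)]$, where $h(\lambda):=\min\{\max\{\lambda,m^2\},M^2\}$ is applied to the eigenvalues. The map $\tilde P\mapsto h(\tilde P)$ is continuous on symmetric matrices because $h$ is Lipschitz. It also leaves $P(x)$ unchanged for $x\in\mathbb{X}$, since there the spectrum already lies in $[m^2,M^2]$. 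Finally, it forces \eqref{two.ineq} with $M_{P(\cdot)}=M$ and $m_{P(\cdot)}=m$.

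The only step needing any care is this continuous variant, specifically the continuity of the matrix function $h(\cdot)$. It follows from the standard fact that a Lipschitz scalar function induces a continuous operator function on symmetric matrices. The piecewise choice in Step 2 avoids the issue entirely.
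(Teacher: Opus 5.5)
Your proposal is correct, but it takes a different route from the paper. The paper also begins with an entrywise Tietze extension $P_1$. It then repairs $P_1$ in two explicit stages. First it damps it to the bounded continuous map $P_2(x)=P_1(x)/(1+\|P_1(x)\|_2\,\dist{x}{\mathbb{X}})$. Next it uses continuity to get $P_2\ge\alpha I_n$ on a neighbourhood $N^{\mathbb{X}}_\gamma$. Finally it adds $2p\frac{1+\gamma}{\gamma}\frac{\dist{x}{\mathbb{X}}}{1+\dist{x}{\mathbb{X}}}I_n$ to restore uniform positivity away from $\mathbb{X}$.

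Your Step~3 replaces both repairs with a single spectral clipping of $\tilde P(x)$ onto $[m^2,M^2]$. This relies on the standard continuity of the matrix functional calculus, but it removes the neighbourhood argument. It also keeps $M_{P(\cdot)}$ and $m_{P(\cdot)}$ equal to their values on $\mathbb{X}$, so the sandwich constants in \eqref{sadnwich2} are not inflated. The paper's route, in exchange, is fully elementary and gives an explicit formula.

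One caveat concerns Step~2. It does prove the lemma as literally stated, since Asm.~\ref{ass.p} imposes no continuity. However, it does not ``avoid the issue entirely'' in the sense the paper needs. The extension is used in Sec.~\ref{sec:monotone-decay}, where Thm.~\ref{thm:area-infty} is applied to compact sets $K\subset\mathbb{X}$ that may meet $\partial\mathbb{X}$, for instance $K=\mathbb{X}$.

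The proof of that theorem bounds each covering ellipsoid by $\Omega_\gamma$. This is a maximum over centres in $N^K_\gamma$, which partly lie outside $\mathbb{X}$, and the proof then needs $\Omega_\gamma\to\Omega_K$ as $\gamma\to0+$. That limit requires $P(\cdot)$ to be continuous on a neighbourhood of $K$. With $P\equiv I_n$ off $\mathbb{X}$, points $c\notin\mathbb{X}$ near a boundary point $x$ contribute either $\omega_d\{S[g(c)]\,Dg(c)\}$ or $\omega_d[Dg(c)]$. Neither tends to $\omega_d\{S[g(x)]\,Dg(x)\,S(x)^{-1}\}$. So the anytime-decay argument would break, and it is your Step~3, or the paper's construction, that actually serves the paper.
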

\begin{proof}
By entry-wise applying the classic Tietze extension theorem \cite{Engel89} to $P(\cdot)$, we first extend $P(\cdot)$ to $\br^n$ as a continuous matrix function $P_1(\cdot)$ whose values are symmetric $n \times n$-matrices (not necessarily positive definite). Then we transform $P_1(\cdot)$ into not only a continuous but also a bounded extension $P_2(x) := \frac{P_1(x)}{1+ \|P_1(x)\|_2 \dist{x}{\mathbb{X}}}$. Here the distance function $\dist{x}{\mathbb{X}}:= \min_{y \in \mathbb{X}} \|x-y\|$ is well known to be Lipschitz. Since the matrix $P_2(x) = P(x)$ is positive definite on $\mathbb{X}$, the same is true in some neighborhood of $\mathbb{X}$ by continuity: there are $\gamma>0, \alpha>0$ such that $P_2(x) \geq \alpha I_n \; \forall x \in N^{\mathbb{X}}_\gamma$. Denoting $p:= \sup_{x \in \br^n} \|P_2(x)\|_2$, it is easy to see that the function $P(x) := P_2(x) + 2p \frac{1+\gamma}{\gamma}\frac{\dist{x}{\mathbb{X}}}{1+\dist{x}{\mathbb{X}}} I_n$ does satisfy  Asm.~{\rm \ref{ass.p}} and extends $P(\cdot)$.
\end{proof}
\par
Uniform $d$-contractivity in the sense of Defn.~\ref{ass_on_X+} means that shrinking of the $d$-measure may be observed only after some and maybe, long time passed.
In this section, we show that under a special choice of a Hausdorff-Riemann elliptic $d$-measure, this shrinking takes an infinitesimally short time to come into effect and is observed in any time. Moreover, the $d$-measure decays in an enhanced sense introduced in the following.
\begin{dfn}
\label{def.decay}
Let $z_{\mathfrak{z}}(\cdot) : [0,\infty) \to \br$ be a function that depends on a parameter $\mathfrak{z} \in \mathfrak{Z}$. This function
is said to {\em uniformly decay with an exponential intensity} if there is $\ve>0$ such that
$z_{\mathfrak{z}}(t) \leq e^{- \ve(t-\tau)} z_{\mathfrak{z}}(\tau)\; \forall t \geq \tau \geq 0, \mathfrak{z} \in \mathfrak{Z}$.
%The coefficient $\ve$ is called the {\em rate of decay}.
\end{dfn}
If $z_{\mathfrak{z}}(\cdot)$ is an absolutely continuous function of $t$, the property from Defn.~\ref{def.decay} holds if and only if   $\dot{z}_{\mathfrak{z}}(t) \leq - \ve z_{\mathfrak{z}}(t)$ for almost all $t$ whenever $\mathfrak{z} \in {\mathfrak{Z}}$.
\par
The following is in fact the key result of the paper.
\begin{thmm}
\label{second_method_2+}
Suppose that Asm.~{\rm \ref{ass_on_X}} and {\rm \ref{ass_on_X+}} hold. Then
the following statements are equivalent:
\begin{enumerate}[{\bf i)}]
\item The system \eqref{given_system} is uniformly $d$-contractive on $\mathbb{X}$;
\item There exists a matrix function $P:\mathbb{X}\to\R^{n\times n}$ such that Asm.~{\rm \ref{kawan}} is true and the associated Hausdorff-Riemann elliptic $d$-measure $\pi_d^{P(\cdot)} \big[ \varphi^t(K) \big]$ decays with an exponential intensity uniformly over the compact subsets $K \subset \mathbb{X}$ with $\pi_d^{P(\cdot)} \big[K \big]<\infty$.
\end{enumerate}
\end{thmm}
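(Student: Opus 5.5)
The implication ii) $\Rightarrow$ i) will be derived from the sandwich bounds, and i) $\Rightarrow$ ii) from Thm.~\ref{second_method} combined with the area type inequality of Thm.~\ref{thm:area-infty}.

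\textbf{ii) $\Rightarrow$ i).} Extend $P(\cdot)$ to $\br^n$ so that Asm.~\ref{ass.p} holds; this is possible by the extension lemma above. Let $K\subset\mathbb{X}$ be compact with $\mu_d(K)<\infty$. By \eqref{sadnwich2}, $\pi_d^{P(\cdot)}(K)\le [M_{P(\cdot)}]^d\mu_d(K)<\infty$, so the decay hypothesis applies with $\tau=0$. Using \eqref{sadnwich2} twice then gives
\[
\mu_d[\varphi^t(K)]\le 2^n[m_{P(\cdot)}]^{-d}\pi_d^{P(\cdot)}[\varphi^t(K)]\le 2^n e_{P(\cdot)}^d e^{-\ve t}\mu_d(K).
\]
This is the exponential $d$-contraction ii) of Defn.~\ref{def.contr}, which trivially yields the uniform one.

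\textbf{i) $\Rightarrow$ ii).} By Thm.~\ref{second_method}, there is $P(\cdot)$ meeting Asm.~\ref{kawan} with $\Lambda<0$. Extend it to $\br^n$ as above. Fix $\tau\ge0$ and $h>0$, and apply Thm.~\ref{thm:area-infty} with $g:=\varphi^h$ to the compact set $\mathscr{K}:=\varphi^\tau(K)\subset\mathbb{X}$. The Jacobian $X(h,\cdot)$ is nonsingular because it is a fundamental matrix, and $\varphi^h$ is $C^1$ on the open set $O$ from Asm.~\ref{ass_on_X}. Since $P$ on $\mathbb{X}$ is the original one and $\mathscr{K}\subset\mathbb{X}$, the constant in \eqref{area.for} is
\[
\Omega_{\mathscr{K}}=\max_{x\in\mathscr{K}}\omega_d\{S[\varphi^h(x)]X(h,x)S(x)^{-1}\}=\max_{x\in \mathscr{K}}\omega_d[Y_x(h)]\le e^{\Lambda h/2},
\]
where the last bound is Lem.~\ref{lem.kkeeyy}. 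Hence
\[
\pi_d^{P(\cdot)}[\varphi^{\tau+h}(K)]\le e^{-\ve h}\pi_d^{P(\cdot)}[\varphi^\tau(K)]\quad\text{with } \ve:=-\Lambda/2>0,
\]
for all $\tau\ge0$ and $h>0$. This is exactly the uniform decay of Defn.~\ref{def.decay}, and the rate $\ve$ does not depend on $K$.

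\textbf{Main obstacle.} The hardest step is justifying the application of Thm.~\ref{thm:area-infty} with a metric that is only known on $\mathbb{X}$. The theorem evaluates $\Omega_K$ only at points $x\in K$ and $g(x)$, and both lie in $\mathbb{X}$ by positive invariance. Still, the proof of that theorem passes through $\gamma$-neighborhoods, where the extended $P$ enters, and then lets $\gamma\to0$. I will check that this limit recovers the maximum over $K$ itself, which needs continuity of $P$ in those neighborhoods (this holds for the Tietze-type extension). Note that the orbital derivative of $P$ enters only through Lem.~\ref{lem.kkeeyy}, which is a statement on $\mathbb{X}$, so it is not needed off $\mathbb{X}$. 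A second point to verify is that finiteness of $\pi_d^{P(\cdot)}$ is preserved along the flow, which is immediate from the displayed inequality.
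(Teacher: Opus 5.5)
Your proposal is correct and follows the paper's route: ii)$\Rightarrow$i) comes from the sandwich bounds \eqref{sadnwich2}. For i)$\Rightarrow$ii), you combine Thm.~\ref{second_method}, the area-type inequality of Thm.~\ref{thm:area-infty} with $g:=\varphi^{t-\tau}$, and the bound $\omega_d[Y_x(\theta)]\le e^{\Lambda\theta/2}$ of Lem.~\ref{lem.kkeeyy}, which is exactly the content of Lem.~\ref{lem.twosided} and Prop.~\ref{prop.fgg} that the paper invokes. Your remark that the passage $\Omega_\gamma\to\Omega_K$ in the proof of Thm.~\ref{thm:area-infty} tacitly needs continuity of the extended $P(\cdot)$ near $K$ is a fair point that the paper leaves implicit; the Tietze-based extension lemma supplies that continuity.
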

\par
As will be shown, this theorem is a nearly immediate corollary of the following.
\begin{proposition}
Let Asm.~{\rm \ref{ass_on_X}} and {\rm \ref{ass_on_X+}} hold, a mapping $P(\cdot)~:~\mathbb{X}\rightarrow \br^{n \times n}$ satisfy Asm.~{\rm \ref{kawan}}, and $\Lambda:=\sup_{x\in\mathbb{X}}\Xi^{P(\cdot)}_d(x) < \infty$, where $\Xi^{P(\cdot)}_d(x)$ is defined by \eqref{XI_P}. Then for any compact set $K \subset \mathbb{X}$ with the finite Hausdorff-Riemann elliptic $d$-measure and $t \geq \tau \geq 0$,
\begin{gather}
\pi_d^{P(\cdot)}\!\big[\varphi^t(K)\big] \le\ e^{\Lambda (t-\tau)/2}\ \pi_d^{P(\cdot)}\!\big[\varphi^\tau(K)\big] .
\label{qed.inter1}
\end{gather}
If in addition $\inf\limits_{x\in\mathbb{X}} \lambda_n(x) > -\infty$, the function $\Upsilon(t):=\pi_d^{P(\cdot)} \big[ \varphi^t(K) \big] $ is absolutely continuous and $\dot{\Upsilon}(t) \leq \Lambda \Upsilon(t)/2$ for almost all $t \geq 0$.
\iffalse
\begin{enumerate}[{\bf i)}]
\item $\pi_d^{P(\cdot)} \big[ \varphi^t(K) \big] \leq e^{\Lambda(t-\tau)/2} \pi_d^{P(\cdot)} \big[ \varphi^\tau(K) \big] \; \forall t \geq \tau \geq 0$;
\item The function  $\Upsilon(t):=\pi_d^{P(\cdot)} \big[ \varphi^t(K) \big] $ is absolutely continuous and $\dot{\Upsilon}(t) \leq \Lambda \Upsilon(t)/2$ for almost all $t \geq 0$ provided that $\inf\limits_{x\in\mathbb{X}} \lambda_n(x) > -\infty$.
\end{enumerate}
\fi
\label{prop.fgg}
\end{proposition}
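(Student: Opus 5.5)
The plan is to derive \eqref{qed.inter1} from the area type inequality of Thm.~\ref{thm:area-infty}, applied with $g:=\varphi^{h}$ for a time increment $h:=t-\tau\ge 0$, and the compact set $\mathscr{K}:=\varphi^\tau(K)\subset\mathbb{X}$. Here $P(\cdot)$ is first extended to $\br^n$ so that Asm.~\ref{ass.p} holds, as the preceding lemma allows. By Asm.~\ref{ass_on_X}, $\varphi^h$ is continuously differentiable on an open set containing $\mathbb{X}$, and its Jacobian $X(h,x)$ is nonsingular, so Asm.~\ref{ass.map} and the nondegeneracy hypothesis hold. Thm.~\ref{thm:area-infty} then yields
\[
\pi_d^{P(\cdot)}\big[\varphi^{t}(K)\big]\le \Omega\,\pi_d^{P(\cdot)}\big[\varphi^{\tau}(K)\big],\qquad \Omega=\max_{x\in\mathscr{K}}\omega_d\big\{S[\varphi^h(x)]X(h,x)S(x)^{-1}\big\}=\max_{x\in\mathscr{K}}\omega_d\big[Y_x(h)\big],
\]
with $Y_x$ from \eqref{def.yxode1}. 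Since $\mathscr{K}\subset\mathbb{X}$, Lem.~\ref{lem.kkeeyy} gives $\omega_d[Y_x(h)]\le e^{\Lambda h/2}$, which is exactly \eqref{qed.inter1}. One technical point needs checking. Thm.~\ref{thm:area-infty} uses the values of the extended $P$ only at the points $x$ and $\varphi^h(x)$ of $\mathbb{X}$, and both $Y_x$ and Lem.~\ref{lem.kkeeyy} involve $P$ only along trajectories inside $\mathbb{X}$. Hence the extension does not affect the constant, and the original orbital derivative is the one that enters.

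For the second claim, assume $\inf_{\mathbb{X}}\lambda_n>-\infty$. Then the second inequality of Lem.~\ref{lem.hjdfg}, applied to the ODE \eqref{comp.ode} and combined with Lem.~\ref{lem.eigenv}, should bound the backward growth. For $h>0$, apply Thm.~\ref{thm:area-infty} to $g:=\varphi^{-h}$ defined on a neighbourhood of $\varphi^{t}(K)$, or equivalently use $Y_x(h)^{-1}$. This gives $\omega_d[Y_x(h)^{-1}]\le e^{-\underline{\Lambda}h/2}$ with $\underline{\Lambda}:=d\cdot\inf_{\mathbb{X}}\lambda_n$, and hence
\[
\Upsilon(t)\ge e^{\underline{\Lambda}h/2}\,\Upsilon(t-h),\qquad \Upsilon(t+h)\le e^{\Lambda h/2}\,\Upsilon(t).
\]
Together these show that $\ln\Upsilon$ is Lipschitz, so $\Upsilon$ is locally Lipschitz and therefore absolutely continuous. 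The case $\Upsilon\equiv 0$ is trivial, because once the measure vanishes it stays zero by \eqref{qed.inter1}. Dividing $\Upsilon(t+h)-\Upsilon(t)\le (e^{\Lambda h/2}-1)\Upsilon(t)$ by $h$ and letting $h\to 0+$ at points of differentiability gives $\dot\Upsilon\le\Lambda\Upsilon/2$ almost everywhere.

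I expect the main obstacle to be the backward step. The map $\varphi^{-h}$ is defined only on $\varphi^{h}(O)$, and the points $\varphi^{-s}(y)$ may leave $\mathbb{X}$, where the bound on $\lambda_n$ is not assumed. I would first try to avoid backward flow altogether. Write $\varphi^{\tau}(K)=\varphi^{h}[\varphi^{\tau-h}(K)]$ and use a lower area inequality: from Lemma~\ref{thm:unit-constant} applied to $g:=\varphi^{-h}$ on the compact set $\varphi^{\tau}(K)\subset\varphi^{h}(\mathbb{X})$, every point of that set has a preimage in $\mathbb{X}$. The relevant Jacobians are then $Y_{x}(h)^{-1}$ with $x\in\mathbb{X}$, and these are controlled by the forward trajectory inside $\mathbb{X}$. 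This should let the $\lambda_n$ bound on $\mathbb{X}$ suffice.
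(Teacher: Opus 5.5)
Your proposal is correct and follows essentially the paper's own route (Lem.~\ref{lem.twosided}): the forward bound comes from Thm.~\ref{thm:area-infty} with $g=\varphi^{t-\tau}$ together with Lem.~\ref{lem.kkeeyy}. Absolute continuity then comes from a backward area inequality whose constant is controlled by a lower bound on the roots $\lambda_i$ along forward trajectories inside $\mathbb{X}$; you use $d\inf_{\mathbb{X}}\lambda_n$ and the inverse of $\varphi^{h}$ on $\varphi^{h}(O)$ where the paper uses $\Lambda_-$ and a flow extended to $[-\delta,\infty)$. The only slip is that your $\ln\Upsilon$ argument needs the dichotomy ``$\Upsilon\equiv 0$ or $\Upsilon>0$ everywhere'', which follows from your backward inequality rather than from \eqref{qed.inter1}; alternatively, bound $|\Upsilon(t+h)-\Upsilon(t)|\le Ch\,\Upsilon(t)$ directly, as the paper does, and then no positivity is needed.
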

To prove this, we invoke the notations \eqref{def.yxode}, \eqref{def.yxode1}
and by using, e.g., \cite[Ch.~II, Cor.~2.1]{Hart82}, extend the flow $\varphi^t(x) , x \in O$ from $t \in [0,\infty)$ to $[-\delta, \infty)$ with some $\delta>0$ (maybe, while shrinking the open set $O$ from Asm.~\ref{ass_on_X}). We also assume that $e^{-\infty} := 0$, invoke $d_0$ and $s$ from \eqref{decopp}, put $d_0^{\leftarrow} := n-d_0+1$, and introduce the following notations:
\begin{gather}
\label{def.lll}
\Xi^{P(\cdot)}_{d,\leftarrow}(x)=\lambda_n(x)+\cdots+\lambda_{d_0^{\leftarrow}}(x)+s\lambda_{n-d_0}(x),
\quad \\
\Lambda_- := \inf_{x \in \mathbb{X}} \Xi^{P(\cdot)}_{d,\leftarrow}(x). \label{def.lll1}
\end{gather}
\begin{lemma}
For any compact set $K \subset \mathbb{X}$ of a finite Hausdorff-Riemann elliptic $d$-measure and $t \geq \tau \geq 0$, inequality \eqref{qed.inter1} holds and whenever $t-\tau \leq \delta$ in addition,
\begin{gather}
e^{\Lambda_- (t-\tau)/2}\ \pi_d^{P(\cdot)}\!\big[\varphi^\tau(K)\big] \leq \pi_d^{P(\cdot)}\!\big[\varphi^t(K)\big].
\label{qed.inter}
\end{gather}
\label{lem.twosided}
\end{lemma}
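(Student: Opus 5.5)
The plan is to apply the area-type inequality of Thm.~\ref{thm:area-infty} to the flow map over the elapsed time, with the metric $P(\cdot)$ (extended to $\br^n$ per the preceding lemma so that Asm.~\ref{ass.p} holds). Put $h:=t-\tau\ge 0$. By Asm.~\ref{ass_on_X+}, $\mathscr{K}:=\varphi^\tau(K)$ is a compact subset of $\mathbb{X}$, and $\varphi^t(K)=\varphi^h(\mathscr{K})$. The map $g:=\varphi^h$ is $C^1$ on the open set $O$ and has nonsingular Jacobian $X_x(h)$, since a fundamental matrix is always invertible. Theorem~\ref{thm:area-infty} therefore gives
\begin{equation*}
\pi_d^{P(\cdot)}\big[\varphi^t(K)\big]\le \Omega\,\pi_d^{P(\cdot)}(\mathscr{K}),\qquad \Omega=\max_{x\in\mathscr{K}}\omega_d\big\{S[\varphi^h(x)]X_x(h)S(x)^{-1}\big\}=\max_{x\in\mathscr{K}}\omega_d[Y_x(h)].
\end{equation*}
Since $\mathscr{K}\subset\mathbb{X}$, Lem.~\ref{lem.kkeeyy} yields $\omega_d[Y_x(h)]\le e^{\Lambda h/2}$, which proves \eqref{qed.inter1}. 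This holds for all $h\ge0$, including $h=0$.

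For the lower bound \eqref{qed.inter}, I would run the same argument backwards in time. I apply Thm.~\ref{thm:area-infty} to $g:=\varphi^{-h}$ with $h\in[0,\delta]$, using the flow extended to $[-\delta,\infty)$, and to the compact set $\varphi^t(K)\subset\mathbb{X}$. This gives $\pi_d^{P(\cdot)}[\varphi^\tau(K)]\le\max_{y\in\varphi^t(K)}\omega_d[Y_y(-h)]\;\pi_d^{P(\cdot)}[\varphi^t(K)]$, where $Y_y(\cdot)$ is still given by \eqref{def.yxode1} and, by Lem.~\ref{lem.funode}, is fundamental for \eqref{comp.ode} on $[-\delta,0]$. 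The second inequality of Lem.~\ref{lem.hjdfg}, combined with Lem.~\ref{lem.eigenv} (which identifies the $\nu_i$ with the $\lambda_i$ evaluated along $\varphi^\theta(y)$), then gives
\begin{equation*}
\omega_d[Y_y(-h)]\le\exp\Big\{\tfrac12\int_0^{-h}\Xi^{P(\cdot)}_{d,\leftarrow}[\varphi^\theta(y)]\,d\theta\Big\}=\exp\Big\{-\tfrac12\int_{-h}^{0}\Xi^{P(\cdot)}_{d,\leftarrow}[\varphi^\theta(y)]\,d\theta\Big\}.
\end{equation*}
For $y\in\varphi^t(K)$ and $\theta\in[-h,0]$ we have $\varphi^\theta(y)\in\varphi^{t+\theta}(K)\subset\mathbb{X}$ because $t+\theta\ge\tau\ge0$. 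Hence the integrand is at least $\Lambda_-$, and the bound becomes $e^{-\Lambda_-h/2}$. Rearranging gives \eqref{qed.inter}. When $\Lambda_-=-\infty$, the convention $e^{-\infty}=0$ makes the claim trivial.

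I expect the main obstacle to be the technical side of the backward step, not the estimates themselves. First, the quantities $\dot S$, and hence $F$ in \eqref{comp.ode}, are defined only on $\mathbb{X}$. Lem.~\ref{lem.funode} and Lem.~\ref{lem.hjdfg} must therefore be applied only along backward arcs that stay in $\mathbb{X}$; the observation $\varphi^{t+\theta}(K)\subset\mathbb{X}$ secures exactly this. Second, $\varphi^{-h}$ must be $C^1$ with nonsingular Jacobian on a neighbourhood of $\varphi^t(K)$, which follows from the extension of the flow after possibly shrinking $O$. I would also check that the orbital derivative $\dot S$ exists and is orbitally continuous given Asm.~\ref{kawan}, so that Lem.~\ref{lem.funode} legitimately applies. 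Finally, the hypothesis of finite measure is needed only to avoid vacuous $\infty\le\infty$ comparisons.
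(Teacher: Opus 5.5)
Your proposal is correct and follows essentially the paper's own route. Thm.~\ref{thm:area-infty} applied to $\varphi^{t-\tau}$ on $\varphi^\tau(K)$, together with Lem.~\ref{lem.kkeeyy}, gives \eqref{qed.inter1}; the same theorem applied to $\varphi^{\tau-t}$ on $\varphi^t(K)$, combined with the backward bound of Lem.~\ref{lem.hjdfg} and Lem.~\ref{lem.eigenv} along backward arcs that stay in $\mathbb{X}$, gives \eqref{qed.inter}, and your exponent $e^{-\Lambda_-(t-\tau)/2}$ is the right one (the paper's final display drops the factor $1/2$, which \eqref{qed.inter} actually requires).
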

{\bf Proof:}
We introduce the $\theta$-dependent ($\theta \geq 0$) compact subset $\mathscr{K}_\theta:= \varphi^\theta(K)$ of $\mathbb{X}$ and note that
\begin{gather*}
\mathscr{K}_0 = K, \quad
\mathscr{K}_t = \varphi^{t-\tau}(\mathscr{K}_\tau) \quad  \text{if}\quad t - \tau \geq - \delta, t \geq 0, \tau \geq 0.
\end{gather*}
Let $t \geq \tau \geq 0$ be given. By using Thm.~\ref{thm:area-infty} and the relation $\mathscr{K}_t = \varphi^{t-\tau}[\varphi^\tau(K)]$, we infer that
\begin{gather}
\pi_d^{P(\cdot)}\big[\mathscr{K}_t\big] \leq \Omega_{t-\tau, \tau} \times \pi_d^{P(\cdot)} \big[\mathscr{K}_\tau\big] , \qquad \text{\rm where} \;
\label{sumilalrly}
\\
\nonumber
\Omega_{\theta,\eta}:=
\max_{x\in\mathscr{K}_\eta}\omegad\big\{S[\varphi^{\theta}(x)]\,D \varphi^{\theta}(x)\,S(x)^{-1}\big\} \\
\nonumber
\overset{\text{\eqref{def.yxode1}}}{=}
\max_{x\in\mathscr{K}\eta}\omegad\big[ Y_x(\theta) \big] \overset{\text{\eqref{chto.nado}}}{\leq}  e^{\frac{\Lambda}{2} \theta}
\Rightarrow \text{ \eqref{qed.inter1}}.
\end{gather}
\par
Suppose that $t - \tau \leq \delta$. If $\Lambda_- = - \infty$, \eqref{qed.inter} is trivially true. Let $\Lambda_- > - \infty$. Similarly to \eqref{sumilalrly}, Thm.~\ref{thm:area-infty} and the relation $\mathscr{K}_\tau = \varphi^{\tau - t}(\mathscr{K}_t)$ imply that
\begin{gather*}
\pi_d^{P(\cdot)} [\mathscr{K}_\tau ] \leq \Omega_{\tau-t, t} \times \pi_d^{P(\cdot)} \big[\mathscr{K}_t\big],
\\
\Omega_{\tau-t, t}  = \max_{x\in\mathscr{K}_t}\omegad\big[ Y_x(\tau-t) \big] \overset{\text{\eqref{def.lll}, Lem.~\ref{lem.hjdfg}--\ref{lem.eigenv}}}{\leq} \\
\max_{x\in\mathscr{K}_t} \exp\!
\left\{-\frac12\int_0^{t-\tau}\Xi^{P(\cdot)}_{d,\leftarrow}[\varphi^{- \theta}(x)]\,d\theta\right\},
\end{gather*}
where $\varphi^{-\theta}(x) \in \mathbb{X}\; \forall x \in \mathscr{K}_t, \theta \in [0,(t-\tau)]$. By \eqref{def.lll1},
\begin{gather*}
\pi_d^{P(\cdot)} [\mathscr{K}_\tau ] \leq e^{- \Lambda_- (t-\tau)} \pi_d^{P(\cdot)} \big[\mathscr{K}_t\big] \Rightarrow \text{\eqref{qed.inter}}. \qquad \qedhere
\end{gather*}
\par
{\bf Proof of Prop.~\ref{prop.fgg}:} Lem.~\ref{lem.twosided} gives \eqref{qed.inter1}.
\par
Let $\inf\limits_{x\in\mathbb{X}} \lambda_n(x) > -\infty$. Then $\Lambda_- > - \infty$ by \eqref{roots}, \eqref{def.lll}, and \eqref{def.lll1}.
We also
note that $e^z-1 \leq e^z z, e^{-z}-1 \geq - z \; \forall z \geq 0$. For any $T>0$ and two points from $[0,T]$, denoted by $\tau$ and $t$ in the ascending order, we have
\begin{gather}
\label{ineq.1}
\text{\eqref{qed.inter}}\Rightarrow\Upsilon(t) - \Upsilon(\tau) \leq \left[ e^{\Lambda(t-\tau)/2} - 1 \right] \Upsilon(\tau) \\ \leq \frac{\Lambda}{2}e^{\Lambda(t-\tau)/2} (t-\tau) \leq \frac{\Lambda}{2}e^{\Lambda T/2} (t-\tau),\nonumber
\\
\nonumber
\Upsilon(t) - \Upsilon(\tau) \geq \left[ e^{\Lambda_-(t-\tau)/2} - 1 \right] \Upsilon(\tau) \\ \geq \left[ e^{- |\Lambda_-|(t-\tau)/2} - 1 \right] \Upsilon(\tau)
\geq - |\Lambda_-|(t-\tau)/2, \nonumber
\\
\nonumber
|\Upsilon(t) - \Upsilon(\tau)| \leq |t-\tau| \max \left\{ \frac{\Lambda}{2}e^{\Lambda T/2}; |\Lambda_-| \right\}.
\end{gather}
Thus we see that the function $\Upsilon(\cdot)$ is Lipschitz and so absolutely continuous \cite[Thm.~2.9.19]{Federer}. The inequality $\dot{\Upsilon}(t) \leq \Lambda \Upsilon(t)/2$ is immediate from the inequality in \eqref{ineq.1}. \hfill \endproof
\par
{\bf Proof of Thm.~\ref{second_method_2+}: i) $\boldsymbol{\Rightarrow}$ ii)} This is immediate from Thm.~\ref{second_method}, Defn.~\ref{def.decay}, and Prop.~\ref{second_method_2+}. The converse implication {\bf ii) $\boldsymbol{\Rightarrow}$ i)} is straightforward from Defn.~\ref{def_uniform_contr} and \ref{def.decay}, along with \eqref{sadnwich2}. \hfill \endproof

\section{Two-contraction and orbital stability}
\label{sec.orbital}
The purpose of this section is twofold. First, it is shown that orbital stability of a periodic solution of an autonomous ODE can be verified via a constructive criterion similar to the conditions from ii) in Thm.~\ref{second_method} and referring to the roots of the algebraic equation \eqref{eigenvalue_problem}. More precisely, the criterion from Thm.~\ref{second_method} with the particular value $d=2$ is used, and its equivalence to the famous Andronov--Witt condition for orbital stability is established. Second, it is demonstrated that this condition is in fact a criterion for the uniform $2$-contractivity of the system on a narrow enough tubular neighborhood of the periodic orbit.
Tractability of the main result of this section is illustrated in Sec.~\ref{sec:langford}.
\par
We still consider the autonomous ODE \eqref{given_system}, use the notations \eqref{def.yxode}, and additionally assume that the ODE \eqref{given_system} has a $T$-periodic solution $x_\star(t) = x_\star(t+T)= x(t,x^o)$, where $T>0$ is the minimal period. Let $M:= X_{x^0}(T)$ stand for the monodromy matrix and let $\varrho_1, \ldots, \varrho_n$ be the Floquet multipliers, i.e., the eigenvalues of $M$ (repeated with regard to the algebraic multiplicity of each) enumerated in the non-increasing order of the modulus:
\[
|\varrho_1|\ge|\varrho_2|\ge\cdots\ge|\varrho_n|.
\]
The fact that $1$ is among these eigenvalues is well known to follow from the periodicity of the solution at hands.
The \emph{Andronov--Witt condition} consists in $|\varrho_2|<1$ and guarantees that $x_\star(\cdot)$ is asymptotically orbitally stable and
admits an asymptotic phase.

Since $T$ is the minimal period, $t \mapsto x:= x_\star(t)$ is a homeomorphism between the circle $[0,T]$ (where $T$ and $0$ are glued) and the (compact) image $\Gamma := x_\star\big( [0,T])$ (the periodic orbit).
This permits us to view any continuous function of $x \in \Gamma$ as a continuous and $T$-periodic function of $t$, and vice versa.
In particular, we treat mappings $P(\cdot)$ satisfying Asm.~\ref{kawan} with $\mathbb{X}:=\Gamma$, as well as the related roots of \eqref{eigenvalue_problem}, as $T$-periodic functions of time.

\begin{dfn}
\label{def.tubular}
A {\em regular tubular neighborhood} of the periodic orbit $\Gamma$ is any set $\mathbb{X}$ that meets Asm.~{\rm \ref{ass_on_X+}} and whose interior contains the orbit $\Gamma$.
\end{dfn}

Now we are ready to state the main result of Sec.~\ref{sec.orbital}.

\begin{thmm} \label{thm:orbital}
The following statements are equvalent:
\begin{enumerate}[{\bf i)}]
\item The Andronov--Witt condition holds for $x_\star(\cdot)$;
\item A $T$-periodic, continuous, positive definite, and piece-wise continuously differentiable function $P(\cdot) \in \br^{n \times n}$ exists such that $\lambda_1(t)+\lambda_2(t)<0  \forall t\geq 0$;
\item There is a regular tubular neighborhood of $\Gamma$ on which the system \eqref{given_system} is uniformly $2$-contractive.
\end{enumerate}
\end{thmm}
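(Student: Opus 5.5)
I will prove the cycle i) $\Rightarrow$ iii) $\Rightarrow$ ii) $\Rightarrow$ i). The unifying quantity is $\mathbf{\Sigma_2}$ from Lem.~\ref{lem.cocycle}, evaluated on $\mathbb{X}:=\Gamma$. Lem.~\ref{lem.cocycle} and Thms.~\ref{firstmethod} and \ref{second_method} only use compactness and invariance in the relevant directions (see Rems.~\ref{rem.drop} and \ref{rem.drop+}), so they can be applied on $\Gamma$.

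\textbf{Computing $\mathbf{\Sigma_2}$ on $\Gamma$.} By the cocycle property \eqref{cocycle}, $X(kT,x^o)=M^k$. Hence $\frac{1}{kT}\ln\omega_2(M^k)\to \frac1T\ln(|\varrho_1\varrho_2|)$. This follows from the Gelfand-type formula for the compound matrix $M^{(2)}$: its spectral radius is $|\varrho_1||\varrho_2|$, and $\omega_2(M^k)=\|(M^k)^{(2)}\|_2$ by \eqref{sigma_boy}. The multiplier $1$ corresponds to the eigenvector $f(x^o)$. If $|\varrho_2|<1$ then $|\varrho_1|=1$ and $|\varrho_1\varrho_2|=|\varrho_2|$. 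Conversely, if $|\varrho_1\varrho_2|<1$ then $|\varrho_2|<1$, because $|\varrho_2|\ge 1$ together with $1$ being a multiplier would force $|\varrho_1\varrho_2|\ge 1$. Since all points of $\Gamma$ are related by conjugation of the cocycle with bounded matrices, the same limit holds for every $x\in\Gamma$. Thus the Andronov--Witt condition is equivalent to $\mathbf{\Sigma_2}(\Gamma)<0$.

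\textbf{i) $\Rightarrow$ iii).} Lem.~\ref{lem.cocycle} gives $t_0$ with $\max_{x\in\Gamma}\Sigma_2(t_0,x)<0$. The map $x\mapsto\Sigma_2(t_0,x)$ is continuous, so this inequality persists on a closed ball-tube $N^\Gamma_\ve$. This tube need not be invariant, so I will instead use the asymptotic orbital stability given by i). I take a compact positively invariant neighborhood $\mathbb{X}\subset N^\Gamma_\ve$, for instance a sublevel set $\{V\le c\}$ of a Lyapunov function for the attracting cycle, which can be chosen to be the closure of its interior. On $\mathbb{X}$ the cocycle is subadditive and $\max_{\mathbb{X}}\Sigma_2(t_0,\cdot)<0$, so $\mathbf{\Sigma_2}(\mathbb{X})<0$ by the $\inf_t$ formula in \eqref{three,eq}. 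Thm.~\ref{firstmethod} then yields uniform $2$-contraction on $\mathbb{X}$. The main obstacle is producing this invariant regular neighborhood. The fallback is to take $\mathbb{X}$ as the closure of $\bigcup_{t\ge0}\varphi^t(B)$ for a small closed tube $B$, which is contained in $N^\Gamma_\ve$ by orbital stability.

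\textbf{iii) $\Rightarrow$ ii).} Thm.~\ref{second_method}, applied on the tubular neighborhood $\mathbb{X}$, gives $P(\cdot)$ satisfying Asm.~\ref{kawan} with $\Xi^{P}_2<0$ on $\mathbb{X}$. Restricting to $\Gamma\subset\mathbb{X}$ gives $\lambda_1(t)+\lambda_2(t)<0$ along $x_\star(t)$. The parametrization $t\mapsto x_\star(t)$ makes $P$ a $T$-periodic continuous function of $t$ with continuous orbital derivative, which is more than the piecewise $C^1$ regularity required. Alternatively, Lem.~\ref{le.prop27} applied directly on $\Gamma$ works equally well.

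\textbf{ii) $\Rightarrow$ i).} Given a periodic, piecewise $C^1$ function $P(t)$, I set $Y(t)=S(t)X_{x^o}(t)S(0)^{-1}$. Lems.~\ref{lem.funode} and \ref{lem.eigenv} remain valid with one-sided derivatives, since Lem.~\ref{lem:Smith} only needs $F$ integrable. Lem.~\ref{lem.hjdfg} then gives $\omega_2[Y(kT)]\le e^{\frac12\int_0^{kT}(\lambda_1+\lambda_2)}\le e^{-ckT}$ for some $c>0$, using periodicity and continuity of $\lambda_1+\lambda_2$ (compactness after accounting for the finitely many break points). Because $S(kT)=S(0)$, we have $Y(kT)=S(0)M^kS(0)^{-1}$, so the spectral radius of $M^{(2)}$ is $|\varrho_1\varrho_2|\le e^{-cT}<1$. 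By the computation above this is the Andronov--Witt condition, which closes the cycle.
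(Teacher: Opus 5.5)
Your proof is correct, but for the key implication i)$\Rightarrow$iii), and hence for reaching ii), it takes a genuinely different route from the paper.

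The paper proves i)$\Rightarrow$ii) constructively:
\begin{itemize}
\item After a similarity transformation making $\sigma_1(M)\sigma_2(M)<1$, it sets $P(t)=X_{x^0}(t)\trnm e^{\Xi t}X_{x^0}(t)^{-1}$ with $\Xi=T^{-1}\Ln(M\trn M)$. The roots of this $P$ are the constant eigenvalues of $\Xi$.
\item It then gets iii) by transplanting this periodic metric to a tube via the nearest-point time $\mathscr{T}(x)$, and finally invokes Thm.~\ref{second_method}.
\item The transplant step needs the $C^1$ smoothness of the projection onto $\Gamma$, the formula for $D\mathscr{T}$, Lemma~\ref{lem.extend}, a relaxed Asm.~\ref{kawan}, and continuity of polynomial roots.
\end{itemize}

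You instead stay with the first method:
\begin{itemize}
\item You identify $\mathbf{\Sigma_2}(\Gamma)=T^{-1}\ln|\varrho_1\varrho_2|$ by Gelfand's formula for $M^{(2)}$.
\item You pick one $t_0$ with $\max_\Gamma\Sigma_2(t_0,\cdot)<0$ and spread this to a tube by continuity of $D\varphi^{t_0}$.
\item You use the $\inf_t$ formula of Lemma~\ref{lem.cocycle} on an invariant regular neighborhood inside the tube, then apply Thm.~\ref{firstmethod}.
\end{itemize}

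This is shorter and bypasses all the projection machinery. The price is that ii) is obtained only through Thm.~\ref{second_method}, i.e., non-constructively via the cited Lemma~\ref{le.prop27}. The paper's route instead produces an explicit metric with time-invariant roots (Remark~\ref{rem.constant}).

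Your ii)$\Rightarrow$i) is essentially the paper's argument. The only change is that you apply Gelfand's formula to the second compound of $Y(kT)=S(0)M^kS(0)^{-1}$ where the paper uses Weyl's inequality.

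Two small points need tidying:
\begin{itemize}
\item State that $\overline{\bigcup_{t\ge0}\varphi^t(B)}$ is the closure of the open set $\bigcup_{t\ge0}\varphi^t(\inter B)\supset\Gamma$. It is therefore regular closed, with $\Gamma$ in its interior.
\item With a merely piecewise-$C^1$ $P$, the pointwise inequality $\lambda_1+\lambda_2<0$ need not give a negative supremum near break points. Take $c:=-\frac{1}{2T}\int_0^T(\lambda_1+\lambda_2)\,dt>0$ rather than appealing to compactness.
\end{itemize}
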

\par
This theorem is proven in Sec.~\ref{sec.orbitalproof} and generalizes some results from \cite{BLReitman} (Thm.~3.2.2, p.~117), \cite{Kuz_Reit}(Cor.~2.12, p.~72), and  \cite{Muldowny90}.
\par
Orbital stability of periodic solutions is traditionally verified via the \emph{Poincar\'e sections method}: whereas the periodic orbit corresponds to a fixed point of the Poincar\'e map, the asymptotic orbital stability follows from local contraction properties, on the Poincar\'e section and in the ordinary sense, of this map near the fixed point.
In contrast, Thm.~\ref{thm:orbital} yields a continuous-time and section-independent criterion.
We expect this theorem to be particularly useful when stabilizing periodic orbits of autonomous systems. Indeed, then building and analyzing a Poincar\'e map may be fairly cumbersome, whereas the conditions from Thm.~\ref{thm:orbital} on the one hand, are aligned in spirit with standard continuous-time design methods and on the other hand, suggest some promise for being constructive, tractable, and easily implementable. An illustration is offered in Sec.~\ref{sec:langford}.
\begin{remark}
\label{rem.constant}
Let the equivalent statements {\rm i)--iii)} in Thm.~{\rm \ref{thm:orbital}} be true. Then in {\rm ii)}, the function $P(\cdot)$ can be chosen so that the roots $\lambda_i(t)$ do not change with time.
\end{remark}
The proof of this remark is given in Sec.~\ref{sec.orbitalproof}.
\par
Finally, we note that Thm.~\ref{second_method_2+} and iii) in Thm.~{\rm \ref{thm:orbital}} imply the following fact. Under the Andronov-Witt condition, some regular tubular neighborhood of the periodic orbit can be equipped with a Riemannian metric so that under the action of the dynamic flow, the Hausdorff-Riemann elliptic $2$-measure of any compact subset of this neighborhood decays constantly, with an exponential intensity, and uniformly over such subsets.

\subsection{Proofs of Theorem~\ref{thm:orbital} and Remark~\ref{rem.constant}.}
\label{sec.orbitalproof}
These proofs are prefaced and interspersed with several technical lemmas.
\begin{lemma}
\label{lem.singbl}
For any matrix $G \in \br^{n \times n}$, there exists a similar real matrix $E$ whose singular values are arbitrarily close to the moduli of the eigenvalues $\eta_i$ of $G$: for any $\ve>0$, there is $E$ such that $|\sigma_i(E) - |\eta_{j_i}||<\ve$ for every $i$ with properly chosen permutation $\{j_i\}$ of $1,\ldots,n$.
\end{lemma}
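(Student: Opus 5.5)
The plan is to use the real Jordan canonical form and then rescale it. First, by the real Jordan decomposition, $G = Q J Q^{-1}$ with $Q$ real and nonsingular. Here $J$ is block diagonal. Each block corresponding to a real eigenvalue $\eta$ is a standard Jordan block $\eta I + N$. Each block corresponding to a complex pair $\eta = a \pm ib$ is a block upper-triangular matrix with $2\times 2$ diagonal blocks $C = \begin{pmatrix} a & -b \\ b & a\end{pmatrix}$ and identity $2\times 2$ blocks on the superdiagonal. The key observation is that $C = |\eta|\, R$ with $R$ a rotation, so $C$ is $|\eta|$ times an orthogonal matrix. Consequently, the block diagonal part $J_0$ of $J$ (the real diagonal entries together with the $2\times 2$ rotation-scaled blocks) is orthogonally equivalent to $\diag(|\eta_1|,\ldots,|\eta_n|)$. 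Explicitly, $J_0 = \diag(|\eta_{j}|)\, U$ with $U$ orthogonal and block diagonal; on real eigenvalues $U$ contributes the entry $\pm 1$. Hence the singular values of $J_0$ are exactly the moduli $|\eta_j|$, with each complex modulus listed twice, which matches the algebraic multiplicities counted for the pair $\eta,\bar\eta$.

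Second, we remove the nilpotent part by the standard scaling trick. Let $D_\delta$ be the diagonal matrix that multiplies the $k$th scalar coordinate, or the $k$th $2\times 2$ coordinate pair, within each Jordan block by $\delta^{k}$. This $D_\delta$ commutes with the $2\times 2$ rotation blocks. Conjugating gives $D_\delta^{-1} J D_\delta = J_0 + \delta N'$, where $N'$ has the same superdiagonal structure as $N$, so its norm satisfies $\|N'\|_2 \le 1$ (or $\le 2$, which is harmless). Now set $E := (Q D_\delta)^{-1} G (Q D_\delta) = J_0 + \delta N'$, which is real and similar to $G$.

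Third, we control the singular values by perturbation. By Weyl's inequality for singular values (equivalently, the Lipschitz continuity of singular values noted in the proof of Thm.~\ref{firstmethod}), we have $|\sigma_i(E) - \sigma_i(J_0)| \le \|E - J_0\|_2 \le \delta\|N'\|_2$. Choosing $\delta < \ve/2$ and taking $\{j_i\}$ to be the permutation that orders the $|\eta_j|$ decreasingly yields the claim.

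The only step requiring care is the bookkeeping in the complex case. There one must check two things: that the $\delta$-scaling acts by scalar multiples of $I_2$ on each pair, so that it commutes with the rotation blocks, and that the pair $\eta,\bar\eta$ is accounted for by the two equal singular values $|\eta|$ of $C$. Both checks are routine.
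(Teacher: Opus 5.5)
Your proof is correct and takes essentially the same route as the paper. You pass to the real Jordan basis, rescale the basis by powers of a small parameter so that the superdiagonal part becomes $O(\delta)$, and then use continuity of singular values. There are two small differences, both in your favour. First, you use the explicit Weyl bound $|\sigma_i(E)-\sigma_i(J_0)|\le\|E-J_0\|_2$ in place of the paper's appeal to continuity of the eigenvalues of $E^{\top}E$. Second, you carry out the complex case explicitly, via $C=|\eta|R$ and the observation that the scaling by $\delta^k I_2$ commutes with the rotation blocks; the paper only asserts that this case is handled likewise.
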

\begin{proof}
For the sake of clarity, we first assume that $\eta_j \in \br \; \forall j$. In the basis $e_1, \ldots, e_n$ of $\br^n$ where $G$ has the normal Jordan form and under proper re-enumeration of $\eta_i$'s (if necessary), we have $G e_1 = \eta_1 e_1, G e_{i} = \eta_i e_i + \alpha_i e_{i-1} \; \forall i \geq 2$, where either $\alpha_i=0$ or $\alpha_i$=1. Now we pick $\kappa \in (0,1)$ and consider the matrix $E_\kappa$ of $G$ in the basis $\zeta_1, \ldots \zeta_n$ of $\br^n$ given by
$\zeta_i:= \kappa^i e_i$. This matrix is similar to $G$. Also, $G \zeta_1 = \eta_1 \zeta_1, G \zeta_{i} = \eta_i \zeta_i + \alpha_i \kappa \zeta_{i-1} \; \forall i \geq 2$ and so $E_\kappa = D + \Delta_\kappa$, where $D := \diag(\eta_1, \ldots, \eta_n)$, whereas all entries of $\Delta_\kappa$ are zero, maybe except for $\kappa$'s at some places above the diagonal.
Meanwhile, $E_\kappa\trn E_\kappa = \diag(|\eta_1|^2, \ldots, |\eta_n|^2) + \Delta_\kappa\trn D + D \Delta_\kappa + \Delta_\kappa\trn \Delta_\kappa \to \diag(|\eta_1|^2, \ldots, |\eta_n|^2) $ as $\kappa \to 0+$. Since the eigenvalues of a symmetric matrix vary continuously with its entries \cite[p.~109,Thm.~5.2]{Kato95}, it remains to invoke the definition of the singular values and pick $\kappa>0$ small enough.
\par
The general case $\eta_j \in \mathbb{C}$ is handled likewise, with using the real block-diagonal Jordan form of $G$.
\end{proof}
The following lemma addresses the linear ODE \eqref{linsys_At} with a $T$-periodic coefficient $A(t)$.
\begin{lemma}
Let $P(\cdot)$ be a $T$-periodic, continuous, and positive definite $n\times n$-matrix-function and $\lambda_1(t)\ge\lambda_2(t)\ge\ldots\ge\lambda_n(t) \geq 0$ be the roots of the related algebraic equation \eqref{eigenvalue_problem}, where $x:=t$. For any nonsingular matrix $Q \in \br^{n \times n}$, they are simultaneously the roots of the similar algebraic equation written for $\mathscr{A}(t) := Q A(t) Q^{-1}$ and $\mathscr{P}(t) := Q\trnm P(t) Q^{-1}$. Also, the function $\mathscr{P}(\cdot)$ is $T$-periodic, continuous, and positive definite; furthermore, it is piece-wise continuously differentiable if so is $P(\cdot)$.
\label{lem.subst}
\end{lemma}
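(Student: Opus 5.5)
The plan is a direct computation showing that the matrix pencil in \eqref{eigenvalue_problem} transforms by a congruence under the change of variables $\mathscr{A}=QAQ^{-1}$, $\mathscr{P}=Q\trnm P Q^{-1}$. Here $x:=t$, so the orbital derivative $\dotm{P}$ is simply the time derivative $\dot P(t)$ (one-sided at the finitely many break points in the piecewise $C^1$ case). Since $Q$ is constant, $\dot{\mathscr{P}}(t)=Q\trnm \dot P(t) Q^{-1}$ wherever $\dot P$ exists. Also $\mathscr{A}\trn = Q\trnm A\trn Q\trn$.

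First I will compute each term of the transformed pencil:
\begin{gather*}
\mathscr{A}\trn\mathscr{P}=Q\trnm A\trn Q\trn Q\trnm P Q^{-1}=Q\trnm A\trn P Q^{-1},\\
\mathscr{P}\mathscr{A}=Q\trnm P Q^{-1}QAQ^{-1}=Q\trnm PAQ^{-1}.
\end{gather*}
Adding these to $\dot{\mathscr{P}}-\lambda\mathscr{P}$ gives
\[
\mathscr{A}\trn\mathscr{P}+\mathscr{P}\mathscr{A}+\dot{\mathscr{P}}-\lambda\mathscr{P}=Q\trnm\big[A\trn P+PA+\dot P-\lambda P\big]Q^{-1}.
\]
Taking determinants, the left-hand side has determinant $(\det Q)^{-2}\det[A\trn P+PA+\dot P-\lambda P]$. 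Because $\det Q\neq0$, both polynomial equations in $\lambda$ have the same roots with the same multiplicities. Hence the ordered roots $\lambda_i(t)$ coincide for every $t$, with one-sided derivatives used at break points, consistently for both pencils.

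Second, I will verify the listed properties of $\mathscr{P}$:
\begin{itemize}
\item \emph{$T$-periodicity and continuity} are inherited from $P$, since the map $P\mapsto Q\trnm PQ^{-1}$ is linear and time-independent.
\item \emph{Symmetry} holds because $(Q\trnm P Q^{-1})\trn=Q\trnm P\trn Q^{-1}=\mathscr{P}$.
\item \emph{Positive definiteness} holds because $\spr{\mathscr{P}y}{y}=\spr{P Q^{-1}y}{Q^{-1}y}>0$ for $y\neq0$.
\item \emph{Piecewise continuous differentiability} transfers with the same break points, because the map is linear and constant in time.
\end{itemize}

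No step is a real obstacle. The only point needing care is the meaning of $\dot P$ at break points of a merely piecewise $C^1$ function. I will handle it by noting that the identity $\dot{\mathscr{P}}=Q\trnm\dot PQ^{-1}$ holds for one-sided derivatives as well, so whatever convention defines the roots for $P$ defines identical roots for $\mathscr{P}$.
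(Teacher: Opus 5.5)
Your proof is correct and follows the paper's argument. You write the transformed pencil as the congruence $Q\trnm\big[A\trn P+PA+\dot P-\lambda P\big]Q^{-1}$, so the two determinants differ only by the nonzero factor $(\det Q)^{-2}$ and the roots coincide; your explicit checks of symmetry, positive definiteness, periodicity and piecewise smoothness of $\mathscr{P}$ fill in what the paper dismisses as evident.
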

\begin{proof}
The traits listed in the last sentence are evident. The latter algebraic equation is as follows
\begin{gather*}
0= \det\big[ \mathscr{A}(t)\trn \mathscr{P}(t)+\mathscr{P}(t) \mathscr{A}(t)+\dotm{\mathscr{P}}(t)-\lambda \mathscr{P}(t) \big]
\\
 = \det\big[ \big( Q A  Q^{-1} \big)\trn Q\trnm P Q^{-1} + Q\trnm P Q^{-1} \big( Q A  Q^{-1} \big) \\ +  Q\trnm \dot{P} Q^{-1} -\lambda  Q\trnm P Q^{-1} \big]
 \\
 =  \det\big[ Q\trnm A\trn P Q^{-1} + Q\trnm P A  Q^{-1}  +  Q\trnm \dot{P} Q^{-1} \\ -\lambda  Q\trnm P Q^{-1} \big]
 \\
 =  \det Q\trnm \det\big[  A\trn P  +  P A   +   \dot{P}  -\lambda   P  \big] \det Q^{-1}.
\end{gather*}
Thus the two algebraic equations mentioned in the lemma are trivially equivalent, which completes the proof.
\end{proof}
\begin{lemma}
\label{impl.1.lem}
The implication ${\bf i)}\Rightarrow {\bf ii)}$ is valid.
\end{lemma}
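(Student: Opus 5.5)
The plan is to reduce the $T$-periodic linearization $\dot\xi=A(t)\xi$, with $A(t)=Df[x_\star(t)]$, to an autonomous one by a Lyapunov--Floquet change of variables, and then to transfer a constant metric back. The tool is a time-varying version of Lem.~\ref{lem.subst}. Let $L(t)$ be a $T$-periodic, continuous, piece-wise $C^1$ nonsingular matrix function with $\dot L = A L - L B$ for a constant $B$. Put $P(t):=L(t)\trnm P_0 L(t)^{-1}$ with a constant $P_0>0$. Differentiating, the terms produced by $\dot L$ cancel the $A$-terms, and
\[
A\trn P+PA+\dot P-\lambda P = L\trnm\bigl(B\trn P_0+P_0B-\lambda P_0\bigr)L^{-1}.
\]
Hence the roots $\lambda_i(t)$ of \eqref{eigenvalue_problem} coincide with the constant roots of $\det(B\trn P_0+P_0B-\lambda P_0)=0$. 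This would also yield Rem.~\ref{rem.constant}.

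When $M=X_{x^0}(T)$ has a real logarithm $M=e^{TB}$, take $L(t):=X_{x^0}(t)e^{-tB}$. This $L$ is periodic, since $L(T)=Me^{-TB}=I_n=L(0)$ and $X(t+T)=X(t)M$, and it is $C^1$. The eigenvalues $\mu_i$ of $B$ satisfy $e^{T\mu_i}=\varrho_i$. The Andronov--Witt condition thus gives $\Re\mu_1=0$ and $\Re\mu_i\le \Re\mu_2<0$ for $i\ge2$.

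Next I choose $P_0$. I would adapt Lem.~\ref{lem.singbl}, using the real Jordan form and the scaling $\zeta_i=\kappa^i e_i$, to find a nonsingular $Q$ such that $G:=QBQ^{-1}$ has symmetric part $\tfrac12(G+G\trn)$ with eigenvalues within $\ve$ of $\Re\mu_i$. In the real $2\times2$ blocks $\begin{pmatrix}a&-b\\ b&a\end{pmatrix}$, the skew part drops out of the symmetric part, which is exactly why real parts appear. Take $P_0:=Q\trn Q$. By Lem.~\ref{lem.eigenv} applied with $S=Q$, or directly by Lem.~\ref{lem.subst}, the roots are the eigenvalues of $G+G\trn$. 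Therefore $\lambda_1+\lambda_2\le 2(\Re\mu_1+\Re\mu_2)+4\ve = 2\Re\mu_2+4\ve<0$ for small $\ve$.

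The main obstacle is that $M$ need not possess a real logarithm. This happens when negative real multipliers have Jordan blocks that are not paired. Note, however, that $\det M>0$ by Liouville, so $M\in GL^+(n)$. Here the piece-wise differentiability allowed in ii) should be used, and I would try two routes in order.

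The first route replaces $e^{TB}$ by a piece-wise $C^1$ path $C(t)$ in $GL^+$ from $I_n$ to $M$. Such a path can be built from finitely many constant-generator segments, for instance by splitting off the negative real eigenvalue blocks as rotations by $\pi$ in paired $2$-planes, which are available because $\det M>0$ allows pairing with a conjugate direction or with the eigenvalue $1$ rotated. The resulting $L=XC^{-1}$ is periodic, and on each segment the reduced system has the constant generator $B_k$. Then I would pick $P_0^{(k)}$ as above per segment and glue the segments continuously. The gluing is the delicate point: matching $P$ at the breakpoints generally forces a non-constant factor. I would handle this by absorbing a slowly varying scalar or similarity interpolation, whose contribution to $\dot P$ is small. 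This needs the strict margin $-2\Re\mu_2>0$ to dominate the error, and it costs the constancy of the roots only in this degenerate case.

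The second route, if the first fails, is to work with period $2T$, where a real logarithm of $M^2$ always exists, construct the $2T$-periodic $P$, and then average or symmetrize over the shift by $T$. This route needs a check that the construction can be made $T$-invariant, which may itself require the piece-wise correction described above.
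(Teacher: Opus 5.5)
\textbf{Verdict: genuine gap.} Your argument proves the lemma only when the monodromy matrix $M$ has a real logarithm, and that case is not the general one.

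\textbf{What is correct.} Suppose $M=e^{TB}$ with $B$ real. Then $L(t)=X_{x^0}(t)e^{-tB}$ is $T$-periodic. The identity $A\trn P+PA+\dot P-\lambda P=L\trnm(B\trn P_0+P_0B-\lambda P_0)L^{-1}$ holds. A scaled real Jordan form of $B$ then gives constant roots with $\lambda_1+\lambda_2\approx\frac{2}{T}\ln|\varrho_2|<0$.

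\textbf{Where it fails.} Absence of a real logarithm is not a degenerate situation. For $n=3$ with multipliers $1,-a,-b$ and $0<b<a<1$, there is no real logarithm, and this persists under small perturbations of the system. For this case you only sketch two routes, and neither is a proof.
\begin{itemize}
\item \textbf{Route~1.} The segment durations are fixed and sum to $T$. Nothing makes the interpolation between different $P_0^{(k)}$ ``slow''. Its contribution to $\dot P$ is of order one, and you give no reason why the margin $-2\Re\mu_2$ absorbs it. Moreover, take a segment whose generator is a pure rotation by $\pi$. A constant $P_0^{(k)}$ there gives roots summing to $2\tr B_k=0$. This is incompatible with $\lambda_1+\lambda_2<0$, which forces $\lambda_i<0$ for all $i\ge2$ and hence a negative total. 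So on such segments the correction is not a small perturbation at all.
\item \textbf{Route~2.} It produces a $2T$-periodic $P$, which is not a function on $\Gamma$. You give no argument that averaging $P(t)$ with $P(t+T)$ keeps $\lambda_1+\lambda_2<0$, and the roots depend nonlinearly on $P$.
\end{itemize}

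\textbf{The missing idea.} Periodicity of $P=L\trnm P_0L^{-1}$ only requires $L(T)$ to be a $P_0$-isometry, not $L(T)=I_n$. With this relaxation no logarithm of $M$ is needed. The paper does exactly this with $P_0=I_n$:
\begin{enumerate}
\item Use Lem.~\ref{lem.subst} and Lem.~\ref{lem.singbl} to replace $M$ by a similar matrix whose singular values are close to $1,|\varrho_2|,\dots$. This gives $\sigma_1(M)\sigma_2(M)<1$.
\item Set $\Xi:=T^{-1}\Ln(M\trn M)$. This is the principal logarithm of a positive definite matrix, so it always exists.
\item Set $P(t):=X_{x^0}(t)\trnm e^{\Xi t}X_{x^0}(t)^{-1}$. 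In your notation this is $B=\Xi/2$.
\end{enumerate}
Then $L(T)=M(M\trn M)^{-1/2}$ is the orthogonal polar factor $U$ of $M$, so $P(T)=I_n=P(0)$. Put $Z(t):=X_{x^0}(t)e^{-\Xi t/2}$. One gets $A\trn P+PA+\dot P-\lambda P=Z\trnm(\Xi-\lambda I_n)Z^{-1}$. Hence the roots are the constants $\frac{2}{T}\ln\sigma_i(M)$, and $\lambda_1+\lambda_2<0$.

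The only price is that $\dot P$ jumps at multiples of $T$. The one-sided limits there are $-A(0)\trn-A(0)+\Xi$ and $-A(0)\trn-A(0)+U\Xi U\trn$. This is exactly why ii) asks only for piece-wise continuous differentiability. The same construction also yields Rem.~\ref{rem.constant} in general, whereas your argument yields it only in the real-logarithm case.
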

\begin{proof}
In the proof, the function $A(t)$ from \eqref{def.yxode} can be replaced by any similar matrix-function $Q A(t) Q^{-1}$ due to Lem.~\ref{lem.subst}.
This transforms the linearized flow $X_{x^0}(t)$ from \eqref{def.yxode} into the conjugate flow $Q X_{x^0}(t) Q^{-1}$, and the monodromy matrix $M = X_{x^0}(T)$
into the similar matrix $\mathscr{M} := Q M Q^{-1}$. Except for $1$, the spectrum of $M$ lies in the ball $B_0^{\rho} \subset \mathbb{C}$ (where $\rho:=|\varrho_2|<1$) by the Andronov-Witt condition. We pick so small $\kappa >0$ that $(1+\kappa)(\varrho + \kappa) <1$. Then
by using Lem.~\ref{lem.singbl}, we choose $Q$ so that $|\sigma_1(\mathscr{M}) - 1 | < \kappa $ and $\sigma_i(\mathscr{M}) < \varrho + \kappa\; \forall i \geq 2$ and so
$
\sigma_1(\mathscr{M}) \sigma_i(\mathscr{M}) < (1+\kappa)(\varrho + \kappa) <1$ for such $i$'s.
Summing up and reverting to the initial notations, we conclude that without any loss of generality, the proof can be carried out with assuming that $\sigma_1(M) \sigma_2(M) <1$.
\par
Let $\Ln \, R$ stand for the principal logarithm of the positive definite matrix $R = R\trn \in \br^{n \times n}$. We put
\begin{gather*}
\Xi := T^{-1} \Ln \, ( M\trn  M), \quad P(t)=  X_{x^0}(t)\trnm e^{\Xi t}  X_{x^0}(t)^{-1}.
\end{gather*}
The eigenvalues $\eta_1 \geq \eta_2 \geq \ldots \geq \eta_n $ of $\Xi$ equal $\eta_i = \frac{2}{T} \ln \sigma_i(M)$ and so
\begin{equation}
\label{key.inneqq}
\eta_1+ \eta_2 <0.
\end{equation}
The function $P(\cdot)$ is $C^1$-smooth, $P(0) = I_n$, and
\begin{gather}
\nonumber
P(T) =  X_{x^0}(T)\trnm e^{\Xi T}  X_{x^0}(T)^{-1} \\
= M\trnm ( M\trn  M) M^{-1} = I_n = P(0),
\label{p.per}
\\
\nonumber
\dot{P}(t) =  - X_{x^0}(t)\trnm e^{\Xi t}  X_{x^0}(t)^{-1} \dot{X} _{x^0}(t)  X_{x^0}(t)^{-1} \\
\nonumber
- X_{x^0}(t)\trnm \dot{X} _{x^0}(t)\trn  X_{x^0}(t)\trnm  e^{\Xi t}  X_{x^0}(t)^{-1}
\\
\nonumber
+ X_{x^0}(t)\trnm e^{\Xi t} \Xi X_{x^0}(t)^{-1}
\\
\nonumber
= -  X_{x^0}(t)\trnm e^{\Xi t}  X_{x^0}(t)^{-1} A(t)X _{x^0}(t)  X_{x^0}(t)^{-1} \nonumber\\ - X_{x^0}(t)\trnm X _{x^0}(t)\trn A(t)\trn X_{x^0}(t)\trnm  e^{\Xi t}  X_{x^0}(t)^{-1}
\\
\nonumber
+ X_{x^0}(t)\trnm e^{\Xi t/2} \Xi e^{\Xi t/2} X_{x^0}(t)^{-1}
\\
= -  P(t) A(t) - A(t)\trn P(t) + Z(t)\trnm \Xi Z(t)^{-1}, \nonumber \\ \quad \text{where} \; Z(t):=    X_{x^0}(t) e^{-\Xi t/2}
.
\nonumber
\end{gather}
\par
We proceed by noting that $P(t):=Z(t)\trnm\,Z(t)^{-1}$. Hence
\begin{multline}
\det \big[ P(t) A(t) + A(t)\trn P(t) + \dot{P}(t) - \lambda P(t) \big] \\ = \det \big[ Z(t)\trnm \Xi Z(t)^{-1} - \lambda Z(t)\trnm\,Z(t)^{-1} \big]
\\
= \det   Z(t)\trnm  \det \big[\Xi - \lambda I_n \big] \det Z(t)^{-1} .
\label{root.const}
\end{multline}
The proof is completed by invoking \eqref{key.inneqq} and extending $P(\cdot)$ from $[0,T]$ to $\br$ as a continuous $T$-periodic function, which is possible due to
\eqref{p.per}.
\end{proof}
\par
{\bf Proof of Remark~\ref{rem.constant}:} This remark is immediate from \eqref{root.const}, which shows that the roots $\lambda_i(t)$ are simultaneously the eigenvalues of the time-invariant matrix $\Xi$.
\begin{lemma}
\label{impl.2.lem}
The implication ${\bf ii)}\Rightarrow {\bf i)}$ is valid.
\end{lemma}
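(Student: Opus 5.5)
Let $P(\cdot)$ be as in ii), viewed as a $T$-periodic function of $t$ along $x_\star$, and let $S(t):=\sqrt{P(t)}$. The plan is to apply the machinery of Lem.~\ref{lem.funode}, \ref{lem.eigenv} and \ref{lem.hjdfg} along the single periodic trajectory rather than on a set $\mathbb{X}$. Their proofs only use the orbital derivative along that trajectory, and piecewise continuous differentiability suffices because all estimates are integral ones.

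\textbf{Step 1: the conjugated fundamental matrix.} Put $Y(t):=S(t)X_{x^0}(t)S(0)^{-1}$. As in Lem.~\ref{lem.funode}, $Y$ is the fundamental matrix of $\dot z=F(t)z$ with $F=SAS^{-1}+\dot S S^{-1}$, valid on each interval where $P$ is $C^1$. Continuity of $Y$ across the breakpoints lets the Smith-type bound of Lem.~\ref{lem:Smith} be concatenated, via the cocycle property and Horn's inequality \eqref{eq:submult}. By Lem.~\ref{lem.eigenv}, the eigenvalues of $F+F\trn$ are $\lambda_1(t)\ge\dots\ge\lambda_n(t)$. Lem.~\ref{lem.hjdfg} with $d=2$ then gives
\[
\sigma_1[Y(kT)]\,\sigma_2[Y(kT)]\le \exp\Bigl\{\tfrac12\int_0^{kT}[\lambda_1(\tau)+\lambda_2(\tau)]\,d\tau\Bigr\}\le e^{-ckT},
\]
where $-c:=\max_{t\in[0,T]}[\lambda_1(t)+\lambda_2(t)]/2<0$. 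This maximum is attained and strictly negative by periodicity and the piecewise continuity of the roots; one-sided limits at breakpoints must also be checked, or the strict inequality read as holding on the closures of the pieces.

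\textbf{Step 2: passing to the monodromy matrix.} By periodicity $S(kT)=S(0)$, so $Y(kT)=S(0)M^kS(0)^{-1}$. Lem.~\ref{lem.sv} gives $\omega_2(M^k)\le C\,\omega_2[Y(kT)]\le Ce^{-ckT}$ with $C$ independent of $k$. Next, $\omega_2(M^k)=\|(M^k)^{(2)}\|_2=\|(M^{(2)})^k\|_2$ by \eqref{sigma_boy} and the multiplicativity of compound matrices. The eigenvalues of $M^{(2)}$ are the products $\varrho_i\varrho_j$ with $i<j$, so its spectral radius is $|\varrho_1\varrho_2|$. Gelfand's formula then yields
\[
|\varrho_1||\varrho_2|=\lim_k\|(M^{(2)})^k\|_2^{1/k}\le e^{-cT}<1.
\]

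\textbf{Step 3: conclusion.} Since $1$ is a Floquet multiplier, $|\varrho_1|\ge1$, and therefore $|\varrho_2|<1$, which is the Andronov--Witt condition.

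The main obstacle is Step 2. One has to justify the compound-matrix identity and its spectrum, or avoid them by Weyl's inequality $|\varrho_1\varrho_2|^k\le\sigma_1(M^k)\sigma_2(M^k)$, which I would use as the cleaner route. The regularity bookkeeping at breakpoints of $P$ in Step 1 is a minor secondary point.
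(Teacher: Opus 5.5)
Your proposal is correct and follows essentially the paper's route: you conjugate the linearization by $\sqrt{P}$, bound $\omega_2$ of the conjugated fundamental matrix through Lem.~\ref{lem.funode}, \ref{lem.eigenv} and \ref{lem.hjdfg} (the paper simply invokes Lem.~\ref{lem.kkeeyy} on $\Gamma$), transfer the decay to $\omega_2(M^k)$, and conclude with Weyl's inequality $|\varrho_1\varrho_2|^k\le\omega_2(M^k)$, which is the paper's final step. Your $|\varrho_1|\ge 1$ is slightly more careful than the paper's $|\varrho_1|^k=1$. Your breakpoint concern also disappears: since you only need the times $kT$, periodicity gives $\int_0^{kT}[\lambda_1+\lambda_2]\,d\tau=k\int_0^T[\lambda_1+\lambda_2]\,d\tau$, and this is strictly negative by pointwise negativity alone, so no uniform negative bound is required.
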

\par
{\bf Proof:}
By applying Lem.~\ref{lem.kkeeyy} with $d=2$ and $\mathbb{X} := x_\star([0,T])$ (and noting that the requirements to the interior of $\mathbb{X}$ from Asm.~\ref{ass_on_X+} and the continuity of the derivative $\dot{P}(\cdot)$ are not utilized in the proof of Lem.~\ref{lem.kkeeyy}), we infer that $\omega_2[Y_x(t)] \leq e^{\frac{\Lambda}{2} t} \; \forall t \geq 0$, where $\Lambda := \max_{x \in \mathbb{X}} [\lambda_1(x) + \lambda_2(x)] = \max_{t \in [0,T]} [\lambda_1(t) + \lambda_2(t)] <0$. Hence $\omega_2[Y_{x^0}(t)] \xrightarrow{t \to \infty} 0$ and
\begin{gather*}
\omega_2[X_{x^0}(t)]
\overset{\text{\eqref{def.yxode1}}}{=} \omega_2 \Big\{ S\!\big[\varphi^t(x^0)\big]^{-1}\,Y_{x^0}(t)\,S(x^0) \Big\} \\
\overset{\text{\eqref{eq:submult}}}{\leq} \omega_2[Y_{x^0}(t)] \times \sup_{t \geq 0} \omega_2 \Big\{ S\!\big[\varphi^t(x^0)\big]^{-1} \Big\} \times \omega_2 \Big\{ S(x^0) \Big\} .
\end{gather*}
Here
$$
\sup_{t \geq 0} \omega_2 \Big\{ S\!\big[\varphi^t(x^0)\big]^{-1} \Big\} =\max_{t \in [0,T]} \omega_2 \Big\{ S\!\big[\varphi^t(x^0)\big]^{-1} \Big\} <0
$$
due to the periodicity of the function $t \mapsto \varphi^t(x^0)$ and so $\omega_2[X_{x^0}(t)] \xrightarrow{t \to \infty} 0$. For the monodromy matrix $M:= X_{x^0}(T)$, this yields that $\omega_2(M^k) \to 0$ as $k \to \infty$. Meanwhile, the moduli of the eigenvalues of $M^k$ are given by $|\varrho_1|^k=1 \geq |\varrho_2|^k \geq \ldots \geq \|\varrho_n|^k$. So by the Weyl's inequality \cite[ p.~171]{Horn},
$$
 |\varrho_2|^k = |\varrho_1|^k  |\varrho_2|^k \leq \omega_2(M^k) \xrightarrow{k \to \infty}0 \Rightarrow |\varrho_2| <1. \qquad \qedhere
$$
\par
Since the minimal period $T>0$, the derivative $\dot{x}_\star(t) \neq 0\; \forall t$ and so $t \mapsto x_\ast(t)$ is a parametric representation of a regular $C^2$-smooth Jordan loop $\Gamma:=x_\star([0,T])$ (the periodic orbit). We treat $t$ as a cyclic coordinate of a point $x \in \Gamma$.
Due to the regularity of $\Gamma$, there exists $\gamma^\dagger>0$ such that for $x \in \overset{\circ}{N}\vphantom{N}^\Gamma_{\gamma^\dagger} := \{x: \dist{x}{\Gamma} < \gamma^\dagger\}$, the minimum distance point $q(x) \in \Gamma, \|q(x) - x\| = \dist{x}{\Gamma}$ is unique and the mapping $x \mapsto q(x)$ is $C^1$-smooth \cite{Sternb83,Thorpe79}.
\begin{lemma}
If $\gamma^\dagger >0$ is small enough, the mapping $x \in \overset{\circ}{N}\vphantom{N}^\Gamma_{\gamma^\dagger} \mapsto \mathscr{T}(x):=t[q(x)]$ is $C^1$-smooth and
\begin{equation}
\label{denom}
 D \mathscr{T}(x) v =  \frac{\spr{v}{f(y)}}{\|f(y)\|_2^2  + \spr{ y - x}{Df(y) f(y) } },
\end{equation}
 where $y:= q(x)$ and the denominator is nonzero.
\end{lemma}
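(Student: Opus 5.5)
The idea is to characterize $y=q(x)$ implicitly and then apply the implicit function theorem. Parametrize $\Gamma$ by $t\mapsto x_\star(t)$. For $x$ in the tube, the nearest point $y=x_\star(\theta)$, with $\theta=\mathscr{T}(x)$, is a critical point of $\theta\mapsto \tfrac12\|x_\star(\theta)-x\|_2^2$. Since $\dot x_\star(\theta)=f[x_\star(\theta)]$, this gives the orthogonality relation
\begin{equation*}
G(x,\theta):=\spr{x_\star(\theta)-x}{f[x_\star(\theta)]}=0 .
\end{equation*}
The cyclic coordinate $\theta$ is defined modulo $T$. We therefore work locally: near a point $x_1$ of the tube we fix a branch of $\theta$ on an interval of length less than $T$. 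Smoothness is a local property, so this suffices.

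The next step is to compute the partial derivatives of $G$. Differentiating in $\theta$ and using $\frac{d}{d\theta}f[x_\star(\theta)]=Df(y)f(y)$ gives
\begin{equation*}
\partial_\theta G=\|f(y)\|_2^2+\spr{y-x}{Df(y)f(y)} .
\end{equation*}
Differentiating in $x$ gives $\partial_x G\,v=-\spr{v}{f(y)}$. Here $f\in C^1$, so $x_\star\in C^2$ and $G$ is $C^1$ jointly in $(x,\theta)$.

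The main point is to show that the denominator $\partial_\theta G$ does not vanish once $\gamma^\dagger$ is small. Because $T$ is the minimal period, $\min_\Gamma\|f\|_2=:\phi>0$, and $\max_\Gamma\|Df(y)f(y)\|_2=:\psi<\infty$ by compactness. Since $\|y-x\|_2=\dist{x}{\Gamma}<\gamma^\dagger$, we get
\begin{equation*}
\partial_\theta G\ \ge\ \phi^2-\gamma^\dagger\psi ,
\end{equation*}
which is positive if $\gamma^\dagger<\phi^2/\psi$. (If $\psi=0$, any $\gamma^\dagger$ works.) We shrink $\gamma^\dagger$ accordingly; this also keeps the uniqueness and $C^1$-smoothness of $q(\cdot)$ quoted before the lemma.

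Now apply the implicit function theorem at $(x_1,\mathscr{T}(x_1))$. It yields a $C^1$ function $\tilde\theta(x)$ near $x_1$ with $G(x,\tilde\theta(x))=0$. We must still identify $\tilde\theta$ with $\mathscr{T}$, and there are two routes. The first uses continuity of $q$ (it is $C^1$ by the cited facts) and of the inverse of the homeomorphism $t\mapsto x_\star(t)$: these make $\mathscr{T}$ continuous in the local branch, hence it lies in the uniqueness neighborhood of the implicit function theorem, so $\tilde\theta=\mathscr{T}$ nearby. The second is more direct: $\mathscr{T}=x_\star^{-1}\circ q$ locally, and $x_\star$ is a $C^1$ immersion, so $\mathscr{T}$ is $C^1$ directly.

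Finally, differentiating $G(x,\mathscr{T}(x))\equiv 0$ gives
\begin{equation*}
-\spr{v}{f(y)}+\partial_\theta G\cdot D\mathscr{T}(x)v=0 ,
\end{equation*}
and this rearranges to \eqref{denom}. The main obstacle is the identification step, i.e.\ checking that the implicit solution coincides with the globally defined nearest-point parameter. I would handle it through the continuity of $\mathscr{T}$ on local branches, as described above.
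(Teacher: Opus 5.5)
Your proposal is correct and follows essentially the same route as the paper: the first-order condition $\spr{q(x)-x}{f[q(x)]}=0$, differentiation of this identity in $x$, and a lower bound on the denominator by $\min_\Gamma\|f\|_2^2-\gamma^\dagger\max_\Gamma\|Df\,f\|_2$. The only difference is how you get $C^1$-smoothness of $\mathscr{T}$. You use the implicit function theorem, whose hypothesis is exactly the nonvanishing denominator, so only continuity of $q$ is needed. The paper instead cites the classical $C^1$-smoothness of $q$, which is your second route.
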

\begin{proof}
The $C^1$-smoothness of the mapping $x \in \overset{\circ}{N}\vphantom{N}^\Gamma_{\gamma^\dagger} \mapsto q(x)$ for small enough $\gamma^\dagger >0$ is a classic fact; see, e.g., \cite{Thorpe79,Sternb83,MerrRuu07}. This implies the smoothness of $\mathscr{T}(\cdot)$.
\par
Since $\min_t\|x(t,x^0)-x\|^2$ is attained at $t= \mathscr{T}(x)$, the Fermat's theorem yields that
\begin{gather*}
0 = \frac{d}{dt} \| x(t,x^0) -x\|^2|_{t=\mathscr{T}(x)} =
2 \spr{q(x)-x}{f[q(x)]}
\\
 = \spr{ x \big[ \mathscr{T}(x) , x^0\big]-x}{f\big\{ x \big[ \mathscr{T}(x) , x^0\big] \big\}} \qquad \forall x.
\end{gather*}
By differentiating, we see that for any $v \in \br^n$,
\begin{gather*}
0 = \spr{ \dot{x} \big[ \mathscr{T}(x) , x^0\big] }{f\big\{ x \big[ \mathscr{T}(x) , x^0\big] \big\}} D \mathscr{T}(x) v \\
- \spr{v}{f\big\{ x \big[ \mathscr{T}(x) , x^0\big] \big\}}
\\
+
\spr{ x \big[ \mathscr{T}(x) , x^0\big]-x}{Df\big\{ x \big[ \mathscr{T}(x) , x^0\big] \big\}\dot{x} \big[ \mathscr{T}(x) , x^0\big] } D \mathscr{T}(x) v
\\
=
\Big\{\|f[q(x)]\|_2^2  + \spr{ q(x) - x}{Df[q(x)] f[q(x)] } \Big\} D \mathscr{T}(x) v \\ - \spr{v}{f[q(x)]]} .
\end{gather*}
To complete the proof, it remains to note that
\begin{gather*}
\inf_{x \in \overset{\circ}{N}\vphantom{N}^\Gamma_{\gamma^\dagger}} \|f[q(x)]\|_2^2 = \min_{x \in \Gamma}\|f[x]\|_2^2 >0,
\\
\sup_{x \in \overset{\circ}{N}\vphantom{N}^\Gamma_{\gamma^\dagger}}
\Big\|\spr{ q(x) - x}{Df[q(x)] f[q(x)] } \Big\|
\\
\leq \gamma^\dagger \max_{x \in \Gamma }\| Df(x) f(x) \| < \infty
\end{gather*}
and so the denominator in \eqref{denom} is nonzero if $\gamma^\dagger \approx 0$.
\end{proof}
Along with Asm.~\ref{kawan}, we shall consider its relaxation that comes to replacement of the requirement of orbital continuity of the orbital derivative by the weaker property of orbital piece-wise continuity of this derivative. It is easy to see by inspection of the proofs that Thm.~\ref{second_method} remains valid under this relaxation of Asm.~\ref{kawan}.
\begin{lemma}
\label{lem.extend}
Let $\mathscr{P}(\cdot)$ be a $T$-periodic, continuous, positive definite, and piece-wise continuously differentiable $n\times n$-matrix-function. For small enough $\gamma\in (0,\gamma^\dagger)$, the function $x \in \overset{\circ}{N}\vphantom{N}^\Gamma_\gamma \mapsto P(x):= \mathscr{P}[\mathscr{T}(x)]$ meets the relaxed Asm.~{\rm \ref{kawan}} and, for all $x \in \overset{\circ}{N}\vphantom{N}^\Gamma_\gamma$, its orbital derivative $\dot{P}(x)$ is as close to  $\dot{\mathscr{P}}[\mathscr{T}(x)]$ as desired.
\end{lemma}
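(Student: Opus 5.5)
We have to show three things about $P(x):=\mathscr{P}[\mathscr{T}(x)]$ on $\overset{\circ}{N}\vphantom{N}^\Gamma_\gamma$: it is continuous with symmetric positive definite values; it has an orbital derivative that is orbitally piecewise continuous; and this derivative is uniformly close to $\dot{\mathscr{P}}[\mathscr{T}(x)]$ once $\gamma$ is small. Continuity and positive definiteness are immediate. Indeed, $\mathscr{T}(\cdot)$ is $C^1$ by the previous lemma, $\mathscr{P}(\cdot)$ is continuous, $T$-periodic and positive definite, and $P$ takes values among those of $\mathscr{P}$ on $[0,T]$.

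The core computation is the chain rule along trajectories. Fix $a$ in the neighborhood and consider $\theta(t):=\mathscr{T}[x(t,a)]$ for $t$ so small that the trajectory stays in $\overset{\circ}{N}\vphantom{N}^\Gamma_{\gamma^\dagger}$. Since $\mathscr{T}$ is $C^1$, $\theta$ is $C^1$ with
$$\dot\theta(t)=D\mathscr{T}(x)f(x)\Big|_{x=x(t,a)}.$$
By \eqref{denom}, with $y=q(x)$, this equals
$$\frac{\spr{f(x)}{f(y)}}{\|f(y)\|_2^2+\spr{y-x}{Df(y)f(y)}}.$$
We have $\|x-y\|<\gamma$, $f$ is $C^1$, and $\min_\Gamma\|f\|>0$. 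Hence $\dot\theta(t)$ tends to $1$ uniformly as $\gamma\to0+$:
$$\Big|\,D\mathscr{T}(x)f(x)-1\,\Big|\le C\gamma \qquad \text{on } \overset{\circ}{N}\vphantom{N}^\Gamma_\gamma,$$
where $C$ depends only on bounds for $f$ and $Df$ on a fixed neighborhood of $\Gamma$. Choose $\gamma$ so that $C\gamma<1/2$. Then $\theta(\cdot)$ is strictly increasing, and so it is a local $C^1$ diffeomorphism onto an interval. Consequently $\theta$ crosses each of the finitely many (modulo $T$) break points of $\mathscr{P}$ transversally and at isolated times. Away from those times, $t\mapsto P[x(t,a)]=\mathscr{P}[\theta(t)]$ is $C^1$ with
$$\dot P(x)=\dot{\mathscr{P}}[\mathscr{T}(x)]\;D\mathscr{T}(x)f(x).$$
At a break point one uses one-sided derivatives of $\mathscr{P}$ together with monotonicity of $\theta$. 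This yields the right orbital derivative, which is what is used in the relaxed Asm.~\ref{kawan} and in the calculus lemma on right derivatives. It is orbitally piecewise continuous because $\theta$ is $C^1$ and hits break points only at isolated times.

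The closeness claim follows from the estimate
$$\|\dot P(x)-\dot{\mathscr{P}}[\mathscr{T}(x)]\|\le \max_{\tau}\|\dot{\mathscr{P}}(\tau^\pm)\|\,C\gamma.$$
The maximum here is finite because the one-sided derivatives of a piecewise $C^1$ periodic function are bounded. The bound therefore tends to $0$ uniformly in $x$ as $\gamma\to0+$.

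The main obstacle I expect is the careful treatment at break points of $\mathscr{P}$. There one must make sure that orbital "piecewise continuity" is meaningful, meaning finitely many breaks on bounded time intervals. The monotonicity of $\theta$, which comes from $\dot\theta\ge1/2$, handles this: on a time interval of length $L$, $\theta$ advances by at most $(1+C\gamma)L$, so only finitely many break points are met. A secondary point is that trajectories starting in $\overset{\circ}{N}\vphantom{N}^\Gamma_\gamma$ might leave it. Since the orbital derivative is a local notion at $t=0$, only short-time behavior matters. Moreover, the domain $\overset{\circ}{N}\vphantom{N}^\Gamma_{\gamma^\dagger}$ of $\mathscr{T}$ is open, so the construction is defined for small $|t|$ at every point.
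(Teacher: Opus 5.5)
Your proposal is correct and takes essentially the paper's approach. You compute $\frac{d}{dt}\mathscr{T}[x(t,a)]=D\mathscr{T}(x)f(x)$ via \eqref{denom} and show it tends to $1$ uniformly as $\gamma\to0+$, which makes $t\mapsto\mathscr{T}[x(t,a)]$ monotone and $t\mapsto P[x(t,a)]$ piecewise $C^1$; closeness then follows from $\dot P=\dot{\mathscr{P}}[\mathscr{T}]\,D\mathscr{T}f$. The explicit $C\gamma$ bound and the handling of break points through one-sided (right) derivatives only fill in details the paper leaves implicit.
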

\begin{proof}
Let $\gamma \to 0+$. Then uniformly over $\xi \in \overset{\circ}{N}\vphantom{N}^\Gamma_\gamma $, first, $\|x(t,\xi) - q[x(t,\xi)]\| \to 0$ and second, for $t \approx 0$,
\begin{gather}
\frac{d \mathscr{T}[x(t,\xi)]}{dt} = D \mathscr{T}[x(t,\xi)] \dot{x} (t,\xi) \nonumber \\
\overset{\text{\eqref{denom}}}{=}
\frac{\spr{f[x(t,\xi)] }{f[q(x(t,\xi))]}}{\|f[q(x(t,\xi))]\|_2^2  + {\mathcal R} }
\to 1, \quad \text{where}
\label{converge}\\
{\mathcal R}:=\spr{ q(x(t,\xi)) - x(t,\xi)}{Df[q(x(t,\xi))] f[q(x(t,\xi))] }.\nonumber
\end{gather}
If $\gamma>0$ is small,
the function $t \mapsto \mathscr{T}[x(t,\xi)]$ is thereby, monotone.
Hence the function $t \mapsto P[x(t,\xi)] = \mathscr{P}\big\{\mathscr{T} [x(t,\xi)] \big\}$ is piece-wise continuously differentiable; so it satisfies the relaxed Asm.~{\rm \ref{kawan}}. Furthermore,
\begin{gather*}
\dot{P}(\xi) = \frac{d}{dt} P[x(t,\xi)]|_{t=0} =  \left. \frac{d}{dt} \mathscr{P}\big\{\mathscr{T} [x(t,\xi)] \big\}\right|_{t=0}
\\
= \dot{\mathscr{P}}[\mathscr{T}(\xi)] \left. \frac{d}{dt}\frac{d \mathscr{T}[x(t,\xi)]}{dt}\right|_{t=0}
\\ =  \dot{\mathscr{P}}[\mathscr{T}(\xi)]
 \frac{\spr{f(\xi)}{f[q(\xi)]}}{\|f(\xi)\|_2^2  + \spr{ q(\xi) - \xi}{Df(\xi) f(\xi) } } .
\end{gather*}
The proof is completed by invoking \eqref{converge}.
\end{proof}
\par
{\bf Proof of Theorem~\ref{thm:orbital}:} The equivalence {\bf i)} $\Leftrightarrow$ {\bf ii)} is established by Lem.~\ref{impl.1.lem} and \ref{impl.2.lem}.
\par
{\bf iii)} $\boldsymbol{\Rightarrow}$ {\bf ii):} This is established by applying the part i) $\Rightarrow$ ii) of Thm.~\ref{second_method} to the considered tubular neighborhood of the periodic solution $x_\star(\cdot)$ and subsequently forming the superposition of the matrix-function $P(\cdot)$ from ii) in Thm.~\ref{second_method} with $x_\star(\cdot)$.
\par
{\bf i)} $\boldsymbol{\Rightarrow}$ {\bf iii):} By the foregoing, the statement ii) is also true. Let $\mathscr{P}(\cdot)$ be a matrix function that possesses the properties from ii). By applying Lem.~\ref{lem.extend} to $\mathscr{P}(\cdot)$ , we introduce the function $P(x)= \mathscr{P}[\mathscr{T}(x)]$ of $x \in \overset{\circ}{N}\vphantom{N}^\Gamma_\gamma$ and $\gamma_\ast \in (0,\gamma^\dagger)$ such that $P(\cdot)$ satisfies the relaxed Asm.~{\rm \ref{kawan}} for $\gamma:=\gamma_\ast$.
The last statement of Lem.~\ref{lem.extend} implies that uniformly over $x \in \overset{\circ}{N}\vphantom{N}^\Gamma_{\gamma_\ast}$, the coefficients of the algebraic equation \eqref{eigenvalue_problem} related to $P(x)$ can be made arbitrarily close to the eponymous coefficients of the similar equation related to $\mathscr{P}[\mathscr{T}(x)]$ if $\gamma_\ast>0$ is properly reduced. Since the roots vary continuously with the coefficients (see, e.g., \cite{HarMar87}), the eponymous roots can also be made as close as desired. By doing so, the condition $\lambda_1(x)+\lambda_2(x) <0 \; \forall x \in x \in \overset{\circ}{N}\vphantom{N}^\Gamma_{\gamma_\ast}$ can be ensured by inheritance from the similar condition on $\mathscr{P}(\cdot)$ from ii).
\par
By the Andronov--Witt theorem, $x_\star(\cdot)$ is asymptotically orbitally stable. Hence there is $\ve>0$ such that
\begin{gather}
\nonumber
x(t,\xi) \in \overset{\circ}{N}\vphantom{N}^\Gamma_{\gamma_\ast} \; \forall t \geq 0, \xi \in N ^\Gamma_{\ve} := \{x: \dist{x}{\Gamma} \leq \ve\}
~ \text{and}
\\
\label{to.infty}
\max_{\xi \in N^\Gamma_{\ve}} \dist{x(t,\xi)}{\Gamma} \to 0 ~ \text{as} ~ t \to \infty.
\end{gather}
Our next step is to show that the set $\mathbb{X}_\star := \{y = x(t,\xi): t \geq 0, \xi \in  N ^\Gamma_{\ve}\}$ is a regular tubular neighborhood of the periodic orbit $\Gamma$ in the sense of Defn.~\ref{def.tubular}.
\par
To demonstrate that $\mathbb{X}_\star $ is compact, we consider an arbitrary sequence $\{y_i\}_{i=1}^\infty \subset \mathbb{X}_\star$. It is associated with two sequences $\{t_i\}_{i=1}^\infty \subset [0,\infty)$ and $\{\xi_i\}_{i=1}^\infty \in  N ^\Gamma_{\ve}$ such that $y_i = x(t_i,\xi_i) \; \forall i$. If the first of them is not bounded, there exists a subsequence $\{t_{i_k}\}_{k=1}^\infty$ converging to $+\infty$.
Then $\dist{y_{i_k}}{\Gamma} = \dist{x(t_{i_k},\xi_{i_k})}{\Gamma} \to 0$ as $k \to \infty$ by \eqref{to.infty} and so there exists a third sequence $\{z_{k}\}_{k=1}^\infty \subset \Gamma$ such that $\|y_{i_k}-z_k\|_2 \to 0$ as $k \to \infty$. Since the set $\Gamma$ is compact, there is a converging subsequence of $\{z_{k}\}_{k=1}^\infty$. The matching subsequence of $\{y_{i}\} \subset \mathbb{X}_\star$ also converges.
\par
If conversely the sequence $\{t_{i}\}$ is bounded, then there exist matching subsequences $\{t_{i_k}\}_{k=1}^\infty$ and $\{\xi_{i_k}\}_{k=1}^\infty$ that converge to finite limits $t_\infty \in [0,\infty)$ and $\xi_\infty \in N^\Gamma_{\ve}$, wherein the second one exists since the set $N^\Gamma_{\ve}$ is compact. Then $y_{i_k} = x[t_{i_k}, \xi_{i_k}] \to x(t_\infty,\xi_\infty) \in \mathbb{X}_\star$ as $k \to \infty$ \cite[Ch.~V, Thm.~2.1]{Hart82}. Thus any sequence $\{y_i\}_{i=1}^\infty \subset \mathbb{X}_\star$ has a subsequence that converges to some element of $\mathbb{X}_\star$, which means that $\mathbb{X}_\star$ is compact.
\par
Finally, the set $\overset{\circ}{N}\vphantom{N}^\Gamma_{\ve}$ is open and its closure equals $N^\Gamma_{\ve}$.
For any $t \geq 0$, the mapping $\xi \mapsto x(t,\xi)$ is a diffeomorphism. So the set $\overset{\circ}{\mathbb{X}}\vphantom{\mathbb{X}}_\star := \{y = x(t,\xi): t \geq 0, \xi \in \overset{\circ}{N}\vphantom{N}^\Gamma_{\ve}\}$ is open, covers $\Gamma$, and its closure contains $\mathbb{X}_\star$. Since $\mathbb{X}_\star$ is positively invariant by construction, it is a regular tubular neighborhood of $\Gamma$ by Defn.~\ref{def.tubular}. Thm.~\ref{second_method} and the remark preceding Lem.~\ref{lem.extend} complete the proof of iii). \qquad \endproof

\section{Illustrative examples}
\label{illustr.examples}
This section aims to illustrate the effectiveness of the approach developed above.
\subsection{ Motion of a rigid body in a dissipative environment}
\label{Tennis_bat}
We start with a system described by the classical Euler's rotation equations:
\begin{equation}
\begin{array}{l}
J_1\dot\omega_1 = (J_2-J_3)\omega_2\omega_3 - J_1\delta\omega_1, \\
J_2 \dot \omega_2 = (J_3-J_1)\omega_1\omega_3 - J_2\delta\omega_2 + \tau,
\\
J_3\dot\omega_3 = (J_1-J_2)\omega_1\omega_2 -J_3\delta\omega_3 ,
\end{array}\label{Jun_sys}
\end{equation}
where $ 0<J_1<J_2<J_3 $
are the moments of inertia. Also, $\tau$ is an external torque and $\delta>0$ is the coefficient of friction.
The system (\ref{Jun_sys}) describes the rotational motion of a rigid body in the body-fixed frame composed by the axes of inertia;
$\omega_1,\omega_2,\omega_3$ are the respective angular velocities. The body is actuated by the torque $\tau$ applied around the second principal axis.
\par
The system \eqref{Jun_sys} has attracted much attention not only due to its fundamentality but also for related interesting effects.
One of them is addressed by the tennis racket theorem, also known as the intermediate axis theorem. It states that if
there is neither an outer actuation $\tau=0$ nor energy dissipation $\delta=0$, rotation around both the first and third axes is stable, whereas that around the intermediate axis is unstable.
An experimental manifestation of this phenomenon is called the Dzhanibekov effect, after cosmonaut Vladimir Dzhanibekov, who surprisingly for himself noticed the outcome of this theorem whilst in space in 1985.
\par
Application of Thm.~\ref{second_method} to the system \eqref{Jun_sys} entails the following main result of this section.
\begin{thmm}
\label{thm.jan}
The system \eqref{Jun_sys} with a constant torque $\tau$ is uniformly 2-contractive on any compact positively invariant subset $\mathbb{X}$ of its phase space if and only if
\begin{equation}
\label{result_tau}
|\tau|<2\delta^2J_2\sqrt{\frac{J_1J_3}{(J_3-J_2)(J_2-J_1)}}.
\end{equation}
This inequality necessarily holds if the system \eqref{Jun_sys} is uniformly 2-contractive on at least one compact set $\mathbb{X}$ that meets Asm.~{\rm \ref{ass_on_X+}} and contains the equilibrium $\boldsymbol{\omega}_{\text{\rm eq}}=\left(0,\tfrac{\tau}{J_2\delta},0\right)$ in its interior.
\end{thmm}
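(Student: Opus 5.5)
The plan is to prove both directions through the equivalences already established. Sufficiency goes through Thm.~\ref{second_method} (implication ii) $\Rightarrow$ i), which by Rem.~\ref{rem.drop+} needs only compactness and positive invariance of $\mathbb{X}$). Necessity goes through Thm.~\ref{firstmethod} evaluated at the equilibrium. Throughout I write \eqref{Jun_sys} as $\dot\omega=f(\omega)$ with
\[
a_1:=\tfrac{J_2-J_3}{J_1}<0,\quad a_2:=\tfrac{J_3-J_1}{J_2}>0,\quad a_3:=\tfrac{J_1-J_2}{J_3}<0 .
\]
The Jacobian is then $A(\omega)=-\delta I_3+N(\omega)$, where $N$ has zero diagonal and off-diagonal entries $N_{12}=a_1\omega_3$, $N_{13}=a_1\omega_2$, $N_{21}=a_2\omega_3$, $N_{23}=a_2\omega_1$, $N_{31}=a_3\omega_2$, $N_{32}=a_3\omega_1$. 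It is independent of $\tau$.

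\emph{Sufficiency.} I look for $P(\omega)=e^{V(\omega)}P_0$ with constant $P_0=\diag(s_1^2,s_2^2,s_3^2)$ and scalar $V$. Since $\dot P=\dot V P$, the roots of \eqref{eigenvalue_problem} for $P$ are those for $P_0$ shifted by $\dot V(\omega)$. By Lem.~\ref{lem.eigenv}, the roots for $P_0$ are the eigenvalues of $F+F\trn$ with $F=S_0AS_0^{-1}$, $S_0=\diag(s_i)$. Choose $s_1^2/s_2^2=-a_2/a_1$ and $s_3^2/s_2^2=-a_2/a_3$. Both ratios are positive, and this choice kills the $(1,2)$ and $(2,3)$ entries of $F+F\trn$. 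Then $F+F\trn=-2\delta I_3+\omega_2(a_1 r+a_3/r)(e_1e_3\trn+e_3e_1\trn)$ with $r=s_1/s_3$, whose eigenvalues are $-2\delta$ and $-2\delta\pm|\omega_2|\,|a_1r+a_3/r|$. Here $a_1r+a_3/r$ depends only on the ratio $r=\sqrt{a_3/a_1}$ fixed by the normalization, which equals the value minimizing $|a_1r+a_3/r|$. This gives $c:=2\sqrt{a_1a_3}$, so that $\lambda_1+\lambda_2=-4\delta+c|\omega_2|+2\dot V$.

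Next take $V=\kappa E$ with the energy $E:=\sum_iJ_i\omega_i^2$. A direct computation gives $\dot E=-2\delta E+2\tau\omega_2$, and $E\ge J_2\omega_2^2$. Hence
\[
\lambda_1+\lambda_2\le -4\kappa\delta J_2\omega_2^2+(4\kappa|\tau|+c)|\omega_2|-4\delta .
\]
The right-hand side is a concave quadratic in $|\omega_2|$ with maximum $\frac{(4\kappa|\tau|+c)^2}{16\kappa\delta J_2}-4\delta$. Taking $\kappa=c/(4|\tau|)$ (any $\kappa>0$ if $\tau=0$) makes the maximum equal to $\frac{|\tau|c}{\delta J_2}-4\delta$. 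This is negative exactly when $|\tau|\sqrt{a_1a_3}<2\delta^2J_2$, and substituting $a_1a_3=\frac{(J_3-J_2)(J_2-J_1)}{J_1J_3}$ shows this is \eqref{result_tau}. Since the bound is uniform over all $\omega$, $\sup_{\mathbb{X}}\Xi_2^{P(\cdot)}<0$ on any compact positively invariant $\mathbb{X}$. The function $P$ is smooth, so Asm.~\ref{kawan} holds, and Thm.~\ref{second_method} gives uniform 2-contractivity.

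\emph{Necessity.} Let $\mathbb{X}$ satisfy Asm.~\ref{ass_on_X+} and contain $\boldsymbol{\omega}_{\rm eq}$ in its interior. If the system is uniformly $2$-contractive on $\mathbb{X}$, Thm.~\ref{firstmethod} gives $\mathbf{\Sigma_2}<0$. At the equilibrium $X(t,\boldsymbol{\omega}_{\rm eq})=e^{A_{\rm eq}t}$, and by Weyl's inequality $\omega_2(e^{A_{\rm eq}t})\ge|\eta_1\eta_2|$ for the eigenvalues $\eta_i$ of $e^{A_{\rm eq}t}$. It follows that $\lim_t\Sigma_2(t,\boldsymbol{\omega}_{\rm eq})=\mathrm{Re}\,\mu_1+\mathrm{Re}\,\mu_2$ for the eigenvalues $\mu_i$ of $A_{\rm eq}$ ordered by real part, so $\mathrm{Re}\,\mu_1+\mathrm{Re}\,\mu_2\le\mathbf{\Sigma_2}<0$.

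The eigenvalues of $A_{\rm eq}$ are $-\delta$ and the eigenvalues of $\begin{pmatrix}-\delta&a_1\omega_2^\ast\\a_3\omega_2^\ast&-\delta\end{pmatrix}$, namely $-\delta\pm|\omega_2^\ast|\sqrt{a_1a_3}$, which are real because $a_1a_3>0$. The two largest therefore sum to $-2\delta+|\omega_2^\ast|\sqrt{a_1a_3}$. With $\omega_2^\ast=\tau/(J_2\delta)$, requiring this sum to be negative is exactly \eqref{result_tau}. For the first claim of the theorem, if \eqref{result_tau} fails, any $\mathbb{X}$ meeting Asm.~\ref{ass_on_X+} with the equilibrium in its interior (for instance a large sublevel set of $E$, which is positively invariant because $\dot E<0$ for large $E$) fails to be uniformly 2-contractive.

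The main obstacle is the sufficiency step, specifically the choice of $V$. A constant metric alone only yields the condition $|\omega_2|\sqrt{a_1a_3}<2\delta$ pointwise, which cannot hold on large invariant sets. The point is that the energy identity $\dot E=-2\delta E+2\tau\omega_2$ converts the troublesome $|\omega_2|$ term into a negative quadratic, and after optimizing $\kappa$ this reproduces precisely the sharp threshold \eqref{result_tau}.
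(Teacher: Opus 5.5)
Your proposal is correct and is essentially the paper's proof: the same diagonal $P_0$ (up to scaling), the same weight $e^{\gamma W}$ with $\gamma=2\kappa$ so that all roots shift by the orbital derivative, the same energy bound reducing $\lambda_1+\lambda_2$ to a quadratic in $|\omega_2|$, and the same linearization at $\boldsymbol{\omega}_{\rm eq}$ combined with Thm.~\ref{firstmethod} for necessity. Your explicit choice $\kappa=c/(4|\tau|)$ and your direct Weyl-inequality argument are slightly cleaner than the paper's discriminant analysis and argument by contradiction. The only slip is that for $\tau=0$ the bound $c^2/(16\kappa\delta J_2)-4\delta$ is negative only when $\kappa>c^2/(64\delta^2 J_2)$, not for every $\kappa>0$.
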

\begin{proof}
{\bf Sufficiency:} Let \eqref{result_tau} hold. We start with differentiating the quadratic energy-like function
\(
W(\boldsymbol{\omega})=\tfrac12\big(J_1\omega_1^2+J_2\omega_2^2+J_3\omega_3^2\big)
\)
of $\boldsymbol{\omega}:=(\omega_1,\omega_2,\omega_3)$
along the trajectories of \eqref{Jun_sys}
\begin{equation}
\label{dotmm}
\dotm{W} = -J_1\delta\omega_1^2 -J_2\delta\omega_2^2-J_3\delta\omega_3^2+\tau\omega_2.
\end{equation}
An elementary upper estimate of the r.h.s. shows that \(
\dotm{W} \le -\delta W+\beta_\star
\),
where $\beta_\star={\tau^2}/{(2\delta J_2)}$. Hence for any $\beta > \beta_\star$, all the solutions eventually enter the ellipsoid
\begin{equation}
\label{ellip.ss}
\mathbb{X}_\beta:=\{\boldsymbol{\omega}~|~W(\boldsymbol{\omega})\le \beta/\alpha\}
\end{equation}
and remain there afterwards. Also,
Asm.~\ref{ass_on_X} and \ref{ass_on_X+} hold true with $\mathbb{X} := \mathbb{X}_\beta$.
\par
Our next step is based on Thm.~\ref{second_method}. To use it, we first find the Jacobi matrix of the right-hand side in \eqref{Jun_sys}:
\begin{equation}
\label{jac.rhs}
\mathscr{A}(\boldsymbol{\omega})=
\begin{bmatrix}
-\delta & \tfrac{(J_2-J_3)}{J_1}\omega_3 & \tfrac{(J_2-J_3)}{J_1}\omega_2 \\[6pt]
\tfrac{(J_3-J_1)}{J_2}\omega_3 & -\delta & \tfrac{(J_3-J_1)}{J_2}\omega_1 \\[6pt]
\tfrac{(J_1-J_2)}{J_3}\omega_2 & \tfrac{(J_1-J_2)}{J_3}\omega_1 & -\delta
\end{bmatrix}
\end{equation}
and put $P_0:=\diag\left[ 1,~~\tfrac{J_2}{J_1}\tfrac{(J_3-J_2)}{(J_3-J_1)},~~\tfrac{J_3}{J_1}\tfrac{(J_3-J_2)}{(J_2-J_1)} \right]$. This matrix is positive definite since $ 0<J_1<J_2<J_3 $.
Straightforward calculations yield that
\[
\mathscr{A}\trn P_0+P_0\mathscr{A}=
2\left[
\begin{array}{ccc}
-\delta & 0 & \frac{J_2-J_3}{J_1}\omega_2 \\
0 & -\frac{\delta J_2(J_3-J_2)}{J_1(J_3-J_1)} & 0\\
 \frac{J_2-J_3}{J_1}\omega_2 & 0 & - \frac{\delta J_3(J_3-J_2)}{J_1(J_2-J_1)}
\end{array}
\right].
\]
For any $\chi \in \mathbb{C}$,
the determinant $\Delta(\chi):= \det (P_0\mathscr{A}+\mathscr{A}^\top P_0-\chi P_0)$ reduces to
\(
(\chi+2\delta)\bigl[(\chi+2\delta)^2-4\omega_2^2\varrho^2\bigr]
\)
up to a constant factor, where
\[
\varrho := \sqrt{\tfrac{(J_3-J_2)(J_2-J_1)}{J_1J_3}} .
\]
The roots of the equation $\Delta(\chi)=0$ are as follows
\(
\chi_{1,3} = -2\delta \pm 2|\omega_2|\varrho,
\chi_2 = -2\delta.
\)

Now we introduce a state-dependent metric given by the following positive definite matrix
\(
P(\boldsymbol{\omega})=P_0e^{\gamma W(\boldsymbol{\omega})},
\)
where $\gamma >0$ is a parameter to be specified later on.
The previous calculations readily imply that the solutions to $\det(\mathscr{A}\trn P+P\mathscr{A}+\dotm{P}-\lambda P)=0$
are given by
\(
\lambda_i=\chi_i+\gamma \dotm{W}, i=1,2,3.
\)
Hence,
\(
\lambda_1+\lambda_2 = 2\bigl(-2\delta + |\omega_2|\varrho + \gamma\dotm{W}\bigr).
\)
\par
By Thm.~\ref{second_method} and \eqref{dotmm}, the system \eqref{Jun_sys} is uniformly $2$-contractive on $\mathbb{X}_\beta$
whenever there is $\gamma >0$ such that
\(
-\gamma J_2 \delta \omega_2^2 +\omega_2(\varrho{\rm sign}\omega_2+\gamma \tau )-2\delta<0
\)
for arbitrary $\omega_2$.
In turns, this holds if the discriminant of the left hand side is negative, i.e.,
\(
(\varrho{\rm sign}\,\omega_2+\gamma\tau)^2-8\delta^2\gamma J_2<0
\),
or equivalently,
\(
\gamma^2 \tau^2 + 2\gamma\bigl(\tau\varrho \,\mathrm{sign}\,\omega_2 - 4\delta^2J_2\bigr) + \varrho^2 < 0.
\)
This is a quadratic in $\gamma$ with nonnegative leading coefficient ($\tau^2\ge 0$).
Hence a feasible $\gamma>0$ exists if and only if:
1) the linear coefficient is strictly negative, i.e.
\(
\tau\varrho \,\mathrm{sign}\,\omega_2 - 4\delta^2J_2 < 0,
\)
and
2) the discriminant is positive, i.e.
\(
(\tau\varrho \,\mathrm{sign}\,\omega_2 - 4\delta^2J_2)^2 - \tau^2\varrho^2 > 0.
\)
The first condition is equivalent to
\(
|\tau| < {4\delta^2 J_2}/{\varrho},
\)
whereas the second condition comes to the same bound, but with a factor $1/2$.
Combining the both yields \eqref{result_tau}.
\par
So the system \eqref{Jun_sys} is uniformly 2-contractive on the ellipsoid
\eqref{ellip.ss} by Thm.~\ref{second_method}. It remains to note that any compact set $\mathbb{X}$ can be covered by the ellipsoid \eqref{ellip.ss} with a large enough $\beta > \beta_\star$ and so this set inherits uniform 2-contractivity from $\mathbb{X}_\beta$ by Defn.~\ref{def.contr}.
\par
\noindent\textbf{Necessity.}
It suffices to prove the statement given by the second sentence in Thm.~\ref{thm.jan}. Suppose to the contrary that \eqref{result_tau} is untrue but
the system \eqref{Jun_sys} is uniformly 2-contractive on a compact set $\mathbb{X}$ satisfying Asm.~{\rm \ref{ass_on_X+}} and containing the equilibrium $\boldsymbol{\omega}_{\text{\rm eq}}=\left(0,\tfrac{\tau}{J_2\delta},0\right)$ in its interior.
Since \eqref{result_tau} is untrue, $u:= \frac{|\tau|}{\tau_c} \geq 1$, where
$\tau_c:=2J_2\delta^2/\varrho$.
Linearizing \eqref{Jun_sys} at the equilibrium $\boldsymbol{\omega}_{\text{\rm eq}}$ gives rise to the characteristic polynomial
\(
(\lambda+\delta)\bigl[(\lambda+\delta)^2-(2u\delta)^2\bigr]
\).
Its roots are
\(
\lambda_1=\delta(2u-1),\quad \lambda_2=-\delta,\quad \lambda_3=-\delta(1+2u)
\)
and so
\(
\lambda_1+\lambda_2=2\delta(u-1)>0.
\)
The Jacobi matrix of the right hand side $\mathscr{A}(\boldsymbol{\omega}_{\text{\rm eq}})$ can be made diagonal via a similarity transformation since the eigenvalues are real and different. Let $\Lambda_{\boldsymbol{\omega}_{\text{\rm eq}}}$ denote the diagonal matrix of the eigenvalues
of $\mathscr{A}(\boldsymbol{\omega}_{\text{\rm eq}})$, and $S={\rm const}, {\rm det}S\ne 0$ the similarity transformation bringing $\mathscr{A}(\boldsymbol{\omega}_{\text{\rm eq}})$ to  $\Lambda_{\boldsymbol{\omega}_{\text{\rm eq}}}$. Then Lemma \ref{lem.cocycle} implies that
\begin{gather*}
{\bf \Sigma}_2\stackrel{\rm Lem. \ref{lem.cocycle}}{\ge}
\limsup_{t\rightarrow\infty}\Sigma_2(t,\boldsymbol{\omega}_{\text{\rm eq}}) \\ \stackrel{\eqref{def.sigd}}=\varlimsup_{t\rightarrow\infty}\frac1t\ln\omega_2\bigl(Se^{\Lambda_{\boldsymbol{\omega}_{\text{\rm eq}}} t}S^{-1}\bigr)  \overset{S={\rm const}, \det S\ne 0}{=\!=\!=\!=\!=\!=\!=}\lambda_1+\lambda_2>0.
\end{gather*}
So the system is not uniformly 2-contractive by Thm. \ref{firstmethod}. This contradiction proves the necessity.
\end{proof}

\begin{remark}
Since the Jacobian matrix \eqref{jac.rhs} does not explicitly depend on $\tau$, the method from \cite{Kanevski} is not directly applicable to the problem considered in this section.
\end{remark}

\subsection{R\"ossler system}
\label{ross.sec}
The next example illustrates numerical computation of a bound for uniform $d$-contraction, where $d$ is not necessarily integer.

The following system with parameters $a,b>0$ is traditionally attributed to R\"ossler \cite{KMV}:
\begin{equation}
\dot x=-y-z,\qquad \dot y=x,\qquad
\dot z=-bz+a(y-y^2) .
\label{rossler1}
\end{equation}
It is among a number of others, also attributed to R\"ossler; most of them exhibit a chaotic attractor of the Shilnikov type. The authors are unaware of a formal proof of this fact for the system \eqref{rossler1}; for some other R\"ossler systems, this fact was established via semi-analytical methods, see e.g., \cite{DING2023770,Shiln}. The main argument in favor that the system \eqref{rossler1}
possesses a Shilnikov-type chaotic attractor spiraling around the origin commonly comes to establishing the existence of a homoclinic orbit of the zero equilibrium. This can be done via approximating the homoclinic solution with a series expansion, see, e.g., \cite{DING2023770}.

We are going to illustrate the procedure of estimating the values of $d$ for which the system is uniformly $d$-contractive. We start by noting that the system has two equilibria $E_0=(0,0,0)\trn$ and $E_1=(0,e, -e)\trn$, where $e:=1+b/a$. In \eqref{rossler1},
the Jacobian of the right-hand side depends only on $y$ and is as follows:
$$
A(y)=\begin{bmatrix}
0 & -1 & -1\\[2pt]
1 & 0  & 0\\[2pt]
0 & a-2ay & -b
\end{bmatrix} .
$$
Hence the characteristic equation of the matrix $A(0)$ is
\begin{equation}
\label{int.eq}
\lambda^3+b\lambda^2+\lambda+(a+b)=0.
\end{equation}
It follows that the matrix $A(0)$ satisfies the Shilnikov's saddle-focus condition:
it has a negative eigenvalue $-\gamma$, where $\gamma>0$, and a pair of complex conjugate ones $\sigma\pm \imath \delta$, where $\sigma>0$ and $\delta \in \br$.
Indeed, since $a,b>0$, any real root of \eqref{int.eq} is negative, whereas at least one real root does exist due to the odd degree of the equation. By the Vieta's formula related to the constant term, the product of two other roots is positive and so if they were real, they would be of the same sign. However, since the polynomial in the left hand side is not Hurwitz by the Vyshnegradsky's criterion, these roots are positive, which is impossible by the foregoing. Hence the other two roots form a pair $\sigma\pm \imath \delta$ of conjugate ones with $\sigma>0$. So long as $- \gamma + 2 \sigma = - b <0$ by the Vieta's formula related to the coefficient before $\lambda^2$, we infer that $\gamma>2\sigma$.

Hence, the combination \eqref{def.sigd} of the characteristic exponents along the equilibrium solution $E_0$ is positive for $d \leq 2$. For $d >2$,
\begin{multline*}
\Sigma_d(t,E_0) =2\sigma+s(-\gamma)= \underbrace{2\sigma-\gamma}_{=\tr(A(0))=-b} +\gamma-s\gamma \\ =-b+(1-s)\gamma
\end{multline*}
and so $\Sigma_d(t,E_0) <0 \Leftrightarrow d> 3-b/\gamma$.
With regard to \eqref{three,eq} and Thm.~\ref{firstmethod}, we thus infer that $d_L:=3-b/\gamma$ is a lower bound on $d$'s for which the system \eqref{rossler1} can be uniformly $d$-contractive on a positively invariant compact set $\mathbb{X}$ satisfying Asm.~\ref{ass_on_X+}.
\par
G.A.~Leonov conjectured \cite{KMV} that the Lyapunov dimension of the R\"ossler attractor of the system~\eqref{rossler1} is exactly \(3 - b/\gamma\). {An available idea of proving  this conjecture  basically comes to establishing the following two statements:}
\begin{enumerate}[{\bf i)}]
    \item The saddle-focus equilibrium \(E_0\) possesses a homoclinic trajectory.
    \item For any set \(\mathbb{X}\) satisfying Asm.~\ref{ass_on_X+} and containing \(E_0\), the system~\eqref{rossler1} is uniformly \(d\)-contractive for all \(d>3-b/\gamma\).
\end{enumerate}
Specifically, i) implies that there is an invariant set that meets  Asm.~\ref{ass_on_X+} and contains the equilibrium \(E_0\), whereas ii) ensures that \(d_L:=3-b/\gamma\) gives an upper bound for the Lyapunov dimension of this set.

In the remainder of the section, we handle the classical values $a=0.386, b=0.2$ and by using numerical analysis, show that ii) is true with a high precision.
To this end, we employ Thm.~\ref{second_method}.

With this in mind, we pick a positive definite $3 \times 3$-matrix $P_0$ and put $P(x,y,z)=P_0\,e^{v(x,y,z)}$, where $v(x,y,z) :=\tau(z-bx)$ and $\tau>0$ is a free parameter.
It is easy to see that the roots of equation $(\ref{eigenvalue_problem})$ depend solely on $y$ and are as follows
\[
\lambda_1=\eta_1+\dot v,\qquad
\lambda_2=\eta_2+\dot v,\qquad
\lambda_3=\eta_3+\dot v,
\]
where $\dot v=\tau\big[(a+b)y-ay^2\big]$
is the orbital derivative of $v(\cdot)$ and $\eta_1 \geq \eta_2 \geq \eta_3$ are the roots of the equation
\[
\det\!\big[A(y)^\top P_0+P_0A(y)-\eta P_0\big]=0.
\]
ChatGPT 5 was employed to numerically solve the following minimization problem
\begin{gather}
\nonumber
s\longrightarrow \min_{\tau, P_0}
\\
\textbf{subject to} \; \tau >0, P_0=P_0\trn >0, s \in [0,1], \nonumber \\ \lambda_1(y)+\lambda_2(y)+s\,\lambda_3(y)<0 \ \text{for all } y\in[-20,20] .
\label{constr}
\end{gather}
The suggested solution $s^\star \approx 0.60557$, $\tau^\star\approx 0.25$, and
\[
P^\star \approx
  \begin{pmatrix}
   0.50578332 & -0.03189052 & -0.15406100\\
  -0.03189052 &  0.36983503 &  0.26733901\\
  -0.15406100 &  0.26733901 &  0.52428427
  \end{pmatrix}
\]
was independently verified to find that it meets the constraints from \eqref{constr} and $\lambda_3(y) <0 \; \forall y \in [-20,20]$.
By Thm.~\ref{second_method}, this means that if the system~\eqref{rossler1} admits a positively invariant set $\mathbb{X} \subset \{(x,y,z): |y| \leq 20\}$ satisfying Asm.~\ref{ass_on_X+}, then it is uniformly $d$-contractive on $\mathbb{X}$ with any $d \geq d_\star:=2+s^\star\approx 2.60557$. This provides evidence in favor of the Leonov's conjecture (see statement ii)) since the numerical value of the conjectured threshold $3-b/\gamma\approx 2.6055653$ equals the just established one $d_\star$ with a high precision. The authors, however, are unaware if for the classical parameters, the attractor contains the origin (see statement i)).

%Our analysis also shows that $d \geq 2.60557$ is (an approximate) {\em necessary and sufficient} condition for the uniform $d$-contraction of (\ref{rossler1}) with $a=0.386, %b=0.2$ on sets $\mathbb{X} \subset \{(x,y,z): |y| \leq 20\}$ satisfying Asm.~\ref{ass_on_X+} and containing the origin $E_0$.

\subsection{Langford system}
\label{sec:langford}

The following system is commonly attributed to Langford (also spelled Lanford):
\begin{align}\label{lanford}
\begin{split}
  \dot x &= (a-1)x-y+xz, \\
  \dot y &= x +(a-1)y+yz,\qquad x,y,z\in\mathbb{R},\ a>0, \\
  \dot z &= az-(x^2+y^2+z^2),
\end{split}
\end{align}
and has been extensively studied (see, e.g., \cite{belozyorov,Nik_Bozh,Yang_Yang}), particularly in connection with the stability of periodic orbits of nonlinear systems.
A convenient feature of \eqref{lanford} is that for $a\in(\tfrac12,1)$, there is the explicit $2\pi$-periodic solution
\begin{equation}
\label{eq:periodic_Lanford}
x_\star(t)=R\cos t,\quad
y_\star(t)=R\sin t,\quad
z_\star(t)=1-a,
\end{equation}
where
\(
R :=\sqrt{(1-a)(2a-1)}.
\)

The theory developed in Sec.~\ref{sec.orbital} allows us to establish the following result.

\begin{thmm}
The orbit of the solution \eqref{eq:periodic_Lanford}
satisfies the Andronov--Witt condition if and only if
\(\tfrac12<a<\tfrac23\).
\end{thmm}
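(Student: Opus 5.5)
The plan is to compute the Floquet multipliers of the orbit \eqref{eq:periodic_Lanford} explicitly and then check the Andronov--Witt condition $|\varrho_2|<1$ directly, which is statement i) of Thm.~\ref{thm:orbital}. The system \eqref{lanford} is equivariant under rotations about the $z$-axis. So I would pass to cylindrical coordinates $x=r\cos\theta$, $y=r\sin\theta$, in which \eqref{lanford} decouples into
\[
\dot r=(a-1+z)r,\qquad \dot z=az-r^2-z^2,\qquad \dot\theta=1 .
\]
In these coordinates the periodic orbit becomes the equilibrium $(r,z)=(R,1-a)$ of the planar $(r,z)$-subsystem, with uniform rotation in $\theta$. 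Its existence requires $R^2=(1-a)(2a-1)>0$, i.e.\ $a\in(\tfrac12,1)$, which is the standing range.

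Next I would linearize in Cartesian form and make the rotation explicit. Let $\mathcal R(t)$ be the rotation by angle $t$ about the $z$-axis. Equivariance gives $A(t)=Df[x_\star(t)]=\mathcal R(t)A(0)\mathcal R(t)^{-1}$. Hence $X_{x^0}(t)=\mathcal R(t)e^{Ct}$ with the constant matrix $C:=A(0)-\dot{\mathcal R}(0)$. Since $\mathcal R(2\pi)=I_3$, the monodromy matrix is $M=e^{2\pi C}$. In the frame $(\partial_r,\partial_\theta,\partial_z)$ at $x^0=(R,0,1-a)$, the matrix $C$ is block triangular. It has eigenvalue $0$ in the tangential direction, which gives the trivial multiplier $1$. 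The remaining $2\times2$ block is the Jacobian of the $(r,z)$-subsystem at $(R,1-a)$:
\[
B=\begin{bmatrix} a-1+z & r\\ -2r & a-2z\end{bmatrix}_{(R,\,1-a)}=\begin{bmatrix}0 & R\\ -2R & 3a-2\end{bmatrix}.
\]
So the nontrivial multipliers are $\varrho=e^{2\pi\mu}$, where $\mu$ runs over the eigenvalues of $B$. Here $\tr B=3a-2$ and $\det B=2R^2>0$.

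The conclusion then follows from the signs of $\tr B$ and $\det B$. Since $\det B>0$, both eigenvalues $\mu$ have negative real parts iff $\tr B<0$, i.e.\ $a<\tfrac23$; in that case $|\varrho_2|=e^{2\pi\max\operatorname{Re}\mu}<1$. For $a\ge\tfrac23$, the largest real part is $\ge0$, so $|\varrho_2|\ge1$ and the condition fails; at $a=\tfrac23$ the eigenvalues are $\pm i\sqrt2 R$ and the multipliers lie on the unit circle. Combined with $a>\tfrac12$, this gives exactly $\tfrac12<a<\tfrac23$.

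The main obstacle I anticipate is bookkeeping the conjugation correctly: verifying $A(t)=\mathcal R(t)A(0)\mathcal R(t)^{-1}$ and writing $C$ in the right frame, so that the $(r,z)$ block is identified without sign errors. As an optional illustration of Sec.~\ref{sec.orbital}, I could also exhibit a $P(\cdot)$ for ii) of Thm.~\ref{thm:orbital}. I would take $P(t)=\mathcal R(t)P_0\mathcal R(t)^{-1}$ with a constant $P_0$ adapted to $C$. By Lem.~\ref{lem.subst} the roots $\lambda_i$ are then constant, matching Rem.~\ref{rem.constant}, and the first route only has to ensure $\lambda_1+\lambda_2<0$. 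The Floquet computation above already suffices for the claim.
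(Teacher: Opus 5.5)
Your proof is correct, and its core reduction is the paper's. The paper also passes to the rotating frame $T(t)=\diag[Q(t),1]$ and obtains the constant matrix $B=T^{-1}A(t)T-T^{-1}\dot T$. This is exactly your $C=A(0)-\dot{\mathcal R}(0)$, and after swapping the first two coordinates it becomes $\diag\{0,A_2\}$, with $A_2$ equal to your $(r,z)$-Jacobian; your equivariance argument explains why this frame works.

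The two routes differ only in the last step. The paper does not compute the multipliers. Instead, it uses that $A_2$ is Hurwitz to choose $P_2$ so that the roots of $\det(A_2\trn P_2+P_2A_2-\lambda P_2)=0$ are close to $2\operatorname{Re}\mu$. It then builds a constant $P_0$ from $\mathrm{blkdiag}\{1,P_2\}$, transports it as $P(t)=T(t)P_0T(t)\trn$, and concludes via ii)$\Rightarrow$i) of Theorem~\ref{thm:orbital}. That route is the intended illustration of the certificate of Sec.~\ref{sec.orbital}, with time-independent roots as in Remark~\ref{rem.constant}. As written, however, it only establishes the ``if'' direction. Your direct evaluation of $M=e^{2\pi C}$, with multipliers $1$ and $e^{2\pi\mu}$, is more elementary and bypasses that machinery. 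It also yields both directions at once, including failure of the condition for $a\ge\tfrac23$.

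One minor point concerns your optional remark. Lemma~\ref{lem.subst} is stated for a constant $Q$, so it does not by itself show that $P(t)=\mathcal R(t)P_0\mathcal R(t)^{-1}$ gives constant roots. You also need the short check that the $\dot{\mathcal R}$-terms in $\dot P$ combine with $A(0)$ into $C\trn P_0+P_0C$. This is what the paper verifies for $P(t)=T(t)P_0T(t)\trn$.
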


\begin{proof}
Along the periodic solution \eqref{eq:periodic_Lanford}, the Jacobian matrix
$A(t)=A[x_\star(t),y_\star(t),z_\star(t)]$ takes the form
\begin{align*}
A(t)=
\begin{pmatrix}
0 & -1 & R\cos t\\
1 & 0 & R\sin t\\
-2R\cos t & -2R\sin t & 3a-2
\end{pmatrix}\\
=
\begin{pmatrix}
\Omega & R\,u(t)\\
-2R\,u(t)^{\trn} & 3a-2
\end{pmatrix},
\end{align*}
where
\[
\Omega:=\begin{pmatrix}0&-1\\1&0\end{pmatrix},\qquad
u(t):=\binom{\cos t}{\sin t}.
\]
We introduce the rotating orthonormal basis
\[
v(t)=\binom{-\sin t}{\cos t},\qquad
Q(t)=\big[u(t)\ \ v(t)\big],
\]
for which $Q(t)^{\trn}Q(t)=I_2$ and
\[
\dot Q(t)=Q(t)\Omega,\qquad Q(t)^{\trn}\dot Q(t)=\Omega.
\]
We extend this transformation to $\mathbb{R}^3$ by putting
\[
T(t)=\diag [Q(t),1 ],
\]
and consider the linear ODE $\dot\xi=A(t)\xi$. The change of the variable $\xi \mapsto \zeta:= T(t)^{-1} \xi$
transforms this ODE into
\[
\dot\zeta=B\,\zeta,
~
B :=T^{-1}A(t)T-T^{-1}\dot T
=
\begin{pmatrix}
0 & 0 & R\\
0 & 0 & 0\\
-2R & 0 & 3a-2
\end{pmatrix}.
\]
The matrix $B$ has the zero eigenvalue with the eigenvector $(0,1,0)^\top$.
So the permutation of the coordinates that flips the first and second of them shapes the system into
\[
\dot{z}
=
\diag\{0,A_2\} z, \quad \text{where}
\;
A_2=
\begin{pmatrix}
0 & R\\
-2R & 3a-2
\end{pmatrix}.
\]
The matrix $A_2$ is Hurwitz with a real $R$ if and only if
\(
\tfrac12<a<\tfrac23.
\)

Hence, similarly to the argument used in the proof of
Lemma~\ref{lem.singbl}, there exists a positive definite matrix $P_2$
such that the generalized eigenvalues $\lambda_1,\lambda_2$ of
\[
\det(A_2^{\trn}P_2+P_2A_2-\lambda P_2)=0
\]
can be made arbitrarily close to the doubled real parts of the eigenvalues of $A_2$.
Letting $P_1=\operatorname{blkdiag}\{1,P_2\}$ and
$P_0=UP_1U^{\trn}$ yields
\(
\lambda_1+\lambda_2<0
\)
for the two largest solutions of
\(
\det(\tilde J^{\trn}P_0+P_0\tilde J-\lambda P_0)=0.
\)

Returning to the original coordinates, define the periodic matrix
\(
P(t)=T(t)P_0T(t)^{\trn}.
\)
Then
\[
A(t)^{\trn}P(t)+P(t)A(t)+\dot P(t)
=
T(t)\big(\tilde J^{\trn}P_0+P_0\tilde J\big)T(t)^{\trn},
\]
and therefore the generalized eigenvalues of
\[
\det(A(t)^{\trn}P(t)+P(t)A(t)+\dot P(t)-\lambda P(t))=0
\]
coincide with those of the constant pencil
\[
\det(\tilde J^{\trn}P_0+P_0\tilde J-\lambda P_0)=0.
\]
Consequently $\lambda_1+\lambda_2<0$, and condition {\bf ii)} of
Theorem~\ref{thm:orbital} holds provided $\tfrac12<a<\tfrac23$.
\end{proof}

\begin{figure}[t]
\centering
\includegraphics[width=\columnwidth]{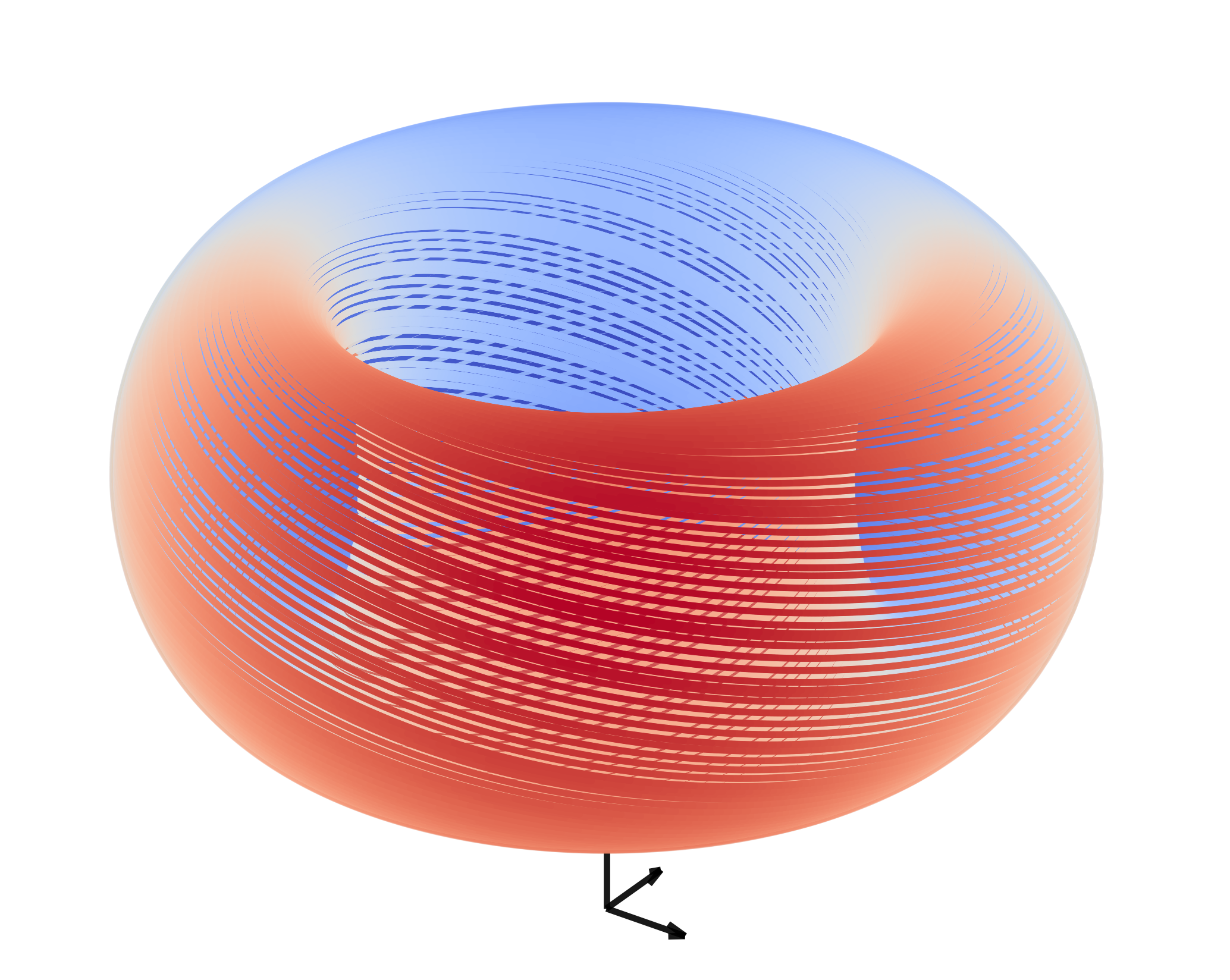}
\caption{
A  trajectory of the Langford system \eqref{lanford} for $a=\tfrac{2}{3}$.
The points are colored according to their depth relative to the viewing direction:
warmer (red) tones correspond to points closer to the viewer, while colder (blue) tones indicate points farther away.
A small coordinate frame is shown at the origin for reference.
}
\label{fig:langford_a_boundary}
\end{figure}

\section{Conclusions}

This paper develops a measure-theoretic extension of the classical $k$-contraction theory by introducing the concept of uniform $d$-contraction, where the contraction dimension $d$ is allowed to take arbitrary real values in the range $(0,n]$. A key element of the proposed framework is a family of metric-dependent {\em Hausdorff--Riemann elliptic $d$-measures}. They naturally arise from state-dependent Riemannian metrics and their evolution along the trajectories of a dynamical system underlies a quantitative characterization of the contraction of compact sets.

A central result of the paper states that uniform $d$-contraction is equivalent to the existence of a Riemannian metric for which the associated Hausdorff--Riemann elliptic $d$-measure of any compact set decreases exponentially along the system's trajectories. In this way, these measures may be viewed as {\em set-based Lyapunov functions} and carry potential for extending the classical Lyapunov's approaches via broadening the focus from trajectory-wise stability analysis to examining the evolution of dimensional volumes of certain sets. Necessary and sufficient conditions are obtained using both Lyapunov characteristic exponents (the first Lyapunov method) and metric-based constructions (the second Lyapunov method) that provide complementary analytical and computational tools for verifying contraction properties.

The proposed theory is illustrated through several examples, including systems where analytical estimates are tractable as well as cases requiring numerical studies. Beyond providing new criteria for verifying contraction and orbital stability, the introduced Hausdorff--Riemann measures offer a flexible geometric framework for studying dimensional contraction, estimating invariant-set dimensions, and designing contraction-inducing metrics. These results open the way for further development of numerical algorithms for computing contraction metrics and for applications to high-dimensional nonlinear systems where classical contraction analysis is too conservative.

\section*{Acknowledgements}

The authors are grateful to Dr. R. Kato for drawing our attention to the paper \cite{Ilyushenko}.

\appendix
\section{Proofs of the supplimentary results.}
\textbf{Proof of Lemma \ref{lem.epony}:}
{\bf i)} We observe that
\begin{gather*}
\leftindex^{P_2\!}{B}_x^r = \{ y: \text{\raisebox{6.0pt}{\rotatebox{180}{\ding{226}}}} y-x,y-x \text{\raisebox{-1.0pt}{\ding{226}}}^{P_2}\leq r^2 \} 
\\
= \{y: \spr{P_2(y-x)}{y-x} \leq r^2\}
\\
= \{y: \spr{S_2(y-x)}{S_2(y-x)} \leq r^2 
\\
= \{y : S_2(y-x) \in B_0^r\} = x+S^{-1}_2 B_0^r,
\end{gather*}
i.e., the first equation in i) is true 
The second equation follows from the first one.
\par
{\bf ii)} Let $A^\star$ stand for the operator adjoint to $A$ w.r.t.  \raisebox{6.0pt}{\rotatebox{180}{\ding{226}}}$\cdot,\cdot$\raisebox{-1.0pt}{\ding{226}}$^{P_2}$. Then for any $x,y \in \br^n$, we have
\begin{gather*}
\text{\raisebox{6.0pt}{\rotatebox{180}{\ding{226}}}}Ax,y\text{\raisebox{-1.0pt}{\ding{226}}}^{P_2}= \text{\raisebox{6.0pt}{\rotatebox{180}{\ding{226}}}}x,A^\star y\text{\raisebox{-1.0pt}{\ding{226}}}^{P_2}
\Leftrightarrow \\
\spr{P_2Ax}{y} = \spr{P_2x}{A^\star y}  = \spr{(A^\star)^\top P_2x}{ y} \Leftrightarrow \\ P_2A = (A^\star)^\top P_2 \Leftrightarrow A^\star = P_2^{-1} A^\top P_2 \Rightarrow 
\\ 
A^\star A = P^{-1}_2 A^\top P_2 A = S^{-1}_2 S^{-1}_2 A^\top S_2S_2 A 
\\
= S^{-1}_2 \big[ S^{-1}_2 A^\top S_2 S_2 A  S^{-1}_2\big] S_2 = S^{-1}_2 \Xi^\top \Xi S_2, 
\end{gather*}
where $\Xi := S_2 A  S^{-1}_2$. It follows that the matrices $A^\star A$ and $\Xi^\top \Xi$ are similar and so have common eigenvalues. The definition of the singular values completes the proof of the first formula in ii). By applying it to $P_2:= P_1$ and $A:= S_{1 \to 2}  A S_{1 \to 2}^{-1}$, we arrive at the second formula.
The third and fourth formulas follow from the first and second ones, respectively, due to \eqref{def.omega}.
\par
{\bf iii)} We represent the ellipsoid $E$ in the form $E = x+ \mathscr{A} (\leftindex^{P_2\!}{B}_0^1)$ with some $x \in \br^n$ and $\mathscr{A} \in \br^{n \times n}, \det \mathscr{A} \neq 0$.
By ii) and Lem.~\ref{lem.same}, 
\begin{gather*}
\varpi_d^{P_2} (E) = \omega_d^{P_2}(\mathscr{A}) = \omega_d(S_2\mathscr{A}S_2^{-1}),
\\
\varsigma_i^{P_2} (E) = \sigma_i^{P_2}(\mathscr{A}) = \sigma_i(S_2\mathscr{A}S_2^{-1}). 
\end{gather*}
By using Lem.~\ref{lem.same} once more and noting that $E = x+ \mathscr{A} (\leftindex^{P_2\!}{B}_0^1) = x+ \mathscr{A} S_2^{-1} B_0^1 \Rightarrow S_2 E = S_2x + S_2\mathscr{A} S_2^{-1} B_0^1 $, we arrive at the first and third relations in iii). In turns, they imply the second and fourth relations.
\par
{\bf iv)}
We represent the ellipsoid $E$ in the form $E = x+ \mathscr{A} (\leftindex^{P_1\!}{B}_0^1)$, where $x \in \br^n$ and $\mathscr{A} \in \br^{n \times n}, \det \mathscr{A} \neq 0$.
Then
\begin{gather*}
\varpi_d^{P_2}(E) \overset{\text{iii)}}{=} \varpi_d^{P_1}(S_{1 \to 2}E) = \varpi_d^{P_1} \big[ S_{1 \to 2}x + S_{1 \to 2}\mathscr{A} (\leftindex^{P_1\!}{B}_0^1) \big] \\ \overset{\text{Lem.~\ref{lem.same}}}{=\!=\!=}  \omega_d^{P_1} ( S_{1 \to 2}\mathscr{A})
\overset{\text{\eqref{eq:submult}}}{\leq} \omega_d^{P_1} ( S_{1\to 2})\omega_d^{P_1} (\mathscr{A})
\\
\overset{\text{Lem.~\ref{lem.same}}}{=\!=\!=}  \omega_d^{P_1} ( S_{1 \to 2}) \varpi_d^{P_1} (E) \overset{\text{ii)}}{=} \omega_d ( S_1 S_{1 \to 2} S_1^{-1}) \varpi_d^{P_1} (E) \\
= \omega_d (S_{2} S_1^{-1}) \varpi_d^{P_1} (E).
\end{gather*}
Thus we see that in iv), the second inequality is true. By flipping the indices in it, we arrive at the first inequality.
\iffalse
By noting that $S^{-1}E = S^{-1} x + S^{-1}  A (\leftindex^{P\!}{B}_0^1)$, we similarly see that
\begin{gather*}
\varpi_d(E) = \varpi_d^P(S^{-1}E) \overset{\text{Lem.~\ref{lem.same}}}{=\!=\!=} \omega_d^P (S^{-1}  A ) \overset{\text{\eqref{eq:submult}}}{\leq} \omega_d^P ( S^{-1})\omega_d^P (A) \overset{\text{Lem.~\ref{lem.same}}}{=\!=\!=}  \omega_d^P ( S^{-1}) \varpi_d^P(E)
\overset{\text{ii)}}{=} \omega_d ( S S^{-1} S^{-1})\varpi_d^P(E)
\\
 = \omega_d (S^{-1})\varpi_d^P(E) \Rightarrow \varpi_d^P(E) \geq \frac{1}{\omega_d (S^{-1})} \varpi_d(E) .
\end{gather*}
\fi
\par
{\bf v)} We first note that for any $A,B \in \br^{n \times n}$,
\begin{gather}
\label{mmppll}
\sigma_1(A^{-1}) = \frac{1}{\sigma_n(A)} \quad \text{if}\; \det A \neq 0, \\ \sigma_1(AB) = \|AB\|_2 \leq \|A\|_2\|B\|_2 = \sigma_1(A) \sigma_1(B) .\nonumber
%\\
%\label{mmppl2}
%\sigma_1(A^{-1}) = \frac{1}{\sigma_n(A)} \quad \text{if}\; \det A \neq 0.
\end{gather}
The second inequality in v) is proven as follows:
\begin{gather*}
\varsigma_1^{P_2} (E) = \max_{x \in E} \sqrt{\spr{P_2 x}{x}} = \max_{x \in E} \|S_2 x\| \\ = \max_{x \in E} \|S_2 S^{-1}_1 S_1x\|
\leq \sigma_1(S_2 S^{-1}_1) \max_{x \in E} \|S_1x\| = \\ \sigma_1(S_2 S^{-1}_1) \varsigma_1^{P_1} (E)
\overset{\text{\eqref{mmppll}}}{\leq} \sigma_1(S_2)\sigma_1( S^{-1}_1) \varsigma_1^{P_1} (E) \\
\overset{\text{\eqref{mmppll}}}{=} \frac{\sigma_1(S_2)}{\sigma_n( S_1)} \varsigma_1^{P_1} (E).
\end{gather*}
The first inequality follows from the second one by flipping the indices 1 and 2.
Similarly
\begin{gather*}
\varsigma_n^{P_2} (E) = \min_{x \not\in E} \sqrt{\spr{P_2 x}{x}} = \min_{x \not\in E} \|S_2 S^{-1}_1 S_1x\|
\\ \leq \sigma_1(S_2 S^{-1}_1) \min_{x \not\in E} \|S_1x\| = \sigma_1(S_2 S^{-1}_1) \varsigma_n^{P_1} (E) \\
\overset{\text{\eqref{mmppll}}}{\leq} \frac{\sigma_1(S_2)}{\sigma_n (S_1)} \varsigma_n^{P_1} (E)
,
\end{gather*}
i.e., the forth inequality in v) holds. This inequality implies the third one by flipping the indices 1 and 2.

\par
{\bf vi)} It suffices to note that
\begin{gather*}
\ecc^{P_2}(E)=\frac{\varsigma_1^{P_2}(E)}{\varsigma_n^{P_2}(E)} \overset{\text{v)}}{\leq} \frac{\frac{\sigma_1(S_2)}{\sigma_n( S_1)} \varsigma_1^{P_1} (E)}{\frac{\sigma_n(S_2)}{\sigma_1 (S_1)} \varsigma_n^{P_1} (E)} \\ =
\frac{\sigma_1(S_2)}{\sigma_n(S_2)} \frac{\sigma_1(S_1)}{\sigma_n (S_1)} \ecc^{P_1}(E),
\end{gather*}
i.e., the second inequality from vi) holds. The first inequality follows from the second one.
\par
{\bf vii)} We represent $E$ in the form $E = x+ C (B_0^1)$, where $x \in \br^n$ and $C \in \br^{n \times n}, \det C \neq 0$. Then
\begin{gather*}
\varpid^{P_2} (AE) \overset{\text{iii)}}{=} \varpi_d^{P_1}(S_{1 \to 2} A E)  \overset{\text{iii)}}{=} \varpi_d(S_1 S_{1 \to 2} A E) \\ = \varpi_d(S_{2} A S_1^{-1} S_1 E)
\overset{\text{Lem.~\ref{lem.same}}}{=\!=} \omega_d (S_{2} A S_1^{-1} S_1 C)
\\
\overset{\text{\eqref{eq:submult}}}{\leq} \omega_d (S_{2} A S_1^{-1}) \omega_d (S_1 C) \overset{\text{Lem.~\ref{lem.same}}}{=\!=} \omega_d (S_{2} A S_1^{-1}) \varpi_d (S_1 E) \\
\overset{\text{iii)}}{=}  \omega_d (S_{2} A S_1^{-1}) \varpi_d^{P_1} (E). \qquad \text{\qed}
\end{gather*}

\iffalse
\begin{cor}
Let $P \in \br^{n \times n}$ be a positive definite matrix and let $S:= \sqrt{P} \in \br^{n \times n}$ stand for its positive definite square root.
For any compact set $K \subset \br^n$, we have $\mu_d^P(K) = \mu_d(SK)$.
\end{cor}
\fi
\textbf{Proof of Lemma \ref{lemm.elip}:}
\begin{proof}
{\bf i)} The first two relations are trivial and imply the third one by \eqref{def.ecc}.
\\
{\bf ii)} We put $S:= \sqrt{P}$. By iii) in Lem.~\ref{lem.epony}, $\varpid^P(AE) = \varpid (SAE) = \varpid (SAS^{-1} SE) $.
We proceed by writing the ellipsoid $E$ as $E=  x + C(\Ball{1}{0})$ with some $n \times n$ matrix $C$ and $x \in \br^n$. 
By Lem.~\ref{lem.same} and \eqref{eq:submult},
\begin{gather*}
\varpid^P(AE) = \omegad (SAS^{-1} SC) 
\\
\leq \omegad (SAS^{-1}) \omegad (SC) =  \omega_d^P(A)\, \varpid^P(E), 
\end{gather*}
where the last equation is given by Lem.~\ref{lem.same}, along with ii) and iii) in Lem.~\ref{lem.epony}.
\par
{\bf iii)} Since $E$ is a convex set, 
\begin{gather*}
\Ball{\lambda}{0} = \frac{\lambda}{\varsigma_n(E)}\bigl[\varsigma_n(E)\Ball{1}{0}\bigr]\subset \frac{\lambda}{\varsigma_n(E)}[E-\mathbf{c}(E)],
\\
E+\Ball{\lambda}{0} = \mathbf{c}(E)+ E - \mathbf{c}(E) +\Ball{\lambda}{0} \\ \subset \mathbf{c}(E) + \left[ 1+ \frac{\lambda}{\varsigma_n(E)}\right] [E - \mathbf{c}(E)] = E^+_\lambda.
\end{gather*}
This implies the inequality in iii) by the last fact in i).
\par
{\bf iv)} We write the ellipsoid $E$ in the form $E=y+T[\leftindex^{P\!}B_0^1]$, where $T \in \br^{n \times n}$. This yields a similar representation $x+AE = x+Ay + AT[\leftindex^{P\!}B_0^1] $. It is easy to see that $\varsigma_i^P(E) = \sigma_i^P(T)$
and similarly $\varsigma_i^P(x+AE) = \sigma_i^P(AT)$
for $i=1, \ldots, n$. By denoting $\|x\|_P := \sqrt{\spr{Px}{x}}$ and $\|C\|_P := \max_{\|x\|_P \leq 1} \|C\|_P \; \forall C \in \br^{n \times n}$, we see that the first two inequalities in iv) hold since
\begin{gather*}
\varsigma_1^P(x+AE) = \sigma_1^P(AT) = \|AT\|_P \leq \|A\|_P \|T\|_P \\ = \sigma_1^P(A) \sigma_1^P(T)  = \sigma_1^P(A) \varsigma_1^P(E);\\
\varsigma_n^P(x+AE) = \sigma_n^P(AT) = \frac{1}{\sigma_1^P(T^{-1}A^{-1})} \\ = \frac{1}{\|T^{-1}A^{-1}\|_P} \geq \frac{1}{\|T^{-1}\|_P}\frac{1}{\|A^{-1}\|_P} \\ =
\frac{1}{\sigma_1^P(T^{-1})} \frac{1}{\sigma_1^P(A^{-1})} = \underbrace{\sigma_n^P(T)}_{\varsigma_n(E)} \sigma_n^P(A).
\end{gather*}
In turns, they imply the third inequality.
\end{proof}

\begin{lemma}
For any $\gamma \in (0,\gamma_\star)$, $\ve \in \left(0, \frac{\gamma m_{P(\cdot)}}{2} \right)$, $\alpha \geq 1$, and compact set $K\subset O$,
the following inequality holds:
\begin{gather}
\label{def.max}
\leftindex^{P(\cdot)\!}\pi^{\langle a(\alpha)\rangle}_{d,\epsilon_\gamma(\varepsilon,\alpha)}[g(K)] \leq \leftindex^{P(\cdot)\!}\pi_{d,\varepsilon}^{\langle \alpha \rangle}(K) (1+\xi_{\ve,\alpha,\gamma})^d \Omega_\gamma .
\end{gather}
\end{lemma}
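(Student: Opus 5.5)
The plan is to push an arbitrary admissible covering of $K$ forward under $g$ and to enlarge each image slightly so that it becomes an ellipsoid admissible in the target pre-measure. Fix a finite covering $\{E_i\}$ of $K$ that is admissible in \eqref{def.meas+} with parameters $(\ve,\alpha)$. Without loss of generality, each $E_i$ meets $K$. Since the infimum in \eqref{def.meas+} can be approached arbitrarily closely, it suffices to cover $g(K)$ by ellipsoids $E_i'$ satisfying
\begin{gather*}
\varsigma_1^{P[\mathbf{c}(E_i')]}(E_i')<\epsilon_\gamma(\ve,\alpha),\qquad \ecc^{P[\mathbf{c}(E_i')]}(E_i')\le a(\alpha),\\
\varpi_d^{P[\mathbf{c}(E_i')]}(E_i')\le (1+\xi_{\ve,\alpha,\gamma})^d\,\Omega_\gamma\,\varpi_d^{P[\mathbf{c}(E_i)]}(E_i).
\end{gather*}
Summing over $i$ and taking the infimum then yields \eqref{def.max}.

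\textbf{Step 1 (linearization).} Put $x_i:=\mathbf{c}(E_i)$. By v) of Lem.~\ref{lem.epony} (with $P_1:=P(x_i)$, $P_2:=I_n$), the Euclidean semi-axes satisfy $\varsigma_1(E_i)\le \ve/m_{P(\cdot)}$. Hence $\mathbf{diam}\,E_i\le 2\ve/m_{P(\cdot)}<\gamma$. Lem.~\ref{lem.dif0} applied with $M:=E_i$ and $x_\ast:=x_i$ gives
$$g(E_i)\subset g(x_i)+Dg(x_i)[E_i-x_i]+\Ball{r\delta}{0},$$
where $r=2\ve/m_{P(\cdot)}$ and $\delta=\delta_{g,\gamma}(r)$. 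The set $G_i:=g(x_i)+Dg(x_i)[E_i-x_i]$ is an ellipsoid centred at $y_i:=g(x_i)$. By Lem.~\ref{lemm.elip}\,iii), $G_i+\Ball{r\delta}{0}\subset E_i':=(G_i)^+_{r\delta}$. This $E_i'$ is a dilation of $G_i$ about $y_i$ by the factor $1+r\delta/\varsigma_n(G_i)$.

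\textbf{Step 2 (bounding the dilation factor).} This is the main obstacle, because $\varsigma_n(G_i)$ may be tiny compared with $r$. I would handle it with the eccentricity bound. Lem.~\ref{lemm.elip}\,iv) gives $\varsigma_n(G_i)\ge \sigma^-_\gamma\varsigma_n(E_i)$. Lem.~\ref{lem.epony}\,v),vi) convert the $P(x_i)$-eccentricity bound $\alpha$ into the Euclidean bound $\varsigma_n(E_i)\ge \varsigma_1(E_i)/(e_{P(\cdot)}\alpha)$. Together with $r\le 2\varsigma_1(E_i)$ (choosing $r:=\mathbf{diam}\,E_i$, which only decreases $\delta$ by monotonicity), this gives
$$\frac{r\delta}{\varsigma_n(G_i)}\le \frac{2e_{P(\cdot)}\alpha}{\sigma^-_\gamma}\,\delta_{g,\gamma}\!\Big(\frac{2\ve}{m_{P(\cdot)}}\Big)=\xi_{\ve,\alpha,\gamma}.$$
The only subtlety is tracking which metric each semi-axis refers to. I expect the constant $e_{P(\cdot)}$ in $\xi$ to absorb the Euclidean/$P$ conversion; if a stray factor appears, I would re-derive $\varsigma_n(E_i)$ directly in the Euclidean metric.

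\textbf{Step 3 (the three target bounds).} For the $\varpi_d$-bound, the dilation costs the factor $(1+\xi)^d$ by Lem.~\ref{lemm.elip}\,i),iii). Lem.~\ref{lem.epony}\,vii) with $P_1:=P(x_i)$, $P_2:=P(y_i)$ gives
$$\varpi_d^{P(y_i)}(G_i)\le \omega_d\big(S(g(x_i))\,Dg(x_i)\,S(x_i)^{-1}\big)\,\varpi_d^{P(x_i)}(E_i)\le\Omega_\gamma\,\varpi_d^{P(x_i)}(E_i),$$
since $x_i\in N_\gamma^K$. For the $\varsigma_1$-bound, Lem.~\ref{lem.epony}\,v) together with Lem.~\ref{lemm.elip}\,iv) yields
$$\varsigma_1^{P(y_i)}(E_i')\le (1+\xi)\,M_{P(\cdot)}\,\sigma^+_\gamma\,\varsigma_1(E_i)\le (1+\xi)\,M_{P(\cdot)}\,\sigma^+_\gamma\,\ve/m_{P(\cdot)}=\epsilon_\gamma(\ve,\alpha).$$
For the eccentricity bound, the dilation preserves eccentricity. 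Lem.~\ref{lemm.elip}\,iv) costs the factor $\sigma^+_\gamma/\sigma^-_\gamma$, and the two metric changes via Lem.~\ref{lem.epony}\,vi), each costing $e_{P(\cdot)}^2$, bring the bound to $a(\alpha)$. Since $E_i'$ is centred at $y_i$, all quantities are evaluated in $P(y_i)$, exactly as \eqref{def.meas+} requires. Summing over $i$ completes the argument.
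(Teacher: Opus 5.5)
Your plan follows the paper's proof step for step. You take a near-optimal admissible covering, discard ellipsoids missing $K$, and linearize at the centres by Lem.~\ref{lem.dif0}. You then absorb the remainder ball by the dilation of Lem.~\ref{lemm.elip}\,iii) and control the dilation factor through the eccentricity, which gives exactly $\xi_{\ve,\alpha,\gamma}$. Finally you check $\varpi_d$, eccentricity and $\varsigma_1$ of the enlarged ellipsoids with Lem.~\ref{lem.epony}\,v)--vii) and Lem.~\ref{lemm.elip}. Your $\varpi_d$ and eccentricity estimates are fine. Do make sure the radius in the inclusion of Step~1 is $r_i\delta_{g,\gamma}(r_i)$ with $r_i=\mathbf{diam}\,E_i$, not $2\ve/m_{P(\cdot)}$; otherwise the ratio in Step~2 is uncontrolled for small $E_i$.

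The step that does not close is the final equality in your $\varsigma_1$-bound. Your estimate is correct: $\varsigma_1^{P(y_i)}(E_i')<(1+\xi)M_{P(\cdot)}\sigma^+_\gamma\ve/m_{P(\cdot)}=e_{P(\cdot)}\sigma^+_\gamma(1+\xi)\ve$. However, \eqref{defki1} defines $\epsilon_\gamma(\ve,\alpha)=e_{P(\cdot)}M_{P(\cdot)}\sigma^+_\gamma(1+\xi)\ve$, which is $M_{P(\cdot)}$ times your bound, not equal to it. Admissibility at a smaller level implies admissibility at a larger one, so your argument gives \eqref{def.max} when $M_{P(\cdot)}\ge 1$, but not otherwise.

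No sharper estimate can fix this, because for $M_{P(\cdot)}<1$ the inequality with this $\epsilon_\gamma$ is false. Take $n=1$, $d=\tfrac12$, $P\equiv\tfrac14$, $g=\mathrm{id}$, $K=[0,1]$. Then $\xi=0$, $a(\alpha)=\alpha$, $\Omega_\gamma=1$ and $\epsilon_\gamma=\ve/2$, so \eqref{def.max} says $\pi_{\ve/2}\le\pi_{\ve}$, where $\pi_\epsilon:=\leftindex^{P(\cdot)\!}\pi^{\langle\alpha\rangle}_{d,\epsilon}(K)$. An admissible interval of $P$-radius $u<\epsilon$ has Euclidean length $4u$ and weight $u^{1/2}>u/\sqrt{\epsilon}$, whence $\pi_\epsilon\ge 1/(4\sqrt{\epsilon})$. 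On the other hand, $N=\lfloor 1/(4\ve)\rfloor+1$ equal intervals give $\pi_\ve\le\sqrt{1+4\ve}/(4\sqrt{\ve})$, which is $<\sqrt2/(4\sqrt\ve)\le\pi_{\ve/2}$ for $\ve<\tfrac14$.

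The paper's own proof has the matching slip. In its $\varsigma_1$ chain it bounds the Euclidean $\varsigma_1(E_i)$ by $\ve$, although only $\varsigma_1(E_i)<\ve/m_{P(\cdot)}$ is available. Its chain therefore really yields $\epsilon_\gamma/m_{P(\cdot)}$, which is coarser than your bound.

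The remedy is to redefine $\epsilon_\gamma(\ve,\alpha):=e_{P(\cdot)}\sigma^+_\gamma(1+\xi_{\ve,\alpha,\gamma})\ve$, i.e. your bound, which also scales correctly under $P\mapsto c^2P$. The proof of Thm.~\ref{thm:area-infty} only needs $\epsilon_\gamma(\ve,\alpha)\to 0$ as $\ve\to 0+$, so nothing downstream changes.
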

\textbf{Proof of Lemma \ref{thm:unit-constant}:}
\begin{proof}
Given $\eta>0$, formula \eqref{def.meas+} implies that there exists a finite covering $\{E_i\}$ of $K$ with ellipsoids $E_i$ for which
\begin{gather}
\label{many.rel}
\sum_i \varpi_d^{P[\mathbf{c}(E_i)]}(E_i) \leq \leftindex^{P(\cdot)\!}\pi_{d,\varepsilon}^{\langle \alpha \rangle}(K) + \eta, \; \\
K\subset \bigcup_i E_i, \; \varsigma_1^{P[\mathbf{c}(E_i)]}(E_i)<\varepsilon, \; \ecc^{P[\mathbf{c}(E_i)]}(E_i) \leq \alpha \; \forall i.\nonumber
\end{gather}
By dropping the ellipsoids not intersecting $K$, we retain the above relations and ensure that $E_i \cap K \neq \emptyset \; \forall i$.
We denote $\mathbf{c}_i := \mathbf{c}(E_i)$ and use
v) in Lem.~\ref{lem.epony} (where $P_2 := I_n, P_1:= P[\mathbf{c}(E_i)]$) to see that
\begin{gather}
\varsigma_1 (E_i) \leq \frac{1}{\sigma_n(S[\mathbf{c}(E_i)])} \varsigma_1^{P[\mathbf{c}(E_i)]} (E_i)
\overset{\text{\eqref{two.ineq}}}{\leq} \frac{\ve}{m_{P(\cdot)}} \nonumber \\ \Rightarrow r_i:= \mathbf{diam}\,E_i \leq \frac{2\ve}{m_{P(\cdot)}} < \gamma
\overset{E_i \cap K\neq \emptyset}{=\!=\!=\!=\!\Rightarrow} \mathbf{c}_i \in N_\gamma^{K}.\label{last.inn}
\end{gather}
Applying Lem.~\ref{lem.dif0} to $M:=E_i, x_\ast:= \mathbf{c}_i := \mathbf{c}(E_i)$ and invoking \ref{lemm.elip3}) in Lem.~\ref{lemm.elip} yield that
\begin{gather}
 \ff(E_i) \subset \ff(\mathbf{c}_i) + D \ff (\mathbf{c}_i)[E_i - \mathbf{c}_i] +  \Ball{r_i \delta_{g,\gamma}(r_i)}{0} \subset \mathcal{E}_i \nonumber\\ := \ff(\mathbf{c}_i)+ \left[1+\frac{r_i \delta_{g,\gamma}(r_i)}{\varsigma_n(\mathfrak{E}_i)} \right] \mathfrak{E}_i,
 \label{incl.dif}
 \\
 \label{dwef.egot}
 \text{where}  ~ \mathfrak{E}_i:= D \ff (\mathbf{c}_i)\big[ E_i - \mathbf{c}_i \big];
 \\
 \label{cover}
 \ff(K) \subset \bigcup_i \ff(E_i) \subset \bigcup_i \mathscr{E}_i,   \text{where} \quad \mathbf{c}(\mathscr{E}_i) = \ff[\mathbf{c}_i].
\end{gather}
We proceed by noting that  $r_i := \mathbf{diam}\,E_i = 2 \varsigma_1(E_i)$ and
\begin{gather}
\frac{r_i \delta_{g,\gamma}(r_i)}{\varsigma_n(\mathfrak{E}_i)} \overset{\text{\ref{lemm.elip4}) in Lem.~\ref{lemm.elip}}}{\leq} \frac{r_i \delta_{g,\gamma}(r_i)}{\sigma_n [Df(\mathbf{c}_i)]\varsigma_n(E_i)} \nonumber\\
\overset{\text{\eqref{def.sigg},\eqref{last.inn}}}{\leq} \frac{r_i \delta_{g,\gamma}(r_i)}{\sigma^-_\gamma \varsigma_n(E_i)}
= \ecc (E_i) \frac{r_i \delta_{g,\gamma}(r_i)}{\sigma^-_\gamma\varsigma_1(E_i)} =  2\ecc (E_i) \frac{\delta_{g,\gamma}(r_i)}{\sigma^-_\gamma}
\nonumber\\ \overset{\text{vi) in Lem\ref{lem.epony}}}{\leq} 2 \frac{ \sigma_1 \left[S(\mathbf{c}_i)\right]}{\sigma_n \left[S(\mathbf{c}_i)\right]} \ecc^{P(\mathbf{c}_i)} (E_i) \frac{\delta_{g,\gamma}(r_i)}{\sigma^-_\gamma} \label{nbnb1}
\\
\label{nbnb2}
\overset{\text{\eqref{two.ineq}}}{\leq} 2 \frac{M_{P(\cdot)}}{ m_{P(\cdot)}} \ecc^{P(\mathbf{c}_i)}(E_i) \frac{\delta_{g,\gamma}(r_i)}{\sigma^-_\gamma}
\\
\overset{\text{\eqref{defki}}}{=} 2 e_{P(\cdot)} \ecc^{P(\mathbf{c}_i)}(E_i) \frac{\delta_{g,\gamma}(r_i)}{\sigma^-_\gamma} \nonumber
\\ 
\nonumber
\overset{\text{\eqref{many.rel}}}{\leq}
2 e_{P(\cdot)} \alpha \frac{\delta_{g,\gamma}(r_i)}{\sigma^-_\gamma} \overset{\text{\eqref{modulus},\eqref{defki},\eqref{last.inn}}}{\leq} \xi:= \xi_{\ve,\alpha,\gamma},
\\
\nonumber
\varsigma_1^{P[\mathbf{c}(\mathscr{E}_i)]}(\mathscr{E}_i) \overset{\text{\ref{lemm.elip1}) in Lem.~\ref{lemm.elip}}}{=\!=\!=\!=\!=\!=} \left[1+\frac{r_i \delta_{g,\gamma}(r_i)}{\varsigma_n(E_i)} \right] \varsigma_1^{P[\mathbf{c}(\mathscr{E}_i)]}(\mathfrak{E}_i)
\nonumber \\\overset{\text{v) in Lem.~\ref{lem.epony}}}{\leq}
( 1+\xi) \frac{\sigma_1 \left\{ S[g(\mathbf{c}_i)] \right\}}{ \sigma_n \left\{ S(\mathbf{c}_i)\right\}}\varsigma_1^{P(\mathbf{c}_i)}(\mathfrak{E}_i)
\\
\nonumber
 \overset{\text{\eqref{two.ineq}}}{\leq} \frac{M_{P(\cdot)}}{ m_{P(\cdot)}} (1+\xi)
\varsigma_1^{P(\mathbf{c}_i)}(\mathfrak{E}_i)
\overset{\text{\eqref{defki}}}{=} e_{P(\cdot)} (1+\xi)
\varsigma_1^{P(\mathbf{c}_i)}(\mathfrak{E}_i) \nonumber\\ \overset{\text{iii) in Lem.~\ref{lemm.elip}}}{=\!=\!=\!=\!=\!=} e_{P(\cdot)} (1+\xi)
\varsigma_1\big[S(\mathbf{c}_i)\mathfrak{E}_i \big]
\\
\nonumber
\overset{\text{\ref{lemm.elip4}) in Lem.~\ref{lemm.elip}}}{\leq} e_{P(\cdot)} (1+\xi) \sigma_1\big[ S(\mathbf{c}_i) \big]
\varsigma_1\left\{ D \ff (\mathbf{c}_i)\big[ E_i - \mathbf{c}_i \big] \right\} \\ \nonumber
\overset{\text{\ref{lemm.elip4}) in Lem.~\ref{lemm.elip}}}{\leq} e_{P(\cdot)} (1+\xi) \sigma_1\big[ S(\mathbf{c}_i) \big] \sigma_1 \big[  D \ff (\mathbf{c}_i)\big]
\varsigma_1 (E_i)
\\
\nonumber
\overset{\text{\eqref{two.ineq},\eqref{def.sigg}}}{\leq} e_{P(\cdot)} (1+\xi) M_{P(\cdot)} \sigma^+_\gamma \varsigma_1 (E_i)\nonumber\\
\overset{\text{\eqref{defki},\eqref{many.rel}}}{\leq} e_{P(\cdot)}  M_{P(\cdot)} \sigma^+_\gamma \left\{1+2 \alpha \frac{e_{P(\cdot)}}{\sigma_\gamma^-} \delta_{\gamma}\left[ \frac{2 \ve}{m_{P(\cdot)}} \right] \right\}  \ve 
\nonumber
\\
\overset{\text{\eqref{defki1}}}{=} \epsilon_\gamma(\ve,\alpha).
\nonumber
\end{gather}
\par
By using ii), vi), and vii) in Lem.~\ref{lem.epony}, we also see that
\begin{gather*}
\ecc^{P[\mathbf{c}(\mathscr{E}_i)]}(\mathscr{E}_i)
\overset{\text{\eqref{incl.dif},\eqref{cover}}}{=\!=\!=\!=} \ecc^{P[g(\mathbf{c}_i)]}(\mathfrak{E}_i)\nonumber\\
\overset{\text{\eqref{dwef.egot}}}{=} \ecc^{P[g(\mathbf{c}_i)]}\big[ D \ff(\mathbf{c}_i) E_i \big] \nonumber\\
\overset{\text{vi)}}{\leq} \frac{\sigma_1(\sqrt{P[\mathbf{c}(\mathscr{E}_i)]})}{\sigma_n\sqrt{P[\mathbf{c}(\mathscr{E}_i)]}} \frac{\sigma_1(\sqrt{P[\mathbf{c}(E_i)]})}{\sigma_n (\sqrt{P[\mathbf{c}(E_i)]})} \ecc^{P[\mathbf{c}(E_i)]}\big[ D \ff(\mathbf{c}_i) E_i \big]
\\
\overset{\text{\eqref{two.ineq},\eqref{defki}}}{\leq} \big[e_{P(\cdot)} \big]^2 \ecc^{P(\mathbf{c}_i)}\big[ D \ff(\mathbf{c}_i) E_i \big] \nonumber\\
\overset{\text{\ref{lemm.elip4}) in Lem.~\ref{lemm.elip}}}{\leq} \big[e_{P(\cdot)} \big]^2 \frac{\sigma_1^{P(\mathbf{c}_i)}[D \ff(\mathbf{c}_i)]}{\sigma_n^{P(\mathbf{c}_i)}[D \ff(\mathbf{c}_i)]}\ecc^{P(\mathbf{c}_i)}\big[E_i \big] \nonumber\\
\overset{\text{\eqref{many.rel}}}{\leq} \big[e_{P(\cdot)} \big]^2 \frac{\sigma_1^{P(\mathbf{c}_i)}[D \ff(\mathbf{c}_i)]}{\sigma_n^{P(\mathbf{c}_i)}[D \ff(\mathbf{c}_i)]} \alpha
\\
\overset{\text{ii) in Lem.~\ref{lem.epony}},S:= \sqrt{P(\mathbf{c}_i})}{=\!=\!=\!=\!=\!=\!=\!=\!=\!=\!=\!=\!=}
\big[e_{P(\cdot)} \big]^2 \frac{\sigma_1[SD \ff(\mathbf{c}_i)S^{-1}]}{\sigma_n[SD \ff(\mathbf{c}_i)S^{-1}]} \alpha \nonumber\\
\overset{\text{\eqref{mmppll}}}{\leq}
\big[e_{P(\cdot)} \big]^2 \frac{\sigma_1(S) \sigma_1(S^{-1}) \sigma_1[D \ff(\mathbf{c}_i)]}{\sigma_n(S) \sigma_n(S^{-1}) \sigma_n[D \ff(\mathbf{c}_i)]} \alpha \nonumber\\ 
\overset{\text{\eqref{mmppll}}}{=}
\big[e_{P(\cdot)} \big]^2 \frac{\sigma_1(S)^2 \sigma_1[D \ff(\mathbf{c}_i)]}{\sigma_n(S)^2 \sigma_n[D \ff(\mathbf{c}_i)]} \alpha
\\
\overset{\text{\eqref{two.ineq},\eqref{defki}}}{\leq} \big[e_{P(\cdot)} \big]^4 \frac{ \sigma_1[D \ff(\mathbf{c}_i)]}{\sigma_n[D \ff(\mathbf{c}_i)]} \alpha
\overset{\text{\eqref{def.sigg}}}{\leq}
\big[e_{P(\cdot)} \big]^4 \frac{ \sigma_\gamma^+}{\sigma_\gamma^-} \alpha
\overset{\text{\eqref{defki}}}{=} a(\alpha).
\end{gather*}
\par
By the foregoing, the collection $\{\mathcal{E}_i^+\}$ of the ellipsoids is among those from the definition \eqref{def.meas+} of
$\leftindex^{P(\cdot)\!}\pi^{\langle a(\alpha)\rangle}_{d,\epsilon_\gamma(\varepsilon,\alpha)}[g(K)]$ and so
\begin{gather}
\label{start.ineq}
\leftindex^{P(\cdot)\!}\pi^{\langle a(\alpha)\rangle}_{d,\epsilon_\gamma(\varepsilon,\alpha)}[g(K)] \leq \sum_i \varpi_d^{P[\mathbf{c}(\mathscr{E}_i)]}\big(\mathscr{E}_i\big).
\end{gather}
With taking into account vii) in Lem.~\ref{lem.epony} and the last relation in \eqref{incl.dif}, we see that
\begin{gather*}
\varpi_d^{P[\mathbf{c}(\mathscr{E}_i)]}\big(\mathscr{E}_i\big) \overset{\text{\eqref{incl.dif},\eqref{cover}}}{=\!=\!=} \left[1+\frac{r_i \delta_{g,\gamma}(r_i)}{\varsigma_n(\mathfrak{E}_i)} \right]^{\!d}\, \varpid^{P[g(\mathbf{c}_i)]}(\mathfrak{E}_i) \\ \overset{\text{\eqref{nbnb1},\eqref{nbnb2}}}{\leq}
(1+\xi)^d \varpid^{P[g(\mathbf{c}_i)]}(\mathfrak{E}_i) \\ \overset{\text{\eqref{dwef.egot}}}{=} (1+\xi)^d \varpid^{P[g(\mathbf{c}_i)]}\big[ D \ff (\mathbf{c}_i) E_i \big]
\\
\overset{\text{iii) in Lem.~\ref{lem.epony}}}{=\!=\!=\!=\!=\!=} (1+\xi)^d \varpid\big[ S[g(\mathbf{c}_i)] D \ff (\mathbf{c}_i) S(\mathbf{c}_i)^{-1}S(\mathbf{c}_i) E_i \big]
\\
\overset{\text{ii) in Lem.~\ref{lemm.elip}}}{\leq}
(1+\xi)^d \omega_d\big[ S[g(\mathbf{c}_i)] D \ff (\mathbf{c}_i) S(\mathbf{c}_i)^{-1}\big] 
\\
\nonumber
\times \varpid \big[S(\mathbf{c}_i) E_i \big]
\\
\overset{\text{iii) in Lem.~\ref{lem.epony}}}{=\!=\!=\!=\!=\!=} (1+\xi)^d \omega_d\big[ S[g(\mathbf{c}_i)] D \ff (\mathbf{c}_i) S(\mathbf{c}_i)^{-1}\big]
\\
\nonumber
\times
\varpid^{P[\mathbf{c}_i]} \big[S(\mathbf{c}_i) E_i \big] 
\overset{\text{\eqref{quant0}}}{\leq}
(1+\xi)^d \Omega_\gamma \varpid^{P[\mathbf{c}_i]} \big[ E_i \big].
\end{gather*}
By invoking \eqref{many.rel} and \eqref{start.ineq}, we thus see that
\begin{align*}
\leftindex^{P(\cdot)\!}\pi^{\langle a(\alpha)\rangle}_{d,\epsilon_\gamma(\varepsilon,\alpha)}[g(K)] \leq \sum_i \varpi_d^{P[\mathbf{c}(\mathscr{E}_i)]}\big(\mathscr{E}_i\big)
\\ \leq (1+\xi)^d \Omega_\gamma \sum_i \varpid^{P(\mathbf{c}_i)} \big( E_i \big) \\ \leq  (1+\xi)^d \Omega_\gamma \left[  \leftindex^{P(\cdot)\!}\pi_{d,\varepsilon}^{\langle \alpha \rangle}(K) + \eta\right].
\end{align*}
The proof is completed by letting $\eta \to 0+$.
\end{proof}

\bibliographystyle{apsrev4-2}
\bibliography{chaos_references}

\end{document}